\documentclass[11pt,oneside]{amsart}

\usepackage{fullpage}

\usepackage{amsthm,amsfonts,amssymb,amsmath,amsxtra}
\usepackage[all]{xy}
\SelectTips{cm}{}
\usepackage{tikz}
\usepackage{verbatim}
\usepackage{mathrsfs}


\RequirePackage{xspace}
\RequirePackage{etoolbox}
\RequirePackage{varwidth}
\RequirePackage{enumitem}
\RequirePackage{tensor}
\RequirePackage{mathtools}
\RequirePackage{longtable}
\RequirePackage{multirow}
\RequirePackage{makecell}
\RequirePackage{bigints}

\usepackage[backref=page]{hyperref} %

\setcounter{tocdepth}{1}

\newcommand{\sD}{\ensuremath{\mathscr{D}}\xspace}

\newcommand{\sH}{\ensuremath{\mathscr{H}}\xspace}

\newcommand{\sS}{\ensuremath{\mathscr{S}}\xspace}

\newcommand{\sV}{\ensuremath{\mathscr{V}}\xspace}

\newcommand{\fkm}{\ensuremath{\mathfrak{m}}\xspace}

\newcommand{\BC}{\ensuremath{\mathbb{C}}\xspace}

\newcommand{\BL}{\ensuremath{\mathbb{L}}\xspace}

\newcommand{\BV}{\ensuremath{\mathbb{V}}\xspace}

\newcommand{\BZ}{\ensuremath{\mathbb{Z}}\xspace}

\newcommand{\CN}{\ensuremath{\mathcal{N}}\xspace}
\newcommand{\CO}{\ensuremath{\mathcal{O}}\xspace}

\newcommand{\CV}{\ensuremath{\mathcal{V}}\xspace}

\newcommand{\CZ}{\ensuremath{\mathcal{Z}}\xspace}

\newcommand{\RZ}{\ensuremath{\mathrm{Z}}\xspace}

\DeclareMathOperator{\Aut}{Aut}

\newcommand{\Ch}{{\mathrm{Ch}}}

\newcommand{\cl}{{\mathrm{cl}}}

\newcommand{\disc}{{\mathrm{disc}}}

\DeclareMathOperator{\End}{End}

\newcommand{\Fil}{\ensuremath{\mathrm{Fil}}\xspace}

\newcommand{\Fr}{\mathbf{F}}

\DeclareMathOperator{\Gal}{Gal}

\newcommand{\GL}{\mathrm{GL}}

\DeclareMathOperator{\Gr}{Gr}

\newcommand{\GSpin}{\mathrm{GSpin}}

\DeclareMathOperator{\Hom}{Hom}

\newcommand{\id}{\ensuremath{\mathrm{id}}\xspace}

\DeclareMathOperator{\im}{im}

\newcommand{\Ind}{{\mathrm{Ind}}}

\DeclareMathOperator{\Int}{\ensuremath{\mathrm{Int}}\xspace}

\DeclareMathOperator{\length}{length}
\DeclareMathOperator{\Lie}{Lie}

\newcommand{\Mp}{{\mathrm{Mp}}}
\newcommand{\mult}{{\mathrm{mult}}}

\newcommand{\OGr}{\mathrm{OGr}}
\DeclareMathOperator{\Orb}{Orb}

\newcommand{\PGL}{{\mathrm{PGL}}}

\renewcommand{\Re}{{\mathrm{Re}}}
\newcommand{\red}{\ensuremath{\mathrm{red}}\xspace}

\DeclareMathOperator{\Res}{Res}

\newcommand{\Sh}{\mathrm{Sh}}

\newcommand{\SL}{{\mathrm{SL}}}

\DeclareMathOperator{\Spec}{Spec}
\DeclareMathOperator{\Spf}{Spf}

\newcommand{\SO}{{\mathrm{SO}}}
\newcommand{\Sp}{{\mathrm{Sp}}}
\newcommand{\GSp}{{\mathrm{GSp}}}

\newcommand{\val}{{\mathrm{val}}}

\newcommand{\Ver}{{\mathrm{Vert}}}

\DeclareMathOperator{\supp}{supp}
\DeclareMathOperator{\Sym}{Sym}
\DeclareMathOperator{\sgn}{sgn}

\DeclareMathOperator{\tr}{tr}

\renewcommand{\O}{\mathrm{O}}

\DeclareMathOperator{\vol}{vol}

\newcommand{\jiao}{\stackrel{\BL}{\cap}}

\newcommand{\wit}{\widetilde}
\newcommand{\wh}{\widehat}

\newcommand{\pair}[1]{\langle {#1} \rangle}

\newcommand{\ov}{\overline}

\newcommand{\lra}{\longrightarrow}



%


\newenvironment{altenumerate}
   {\begin{list}
      {(\theenumi) }
      {\usecounter{enumi}
       \setlength{\labelwidth}{0pt}
       \setlength{\labelsep}{0pt}
       \setlength{\leftmargin}{0pt}
       \setlength{\itemsep}{\the\smallskipamount}
       \renewcommand{\theenumi}{\roman{enumi}}
      }}
   {\end{list}}

\newenvironment{altitemize}
   {\begin{list}
      {$\bullet$}
      {\setlength{\labelwidth}{0pt}
	   \setlength{\itemindent}{5pt}
       \setlength{\labelsep}{5pt}
       \setlength{\leftmargin}{0pt}
       \setlength{\itemsep}{\the\smallskipamount}
      }}
   {\end{list}}



\setitemize[0]{leftmargin=*,itemsep=\the\smallskipamount}
\setenumerate[0]{leftmargin=*,itemsep=\the\smallskipamount}

\renewcommand{\to}{%
   \ifbool{@display}{\longrightarrow}{\rightarrow}%
   }
\let\shortmapsto\mapsto
\renewcommand{\mapsto}{%
   \ifbool{@display}{\longmapsto}{\shortmapsto}%
   }
\newlength{\olen}
\newlength{\ulen}
\newlength{\xlen}
\newcommand{\xra}[2][]{%
   \ifbool{@display}%
      {\settowidth{\olen}{$\overset{#2}{\longrightarrow}$}%
       \settowidth{\ulen}{$\underset{#1}{\longrightarrow}$}%
       \settowidth{\xlen}{$\xrightarrow[#1]{#2}$}%
       \ifdimgreater{\olen}{\xlen}%
          {\underset{#1}{\overset{#2}{\longrightarrow}}}%
          {\ifdimgreater{\ulen}{\xlen}%
             {\underset{#1}{\overset{#2}{\longrightarrow}}}
             {\xrightarrow[#1]{#2}}}}%
      {\xrightarrow[#1]{#2}}
   }
\makeatother
\newcommand{\xyra}[2][]{%
   \settowidth{\xlen}{$\xrightarrow[#1]{#2}$}%
   \ifbool{@display}%
      {\settowidth{\olen}{$\overset{#2}{\longrightarrow}$}%
       \settowidth{\ulen}{$\underset{#1}{\longrightarrow}$}%
       \ifdimgreater{\olen}{\xlen}%
          {\mathrel{\xymatrix@M=.12ex@C=3.2ex{\ar[r]^-{#2}_-{#1} &}}}%
          {\ifdimgreater{\ulen}{\xlen}%
             {\mathrel{\xymatrix@M=.12ex@C=3.2ex{\ar[r]^-{#2}_-{#1} &}}}
             {\mathrel{\xymatrix@M=.12ex@C=\the\xlen{\ar[r]^-{#2}_-{#1} &}}}}}%
      {\mathrel{\xymatrix@M=.12ex@C=\the\xlen{\ar[r]^-{#2}_-{#1} &}}}%
   }
\makeatletter
\newcommand{\xla}[2][]{%
   \ifbool{@display}%
      {\settowidth{\olen}{$\overset{#2}{\longleftarrow}$}%
       \settowidth{\ulen}{$\underset{#1}{\longleftarrow}$}%
       \settowidth{\xlen}{$\xleftarrow[#1]{#2}$}%
       \ifdimgreater{\olen}{\xlen}%
          {\underset{#1}{\overset{#2}{\longleftarrow}}}%
          {\ifdimgreater{\ulen}{\xlen}%
             {\underset{#1}{\overset{#2}{\longleftarrow}}}
             {\xleftarrow[#1]{#2}}}}%
      {\xleftarrow[#1]{#2}}
   }
\newcommand{\isoarrow}{%
   \ifbool{@display}{\overset{\sim}{\longrightarrow}}{\xrightarrow\sim}%
   }
\renewcommand{\lra}{%
   \ifbool{@display}{\longleftrightarrow}{\leftrightarrow}%
   }

\linespread{1.2}

\newcommand{\Fb}{{\breve F}}
\newcommand{\Kb}{{\breve K}}
\newcommand{\OFb}{{O_{\breve F}}}
\newcommand{\OFnb}{{O_{\breve F_\nu}}}
\newcommand{\Herm}{\mathrm{Herm}}
\newcommand{\rd}{\mathrm{d}}
\newcommand{\Rep}{\mathrm{Rep}}

\newcommand{\Den}{\mathrm{Den}}
\newcommand{\Dene}{\mathrm{Den}^\varepsilon}
\newcommand{\Denf}{\Den^{\flat}}
\newcommand{\Denfe}{\Den^{\flat\varepsilon }}

\newcommand{\Nor}{\mathrm{Nor}}
\newcommand{\Nore}{\Nor^\varepsilon{}}
\newcommand{\Norfe}{\Nor^{\flat\varepsilon}}
\newcommand{\Has}{\epsilon}
\newcommand{\Hor}{\mathrm{Hor}}
\newcommand{\Hore}{\mathrm{Hor}^\varepsilon}
\newcommand{\pDen}{\partial\mathrm{Den}}
\newcommand{\pDene}{\pDen^\varepsilon}
\newcommand{\Inte}{\Int^\varepsilon}
\newcommand{\wt}{\mathfrak{m}}
\newcommand{\wte}{\wt^\varepsilon}
\newcommand{\wtf}{\wt^\flat}
\newcommand{\wtfe}{\wt^{\flat\varepsilon}}

\newcommand{\LZ}{{}^\BL\CZ}
\newcommand{\Tor}{\underline{\mathrm{Tor}}}
\newcommand{\HOM}{\underline{\mathrm{Hom}}}
\newcommand{\Intp}{\Int_{L^\flat,\sV}^\perp}
\newcommand{\pDenp}{\pDen_{L^\flat,\sV}^\perp}
\newcommand{\Ss}{\sS}
\newcommand{\Dd}{\sD}
\newcommand{\Lloc}{\mathrm{L}^1_\mathrm{loc}}
\newcommand{\VV}{W}
\newcommand{\OFp}{O_{F,(p)}}

\newcommand{\ch}{\mathrm{ch}}
\newcommand{\Tate}{\mathrm{Tate}}
\newcommand{\barK}{\overline{K}_0(\CV(\Lambda))}
\newcommand{\barCh}{\overline{\Ch}}
\newcommand{\Intch}{c_{1,{\mathcal{V}(\Lambda)}}}
\newcommand{\Intc}{c_{{\mathcal{V}(\Lambda)}}}
\newcommand{\IntK}{\mathrm{K}_{\CV(\Lambda)}}

\newcommand{\Zss}{\widehat{\mathcal{Z}}^\mathrm{ss}}
\newcommand{\Mss}{\widehat{\mathcal{M}^\mathrm{ss}}}
\newcommand{\KG}{K}

\newcommand{\pEis}{\partial\mathrm{Eis}}
\newcommand{\Diff}{\mathrm{Diff}}

\newcommand{\sz}{\mathsf{z}}
\newcommand{\sx}{\mathsf{x}}
\newcommand{\sy}{\mathsf{y}}

\DeclareFontFamily{U}{matha}{\hyphenchar\font45}
\DeclareFontShape{U}{matha}{m}{n}{
      <5> <6> <7> <8> <9> <10> gen * matha
      <10.95> matha10 <12> <14.4> <17.28> <20.74> <24.88> matha12
      }{}
\DeclareSymbolFont{matha}{U}{matha}{m}{n}
\DeclareFontFamily{U}{mathx}{\hyphenchar\font45}
\DeclareFontShape{U}{mathx}{m}{n}{
      <5> <6> <7> <8> <9> <10>
      <10.95> <12> <14.4> <17.28> <20.74> <24.88>
      mathx10
      }{}
\DeclareSymbolFont{mathx}{U}{mathx}{m}{n}

\DeclareMathSymbol{\obot}         {2}{matha}{"6B}

\newtheorem{theorem}[subsubsection]{Theorem}
\newtheorem{proposition}[subsubsection]{Proposition}
\newtheorem{lemma}[subsubsection]{Lemma}

\newtheorem{corollary}[subsubsection]{Corollary}

\theoremstyle{definition}
\newtheorem{definition}[subsubsection]{Definition}
\newtheorem{example}[subsubsection]{Example}

\newtheorem{remark}[subsubsection]{Remark}

\numberwithin{equation}{subsubsection}

\newcommand{\QHom}{\mathrm{QHom}}
\newcommand{\ansm}{\mathrm{ANilp}^\mathrm{fsm}_\OFb}
\newcommand{\alg}{\mathrm{Alg}_\OFb}
\newcommand{\kb}{{\bar\kappa}}
\newcommand{\bpi}{\boldsymbol{\pi}}

\newcommand{\etale}{\'etale }
\newcommand{\crys}{\mathrm{crys}}
\newcommand{\et}{{\acute{\mathrm{e}}\mathrm{t}}}
\newcommand{\Endo}{\mathrm{End}^\circ}
\newcommand{\SEnd}{\mathrm{SEnd}}
\newcommand{\SEndo}{\mathrm{SEnd}^\circ}

\DeclareMathOperator{\RRZ}{RZ}

\title{On the arithmetic Siegel--Weil formula for GSpin Shimura varieties}

\author[Chao Li]{Chao Li}
\address{Columbia University, Department of Mathematics, 2990 Broadway,	New York, NY 10027, USA}
\email{chaoli@math.columbia.edu} 

\author[Wei Zhang]{Wei Zhang}
\address{Massachusetts Institute of Technology, Department of Mathematics, 77 Massachusetts Avenue, Cambridge, MA 02139, USA}
\email{weizhang@mit.edu}

\date{\today}

\begin{document}

\maketitle{}

\begin{abstract}
  We formulate and prove a local arithmetic Siegel--Weil formula for GSpin Rapoport--Zink spaces, which is a precise identity between the arithmetic intersection numbers of special cycles on GSpin Rapoport--Zink spaces and the derivatives of local representation densities of quadratic forms. As a first application, we prove a semi-global arithmetic Siegel--Weil formula as conjectured by Kudla, which relates the arithmetic intersection numbers of special cycles on GSpin Shimura varieties at a place of good reduction and the central derivatives of nonsingular Fourier coefficients of incoherent Siegel Eisenstein series. 
\end{abstract}

\tableofcontents{}

\newpage

\section{Introduction}

\subsection{Background}

The classical \emph{Siegel--Weil formula} (\cite{Sie35,Siegel1951,Weil1965}) relates certain Siegel Eisenstein series to the arithmetic of quadratic forms, namely it expresses special \emph{values} of these series as theta functions --- generating series of representation numbers of quadratic forms. Kudla (\cite{Kudla1997a, Kudla2004}) initiated an influential program to establish the \emph{arithmetic Siegel--Weil formula}. In particular, the nonsingular part of Kudla's conjectural formula relates the \emph{central derivative} of nonsingular Fourier coefficients of Siegel Eisenstein series to the arithmetic intersection number of $n$ special divisors on orthogonal Shimura varieties associated to $\GSpin(n-1,2)$. The arithmetic Siegel--Weil formula was established by Kudla, Rapoport and Yang (\cite{Kudla1999, Kudla1997a, Kudla2000, Kudla2006}) for $n=1,2$. The \emph{archimedean} part of the formula for all $n$ was also known by Garcia--Sankaran \cite{Garcia2019} and Bruinier--Yang \cite{Bruinier2018}.  However, for the nonarchimedean part for higher $n$, the only known cases were when $n=3$ due to Gross--Keating \cite{Gross1993} (cf. \cite{VGW+07}) and Terstiege \cite{Terstiege2011}, and some partial results when the arithmetic intersection has dimension $0$ (\cite{Kudla1999a,Kudla2000a, Bruinier2018}).

The main result of this paper proves a semi-global (at a prime $p$) version of the arithmetic Siegel--Weil formula for all $n$. To achieve this, we formulate and prove a local arithmetic Siegel--Weil formula for GSpin Rapoport--Zink spaces, which is a precise identity between the arithmetic intersection numbers of special cycles on GSpin Rapoport--Zink spaces and the derivatives of local representation densities of quadratic forms. Such a local formula is an orthogonal analogue of the Kudla--Rapoport conjecture for unitary Rapoport--Zink spaces (\cite[Conjecture 1.3]{Kudla2011}) recently proved in our companion paper \cite{LZ2019}. Compared to the unitary case in \cite{LZ2019}, several new difficulties arise in the orthogonal case and we highlight some of them in \S\ref{sec:whats-new}. In fact, the geometric difficulty in the higher dimensional orthogonal case was one of the reasons Kudla and Rapoport shifted their perspective to the unitary case \cite{Kudla2011}. 

Via the doubling method of Piatetski-Shapiro and Rallis, the arithmetic Siegel--Weil formula is intimately tied to the \emph{arithmetic inner product formula}, which relates the central derivative of the standard $L$-function of cuspidal automorphic representations on metaplectic/orthogonal groups to the height pairing of cycles on orthogonal Shimura varieties constructed from arithmetic theta liftings. It can be viewed as a higher dimensional generalization of the Gross--Zagier formula \cite{Gross1986}, and an arithmetic analogue of the Rallis inner product formula. We hope to apply the main results in this paper to study the arithmetic inner product formula in the future (cf. \cite{LL2020, LL2021} by Liu and one of us in the unitary case). It is also worth mentioning several other recent advances in arithmetic Siegel--Weil formula in the unitary case, including cases of the singular term formula \cite{BH21} by Bruinier--Howard and the higher derivative formula over function fields  \cite{FYZ21} by Feng, Yun and one of us, and it would be interesting to study the (arguably more difficult) orthogonal analogues.

\subsection{Local arithmetic Siegel--Weil formula} Let $p$ be an odd prime. Let $F=\mathbb{Q}_p$ with residue field $\kappa=\mathbb{F}_p$ and a uniformizer $\varpi$. Let $\breve F$ be the completion of the maximal unramified extension of $F$. Let $m=n+1\geq 3$ be an integer. Let $\varepsilon\in\{\pm1\}$. Let $V=H_m^\varepsilon$ be a self-dual quadratic $O_F$-lattice of rank $m$ with $\chi(V)=\varepsilon$. Here $\chi(V)=+1$ (resp. $-1$) if the the discriminant $\disc(V)$ is a square (resp. nonsquare) in $O_F^\times$ (see Notations \S\ref{sec:notat-quadr-latt-1}). Associated to $V$ we have a \emph{local unramified Shimura-Hodge data} $(G, b, \mu, C)$, where $G=\GSpin(V)$, $b\in G(\Fb)$ is a basic element, $\mu: \mathbb{G}_m\rightarrow G$ is a certain cocharacter, and $C=C(V)$ is the Clifford algebra of $V$. Associated to this local unramified Shimura-Hodge data, we have a \emph{GSpin Rapoport--Zink space} $\RRZ_G=\RRZ(G, b, \mu, C)$ of Hodge type constructed by Howard--Pappas \cite{Howard2015} and Kim \cite{Kim2018}. The space $\RRZ_G$ is a formal scheme over $\Spf\OFb$, formally locally of finite type and formally smooth of relative dimension $n-1$ over $\Spf\OFb$ (see \S\ref{sec:gspin-rapoport-zink} for more details), and admits a decomposition $\RRZ_G=\bigsqcup_{\ell\in \mathbb{Z}}\RRZ_G^{(\ell)}$ into (isomorphic) connected components. Define $\mathcal{N}=\mathcal{N}_{n}^\varepsilon:=\RRZ_G^{(0)}$ to be a connected component of $\RRZ_G$, a formal scheme of (total) dimension $n$ (Definition \ref{def:conn-gspin-rapop-1}).

Let $\mathbb{V}=\mathbb{V}_m^\varepsilon$ be the unique (up to isomorphism) quadratic space over $F$ of dimension $m$, Hasse invariant $\Has(\mathbb{V})=-1$ and $\chi(\mathbb{V})=\varepsilon$. Then $\mathbb{V}$ can be identified as the space of \emph{special quasi-endomorphisms} $\mathbb{V}\subseteq \End(\mathbb{X}) \otimes \mathbb{Q}$, where $\mathbb{X}$ is the framing $p$-divisible group over $\kb$ for $\RRZ_G$ (\S\ref{sec:space-special-quasi}). For any subset $L\subseteq \mathbb{V}$, the \emph{special cycle} $\mathcal{Z}(L)$ (Definition \ref{def:kudla-rapop-cycl-2}) is a closed formal subscheme of $\mathcal{N}$, over which all special quasi-endomorphisms $x\in L$ deforms to endomorphisms.

Let $L\subseteq \mathbb{V}$ be an $O_F$-lattice of rank $n$ (always assumed to be non-degenerate throughout the paper, see Notations \S\ref{sec:notat-quadr-latt-1}). We now associate to $L$ two integers: the \emph{arithmetic intersection number} $\Inte(L)$ and the \emph{derived local density} $\pDene(L)$.

Let $x_1,\ldots, x_n$ be an $O_F$-basis of $L$. Define the \emph{arithmetic intersection number}
\begin{equation}
  \label{eq:intL}
  \Inte(L)\coloneqq \chi(\mathcal{N},\mathcal{O}_{\mathcal{Z}(x_1)} \otimes^\mathbb{L}\cdots \otimes^\mathbb{L}\mathcal{O}_{\mathcal{Z}(x_n)} ),
\end{equation}
 where $\mathcal{O}_{\mathcal{Z}(x_i)}$ denotes the structure sheaf of the special divisor $\mathcal{Z}(x_i)$, $\otimes^\mathbb{L}$ denotes the derived tensor product of coherent sheaves on $\mathcal{N}$, and $\chi$ denotes the Euler--Poincar\'e characteristic. It is independent of the choice of the basis $x_1,\ldots, x_n$ and hence is a well-defined invariant of $L$ itself (Definition \ref{def:Int(L)}).

 For $M$ another quadratic $O_F$-lattice (of arbitrary rank), define $\Rep_{M, L}$ to be the \emph{scheme of integral representations of $M$ by $L$}, an $O_{F}$-scheme such that for any $O_{F}$-algebra $R$, $\Rep_{M,L}(R)=\QHom(L \otimes_{O_{F}}R, M \otimes_{O_{F}}R)$, where $\QHom$ denotes the set of quadratic module homomorphisms. The \emph{local density} of integral representations of $M$ by $L$ is defined to be $$\Den(M,L)\coloneqq \lim_{N\rightarrow +\infty}\frac{\# \Rep_{M,L}(O_{F}/\varpi^N)}{p^{N\cdot\dim (\Rep_{M,L})_{F}}}.$$ Then $\Den(H_{m+2k}^\varepsilon, L)$ is a polynomial in $p^{-k}$ with $\mathbb{Q}$-coefficients.  Define the (normalized) \emph{local Siegel series} of $L$ to be the polynomial $\Dene(X,L)\in \mathbb{Z}[X]$ (Theorem \ref{thm: Den(X)}) such that for all $k\ge0$, $$\Dene(p^{-k},L)=\frac{\Den(H_{m+2k}^\varepsilon, L)}{\Den(H_{m+2k}^\varepsilon, H_n^\varepsilon)}.$$ Since $L\subseteq \mathbb{V}$ is an $O_F$-lattice of rank $n$, it satisfies a functional equation relating $X\leftrightarrow\frac{1}{X}$ (Theorem~\ref{thm:ikeda}, Lemma~\ref{lem:latticemebed} (\ref{item:embed1})),
\begin{equation*}
  \label{eq:FEmain}
  \Dene(X,L)=-X^{\val(L)}\cdot\Dene\left(\frac{1}{X},L\right).
\end{equation*} Here $\val(L)$ is the valuation of $L$, see Notations \S\ref{sec:notat-quadr-latt-1}. We thus consider the \emph{derived local density} $$\pDene(L)\coloneqq-\frac{\rd}{\rd X}\bigg|_{X=1}\Dene(X,L).$$ 

Our main theorem in Part \ref{part:local-kudla-rapoport} is the proof of a local arithmetic Siegel--Weil formula, which asserts an exact identity between the two integers just defined.
\begin{theorem}[Theorem \ref{thm: main}]\label{thm:intro1}
  Let $L\subseteq \mathbb{V}$ be an $O_F$-lattice of rank $n$. Then $$\Inte(L)=\pDene(L).$$
\end{theorem}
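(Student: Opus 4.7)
The plan is to follow the blueprint established in the companion paper \cite{LZ2019} for the Kudla--Rapoport conjecture in the unitary setting, adapting each step to the GSpin case. The overall strategy is induction on the fundamental invariants of $L$ (equivalently on $\val(L)$), combined with a reduction to a ``difference'' version of the identity that is accessible via a geometric-analytic matching.

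First, I would set up the induction by writing $L = L^\flat \obot \langle x \rangle$ for a well-chosen corank-one sublattice $L^\flat$ and a vector $x$ in the orthogonal complement, and then study the dependence of both $\Inte$ and $\pDene$ as $L^\flat$ is held fixed while $x$ varies over $(L^\flat)^\perp \cap \mathbb{V}$. On the geometric side this realises $\Inte(L^\flat \obot \langle x\rangle)$ as an intersection of the special divisor $\mathcal{Z}(x)$ with the already-constructed cycle $\mathcal{Z}(L^\flat)$; on the analytic side the corresponding ``partial derivative'' identity for local Siegel series is accessible via Ikeda-type recursions together with the functional equation recorded in the introduction. The base case is that of a self-dual $L$, in which $\mathcal{Z}(L) = \emptyset$ in $\mathcal{N}$ and $\Dene(X,L)$ is symmetric with a zero at $X=1$, so both invariants vanish.

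Next, I would decompose both sides into a horizontal and a vertical part. The horizontal part corresponds to contributions supported on the formal CM locus inside $\mathcal{Z}(L^\flat)$, and should be classified by quasi-canonical liftings attached to the vectors of $L$; these can be computed explicitly and matched against the ``primitive'' part of $\pDene(L)$ obtained by a sum over lattice embeddings of rank $n$ into $\mathbb{V}$. The vertical part, supported set-theoretically in the special fibre, is then pinned down by a Tate-type strategy: stratify the support of the vertical component of $\mathcal{Z}(L^\flat)$ by the vertex lattices arising in the Howard--Pappas uniformisation, express both $\Inte$ and $\pDene$ as a sum of stratum-wise multiplicities, and compare term-by-term via a combinatorial identity relating the arithmetic of vertex lattices in $\mathbb{V}$ to differences $\Dene(X,L)-\Dene(X,L^\flat\obot\langle x'\rangle)$ for a suitable auxiliary vector $x'$.

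The hard part will be the vertical matching. The strata of the vertical cycle in the GSpin case are more delicate than in the unitary case: the Clifford algebra structure forces the relevant Howard--Pappas local model to carry higher-codimension strata, and the intersection multiplicities are no longer simply read off from characters of classical Deligne--Lusztig varieties. This is exactly the geometric difficulty emphasised by Kudla and Rapoport in \cite{Kudla2011} and flagged in \S\ref{sec:whats-new}. The plan is to extract from the orthogonal local model a tractable combinatorial description of the vertex-lattice stratification, reduce the vertical matching to a ``minimal'' case where both sides become computable (e.g.\ where $L^\flat$ is contained in a vertex lattice of small type), and then bootstrap the general identity by induction. Once the vertical identity is in hand, combining it with the horizontal matching and the base case completes the induction and yields $\Inte(L)=\pDene(L)$.
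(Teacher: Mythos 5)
Your skeleton -- fixing a corank-one $L^\flat$, decomposing $\mathcal{Z}(L^\flat)$ into horizontal and vertical parts, matching the horizontal part via quasi-canonical liftings (really Gross--Keating), and invoking the Tate conjecture for the Deligne--Lusztig strata -- lines up with the paper through \S4--\S6. But your proposed \emph{vertical matching} has a genuine gap. You propose to ``express both $\Inte$ and $\pDene$ as a sum of stratum-wise multiplicities, and compare term-by-term.'' The problem is that the geometric multiplicities $\mult_{C_i}$ of the Deligne--Lusztig curves appearing in $\LZ(L^\flat)_\sV$ are \emph{not known a priori}, and there is no independent analytic-side combinatorial identity that hands them to you; a term-by-term comparison has nothing to compare against. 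Moreover you have misidentified the actual obstruction: it is not that the Howard--Pappas strata are ``higher-codimension'' or that the Deligne--Lusztig cohomology fails -- Theorem \ref{thm:tate} shows the Tate-conjecture analysis goes through cleanly. The real difficulty is that when $L^\flat$ is co-isotropic, $\mathbb{W}=(L^\flat_F)^\perp$ is a $2$-dimensional isotropic quadratic space, so $\mathcal{Z}(L^\flat)$ is not quasi-compact and $\Int_{L^\flat,\sV}$ and $\pDen_{L^\flat,\sV}$ are only locally integrable, not Schwartz. This defeats the support-and-Fourier argument from \cite{LZ2019} on the analytic side, and your outline does not say how to get around it.

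What the paper actually does, and what your proposal is missing, is the \emph{partial Fourier transform} along $L^\flat_F$: define $\Intp(x)=\int_{L^\flat_F}\Int_{L^\flat,\sV}(y+x)\,\rd y$ and likewise $\pDenp$, producing distributions on the small space $\mathbb{W}$ rather than on $\mathbb{V}$. The point is that $\Intp$ then carries extra invariance under $\O(\mathbb{W})(F)$ and $O_F^\times$, and the ``local modularity'' $\widehat{\Int_{L^\flat,\sV}}=-\Int_{L^\flat,\sV}$ (Corollary \ref{cor:FT int}, itself built on the explicit computation $\Int_{\CV(\Lambda)}$ for vertex lattices) puts $\Intp$ into the finite-dimensional space $\Dd(\mathbb{W})_k$ of Definition \ref{def:DD}. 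Classifying that space via the Weil representation and newform theory for $\SL_2(F)$ (Proposition \ref{prop:distributionbasis}) yields a recurrence $\Intp(x)-\Intp(\varpi^{-1}x)=\text{const}$. The analytic-side \S\ref{s:FT ana} then verifies the \emph{same} recurrence for $\pDenp$ directly by weighted lattice counting. Once the two partial Fourier transforms satisfy the same two-step recursion, the induction on $\val(L^\flat)$ (Lemma \ref{lem:decreaseval}) supplies matching initial data and forces $\Int_{L^\flat,\sV}=\pDen_{L^\flat,\sV}$. Your proposal does set up the decomposition and the Tate-conjecture input correctly, but the engine that actually closes the vertical matching -- partial Fourier transform plus classification of invariant distributions on a $2$-dimensional quadratic space -- is absent, and the replacement you sketch (term-by-term stratum comparison) would stall. (As a smaller point: ``self-dual $L$'' cannot serve as the base case, since by Lemma \ref{lem:latticemebed}\,(\ref{item:embed1}) a self-dual rank-$n$ lattice does not embed in $\mathbb{V}$ at all; the paper's base cases are instead supplied by cancellation to small $n$ and the Gross--Keating/Terstiege computations in rank $3$.)
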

We refer to $\Inte(L)$ as the \emph{geometric side} of the identity (related to the geometry of Rapoport--Zink spaces and Shimura varieties) and $\pDene(L)$ the \emph{analytic side} (related to the derivatives of Eisenstein series and $L$-functions).

\subsection{Semi-global arithmetic Siegel--Weil formula} Next let us describe a semi-global application of our local theorem. We now switch to global notations. Let $F=\mathbb{Q}$ and $\mathbb{A}=\mathbb{A}_F$ its ring of ad\`eles. Let $m=n+1\ge3$ be an integer. Let $V$ be a quadratic space over $F$ of dimension $m$ and signature $(n-1,2)$. Let $G=\GSpin(V)$. Associated to $G$ there is a Shimura datum $(G, \{h_G\})$ of Hodge type. Let $K=\prod_vK_v\subseteq G(\mathbb{A}_f)$ be an open compact subgroup. Then the associated Shimura variety $\Sh_{K}=\Sh_{K}(G,\{h_{G}\})$ is of dimension $n-1$ and has a canonical model over its reflex field $F=\mathbb{Q}$.

Assume that $p$ is an odd prime such that $K_p$ is a hyperspecial subgroup of $G(F_p)$, or equivalently $K_p=\GSpin(\Lambda_p)$ for a self-dual lattice $\Lambda_p\subseteq V_p:=V \otimes_F F_p$. Then by Kisin \cite{Kisin2010}, there exists a smooth integral canonical model $\mathcal{M}_K$ of $\Sh_K$ over the localization $O_{F,(p)}$.

Let $\mathbb{V}$ be the \emph{incoherent} quadratic space over $\mathbb{A}$ of rank $m$ nearby $V$, namely $\mathbb{V}$ is positive definite and $\mathbb{V}_v \cong V_v$ for all finite places $v$. Let $\varphi_K\in \sS(\mathbb{V}^n_f)$ be a factorizable Schwartz function. We say that $\varphi_K$ is \emph{$p$-admissible} if $\varphi_K$ is $K$-invariant and $\varphi_{K,p}=\mathbf{1}_{(\Lambda_p)^n}$. Let $T\in \Sym_n(F)_{>0}$ be a positive definite symmetric matrix of size $n$. Associated to $(T,\varphi_K)$ we construct semi-global special cycles $\mathcal{Z}(T,\varphi_K)$ over $\mathcal{M}_K$ (\S\ref{sec:semi-global-kudla}). Analogous to the local situation (\ref{eq:intL}), we may define its semi-global arithmetic intersection numbers $\Int_{T,p}(\varphi_K)$ at  $p$ (\S\ref{sec:local-arithm-inters}). 

On the other hand, associated to $\varphi=\varphi_K \otimes \varphi_\infty\in\sS(\mathbb{V}^n)$, where $\varphi_{\infty}$ is the Gaussian function, there is a classical \emph{incoherent Eisenstein series} $E(\sz, s,\varphi_K)$ (\S\ref{sec:incoh-eisenst-seri-1}) on the Siegel upper half space $$\mathbb{H}_n=\{\sz=\sx+i\sy:\ \sx\in\Sym_n(F_\infty),\ \sy\in\Sym_n(F_\infty)_{>0}\},$$ where $F_\infty= F \otimes_\mathbb{Q} \mathbb{R}\simeq \mathbb{R}$. This is essentially the Siegel Eisenstein series associated to a standard Siegel--Weil section of the degenerate principal series (\S\ref{sec:sieg-eisenst-seri}). The Eisenstein series here has a meromorphic continuation and a functional equation relating $s\leftrightarrow -s$.  The central value $E(\sz, 0, \varphi_K)=0$ by the incoherence. We thus consider its \emph{central derivative} $$\pEis(\sz, \varphi_K)\coloneqq \frac{\rd}{\rd s}\bigg|_{s=0}E(\sz, s,\varphi_K).$$ Associated to the standard additive character $\psi: \mathbb{A}/F\rightarrow \mathbb{C}^\times$, it has a decomposition into the central derivative of the Fourier coefficients $$\pEis(\sz,\varphi_K)=\sum_{T\in \Herm_n(F)}\pEis_T(\sz,\varphi_K).$$ When $T$ is nonsingular, the Euler factorization of $T$-th Fourier coefficients  further gives a decomposition (\S\ref{sec:incoh-eisenst-seri}) $$\pEis_T(\sz,\varphi_K)=\sum_v \pEis_{T,v}(\sz,\varphi_K).$$

Now we can state our application to the semi-global arithmetic Siegel--Weil formula, which asserts an identity between the semi-global arithmetic intersection number of special cycles and the derivative of nonsingular Fourier coefficients of the incoherent Eisenstein series. 

\begin{theorem}[Theorem \ref{thm:semi-global-identity}]\label{thm:intro2}
Assume that $\varphi_\KG\in \sS(\mathbb{V}^n_f)$ is $p$-admissible. Then for any $T\in \Sym_n(F)_{>0}$, $$\Int_{T,p}(\varphi_{\KG})q^T=c_K\cdot \pEis_{T,p}(\sz,\varphi_{\KG}),$$ where $q^T:=\psi_\infty(\tr T\sz)=e^{2\pi i \tr T\sz}$, $c_K=\frac{(-1)^n}{\vol(K)}$ is a nonzero constant independent of $T$ and $\varphi_K$, and $\vol(K)$ is the volume of $K$ under a suitable Haar measure on $G(\mathbb{A}_f)$.
\end{theorem}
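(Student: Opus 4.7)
The plan is to reduce the semi-global identity to the local identity of Theorem \ref{thm:intro1} by combining $p$-adic uniformization on the geometric side with an Euler product decomposition on the analytic side.

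First, I would set up the $p$-adic uniformization at the basic locus. Since $T$ is positive definite and $K_p$ is hyperspecial, the support of $\mathcal{Z}(T,\varphi_K)$ in the special fiber $\mathcal{M}_K\otimes\kappa$ lies in the supersingular (basic) locus: a non-degenerate collection of special endomorphisms with positive definite Gram matrix forces the underlying $p$-divisible group to be basic. Rapoport--Zink uniformization then identifies the formal completion $(\mathcal{M}_K)^{\wedge}_{/\mathrm{ss}}$ with a double coset quotient of $\RRZ_G \times G(\mathbb{A}_f^p)/K^p$ by the $F$-points of an inner form $G'$ of $G$ (compact at $\infty$ with framing data at $p$). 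Using this, I would decompose $\mathcal{Z}(T,\varphi_K)$ as a finite sum indexed by $G'(F)$-orbits of pairs $(L, g^p)$, where $L\subset\mathbb{V}$ is a rank-$n$ $\mathbb{Z}$-lattice with Gram matrix $T$ (reinterpreted via the nearby coherent space at which $G'$ is defined) and $g^p\in G(\mathbb{A}_f^p)/K^p$, each orbit contributing the local arithmetic intersection $\Inte(L_p)$ weighted by $\varphi_K^p(g^p{}^{-1} L)$.

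On the analytic side, I would factor the nonsingular Fourier coefficient into a product of local Whittaker functions $\prod_v W_{T,v}(\sz_v, s, \varphi_v)$. Using $p$-admissibility $\varphi_{K,p}=\mathbf{1}_{\Lambda_p^n}$, the local Whittaker function at $p$ coincides, up to a normalization constant, with the local Siegel series $\Dene(p^{-s}, L_p)$ of the lattice $L_p$ determined by $T$. Incoherence forces this factor to vanish at $s=0$, so
\begin{equation*}
\pEis_{T,p}(\sz,\varphi_K) = W_{T,p}'(0,\mathbf{1}_{\Lambda_p^n})\cdot\prod_{v\neq p}W_{T,v}(\sz_v,0,\varphi_v),
\end{equation*}
where the derivative at $p$ is (up to a universal constant) $\pDene(L_p)$. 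The remaining product, after unfolding into a sum over orbits of rank-$n$ lattices in $\mathbb{V}$ with Gram matrix $T$, should reproduce $c_K\cdot \varphi_K^p(g^p{}^{-1} L)\cdot q^T$: the archimedean Gaussian contributes $q^T$, the finite away-from-$p$ Whittaker values for $\varphi_K^p$ assemble into the orbit weight, and the various Haar measure and gamma factor normalizations combine into $c_K=(-1)^n/\vol(K)$.

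With both sides expressed as sums over the same orbit set, the identity reduces term by term to the local statement $\Inte(L_p)=\pDene(L_p)$ supplied by Theorem \ref{thm:intro1}. The main obstacle, and the most delicate step, is tracking the constant $c_K$: one must identify the $p$-adic Whittaker function precisely with $\Dene$ (with the correct power of $p$ and normalization), compute the archimedean Whittaker value at the Gaussian, normalize Haar measures on $G(\mathbb{A}_f)$ in a manner compatible with the counting of connected components of $\RRZ_G$ and with the double coset description of the basic locus, and keep track of signs coming through the functional equation that forces $E(\sz,0,\varphi_K)=0$. The sign $(-1)^n$ and the volume factor $\vol(K)^{-1}$ should emerge cleanly from this normalization exercise, following the pattern of lower-dimensional analogues in the Kudla--Rapoport--Yang literature.
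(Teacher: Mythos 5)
Your proposal is correct and follows essentially the same path as the paper's proof: reduce to the supersingular locus (since $T>0$ forces basicness), invoke Rapoport--Zink uniformization to express $\Int_{T,p}(\varphi_K)$ as a finite orbital sum weighted by the local intersection numbers $\Inte(L_p)$, replace those by $\pDene(L_p)$ (hence $W'_{T,p}$) via Theorem \ref{thm:intro1} and Remark \ref{sec:relation-with-local}, and match the remaining orbital integral over $\SO(V')(\mathbb{A}_f^p)$ with the product of finite Whittaker values $\prod_{v\neq p,\infty}W_{T,v}(1,0,\varphi_{K,v})$ via Kudla's local Siegel--Weil formula \cite[Prop.~1.2]{Kudla1997a}, while the Gaussian gives $q^T$ up to a constant. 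The only step you phrase loosely (``unfolding into a sum over orbits'') is precisely where the paper uses that a nonsingular $T$ has a unique $\SO(V')(\mathbb{Q})$-orbit and that $G'_{\mathbf{x}}=Z'$ is the center, so the orbital sum becomes a single adelic integral; this, plus the normalization of Haar measures and the factor $\gamma_{\mathbb{V}}^n=(-1)^n$, is exactly where $c_K$ is pinned down.
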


\begin{remark}
In the unitary case, we also proved a global version (including terms for all $T$ and all places $v$) of the arithmetic Siegel--Weil formula \cite[Theorem 1.3.2]{LZ2019}, at least for test functions $\varphi_K$ with nonsingular support at two split places. This global version is more difficult in the orthogonal case due to several complications, most notably the lack of the analogue of split places and the inevitability to treat the place $p=2$. We hope to return to these questions in the future. 
\end{remark}

\begin{remark}
The assumption $F=\mathbb{Q}_p$ ($p$ odd) in Theorem \ref{thm:intro1} is required to apply results from  \cite{Howard2015} and \cite{Kim2018}. It would be very interesting to relax this assumption  to more general $p$-adic fields $F$ by generalizing \cite{Howard2015} and \cite{Kim2018} to ``relative'' GSpin Rapoport--Zink spaces and proving a comparison between the relative and absolute GSpin Rapoport--Zink spaces of Weil-restricted groups $\Res_{F/\mathbb{Q}_p}\GSpin$. Once this is done, one should also be able to relax the assumption $F=\mathbb{Q}$ in Theorem \ref{thm:intro2} to more general totally real fields.
\end{remark}

\subsection{Strategy and novelty of the proof of the Main Theorem \ref{thm:intro1}}\label{sec:whats-new} Our general strategy is parallel to the unitary case proved in \cite{LZ2019} (see also several simplifications in \cite{LL2021}).   More precisely, fix an $O_F$-lattice $L^\flat\subseteq \mathbb{V}$ of rank $n-1$ and denote by $\mathbb{W}=(L^\flat_F)^\perp\subseteq \mathbb{V}$. Consider functions on $L^\flat\times \mathbb{W}^\mathrm{an}$, $$\Int_{L^\flat}(x)\coloneqq \Inte(L^\flat+\langle x\rangle),\quad \pDen_{L^\flat}(x)\coloneqq \pDene(L^\flat+\langle x\rangle),$$ where $\mathbb{W}^\mathrm{an}:=\{x \in \mathbb{W}: (x,x)\ne0\}$ is the set of anisotropic vectors. Then it remains to show the equality of the two functions $\Int_{L^\flat}=\pDen_{L^\flat}$. To show this equality, we find a decomposition $$\Int_{L^\flat}=\Int_{L^\flat,\sH}+\Int_{L^\flat,\sV},\quad \pDen_{L^\flat}=\pDen_{L^\flat,\sH}+\pDen_{L^\flat,\sV}$$ into ``horizontal'' and ``vertical'' parts such that the horizontal identity $\Int_{L^\flat,\sH}=\pDen_{L^\flat,\sH}$ holds and the vertical parts $\Int_{L^\flat,\sV}$ and $\pDen_{L^\flat,\sV}$ behaves well under Fourier transform.

In the unitary case, the hermitian space $\mathbb{W}$ is 1-dimensional over $F$ and we found that both $\Int_{L^\flat,\sV}$ and $\pDen_{L^\flat,\sV}$ are Schwartz functions on $\mathbb{V}$, and satisfy the remarkable properties that \begin{equation}\label{eq:FCmain}
\widehat{\Int_{L^\flat,\sV}}=- \Int_{L^\flat,\sV},\quad \supp(\widehat{\pDen_{L^\flat,\sV}})\subseteq \mathbb{V}^\circ,
\end{equation} where $\wh{\ }$ denotes the Fourier transform on $\mathbb{V}$, and $\mathbb{V}^\circ:=\{x\in \mathbb{V}: (x,x)\in O_F\}$ is the integral cone.  Equation (\ref{eq:FCmain}), together with the induction on the valuation of $L^\flat$ and the uncertainty principle, allows us to conclude that $\Int_{L^\flat,\sV}=\pDen_{L^\flat,\sV}$.

In the orthogonal case, the quadratic space $\mathbb{W}$ is 2-dimensional over $F$. When $\mathbb{W}$ is anisotropic, both $\Int_{L^\flat,\sV}$ and $\pDen_{L^\flat,\sV}$ are still Schwartz functions on $\mathbb{V}$. But when $\mathbb{W}$ is isotropic, both $\Int_{L^\flat,\sV}$ and $\pDen_{L^\flat,\sV}$ in general have singularities near the isotropic cone of $\mathbb{W}$ and are no longer Schwartz functions on $\mathbb{V}$.  On the geometric side this reflects the fact that $\mathcal{Z}(L^\flat)$ is no longer quasi-compact when $\mathbb{W}$ is isotropic. Nevertheless we may still show that $\Int_{L^\flat,\sV}$ is locally integrable and show its Fourier invariance (up to a sign) as in (\ref{eq:FCmain}), but now understood as a distribution on $\mathbb{V}$. However, on the analytic side the singularities seem to cause essential difficulty in directly extending the argument in \cite{LZ2019} or \cite{LL2021} for controlling $\supp(\widehat{\pDen_{L^\flat,\sV}})$ as in (\ref{eq:FCmain}).

To overcome this difficulty, we instead perform a \emph{partial Fourier transform} along $L^\flat_F$ and consider new functions on $\mathbb{W}^\mathrm{an}$, $$\Intp(x):=\int_{L^\flat_F}\Int_{L^\flat, \sV}(y+x)\rd y,\quad \pDenp(x):=\int_{L^\flat_F}\pDen_{L^\flat, \sV}(y+x)\rd y.$$ On the geometric side, the Fourier transform $\wh{\Intp}$ of $\Intp$ on $\mathbb{W}$ agrees with the restriction of $\widehat{\Int_{L^\flat,\sV}}$ to $\mathbb{W}$. Now the advantage is that the new function $\Intp$ enjoys extra invariance under the action of the orthogonal group $\O(\mathbb{W})(F)$ and the scalars $O_F^\times$. Using the Weil representation and the theory of newforms for $\SL_2(F)$, we completely classify certain subspaces of \emph{invariant distributions} on $\mathbb{W}$ to which $\Intp$ belongs (Proposition \ref{prop:distributionbasis}, which may be of independent interest). In particular, we observe that a certain recurrence relation is satisfied for the values of $\Intp$ (Proposition \ref{prop:Intp}). On the analytic side, we directly verify that the same recurrence relation is also satisfied for $\pDenp$ (Proposition \ref{prop:pDenp})  via involved lattice-theoretic calculations which occupy \S\ref{s:FT ana}. Finally, the induction on the valuation of $L^\flat$ supplies the same initial values for the recursions on both sides, and allows us to conclude that $\Int_{L^\flat,\sV}=\pDen_{L^\flat,\sV}$. 

The strategy outlined above is a reminiscence of the uncertainty principle when $\mathbb{W}$ is anisotropic, but is more refined when $\mathbb{W}$ is isotropic.  We end by mentioning several technical complications compared to the unitary case when executing this strategy.  There are two (instead of one) relevant ambient quadratic spaces $\mathbb{V}=\mathbb{V}_m^\varepsilon$ of a given dimension $m$, which we have emphasized with the superscript $\varepsilon\in\{\pm1\}$. On the analytic side, we extend the results of \cite{CY} and \cite{Ikeda2017} to treat both cases $\varepsilon\in\{\pm1\}$ uniformly in \S\ref{sec:local-dens-quadr}, and the numerology is more complicated than the unitary case. On the geometric side, the GSpin Rapoport--Zink spaces is of Hodge type (instead of PEL type) which makes several proofs more technical. In particular, we provide proofs of two results on the vertical parts which are even new for the unitary case (see Remarks \ref{sec:supp-vert-part-1} and \ref{sec:comp-int_m}). Also the horizontal parts of special cycles are indexed by certain lattices of type $\le2$ (instead of type $\le1$), which we call \emph{horizontal lattices} (see Definition~\ref{def:horizontallattice}), and cause more complicated numerology as well. In particular, the horizontal identity eventually reduces to the case $n=3$ and $\varepsilon=+1$ (instead of $n=2$).

\subsection{The structure of the paper} In Part~\ref{part:local-kudla-rapoport}, we first prove necessary background results on both the analytic side (\S\ref{sec:local-dens-quadr}) and the geometric side (\S\ref{sec:kudla-rapop-cycl}--\S\ref{sec:vert-comp-kudla}) of the local arithmetic Siegel--Weil formula.   The Fourier invariance on the geometric side is proved in \S\ref{sec:four-transf-geom}. The recurrence relations satisfied by the partial Fourier transform on the analytic side is proved in \S\ref{s:FT ana}.  Finally in \S\ref{sec:invar-distr-proof}, we establish results on invariant distributions on 2-dimensional quadratic spaces and prove the main Theorem  \ref{thm:intro1}.  In Part~\ref{part:semi-global-global}, we first review incoherent Eisenstein series (\S\ref{sec:incoh-eisenst-seri-3}), semi-global integral models of GSpin Shimura varieties and their special cycles (\S\ref{sec:kudla-rapop-cycl-2}).  We then apply the local results in Part~\ref{part:local-kudla-rapoport} to prove the semi-global arithmetic Siegel--Weil formula (Theorem \ref{thm:intro2}).

For the sake of readability, we make an effort to ensure that the notations and the structure of this paper are parallel to those of the companion paper \cite{LZ2019} in the unitary case. We always write out the complete statements, and include details when a proof differs from the parallel proof in \cite{LZ2019} or point it out when the same proof of \cite{LZ2019} applies verbatim.

\subsection{Acknowledgments} C.~L.~was partially supported by the NSF grant DMS-1802269.  W.~Z.~was partially supported by the NSF grant DMS-1901642.

\part{Local arithmetic Siegel--Weil formula}\label{part:local-kudla-rapoport}

\section{Notations and Conventions}

\subsection{Notations on quadratic lattices}\label{sec:notat-quadr-latt-1}

Let $p$ be an odd prime. Let $F$ be a non-archimedean local field of residue characteristic $p$, with ring of integers $O_F$, residue field $\kappa=\mathbb{F}_q$ of size $q$, and uniformizer $\varpi$.  Let $\val:F\to \BZ\cup\{\infty\}$ be the valuation on $F$ and $|\cdot|:F\rightarrow \mathbb{R}_{\ge0}$ be the normalized absolute value on $F$. Let $\Fb$ be the completion of the maximal unramified extension of $F$, and $\OFb$ its ring of integers. Let $\sigma\in \Aut(\OFb)$ be the lift of the absolute $q$-Frobenius on $\kb$. We further assume that $F=\mathbb{Q}_p$ when dealing with the geometric side (the exceptions are  \S\ref{sec:local-dens-quadr}, \S\ref{s:FT ana}, \S\ref{sec:invar-distr-2}, which concern only the analytic side).

Let $L$ be a quadratic $O_F$-lattice of rank $n$ with symmetric bilinear form $(\ ,\ )$. We say $L$ is \emph{non-degenerate} if the extension $(\ ,\ )$ on the quadratic space $L_F:=L \otimes_{O_F}F$ is non-degenerate. Unless otherwise specified, all quadratic $O_F$-lattices are assumed to be non-degenerate throughout the paper. We denote by $L^\vee:=\{x\in L_F: (x,L)\subseteq O_F\}$ its dual lattice under $(\ ,\ )$. We say that $L$ is \emph{integral} if $L\subseteq L^\vee$. If $L$ is integral, define its \emph{fundamental invariants} to be the unique sequence of integers $(a_1,\ldots, a_n)$ such that $0\leq a_1\le \cdots\le a_n$, and $L^\vee/L\simeq \oplus_{i=1}^n O_F/\varpi^{a_i}$ as $O_F$-modules; define its \emph{valuation} to be $\val(L)\coloneqq \sum_{i=1}^na_i$; and define its \emph{type}, denoted by $t(L)$, to be the number of nonzero terms in its invariant $(a_1,\ldots, a_n)$. Denote by $\val(x):=\val((x,x))$ for any $x\in L$.  A \emph{standard orthogonal basis} of $L$ is an orthogonal $O_F$-basis $\{e_1,\ldots,e_n\}$ of $L$ such that $\val(e_i)=a_i$, which always exists. Say $L$ is \emph{self-dual} if $L=L^\vee$. Say $L$ is \emph{minuscule} or a \emph{vertex lattice} if it is integral and $L^\vee\subseteq \varpi^{-1}L$. Note that $L$ is a vertex lattice of type $t$ if and only if it has fundamental invariants $(0^{(n-t)},1^{(t)})$, if and only if $L\subseteq^t L^\vee\subseteq \varpi^{-1}L$, where $\subseteq^t$ indicates that the $O_F$-colength is equal to $t$.  Notice that $L$ is self-dual if and only if $L$ is a vertex lattice of type 0. 

The \emph{determinant} of $L$ is defined to be $$\det(L):=\det((x_i, x_j)_{i,j=1}^n)\in F^\times/(O_F^\times)^2$$ where $\{x_1,\ldots,x_n\}$ is an $O_F$-basis of $L$, and the \emph{discriminant} of $L$ is defined to be $$\disc(L):=(-1)^{n \choose 2}\cdot\det(L)\in F^\times/(O_F^\times)^2.$$ Notice that $\val(L)=\val(\disc(L))$. 

Let $\chi=(\frac{\cdot}{\varpi})_F: F^\times/(F^\times)^2\rightarrow\{\pm 1,0\}$ be the quadratic residue symbol. Define $$\chi(L):=\chi(\disc(L))\in\{\pm 1,0\}.$$ Notice that $\chi(L)=0$ if and only if $\val(L)$ is odd. If $L$ is self-dual, then $\chi(L)\in\{\pm1\}$ is the image of $\disc(L)$ under the isomorphism $O_F^\times/(O_F^\times)^2\cong \{\pm1\}$.

Let $\VV$ be a (non-degenerate) quadratic space over $F$ of dimension $m$ with symmetric bilinear form $(\ ,\ )$. Similar invariants are defined for quadratic spaces over $F$, and  we denote them by $$\det(\VV)\in F^\times/(F^\times)^2, \quad \disc(\VV)\in F^\times/(F^\times)^2,\quad \chi(\VV)\in\{\pm1,0\}.$$ Then $\chi(\VV)=+1$ if and only if $\disc(\VV)=1$. Also $\chi(L)=\chi(W)$ for any $O_F$-lattice $L\subseteq W$ of full rank. Define the \emph{Hasse invariant} of $\VV$ to be $$\Has(\VV):=\prod_{1\le i<j\le m}(u_i,u_j)_F\in\{\pm1\},$$ where $(\ , \ )_F: F^\times/(F^\times)^2 \times F^\times/(F^\times)^2\rightarrow\{\pm1\}$ is the Hilbert symbol, and $u_i=(e_i,e_i)$ for an orthogonal basis $\{e_1,\ldots e_m\}$ of $\VV$. 

Recall that quadratic spaces $\VV$ over $F$ are classified by its dimension $m\ge1$, its discriminant $\disc(\VV)$ and its Hasse invariant $\epsilon(\VV)$. When $m\ge3$, $\disc(\VV)$ and $\Has(\VV)$ can take arbitrary values. When $m=2$, the case $\disc(\VV)=1$ and $\Has(\VV)=-1$ is excluded. When $m=1$, the case $\Has(\VV)=-1$ is excluded. The space $\VV$ admits a self-dual lattice if and only if $\Has(\VV)=+1$ and $\chi(\VV)\ne0$.

For $k\in \mathbb{Z}$, denote by $$\VV^{\ge k}:=\{x\in \VV: \val(x)\ge k\}, \quad \VV^{=k}:=\{x\in \VV: \val(x)=k\}.$$ Denote by $$\VV^{\circ}:=\VV^{\ge0},\quad \VV^{\circ\circ}:=\VV^{\ge1},\quad \VV^\mathrm{an}:=\{x\in \VV: (x,x)\ne0\},$$ the integral cone,  the positive cone and the set of anisotropic vectors of $\VV$ respectively. For a quadratic $O_F$-lattice $L$, define $L^{\circ}:=L\cap (L_F)^{\circ}$ and $L^{\circ\circ}:=L\cap (L_F)^{\circ\circ}$. 

The set of vertex lattices of type $t$ in $\VV$ is denoted by $\Ver^t(\VV)$. 

For $\varepsilon\in\{\pm1\}$ and $m\ge1$, denote by $H_m^\varepsilon$ the self-dual $O_F$-lattice of rank $m$ with $\chi(H_m^\varepsilon)=\varepsilon$ (for $m=0$, only $H_m^+=0$ is allowed by convention). Denote by $\mathbb{V}_{m}^\varepsilon$ the quadratic space over $F$ with
\begin{center}
  $\chi(\mathbb{V}_{m}^\varepsilon)=\varepsilon$ and
  $\Has(\mathbb{V}_{m}^\varepsilon)=-1$.
\end{center}

\subsection{Notations on functions}\label{sec:notations-functions}

Let $\VV$ be a (non-degenerate) quadratic space over $F$. Fix an unramified additive character $\psi: F\to \BC^\times$. For an integrable function $f$ on $\VV$, we define its Fourier transform $\wh f$ to be $$\wh f(x):=\int_\VV f(y)\psi ((x,y))\rd y,\quad x\in \VV.$$ We normalize the Haar measure  on $\VV$  to be self-dual, so $\hat{\hat{f}}(x)=f(-x)$. For an $O_F$-lattice $\Lambda\subseteq \VV$ of full rank, we have
\begin{equation}\label{eq:latticefourier}
\wh{\bf 1}_{\Lambda}=\vol(\Lambda){\bf 1}_{\Lambda^\vee},\quad\text{and} \quad\vol(\Lambda)=[\Lambda^\vee:\Lambda]^{-1/2}=q^{-\val(\Lambda)/2}.
\end{equation}
Note that $\val(\Lambda)$ can be defined for any lattice $\Lambda$ (not necessarily integral) so that the above equality for $\vol(\Lambda)$ holds.

Denote by $\Ss(\VV)$  the space of Schwartz functions (i.e., compactly supported locally constant functions) on $\VV$. Denote by $\Dd(\VV):=\Hom(\Ss(\VV), \mathbb{C})$ the space of distributions on $\VV$ (the linear dual of $\Ss(\VV)$). Any Schwartz function is integrable. The Fourier transform preserves $\Ss(\VV)$ and induces a Fourier transform on $\Dd(\VV)$ such that $\hat T(f)=T(\hat f)$ for any $T\in \Dd(\VV)$, $f\in \Ss(\VV)$. Denote by $\supp(T)$ the support of the distribution $T$ (the complement of the largest open subset on which $T=0$). 

For any open dense subset $\Omega\subseteq \VV$, denote by $\Lloc(\Omega)$ the space of locally integrable functions on $\Omega$ (i.e., integrable on any compact open subset of $\Omega$). A function $\phi\in \Lloc(\VV)$ gives a distribution $T_\phi\in \Dd(\VV)$ represented by $\phi$, i.e., $T_\phi(f)=\int_{\Omega} \phi(x)f(x)\rd x$ for any $f\in \Ss(\VV)$. By abuse of notation, we often view $\phi\in\Lloc(\Omega)$ as a distribution on $\VV$ and write $\phi$ (resp. $\hat \phi$) instead of $T_\phi$ (resp. $\hat T_\phi$).

\subsection{Notations on formal schemes}\label{sec:notat-form-schem}

Denote by $\mathrm{ANilp}_{\OFb}$ the category of noetherian $\OFb$-algebras in which $\varpi$ is nilpotent. Denote by $\mathrm{ANilp}_{\OFb}^\mathrm{f}$ the category of noetherian adic $\OFb$-algebras in which $\varpi$ is nilpotent. Denote by $\ansm\subseteq \mathrm{ANilp}_{\OFb}^\mathrm{f}$ the full subcategory consisting of $\OFb$-algebras which are formally finitely generated and formally smooth over $\OFb/\varpi^k$ for some $k\ge1$. Denote by $\alg$ the category of noetherian $\varpi$-adically complete $\OFb$-algebras.

Let $X$ be a formal scheme.  Denote by $X^{\red}$ the underlying reduced scheme. For closed formal subschemes $\CZ_1,\cdots,\CZ_m$ of $X$, denote by $\cup_{i=1}^m\mathcal{Z}_i$ the formal scheme-theoretic union, i.e., the closed formal subscheme with ideal sheaf $\cap_{i=1}^m\mathcal{I}_{\mathcal{Z}_i}$, where $\mathcal{I}_{\mathcal{Z}_i}$ is the ideal sheaf of $\mathcal{Z}_i$.  A closed formal subscheme on $X$ is called a Cartier divisor if it is defined by an {\em invertible} ideal sheaf.

Let  $X$ be a formal scheme over $\Spf \OFb$. Then $X$ defines a functor on the category of $\Spf \OFb$-schemes (i.e. $\OFb$-schemes on which $\varpi$ is locally nilpotent). For $R\in\mathrm{ANilp}_{\OFb}^\mathrm{f}$ with ideal of definition $I$, write $X(R):=\varprojlim_nX(\Spec R/I^n)$. For $R\in\alg$, write $X(R):=\varprojlim_nX(\Spec R/\varpi^n)$.

 When $X$ is noetherian, denote by $K_0^Y(X)$ the Grothendieck group (modulo quasi-isomorphisms) of finite complexes of coherent locally free $\mathcal{O}_X$-modules, acyclic outside $Y$ (i.e., the homology sheaves are formally supported on $Y$). As defined in \cite[(B.1), (B.2)]{Zhang2019},  denote by $\mathrm{F}^i K_0^Y(X)$ the (descending) codimension filtration on $K_0^Y(X)$, and denote by $\Gr^{i}K_0^Y(X)$ its $i$-th graded piece. As in \cite[Appendix B]{Zhang2019}, the definition of $K_0^Y(X)$, $\mathrm{F}^i K_0^Y(X)$ and $\Gr^{i}K_0^Y(X)$ can be extended to locally noetherian formal schemes $X$ by writing $X$ as an increasing union of open noetherian formal subschemes. Similarly, we let $K_0'(X)$ denote the Grothendieck group of coherent sheaves of $\mathcal{O}_X$-modules. Now let $X$ be regular. Then there is a natural isomorphism $K_0^Y(X)\simeq K_0'(Y)$. For closed formal subschemes $\CZ_1,\cdots,\CZ_m$ of $X$, denote by $\CZ_1\jiao_X\cdots\jiao_X\CZ_m$ (or simply $\CZ_1\jiao\cdots\jiao\CZ_m$) the derived tensor product $\CO_{\CZ_1}\otimes^\BL_{\CO_X}\cdots \otimes^\BL_{\CO_X}\CO_{\CZ_m}$, viewed as an element in $K_0^{\CZ_1\cap\cdots\cap \CZ_m}(X)$.

For $\mathcal{F}$ a finite complex of coherent $\CO_X$-modules, we define its Euler--Poincar\'e characteristic $$\chi(X, \mathcal{F}):=\sum_{i,j}(-1)^{i+j}\length_{\OFb}H^i(X,H_j(\mathcal{F}))$$
if the lengths are all finite. Assume that $X$ is regular with pure dimension $n$. If $\mathcal{F}_i\in \mathrm{F}^{r_i}K_0^{\mathcal{Z}_i}(X)$ with $\sum_i r_i\geq n$, then by \cite[(B.3)]{Zhang2019} we know that $\chi(X, \bigotimes_i^\mathbb{L}\mathcal{F}_i)$ depends only on the image of $\mathcal{F}_i$ in $\Gr^{r_i}K_0^{\mathcal{Z}_i}(X)$.  In fact, we will only need this assertion when $X$ is a scheme (cf. Remark \ref{rem:avoid B3}). When $X$ is a formal scheme, the assertion holds trivially when one of the $r_i$ is $\dim X$; this special case will be used repeatedly. 

\subsection{Conventions}\label{sec:conventions}

Unless otherwise specified, we will denote by $L$ an  $O_F$-lattice of rank $n$, $L^\flat$ an $O_F$-lattice of rank $n-1$, and $\Lambda$ an $O_F$-lattice of full rank $m$ in a quadratic space of dimension $m$. Unless otherwise specified, all $O_F$-lattices are assumed to be non-degenerate.

Starting from \S\ref{sec:kudla-rapop-cycl},  we fix $m=n+1\ge3$ and $\varepsilon\in\{\pm1\}$.  For brevity we will suppress the superscript $\varepsilon$ and the subscripts $m$ and $n$ when there is no confusion, so $\mathbb{V}=\mathbb{V}_m^\varepsilon$, $\mathcal{N}=\mathcal{N}_n^\varepsilon$, $\Int(L)=\Int^\varepsilon(L)$, $\pDen(L)=\pDen^\varepsilon(L)$, $\Denf(1, L)=\Denfe(1,L)$, $\Hor(L^\flat)=\Hore(L^\flat)$ and so on.

\section{Local densities of quadratic lattices}\label{sec:local-dens-quadr}

\subsection{Local densities for quadratic lattices}\label{ss:loc den}
\begin{definition}
Let $L, M$ be two quadratic $O_F$-lattices. Let $\Rep_{M,L}$ be the \emph{scheme of integral representations of $M$ by $L$}, an $O_{F}$-scheme such that for any $O_{F}$-algebra $R$, \begin{align}\label{def: Rep}
\Rep_{M,L}(R)=\QHom(L \otimes_{O_{F}}R, M \otimes_{O_{F}}R),\end{align} where $\QHom$ denotes the set of quadratic module homomorphisms. The \emph{local density} of integral representations of $M$ by $L$ is defined to be $$\Den(M,L)\coloneqq \lim_{N\rightarrow +\infty}\frac{\#\Rep_{M,L}(O_{F}/\varpi^N)}{q^{N\cdot\dim (\Rep_{M,L})_{F}}}.$$  
\end{definition}
 Note that if $L, M$ have rank $n, m$ respectively and the generic fiber $(\Rep_{M,L})_{F}\ne\varnothing$, then $n\le m$ and
\begin{equation}
  \label{eq:dimRep}
\dim (\Rep_{M,L})_{F}=\dim \O_m-\dim\O_{m-n}={m\choose 2}-{m-n\choose 2}=\frac{n(2m-n-1)}{2}.  
\end{equation}

Our next goal is to obtain an explicit formulas for $\Den(H_m^\varepsilon, L)$ (Lemma \ref{lem:localden}). To do so we need some preliminaries on quadratic spaces over finite fields.

\subsection{Quadratic spaces over finite fields}

Let $V$ be a non-degenerate quadratic space over $\kappa$ of dimension $m$. Let $\chi(V)\in\{\pm1\}$ be the image of its discriminant $\disc(V)\in \kappa^\times/(\kappa^\times)^2$ under the isomorphism $\kappa^\times/(\kappa^\times)^2\cong\{\pm1\}$. By convention, if $V=0$ then $\det(V)=\disc(V)=1$ and $\chi(V)=+1$. Denote by $\O(V)$ the orthogonal group of $V$. Then we have the well-known formula $$\#\O(V)(\kappa)=
  \begin{cases}
    2 q^{m \choose 2}(1-\chi(V)\cdot q^{-m/2})\prod\limits_{i=1}^{m/2-1}(1-q^{-2i}), & m\text{ is even},\\
    2q^{m \choose 2}\prod\limits_{i=1}^{(m-1)/2}(1-q^{-2i}), & m\text{ is odd}.
  \end{cases}
$$ This can be uniformly written as
\begin{align}
  \#\O(V)(\kappa)&=2q^{m \choose 2}(1-\sgn(V)q^{-m/2})\prod_{1\le i<m/2}(1-q^{-2i}) \\
  &=2q^{m \choose 2}(1+\sgn(V)q^{-m/2})^{-1}\prod_{1\le i\le m/2}(1-q^{-2i})\label{eq:O(V)}
\end{align}
where we define $$\sgn(V):=
\begin{cases}
  \chi(V), & m\text{ is even}, \\
  0, & m\text{ is odd}.
\end{cases}$$ Notice the formula is true even for $m=0$ when interpreted as (\ref{eq:O(V)}).

More generally, for a possibly degenerate quadratic space $U$ over $\kappa$, we take an orthogonal decomposition
\begin{equation}\label{eq:U0U1}
U=U_0\obot U_1,  
\end{equation}
where $U_0$ is non-degenerate and $U_1$ is the radical of $U$, and define
\begin{equation}
  \label{eq:sgn}
\sgn(U):=\sgn(U_0)=\begin{cases}
  \chi(U_0), & \dim U_0\text{ is even}, \\
  0, & \dim U_0\text{ is odd}.
\end{cases}   
\end{equation}
This is independent of the decomposition (\ref{eq:U0U1}). Similarly, define
\begin{equation}
  \label{eq:sgn'}
  \sgn'(U):=
\begin{cases}
  0, & \dim U_0\text{ is even}, \\
  \chi(U_0), & \dim U_0\text{ is odd}. 
\end{cases}
\end{equation}
These two definitions can be written uniformly: for any integer $m$, define
\begin{equation}
  \label{eq:sgnm}
\sgn_m(U):=
\begin{cases}
  \chi(U_0), & \dim U_0\equiv m\pmod{2}, \\
  0, & \dim U_0\equiv m+1\pmod{2}.
\end{cases}
\end{equation}
Then $\sgn=\sgn_\mathrm{even}$ and $\sgn'=\sgn_\mathrm{odd}$.

The following lemma is a generalization of \cite[\S 5.6 Exercise 4]{Kitaoka1993}.
\begin{lemma}\label{lem:isometries}
  Let $U$ be a quadratic space over $\kappa$ of dimension $n$ whose radical has dimension $t$. Let $V$ be a non-degenerate quadratic space over $\kappa$ of dimension $m\ge n$. Let $\O(U,V)$ be the set of isometries from $U$ into $V$.  Then $\O(U,V)$ has size
  $$\#\O(U,V)=q^{\frac{n(2m-n-1)}{2}}    (1-\sgn(V)\cdot q^{-m/2})  
    (1-\chi(V)\sgn_m(U)\cdot q^{-(m-n-t)/2})^{-1}\cdot\prod\limits_{(m-n-t)/2\le i <m/2}(1-q^{-2i}).
$$ Namely,
\begin{multline*}
  \#\O(U,V)=q^{\frac{n(2m-n-1)}{2}}    (1-\sgn(V)\cdot q^{-m/2})\\
  \cdot\begin{cases}
    (1+\chi(V)\sgn(U)\cdot q^{-(m-n-t)/2})\cdot\prod\limits_{1\le i <(n+t)/2}(1-q^{2i-m}), & m \text{ is even},\\ 
    (1+\chi(V)\sgn'(U)\cdot q^{-(m-n-t)/2})\cdot\prod\limits_{1\le i <(n+t+1)/2}(1-q^{2i-(m+1)}), & m\text{ is odd}.
  \end{cases}
\end{multline*}
\end{lemma}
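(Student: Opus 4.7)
My plan is to reduce the count to a stabilizer calculation via Witt's extension theorem, and then compute the stabilizer using the parabolic attached to a totally isotropic flag in an orthogonal group. Since $V$ is non-degenerate, Witt's theorem implies that for any two isometric embeddings $\phi, \phi' \in \O(U,V)$, the isometry $\phi' \circ \phi^{-1} : \phi(U) \to \phi'(U)$ extends to some $g \in \O(V)$ with $\phi' = g \circ \phi$. Hence $\O(V)$ acts transitively on $\O(U,V)$ (whenever the latter is non-empty), and for any fixed $\phi \in \O(U,V)$,
\[
\#\O(U,V) \;=\; \frac{\#\O(V)}{\#\mathrm{Stab}_{\O(V)}(\phi(U)\text{ pointwise})}.
\]

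Next I would fix such a $\phi$ and use the decomposition $U = U_0 \obot U_1$ from \eqref{eq:U0U1}, with $\dim U_0 = n - t$. Setting $W := \phi(U_0)^\perp \subseteq V$, we obtain $V = \phi(U_0) \obot W$, with $W$ non-degenerate of dimension $m - n + t$ and $\chi(W) = \chi(V)\chi(U_0)$. The image $I := \phi(U_1) \subseteq W$ is a totally isotropic $t$-plane, because $U_1$ is the radical of $U$ and is automatically orthogonal to $U_0$. An element of $\O(V)$ fixes $\phi(U)$ pointwise iff it restricts to the identity on $\phi(U_0)$ (so already lies in $\O(W)$) and fixes $I$ pointwise. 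Thus the relevant stabilizer is the pointwise stabilizer of $I$ inside $\O(W)$.

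The pointwise stabilizer of $I$ sits inside the parabolic $P_I \subseteq \O(W)$ stabilizing $I$ as a subspace; $P_I$ has Levi quotient $\GL(I) \times \O(I^\perp/I)$ and unipotent radical $N_I$ of dimension $t(m-n-t) + \binom{t}{2}$, as follows from $\dim P_I = \dim \O(W) - \dim \OGr(t,W)$ with $\dim \OGr(t,W) = t(m-n) - \binom{t+1}{2}$. Hence the pointwise stabilizer of $I$ in $\O(W)$ (the kernel of the $\GL(I)$-projection from $P_I$) has order $q^{t(m-n-t) + \binom{t}{2}} \cdot \#\O(I^\perp/I)$. Witt decomposition identifies $I^\perp/I$ as a non-degenerate quadratic space of dimension $s := m-n-t$ with $\chi(I^\perp/I) = \chi(W) = \chi(V)\chi(U_0)$; in the uniform notation \eqref{eq:sgnm}, this means $\sgn(I^\perp/I) = \chi(V)\sgn_m(U)$, with the sign being nonzero precisely when $s$ is even, i.e.\ when $\dim U_0 \equiv m \pmod 2$.

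Finally I would feed these into formula \eqref{eq:O(V)} applied to both $V$ and $I^\perp/I$. Elementary algebra shows
\[
\binom{m}{2} - \binom{m-n-t}{2} - t(m-n-t) - \binom{t}{2} \;=\; \frac{n(2m-n-1)}{2},
\]
and the remaining factors rearrange directly into the first displayed form of the lemma. The case-by-case expressions for $m$ even and $m$ odd then follow by absorbing the $i = m/2$ factor of the product when $m$ is even, using the identity $(1-q^{-m})/(1+\sgn(V) q^{-m/2}) = 1 - \sgn(V) q^{-m/2}$, and by noting that $\sgn(V) = 0$ when $m$ is odd so that the factor $1 - \sgn(V)q^{-m/2}$ is vacuous. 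The conceptual content sits entirely in Witt's extension theorem together with the parabolic structure on a totally isotropic subspace; the main obstacle is the careful bookkeeping of signs, parities and product ranges in translating between the uniform and case-by-case presentations.
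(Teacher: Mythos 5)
Your argument is essentially the same as the paper's: Witt's extension theorem gives transitivity of $\O(V)$ on $\O(U,V)$, the stabilizer is identified with the pointwise stabilizer of the totally isotropic plane $\phi(U_1)$ inside $\O(\phi(U_0)^\perp)$, and this is computed as $\O(I^\perp/I)\ltimes N_I$ with $\#N_I = q^{t(m-n-t)+\binom{t}{2}}$. The only cosmetic difference is that you derive $\dim N_I$ via the dimension of the orthogonal Grassmannian, whereas the paper reads it off directly from $N_I\cong\Hom(U_1,U_2/U_1)\times\wedge^2 U_1$; the bookkeeping of $\sgn(I^\perp/I)=\chi(V)\sgn_m(U)$ and the exponent identity are handled identically.
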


\begin{proof}
  The group $\O(V)(\kappa)$ acts transitively on $\O(U,V)$. Fixing an isometry $\phi\in \O(U,V)$ and identifying $U$ as a quadratic subspace of $V$ using $\phi$, we find the stabilizer of $\O(V)$ on $\phi$ is isomorphic to $$\{g\in \O(V): g|_{U}=1\}\cong \{g\in \O(U_0^\perp): g|_{U_1}=1\}=:H.$$ Here $U_0, U_1$ are as in decomposition (\ref{eq:U0U1}) and $U_0^\perp\subseteq V$ is the orthogonal complement of $U_0$. Notice that $U_1\subseteq U_0^\perp$ is totally isotropic. Let $U_2$ be the orthogonal complement of $U_1$ in $U_0^\perp$. Let $P=MN\subseteq \O(U_0^\perp)$ be the parabolic subgroup stabilizing the flag $0\subseteq U_1\subseteq U_2\subseteq U_0^\perp$. Then $H=M'N\subseteq P$, where $M'\subseteq M\simeq\GL(U_1)\times \O(U_2/U_1)$ is the subgroup $1\times \O(U_2/U_1)$. Notice that we have an isomorphism as affine varieties $N\cong \Hom(U_1, U_2/U_1)\times \wedge^2(U_1)$. It follows that the number of isometries is equal to $$\#\O(U,V)=\frac{\#\O(V)(\kappa)}{\#H(\kappa)}=\frac{\#\O(V)(\kappa)}{\#\O(U_2/U_1)(\kappa)\cdot \#\Hom(U_1, U_2/U_1)(\kappa)\cdot \#\wedge^2(U_1)(\kappa)}.$$  Notice that $\dim V=m$, $\dim U_2/U_1=m-n-t$, and $$\#\Hom(U_1, U_2/U_1)(\kappa)=q^{t(m-n-t)}, \quad \#\wedge^2(U_1)(\kappa)=q^{t\choose 2}.$$ We compute
 \begin{multline*}
   \frac{\#\O(V)(\kappa)}{\#H(\kappa)}=   q^{\frac{n(2m-n-1)}{2}}\cdot \frac{(1-\sgn(V)\cdot q^{-m/2})\prod\limits_{1\le i< m/2}(1-q^{-2i})}{(1-\sgn(U_2/U_1)\cdot q^{-(m-n-t)/2})\prod\limits_{1\le i<(m-n-t)/2}(1-q^{-2i})} \\
   = q^{\frac{n(2m-n-1)}{2}}\cdot (1-\sgn(V)\cdot q^{-m/2})(1+\sgn(U_2/U_1)\cdot q^{-(m-n-t)/2})\prod\limits_{(m-n-t)/2<i<m/2}(1-q^{-2i})
 \end{multline*}
 Notice that
 \begin{align*}
   \prod\limits_{(m-n-t)/2<i<m/2}(1-q^{-2i})=
   \begin{cases}
     \prod\limits_{1\le i<(n+t)/2}(1-q^{2i-m}), & m \text{ is even}, \\
     \prod\limits_{1\le i<(n+t+1)/2}(1-q^{2i-(m+1)}), & m \text{ is odd}.
   \end{cases}
 \end{align*}
Moreover, $$\sgn(U_2/U_1)=\sgn(U_0^\perp)=
 \begin{cases}
   \chi(V)\cdot \sgn(U), & m \text{ is even}, \\
   \chi(V)\cdot \sgn'(U), & m \text{ is odd}.
 \end{cases}
$$ This completes the proof.
\end{proof}

We deduce the following (well-known) counting formula for totally isotropic subspaces.

\begin{lemma}[number of totally isotropic subspaces]\label{lem:Smb} 
  Let $V$ be a non-degenerate quadratic space over $\kappa$ of dimension $m$. Let $\mathcal{S}_b(V)$ be the set of totally isotropic $\kappa$-subspaces of dimension $b$ in $V$, and $S_b(V):=\#\mathcal{S}_b(V)$. Then $$S_b(V)=
  \begin{cases}
    \displaystyle\frac{(q^{m/2}-\chi(V))(q^{m/2-b}+\chi(V))\prod_{i=1}^{b-1}(q^{m-2i}-1)}{\prod_{i=1}^b(q^{i}-1)},    & m\text{ is even},\\
    \displaystyle\frac{\prod_{i=0}^{b-1}(q^{m-1-2i}-1)}{\prod_{i=1}^b(q^{i}-1)},& m\text{ is odd}.
  \end{cases}$$
\end{lemma}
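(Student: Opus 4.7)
The plan is to derive this directly from Lemma \ref{lem:isometries} by taking $U$ to be a totally isotropic space of dimension $b$, thereby counting bases of isotropic subspaces rather than the subspaces themselves.

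Concretely, I would take $U$ to be the abstract $b$-dimensional $\kappa$-vector space equipped with the zero quadratic form. Then the radical of $U$ is all of $U$, so in the decomposition $U = U_0 \obot U_1$ of (\ref{eq:U0U1}) we have $U_0 = 0$ and $\dim U_1 = b$; in particular $n = t = b$ and $\dim U_0 = 0$ is even, giving $\sgn(U) = \chi(U_0) = +1$ and $\sgn'(U) = 0$ by the conventions (\ref{eq:sgn}), (\ref{eq:sgn'}). An element of $\O(U, V)$ is precisely an ordered basis of a totally isotropic $b$-subspace $W \subseteq V$, and $\GL(U)(\kappa) = \GL_b(\kappa)$ acts freely by precomposition on $\O(U,V)$ with quotient $\mathcal{S}_b(V)$. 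Hence
\begin{equation*}
S_b(V) \;=\; \frac{\#\O(U,V)}{\#\GL_b(\kappa)}, \qquad \#\GL_b(\kappa) \;=\; q^{\binom{b}{2}}\prod_{i=1}^{b}(q^i - 1).
\end{equation*}

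Substituting into the formula of Lemma \ref{lem:isometries} with $(m-n-t)/2 = (m-2b)/2$, the $m$-even case produces the factor $(1 - \chi(V) q^{-m/2})(1 + \chi(V) q^{-(m-2b)/2}) \prod_{1 \le i < b}(1 - q^{2i-m})$, while the $m$-odd case produces $\prod_{1 \le i \le b}(1 - q^{2i - m - 1})$ (the extra factor $(1 + \chi(V)\sgn'(U) q^{-(m-2b)/2})$ collapses to $1$, and likewise $1 - \sgn(V) q^{-m/2} = 1$). In either case I would clear denominators by extracting powers of $q$ from each factor $(1 - q^{-j}) = q^{-j}(q^j - 1)$; a bookkeeping of exponents shows that the resulting power of $q$ coming from $\#\O(U,V)$ is exactly $q^{\binom{b}{2}}$, which cancels with the matching power in $\#\GL_b(\kappa)$. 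The remaining products are exactly those appearing in the statement.

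There is no substantive obstacle here: the only real work is tracking the exponent of $q$, which one checks equals $\tfrac{b(2m-b-1)}{2} + (\text{shift from converting } 1-q^{-j} \text{ to } q^j-1) = \binom{b}{2}$ in both parities. As a sanity check on the bookkeeping, I would verify the formula in the familiar edge cases $b = 1$ (giving the standard count $(q^{m/2} - \chi(V))(q^{m/2-1} + \chi(V))/(q-1)$ for even $m$, and $(q^{m-1}-1)/(q-1)$ for odd $m$, of isotropic lines) and, when $m$ is even, the case $b = m/2$ (recovering the classical count of maximal isotropic subspaces). An alternative approach through the standard recursion $S_b(V)\cdot \tfrac{q^b-1}{q-1} = S_1(V)\cdot S_{b-1}(\ell^\perp/\ell)$ (for any isotropic line $\ell$) is available, but is less clean than reading off the answer from Lemma \ref{lem:isometries}.
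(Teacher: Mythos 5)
Your proposal is correct and follows essentially the same route as the paper's proof: both identify $S_b(V) = \#\O(U,V)/\#\GL_b(\kappa)$ for $U$ a totally isotropic space of dimension $b$ (so $U_0 = 0$, $\sgn(U) = +1$, $\sgn'(U) = 0$) and then substitute the count from Lemma~\ref{lem:isometries}. The exponent bookkeeping you sketch does work out, since $\frac{b(2m-b-1)}{2} + (b^2 - mb) = \binom{b}{2}$, matching the $q$-power in $\#\GL_b(\kappa)$.
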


\begin{proof}
  The group $\O(V)(\kappa)$ acts transitively on the set $\mathcal{S}_b(V)$. Fix a totally isotropic subspace $U\in \mathcal{S}_b(V)$, then we have a surjection $$\O(U,V)\rightarrow \mathcal{S}_b(V),\quad \phi\mapsto \phi(U)$$ with each fiber in bijection with $\GL(U)(\kappa)$. Hence     $S_b(V)=\frac{\# \O(U, V)}{\#\GL(U)(\kappa)}$. Therefore by Lemma \ref{lem:isometries}, we know that $S_b(V)$ equals 
  \begin{equation*}
\frac{q^{\frac{b(2m-b-1)}{2}}    (1-\sgn(V)\cdot q^{-m/2})  
    (1-\chi(V)\sgn_m(U)\cdot q^{-(m-2b)/2})^{-1}\cdot\prod_{(m-2b)/2\le i <m/2}(1-q^{-2i})}{q^{b^2}\prod_{i=1}^b(1-q^{-i})},
  \end{equation*}
   which simplifies to the desired formula, as in this case $\chi(V)\sgn_m(U)=\sgn(V)$ is equal to $\chi(V)$ when $m$ is even and is equal to 0 when $m$ is odd.
 \end{proof}

 We record some consequences of Lemma \ref{lem:Smb}, which will be used throughout this article often without explicit reference.

\begin{corollary} Let $V$ be a non-degenerate quadratic space over $\kappa$ of dimension $m$.
  \begin{altenumerate}
  \item  The number of isotropic lines in $V$ is equal to $$S_1(V)=
    \begin{cases}
      \displaystyle\frac{(q^{m/2}-\chi(V))(q^{m/2-1}+\chi(V))}{q-1}, & m\text{ is even}, \\
      \displaystyle\frac{q^{m-1}-1}{q-1}, & m\text{ is odd}. 
    \end{cases}
$$ In particular, $$    S_1(V)=
\begin{cases}
  2, & m=2,\ \chi(V)=+1,\\
  0, & m=2,\ \chi(V)=-1, \\
  q+1, & m=3, \\
  (q+1)^2, & m=4,\ \chi(V)=+1,\\
  q^2+1, & m=4, \ \chi(V)=-1.
    \end{cases}
$$
\item The number of isotropic vectors in $V$ is equal to $$(q-1)S_1(V)+1=\begin{cases}
      \displaystyle q^{m-1}+\chi(V)q^{m/2}-\chi(V)q^{m/2-1}, & m\text{ is even}, \\
      \displaystyle q^{m-1}, & m\text{ is odd}. 
    \end{cases}$$    
  \end{altenumerate}
\end{corollary}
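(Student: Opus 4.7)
The plan is to derive both parts directly from Lemma \ref{lem:Smb} by specialization and elementary counting.

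For part (i), I would simply set $b=1$ in the formula of Lemma \ref{lem:Smb}. The product $\prod_{i=1}^{b-1}(q^{m-2i}-1)$ becomes the empty product $1$, and the denominator $\prod_{i=1}^b(q^i-1)$ becomes $q-1$. This immediately yields the two cases for $S_1(V)$. The tabulated special values are then obtained by plugging in $m \in \{2,3,4\}$ and $\chi(V) \in \{\pm 1\}$ and simplifying; for instance, $m=2$, $\chi(V)=+1$ gives $(q-1)(1+q^{-1}\cdot\text{stuff})\ldots$ which I would just verify by direct substitution: for $m=2$, $\chi(V)=+1$, the numerator is $(q-1)(1+1) = 2(q-1)$, giving $S_1(V)=2$; for $m=2$, $\chi(V)=-1$, the numerator is $(q+1)(q^0-1)=0$; for $m=3$, we get $(q^2-1)/(q-1)=q+1$; for $m=4$ with $\chi(V)=+1$, $(q^2-1)(q+1)/(q-1)=(q+1)^2$; for $m=4$ with $\chi(V)=-1$, $(q^2+1)(q-1)/(q-1)=q^2+1$.

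For part (ii), the key observation is that an isotropic vector in $V$ is either zero or spans an isotropic line, and conversely every nonzero vector on an isotropic line is isotropic (since the quadratic form restricted to the line vanishes identically). Each isotropic line contributes exactly $q-1$ nonzero isotropic vectors, and distinct isotropic lines share only the zero vector. Hence the total number of isotropic vectors equals $(q-1)S_1(V) + 1$. Substituting the formula from part (i) and simplifying gives the stated expressions. For $m$ even: $(q-1)S_1(V)+1 = (q^{m/2}-\chi(V))(q^{m/2-1}+\chi(V)) + 1 = q^{m-1} + \chi(V)q^{m/2} - \chi(V)q^{m/2-1} - 1 + 1$. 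For $m$ odd: $(q-1)S_1(V) + 1 = (q^{m-1}-1) + 1 = q^{m-1}$.

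There is no real obstacle here; both parts are routine consequences of Lemma \ref{lem:Smb}. The only thing to be careful about is handling the empty-product conventions when $b=1$ and checking the sign conventions in the special small-$m$ cases.
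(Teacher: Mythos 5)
Your proof is correct and follows the intended approach: the paper records this corollary as an immediate consequence of Lemma \ref{lem:Smb} (specialize to $b=1$), and part (ii) is the routine counting observation that each isotropic line carries $q-1$ nonzero isotropic vectors plus the zero vector. All your substitutions and the simplification using $\chi(V)^2=1$ check out.
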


\subsection{Formulas in terms of weighted lattice counting: Theorem of Cho--Yamauchi}

\begin{definition}
Let $L$ be a quadratic $O_F$-lattice of rank $n$. Denote by $L_\kappa:=L \otimes_{O_F}\kappa$, a (possibly degenerate) quadratic space over $\kappa$, and $$\sgn(L):=\sgn(L_\kappa),\quad \sgn'(L):=\sgn'(L_\kappa),\quad \sgn_m(L):=\sgn_m(L_\kappa)$$ as in Equations (\ref{eq:sgn}), (\ref{eq:sgn'}) and (\ref{eq:sgnm}). By definition, if $L$ is self-dual, then $\sgn_n(L)=\chi(L)$.  
\end{definition}

We have the following explicit formula for local densities in terms of weighted lattice counting, generalizing the theorem of Cho--Yamauchi.

\begin{lemma}[Cho--Yamauchi]\label{lem:localden} Let $L$ be a quadratic $O_F$-lattice of rank $n$. Then
  \begin{multline}\label{eq:generalDen}
    \Den(H_{m}^\varepsilon, L)
    =\sum_{L\subseteq L'\subseteq L'^\vee}q^{(n+1-m)\ell(L'/L)}\cdot (1-\sgn(H_{m}^\varepsilon)\cdot q^{-m/2})\\\quad(1+\varepsilon \sgn_m(L')\cdot q^{-(m-n-t(L'))/2})\cdot\prod\limits_{(m-n-t(L'))/2< i<m/2}(1-q^{-2i}).
  \end{multline}
Here the sum runs over all integral lattices $L'\subseteq L_F$ such that $L\subset L'$, and $$\ell(L'/L)\coloneqq {\rm length}_{O_F}\,L'/L.$$  
\end{lemma}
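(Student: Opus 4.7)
The plan is to follow the Cho--Yamauchi method \cite{CY}, adapted to treat both signs $\varepsilon = \pm 1$ uniformly. The strategy is to stratify the scheme $\Rep_{H_m^\varepsilon, L}$ by the ``saturation'' lattice of a representation and to count the contribution of each stratum using Lemma~\ref{lem:isometries}.

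First, for an integral representation $\phi : L \otimes R \to H_m^\varepsilon \otimes R$ with $R = O_F/\varpi^N$ and $N$ sufficiently large, I would define its saturation as the unique integral overlattice $L \subseteq L' \subseteq L'^\vee$ in $L_F$ such that $\phi$ extends to $\phi' : L' \otimes R \to H_m^\varepsilon \otimes R$ with the extension being \emph{primitive}, meaning $\phi' \otimes \kappa$ has kernel equal to the radical of $L'_\kappa$. This would produce a stratification
\[
\#\Rep_{H_m^\varepsilon, L}(O_F/\varpi^N) \;=\; \sum_{L \subseteq L' \subseteq L'^\vee} \#\bigl\{\phi \text{ with saturation } L'\bigr\}
\]
indexed by integral overlattices of $L$ in $L_F$.

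Next, a careful scheme-theoretic comparison of $\Rep_{H_m^\varepsilon, L}$ and the primitive locus $\Rep^{\mathrm{prim}}_{H_m^\varepsilon, L'}$ (which share the generic dimension $d = n(2m-n-1)/2$ by (\ref{eq:dimRep}) but have different integral structures) combined with a Hensel-type smoothness argument will yield, for each $L'$,
\[
\lim_{N \to \infty} \frac{\#\bigl\{\phi \in \Rep_{H_m^\varepsilon, L}(O_F/\varpi^N) : \text{saturation} = L'\bigr\}}{q^{Nd}} \;=\; q^{(n+1-m)\ell(L'/L)} \cdot \frac{\#\O(L'_\kappa, (H_m^\varepsilon)_\kappa)}{q^{n(2m-n-1)/2}}.
\]
The factor $q^{(n+1-m)\ell(L'/L)}$ reflects the precision shift when relating mod-$\varpi^N$ representations of $L$ to primitive mod-$\varpi^N$ representations of $L'$. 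This is the most delicate part of the argument: since the inclusion $L \hookrightarrow L'$ fails to be injective modulo $\varpi^N$ when $L \subsetneq L'$, one must carefully track the fibers of the restriction map $\Rep^{\mathrm{prim}}_{H_m^\varepsilon, L'}(O_F/\varpi^N) \to \Rep_{H_m^\varepsilon, L}(O_F/\varpi^N)$ together with the corresponding precision loss in $N$.

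Finally, I would substitute the explicit count for $\#\O(L'_\kappa, (H_m^\varepsilon)_\kappa)$ provided by Lemma~\ref{lem:isometries}, applied with $U = L'_\kappa$ (whose radical has dimension $t(L')$) and $V = (H_m^\varepsilon)_\kappa$ (so $\chi(V) = \varepsilon$ and $\sgn(V) = \sgn(H_m^\varepsilon)$). A simple algebraic identity, namely
\[
\frac{1 - q^{-(m-n-t)}}{1 - \alpha\, q^{-(m-n-t)/2}} \;=\; 1 + \alpha\, q^{-(m-n-t)/2}, \qquad \alpha^2 = 1,
\]
applied when $m-n-t$ is even with $\alpha = \varepsilon\, \sgn_m(L')$ (the case $m-n-t$ odd being trivial since $\sgn_m(L') = 0$ there), converts the closed lower bound $(m-n-t)/2 \leq i$ appearing in the product of Lemma~\ref{lem:isometries} into the strict inequality $(m-n-t)/2 < i$ appearing in~(\ref{eq:generalDen}). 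This will complete the proof; the principal obstacle, as noted, lies in identifying the exponent $(n+1-m)\ell(L'/L)$ in the stratification step.
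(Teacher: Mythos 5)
Your proposal follows the same route as the paper: the paper obtains the stratification-by-saturation identity
\[
\Den(H_m^\varepsilon, L)=q^{-\frac{n(2m-n-1)}{2}}\sum_{L\subseteq L'\subseteq L'^\vee}q^{(n+1-m)\ell(L'/L)}\,\#\O(L'_\kappa,(H_m^\varepsilon)_\kappa)
\]
by citing \cite[Eq.\ (3.4)]{CY} (with the remark that the argument there carries over to $\varepsilon=-1$ since $H_m^\varepsilon$ is still self-dual), and then substitutes Lemma~\ref{lem:isometries} and performs precisely the $(1-\alpha q^{-(m-n-t)/2})^{-1}(1-q^{-(m-n-t)})=1+\alpha q^{-(m-n-t)/2}$ manipulation you describe. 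The ``most delicate part'' you flag but do not prove — the exponent $q^{(n+1-m)\ell(L'/L)}$ in the stratification — is exactly the content of \cite[Eq.\ (3.4)]{CY}, so the paper avoids re-deriving it by citing that reference; your sketch is consistent with their proof but would need that step filled in to be self-contained.
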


\begin{proof}
  By \cite[Equation (3.4)]{CY} (replacing $2k$ there by $m$), we have
  \begin{align*}
    \Den(H_{m}^{\varepsilon}, L)=q^{-\frac{n(2m-n-1)}{2}}\cdot \sum_{L\subseteq L'\subseteq L'^\vee}q^{(n+1-m)\ell(L'/L)} \#\O(L'_\kappa, (H_{m}^{\varepsilon})_{\kappa}).
  \end{align*}
  Here $\O(U,V)$ denotes the set of isometries from $U$ to $V$ as in Lemma \ref{lem:isometries}.  Strictly speaking, \cite{CY} only treats the case $m$ is even and $\varepsilon=+1$, but the same proof goes through as $H_{m}^\varepsilon$ is self-dual.

  It follows from Lemma \ref{lem:isometries} that \begin{align*}
    \Den(H_{m}^\varepsilon, L)
    &=\sum_{L\subseteq L'\subseteq L'^\vee}q^{(n+1-m)\ell(L'/L)}\cdot (1-\sgn(H_{m}^\varepsilon)\cdot q^{-m/2})\\&\quad (1-\varepsilon \sgn_m(L')\cdot q^{-(m-n-t(L'))/2})^{-1}\cdot\prod\limits_{(m-n-t(L'))/2\le i <m/2}(1-q^{-2i})\\
    &=\sum_{L\subseteq L'\subseteq L'^\vee}q^{(n+1-m)\ell(L'/L)}\cdot (1-\sgn(H_{m}^\varepsilon)\cdot q^{-m/2})\\&\quad(1+\varepsilon \sgn_m(L')\cdot q^{-(m-n-t(L'))/2})\cdot\prod\limits_{(m-n-t(L'))/2< i<m/2}(1-q^{-2i}).
  \end{align*} This completes the proof.
\end{proof}

We have the following induction formula for local densities, generalizing the results of Cho--Yamauchi and Katsurada. 

\begin{lemma}[Induction formula]\label{lem:naiveinduction}
Let $L^\flat$ be a quadratic $O_F$-lattice of rank $n-1$ with fundamental invariants $(a_1,\cdots,a_{n-1})$. Let $L=L^\flat+\pair{x}$ and  $\wit L=L^\flat+\pair{\varpi^{-1} x}$ where $x\perp L^\flat$ with $\val(x)>a_{n-1}$. If $m$ is even, then $$\Den(H_m^\varepsilon, L)=q^{n+1-m}\cdot\Den(H_m^\varepsilon, \wit L)+ (1-\varepsilon q^{-m/2})(1+\varepsilon q^{-(m-2)/2})\cdot\Den(H_{m-2}^\varepsilon, L^\flat).$$ If $m$ is odd, then $$\Den(H_m^\varepsilon, L)=q^{n+1-m}\cdot\Den(H_m^\varepsilon, \wit L)+ (1-q^{-(m-1)})\cdot\Den(H_{m-2}^\varepsilon, L^\flat).$$
\end{lemma}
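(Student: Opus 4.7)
The plan is to apply the Cho--Yamauchi formula~\eqref{eq:generalDen} to each of the three local densities in the identity and to match contributions. Writing
\[
\Den(H_m^\varepsilon, L) = \sum_{L \subseteq L' \subseteq L'^\vee} q^{(n+1-m)\ell(L'/L)}\,W(L'),
\]
with $W(L')$ the local weight factor, I would split this sum by whether $\wit L \subseteq L'$ (equivalently, $\varpi^{-1}x \in L'$) or not. For the lattices $L' \supseteq \wit L$, the length identity $\ell(L'/L) = \ell(L'/\wit L)+1$ factors out $q^{n+1-m}$ and the remaining sum is exactly the Cho--Yamauchi expansion of $\Den(H_m^\varepsilon, \wit L)$, yielding the first term $q^{n+1-m}\Den(H_m^\varepsilon,\wit L)$ on the right-hand side.

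For the remaining case-(b) lattices $L'$ with $\varpi^{-1}x \notin L'$, I would stratify the sum by the auxiliary integral lattice $M' := L^\flat + \pi(L') \subseteq L^{\flat\vee}$ in $L^\flat_F$, where $\pi\colon L_F \to L^\flat_F$ is the projection along $Fx$. Using the orthogonal decomposition $L^\vee/L = L^{\flat\vee}/L^\flat \obot \varpi^{-\val(x)}\langle x\rangle/\langle x\rangle$ and Goursat-style bookkeeping of subgroups, a direct count shows that the fiber of case-(b) $L'$'s over each $M'$ has cardinality $q^{\ell(M'/L^\flat)}$ and satisfies $\ell(L'/L) = \ell(M'/L^\flat)$ uniformly. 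The hypothesis $\val(x) > a_{n-1}$ guarantees that $\bar x$ reduces to a nonzero vector in the radical of $L'_\kappa$ and that the non-degenerate part of $L'_\kappa$ is canonically isomorphic to that of $M'_\kappa$ for every $L'$ in the fiber, so $t(L') = t(M')+1$ and $\sgn_m(L') = \sgn_{m-2}(M')$. Consequently $W(L')$ depends only on $M'$, and a routine manipulation of the prefactors and the product $\prod(1-q^{-2i})$ in \eqref{eq:generalDen} produces the clean ratio
\[
\frac{W(L')}{W'(M')} = \begin{cases}(1-\varepsilon q^{-m/2})(1+\varepsilon q^{-(m-2)/2}), & m\text{ even},\\ 1-q^{-(m-1)}, & m\text{ odd},\end{cases}
\]
where $W'$ denotes the weight factor for $\Den(H_{m-2}^\varepsilon, L^\flat)$. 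Combining the fiber count $q^{\ell(M'/L^\flat)}$ with the factor $q^{(n+1-m)\ell(M'/L^\flat)}$ promotes the exponent to $q^{(n+2-m)\ell(M'/L^\flat)}$---the exponent appearing in the Cho--Yamauchi expansion of $\Den(H_{m-2}^\varepsilon,L^\flat)$---so that the case-(b) contribution assembles as $C\cdot \Den(H_{m-2}^\varepsilon, L^\flat)$ for the advertised constant $C$.

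The hard part will be the careful Goursat-style bookkeeping in case~(b): one must enumerate all isotropic subgroups of $L^\vee/L$ with projection $M'/L^\flat$ to the $L^{\flat\vee}/L^\flat$ factor and not containing the fixed element $\varpi^{-1}x + L$, and then verify the weight-uniformity assertion $t(L') = t(M')+1$ even for those $L'$ that are diagonal Goursat extensions rather than the product $M' + \langle x\rangle$. Once this combinatorial count and weight-uniformity are established, the two cases combine to give the induction formula, treating $\varepsilon \in \{\pm 1\}$ and the parity of $m$ uniformly via the $\sgn_m$-formalism of \eqref{eq:generalDen}.
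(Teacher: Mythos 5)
Your overall framework---splitting the Cho--Yamauchi sum \eqref{eq:generalDen} by whether $\wit L\subseteq L'$, factoring $q^{n+1-m}$ from the first piece, and reassembling the second as a constant times $\Den(H_{m-2}^\varepsilon,L^\flat)$---is exactly the paper's, and both your treatment of the first piece and your weight-ratio computation are correct. The genuine gap is in the case-(b) bookkeeping you flag as ``the hard part,'' and it is not merely a matter of effort: the fibration by $M':=\pi(L')$ you propose does not behave as claimed. Two things go wrong. First, the orthogonal projection $\pi(L')$ of an integral case-(b) $L'$ need not be integral, because integrality of $L'$ permits cancellation between the $L^\flat_F$- and $Fx$-contributions to the quadratic form. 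For instance, take $L^\flat=\langle e_1,e_2\rangle$ with $(e_1,e_1)=\varpi^2$, $(e_2,e_2)=(-1+\varpi)\varpi^2$, and $(x,x)=-\varpi^3$; then $L'=L+O_F\,\varpi^{-2}(e_1+e_2+x)$ is integral and does not contain $\varpi^{-1}x$, yet $\pi(L')=L^\flat+O_F\varpi^{-2}(e_1+e_2)$ satisfies $(\varpi^{-2}(e_1+e_2),\varpi^{-2}(e_1+e_2))=\varpi^{-1}\notin O_F$. So stratifying by integral $M'$ misses case-(b) $L'$'s. Second, even when $\pi(L')=M'$ is integral the fiber generally has size strictly less than $q^{\ell(M'/L^\flat)}$: with $(e_1,e_1)=\varpi^2$, $(e_2,e_2)=-\varpi^2$, $\val(x)=3$, and $M'=L^\flat+O_F\varpi^{-2}(e_1+e_2)$ (integral, $\ell=2$), the integrality of $L'$ is equivalent to $\im\phi$ being isotropic for the $F/O_F$-valued pairing on $B$, which forces $|\im\phi|\leq q$ and leaves only $q$ of the $q^2$ Goursat graphs. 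The mass missing from (ii) is exactly what the $L'$'s from (i) supply, but this rebalancing is invisible to the $\pi(L')$-fibration.

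The paper closes case (b) not by this route but by invoking Cho--Yamauchi, Proposition 4.8, whose re-indexing is built from a Nakayama complement rather than the orthogonal projection: for case-(b) $L'$ the image of $x$ in $L'_\kappa$ is nonzero, so $L'=\langle x\rangle\oplus N'$ as an $O_F$-module for some rank-$(n-1)$ complement $N'$, and $\val(x)>a_{n-1}$ guarantees that $M:=L\cap N'$ has the same fundamental invariants as $L^\flat$ (\cite[Remark~4.3(5)]{CY}). Grouping case-(b) $L'$'s according to the isometry type of the pair $(M\subseteq N')$---not according to $\pi(L')$---is what produces the clean count $q^{\ell(N'/M)}$ per isometry class that makes the identity close. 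If you want a self-contained proof rather than a citation, that is the combinatorial statement you must establish; the Goursat fibration over $\pi(L')$ is the wrong one to fiber over, and no amount of careful bookkeeping will make its fibers have size $q^{\ell}$.
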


\begin{proof}
  When $m$ is even and $\varepsilon=+1$, this is proved in \cite[Corollary 4.10]{CY} (see also \cite[Theorem 2.6 (1)]{Katsurada1999}). The general case can be proved similarly. More precisely, consider the terms in (\ref{eq:generalDen}) indexed by lattices $L\subseteq L'\subseteq L'^\vee$ depending on $\wit L\subseteq L'$ or not.

  The sum of terms in (\ref{eq:generalDen}) with $L'$ satisfying $\wit L\subseteq L'$ evaluates to $$q^{(n+1-m)(\ell(L'/L)-\ell(L'/\tilde L))}\cdot \Den(H_m^\varepsilon, \tilde L)=q^{n+1-m}\cdot \Den(H_m^\varepsilon, \tilde L).$$

Now we consider those $L'$'s satisfying $\wit L\not\subseteq L'$. In this case the image of $x$ in $L'_\kappa$ is nonzero, hence by Nakayama's lemma, there exists a quadratic $O_F$-lattice $M'$ of rank $n-1$ such that $L'=M'+\langle x\rangle$ and $L=M+\langle x\rangle$, where $M=L\cap M'$. Since $\val(x)>a_{n-1}$, we know that $M$ and $L^\flat$ has the same fundamental invariants and moreover $\Den(H_m^\varepsilon, M)=\Den(H_m^\varepsilon, L^\flat)$ (\cite[Remark 4.3 (5)]{CY}). So by \cite[Proposition 4.8]{CY} (specialized to $d=1$ in the notation there), the sum of terms in (\ref{eq:generalDen}) with  $L'$ satisfying $\wit L\not\subseteq L'$ evaluates to
    \begin{multline*}
    \sum_{L^\flat\subseteq M'\subseteq M'^\vee\atop L'=M'+\langle x\rangle}q^{\ell(L'/L)}\cdot q^{(n+1-m)\ell(L'/L)}\cdot (1-\sgn(H_{m}^\varepsilon)\cdot q^{-m/2})\\\quad(1+\varepsilon \sgn_{m}(L')\cdot q^{-(m-n-t(L'))/2})\cdot\prod\limits_{(m-n-t(L'))/2< i<m/2}(1-q^{-2i}).
  \end{multline*}
Since $t(M')=t(L')-1$, $\ell(M'/L^\flat)=\ell(L'/L)$ and $\sgn_{m-2}(M')=\sgn_m(L')$, this evaluates to $$\frac{(1-\sgn(H_{m}^\varepsilon)\cdot q^{-m/2})\prod\limits_{(m-2)/2\le i<m/2}(1-q^{-2i})}{(1-\sgn(H_{m-2}^\varepsilon)\cdot q^{-(m-2)/2})}\cdot\Den(H_{m-2}^\varepsilon, L^\flat).$$ Notice that the extra factor evaluates to $$\frac{(1-\sgn(H_{m}^\varepsilon)\cdot q^{-m/2})\prod\limits_{(m-2)/2\le i<m/2}(1-q^{-2i})}{(1-\sgn(H_{m-2}^\varepsilon)\cdot q^{-(m-2)/2})}=
\begin{cases} 
  (1-\varepsilon q^{-m/2})(1+\varepsilon q^{-(m-2)/2}),& m\text{ is even},\\
  (1-q^{-(m-1)}), & m \text{ is odd}.
\end{cases}$$
This finishes the proof.
\end{proof}

Our next goal is to define normalized local Siegel series and derive explicit formulas for them (Theorems \ref{thm: Den(X)}, \ref{thm: Denf(X)} and \ref{thm:induction}). We distinguish two cases depending on the parity of the corank of $L$ in $H_m^\varepsilon$.

\subsection{Odd corank case}

\begin{definition}
  Let $L$ be a quadratic $O_F$-lattice of rank $n$. Define the \emph{normalizing polynomial} (in the odd corank case) $\Nore(X,L)\in \mathbb{Q}[X]$ to be
  \begin{equation}
    \label{eq:norpoly}
    \Nore(X,L)=(1-\sgn(H_{n+1}^\varepsilon)\cdot q^{-(n+1)/2}X)\prod_{1\le i< (n+1)/2}(1-q^{-2i}X^2).
  \end{equation}
 Notice that the dependence of $\Nore(X,L)$ on $L$ is only its rank $n$. By Lemma \ref{lem:localden}, we have for all $k\ge0$, $$\Nore(q^{-k},L)=\Den(H_{n+1+2k}^\varepsilon, H_n^\varepsilon).$$
\end{definition}

\begin{definition}\label{def:Dene}
  Define the (normalized) \emph{local Siegel series} of $L$ (in the odd corank case) to be the polynomial $\Dene(X,L)\in \mathbb{Z}[X]$ such that for all $k\ge0$, $$\Dene(q^{-k},L)=\frac{\Den(H_{n+1+2k}^\varepsilon, L)}{\Nor^\varepsilon(q^{-k},L)}=\frac{\Den(H_{n+1+2k}^\varepsilon, L)}{\Den(H_{n+1+2k}^\varepsilon, H_n^\varepsilon)}.$$ Define the \emph{central derivative of the local density} or \emph{derived local density} to be $$\pDene(L)\coloneqq-\frac{\rd}{\rd X}\bigg|_{X=1}\Dene(X,L).$$ Notice that if $L$ is not integral then $\Dene(X,L)=0$ and hence $\pDene(L)=0$.
\end{definition}

\begin{remark}
By definition $\Dene(M,L)$ only depends on the isometry classes of $M$ and $L$, and hence $\Dene(X,L)$ and $\pDene(L)$ only depends on the isometry class of $L$.   
\end{remark}

\begin{definition}
  Let $t\ge1$, $s\in \{\pm1,0\}$, $\varepsilon\in\{\pm1\}$ such that if $t-1$ is odd then $s=0$.  Define weight polynomials $$\wt(t, s; X):=(1+s\cdot q^{(t-1)/2}X)\prod_{0\le i<(t-1)/2}(1-q^{2i}X^2), \quad \wte(t, s; X):=\wt(t, s; \varepsilon X).$$ By convention, define $\wte(0, s; X)=1$. Define weight factors
  \begin{align*}
    \wte(t,s)&\coloneqq -\frac{\rd}{\rd X}\bigg|_{X=1}\wte(t,s; X)
    =
    \begin{cases}
      0, & t=0,\\
      -\varepsilon s, & t=1,\\
      2(1+\varepsilon s\cdot q^{(t-1)/2})\prod_{1\le i <(t-1)/2}(1-q^{2i}), & t\ge 2.
    \end{cases}
  \end{align*}
\end{definition}
Now we have the following explicit formula for the local Siegel series $\Dene(X, L)$, generalizing \cite[Corollary 3.16]{CY} (the case $n$ is odd and $\varepsilon=+1$).

\begin{theorem}\label{thm: Den(X)}
Let $L$ be a quadratic $O_F$-lattice of rank $n$. Then $$\Dene(X,L)=
  \sum_{L\subset L'\subset L'^\vee} X^{2\ell( L'/L)}\cdot  \wte(t(L'), \sgn_{n+1}(L'); X).
$$

\end{theorem}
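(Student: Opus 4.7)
My plan is to derive the formula directly from the weighted-lattice expression for $\Den(H_m^\varepsilon, L)$ provided by Lemma~\ref{lem:localden}. I will apply that formula with $m=n+1+2k$ to both $\Den(H_m^\varepsilon,L)$ and $\Den(H_m^\varepsilon, H_n^\varepsilon)$, form the ratio, and then specialize $X=q^{-k}$. In the denominator only $L'=H_n^\varepsilon$ contributes, because $H_n^\varepsilon$ is self-dual with $t=0$, which collapses the normalization to a single factor. Since both sides of the claimed identity will turn out to be polynomials in $X$ of bounded degree agreeing at $X=q^{-k}$ for every $k\ge 0$, they agree as polynomials.

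Two clean cancellations organize the computation. First, because $m\equiv n+1\pmod 2$, we have the uniform identification $\sgn_m(L')=\sgn_{n+1}(L')$ for every intermediate lattice $L'$; in particular $\sgn_m(H_n^\varepsilon)=0$ since $H_n^\varepsilon$ has trivial radical but $n\not\equiv m\pmod 2$. This aligns the sign parameter in Lemma~\ref{lem:localden} with the $\sgn_{n+1}(L')$ appearing in $\wte$. Second, the global factor $(1-\sgn(H_m^\varepsilon)q^{-m/2})$ is common to every numerator summand and to the unique denominator summand, so it cancels. The prefactor $q^{(n+1-m)\ell(L'/L)}=q^{-2k\ell(L'/L)}$ then becomes $X^{2\ell(L'/L)}$, as required.

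Using $-(m-n-t)/2=(t-1)/2-k$, the middle factor $(1+\varepsilon\sgn_m(L')q^{-(m-n-t)/2})$ becomes $(1+\varepsilon\sgn_{n+1}(L')q^{(t-1)/2}X)$, which is exactly the leading factor of $\wte(t,\sgn_{n+1}(L');X)=\wt(t,\sgn_{n+1}(L');\varepsilon X)$. It remains to handle the ratio of products: the numerator runs over integers $(m-n-t)/2<i<m/2$ and the denominator over $(m-n)/2<i<m/2$, so the surviving factors are indexed by integers $i$ with $(m-n-t)/2<i\le(m-n)/2$. Reindexing via $j=k-i$ and checking both parities of $t$ separately (the endpoints are half-integers when $t$ is even and integers when $t$ is odd), this range contains exactly $\lfloor t/2\rfloor$ integers, and the surviving factors equal $\prod_{0\le j<\lfloor t/2\rfloor}(1-q^{2j}X^2)$. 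This matches the product in $\wte$, whose index set $\{0\le i<(t-1)/2\}$ also has cardinality $\lfloor t/2\rfloor$ in both parities.

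The step requiring the most care is this last one: the parity-dependent bookkeeping of the product range. Once one verifies uniformly in the parity of $t$ that exactly $\lfloor t/2\rfloor$ factors of the expected form $(1-q^{2j}X^2)$ survive, summing over all $L\subseteq L'\subseteq L'^\vee$ and invoking polynomial interpolation yields the claimed identity.
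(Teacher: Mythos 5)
Your proposal is correct and is essentially the paper's own proof: take $m=n+1+2k$ in Lemma~\ref{lem:localden}, form the quotient by $\Nore(q^{-k},L)=\Den(H_{n+1+2k}^\varepsilon,H_n^\varepsilon)$, observe that the global factor $(1-\sgn(H_m^\varepsilon)q^{-m/2})$ cancels, reindex the product to land on $\prod_{0\le i<(t(L')-1)/2}(1-q^{2i}X^2)$, and then recognize this polynomial identity at all $X=q^{-k}$. The only cosmetic differences are that you re-derive the denominator from the Cho--Yamauchi formula applied to $H_n^\varepsilon$ rather than using the displayed form of $\Nore(X,L)$ directly, and that you write the surviving product range as $\lfloor t/2\rfloor$ rather than $(t-1)/2$; these describe the same set of integers.
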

\begin{proof} 
Take $m=n+1+2k$ in Lemma \ref{lem:localden}, we obtain that \begin{align*}
    \Den(H_{n+1+2k}^\varepsilon, L)
    &=\sum_{L\subseteq L'\subseteq L'^\vee}q^{-2k\cdot\ell(L'/L)}\cdot (1-\sgn(H_{n+1}^\varepsilon)\cdot q^{-(n+1)/2-k})\\&\quad(1+\varepsilon\sgn_{n+1}(L')\cdot q^{(t(L')-1)/2-k})\prod_{-(t(L')-1)/2+k< i< (n+1)/2+k}(1-q^{-2i}).
  \end{align*}
  Taking the ratio we obtain
  \begin{align*}
    &\phantom{=}\frac{\Den(H_{n+1+2k}^\varepsilon, L)}{\Nore(q^{-k}, L)}\\
    &=\sum_{L\subseteq L'\subseteq L'^\vee}q^{-2k\cdot\ell(L'/L)}(1+\varepsilon\sgn_{n+1}(L')\cdot q^{(t(L')-1)/2-k})\prod_{-(t(L')-1)/2+k< i\le k }(1-q^{-2i})\\
    &=\sum_{L\subseteq L'\subseteq L'^\vee}q^{-2k\cdot\ell(L'/L)}(1+\varepsilon\sgn_{n+1}(L')\cdot q^{(t(L')-1)/2-k})\prod_{0\le i< (t(L')-1)/2 }(1-q^{2i-2k})\\
    &=\sum_{L\subseteq L'\subseteq L'^\vee} X^{2\ell( L'/L)}\cdot  \wte(t(L'), \sgn_{n+1}(L');X)\bigg|_{X=q^{-k}}.
  \end{align*}
  This completes the proof.
  \end{proof}

  We have the following functional equation for $\Dene(X,L)$.
  
\begin{theorem}[Ikeda]\label{thm:ikeda}
Let $L$ be a quadratic $O_F$-lattice of rank $n$. Then $$\Dene(X,L)=w^\varepsilon(L)\cdot X^{\val(L)}\cdot \Dene\left(\frac{1}{X},L\right),$$ where the sign of functional equation is equal to
\begin{equation}
  \label{eq:signwe}
  w^\varepsilon(L):= (\det L, -(-1)^{n+1\choose 2}u)_F\cdot\Has(L_F)\in\{\pm1\},
\end{equation} where $u\in O_F^\times$ such that $\chi(u)=\varepsilon$.
\end{theorem}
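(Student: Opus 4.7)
The plan is to deduce this functional equation from Ikeda's theorem on the classical local Siegel series by identifying $\Dene(X, L)$ with a suitably normalized variant of the standard Siegel series polynomial $F_p(L, X)$ attached to $L$. The functional equation in the statement will then be a reformulation of the classical one, with the explicit sign $w^\varepsilon(L)$ arising from Ikeda's sign plus a contribution from the $\varepsilon$-dependent normalization $\Nore(X, L)$.

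First I would recall the definition of the classical local Siegel series $F_p(L, X)$, characterized up to a standard local $L$-factor by an identity relating $\Den(H_m, L)$ to a specialization of $F_p(L, \cdot)$ for self-dual $H_m$ of rank $m \ge n$. Comparing with the explicit formula of Theorem \ref{thm: Den(X)} and the definition of $\Nore(X, L)$, one checks that after canceling the common $L$-factors, the normalized series $\Dene(X, L)$ agrees with $F_p(L, \cdot)$ up to an explicit and invertible change of variables together with a scalar $c_L$ depending only on $\varepsilon$ and $n$.

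Next I would invoke Ikeda's functional equation \cite{Ikeda2017} for $F_p(L, X)$, which has the form
\begin{equation*}
F_p(L, c\, q^{-(n+1)} X^{-1}) = w(L) \cdot X^{-\val(\det L)} \cdot F_p(L, X),
\end{equation*}
where the sign $w(L) \in \{\pm 1\}$ is given intrinsically in terms of the Hasse invariant $\Has(L_F)$ and a Hilbert symbol involving $\det L$ and $-(-1)^{\binom{n+1}{2}}$, independent of $\varepsilon$. Transporting this identity through the change of variables from the first step produces a functional equation for $\Dene(X, L)$ of the shape
\begin{equation*}
\Dene(X, L) = w(L) \cdot (\text{correction}) \cdot X^{\val(L)} \cdot \Dene(1/X, L),
\end{equation*}
where the correction factor comes solely from how the $\varepsilon$-dependent normalization $\Nore(X, L)$ transforms under $X \leftrightarrow 1/X$.

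The main obstacle will be matching the sign precisely. Ikeda's intrinsic sign $w(L)$ does not see the parameter $\varepsilon$, whereas $\Nore(X, L)$ depends on $\varepsilon$ through the factor $(1 - \sgn(H_{n+1}^\varepsilon) q^{-(n+1)/2} X)$ (nontrivial precisely when $n+1$ is even). Tracking the contribution of this factor under $X \leftrightarrow 1/X$ should produce the extra Hilbert symbol $(\det L, u)_F$, where $u \in O_F^\times$ is chosen so that $\chi(u) = \varepsilon$. Combining this contribution with Ikeda's sign and simplifying via bilinearity of the Hilbert symbol together with the identity $(a, -a)_F = 1$ should yield the stated formula $w^\varepsilon(L) = (\det L, -(-1)^{\binom{n+1}{2}} u)_F \cdot \Has(L_F)$.
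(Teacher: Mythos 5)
Your high-level plan—reduce the general $(\varepsilon, n)$ statement to a known case of Ikeda by a change of variables—is a legitimate route and genuinely different in spirit from the paper, which simply cites Ikeda's Theorem 4.1(2) for $n$ odd, $\varepsilon=+1$ and asserts the same proof goes through in general. However, the mechanism you propose for producing the extra Hilbert symbol $(\det L, u)_F$ does not work, and this is where your argument has a real gap.

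You attribute the $\varepsilon$-dependent correction to how the factor $(1 - \sgn(H_{n+1}^\varepsilon) q^{-(n+1)/2} X)$ inside $\Nore(X,L)$ transforms under $X \leftrightarrow 1/X$. Two problems: first, this factor is nontrivial only when $n$ is odd, yet the Hilbert symbol $(\det L, u)_F = \varepsilon^{\val(L)}$ can be nontrivial when $n$ is even and $\val(L)$ is odd, so your proposed source produces nothing in a case where the answer is nontrivial. Second, $\Nore(X,L)$ is a normalizing polynomial that is divided \emph{out} in the definition of $\Dene$; once $\Dene$ is defined, its functional equation does not see $\Nore$ at all, and in any case $\Nore(1/X,L)/\Nore(X,L)$ is a genuine rational function of $X$, not a constant sign. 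The correct, and cleaner, mechanism is the identity
\begin{equation*}
\Dene(X, L) = \Den^{+}(\varepsilon X, L),
\end{equation*}
which is immediate from the explicit Cho--Yamauchi formula of Theorem~\ref{thm: Den(X)}: one has $\wte(t,s;X) = \wt(t,s;\varepsilon X) = \wt^{+}(t,s;\varepsilon X)$ and $(\varepsilon X)^{2\ell} = X^{2\ell}$ since $\varepsilon^2=1$. Substituting $X \mapsto \varepsilon X$ in the functional equation for $\Den^{+}$ produces the extra factor $(\varepsilon X)^{\val(L)} / X^{\val(L)} = \varepsilon^{\val(L)}$, and for $p$ odd the tame Hilbert symbol gives $(\det L, u)_F = \chi(u)^{\val(\det L)} = \varepsilon^{\val(L)}$, which combines with $w^{+}(L)$ by bilinearity to yield $w^{\varepsilon}(L)$. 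Note also that this substitution only reduces the $\varepsilon$-direction; your plan still needs Ikeda's result (or an extension of its proof, which is what the paper invokes) for the parity of $n$ at hand.
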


\begin{proof}
  This is \cite[Theorem 4.1 (2)]{Ikeda2017} when $n$ is odd and $\varepsilon=+1$. The same proof works in general.
\end{proof}

\begin{corollary}\label{cor: pDen} If $\Dene(X,L)$ has sign of functional equation $w^\varepsilon(L)=-1$, then
$$\pDene(L)=
  \sum_{L\subset L'\subset L'^\vee}  \wte(t(L'),\sgn_{n+1}(L')).
  $$
\end{corollary}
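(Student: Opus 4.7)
The plan is to differentiate the explicit formula of Theorem~\ref{thm: Den(X)} at $X=1$ and then use the hypothesis $w^\varepsilon(L)=-1$ to eliminate the correction term coming from the $X^{2\ell(L'/L)}$ factor.

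Applying the product rule to
\[
\Dene(X,L)=\sum_{L\subseteq L'\subseteq L'^\vee}X^{2\ell(L'/L)}\,\wte(t(L'),\sgn_{n+1}(L');X),
\]
evaluating at $X=1$, and using the defining identity $\wte(t,s)=-\frac{\rd}{\rd X}\big|_{X=1}\wte(t,s;X)$, one obtains
\[
\pDene(L)=\sum_{L'}\wte(t(L'),\sgn_{n+1}(L'))\;-\;2\sum_{L'}\ell(L'/L)\,\wte(t(L'),\sgn_{n+1}(L');1).
\]
It therefore suffices to show that the second sum vanishes under the hypothesis.

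The crucial observation is that each factor $\wte(t(L'),\sgn_{n+1}(L');1)$ is a nonnegative integer. Indeed, $\wte(0,s;1)=1$ by convention, while for $t\ge 1$ one has
\[
\wte(t,s;1)=(1+s\varepsilon q^{(t-1)/2})\prod_{0\le i<(t-1)/2}(1-q^{2i});
\]
the $i=0$ factor forces this to vanish whenever $t\ge 2$, and for $t=1$ the value is $1+s\varepsilon\in\{0,2\}$. So each summand of $\Dene(1,L)=\sum_{L'}\wte(t(L'),\sgn_{n+1}(L');1)$ is nonnegative.

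Now evaluate the functional equation of Theorem~\ref{thm:ikeda} at $X=1$: this gives $(1-w^\varepsilon(L))\Dene(1,L)=0$, so the hypothesis $w^\varepsilon(L)=-1$ forces $\Dene(1,L)=0$. Since the summands are nonnegative, each one must vanish individually, and hence the $\ell(L'/L)$-weighted second sum vanishes term by term. The main (and only substantive) step is this nonnegativity observation, which upgrades vanishing-on-average into termwise vanishing; the rest is routine differentiation.
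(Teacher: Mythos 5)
Your proof is correct, and it supplies the details the paper leaves implicit: the corollary is stated without proof in the paper, the intent being exactly this differentiation of Theorem~\ref{thm: Den(X)} combined with the functional equation of Theorem~\ref{thm:ikeda}. The one non-routine point — that the cross term $\sum_{L'}\ell(L'/L)\wte(t(L'),\sgn_{n+1}(L');1)$ vanishes — is handled cleanly by your observation that each $\wte(t,s;1)\ge 0$ (zero for $t\ge 2$, in $\{0,1,2\}$ for $t\le 1$), so that $\Dene(1,L)=0$ forces every summand of $\Dene(1,L)$ to vanish termwise, and hence the $\ell(L'/L)$-weighted sum too.
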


\begin{remark}[A cancellation law for $\pDene(L)$]
    For  a self-dual $O_F$-lattice $M$ of rank $r$ and an $O_F$-quadratic lattice $L^\flat$ of rank $n-r$, by Theorem \ref{thm: Den(X)} we have
$$\Dene(X,L^\flat\obot M)= \Den^{\varepsilon'}(X,L^\flat)$$ for the unique $\varepsilon'\in\{\pm1\}$ such that $H^{\varepsilon'}_{n-r+1}\obot M\cong H^\varepsilon_{n+1}$.  Therefore we obtain a cancellation law:
\begin{align}
\label{eq:cancel den}
 \pDene(L^\flat\obot M)=\pDen^{\varepsilon'}(L^\flat).
\end{align}
\end{remark}

\begin{remark}[Relation with local Whittaker functions]
  \label{sec:relation-with-local} Let $\Lambda=H_m^\varepsilon$ be a self-dual quadratic $O_F$-lattice of rank $m$. Let $L$ be a quadratic $O_F$-lattice of rank $n$. Let $T=((x_i, x_j))_{1\le i,j\le n}$ be the fundamental matrix of an $O_F$-basis $\{x_1,\ldots, x_n\}$ of $L$, an $n\times n$ symmetric matrix over $F$. Associated to the standard Siegel--Weil section of the characteristic function $\varphi_0=\mathbf{1}_{\Lambda^n}$ and the unramified additive character $\psi: F\rightarrow \mathbb{C}^\times$, there is a local (generalized) Whittaker function $W_T(g, s, \varphi_0)$ (see \S\ref{sec:four-coeff-deriv}, \S\ref{sec:incoh-eisenst-seri} for the precise definition). By \cite[Proposition A.6]{Kudla1997a}, when $g=1$, it satisfies the interpolation formula for integers $s=k\ge0$ (notice $\gamma(V)=1$ in the notation there), $$W_T(1, k,\varphi_0)=\Den(\Lambda \obot H_{2k}^+, L).$$ Assume $m=n+1$.  By Definition \ref{def:Dene}, it follows that its value at $s=k$ is $$W_T(1, k, \varphi_0)=\Den(H_{n+1+2k}^\varepsilon, L)=\Den^\varepsilon(q^{-k}, L)\cdot \Nor^\varepsilon(q^{-k},L),$$ and when $w^\varepsilon(L)=-1$, its derivative at $s=0$ is $$W_T'(1, 0, \varphi_0)=\pDen^\varepsilon(L)\cdot \Nor^\varepsilon(1, L)\cdot\log q.$$ Plugging in (\ref{eq:norpoly}), we obtain \begin{align}
W_T(1, 0, \varphi_0)&=\Den^\varepsilon(1,L)\cdot (1-\sgn(H_{n+1}^\varepsilon)\cdot q^{-(n+1)/2})\prod_{1\le i< (n+1)/2}(1-q^{-2i}),\label{eq:localWhittaker0}\\ W_T'(1, 0, \varphi_0)&=\pDen^\varepsilon(L)\cdot(1-\sgn(H_{n+1}^\varepsilon)\cdot q^{-(n+1)/2})\prod_{1\le i< (n+1)/2}(1-q^{-2i})\cdot \log q.    \label{eq:localWhittaker1}
\end{align}
\end{remark}

\subsection{Even corank case}

\begin{definition}
Let $L$ be a quadratic $O_F$-lattice of rank $n$. Define the \emph{normalizing polynomial} (in the even corank case) $\Nore(X,L)\in \mathbb{Q}[X]$ to be $$\Norfe(X,L)=(1-\sgn(H_n^\varepsilon)\cdot q^{-n/2}X)(1-\varepsilon\chi(L)X)^{-1}\prod\limits_{0\le i<n/2}(1-q^{-2i}X^2).$$ Notice that the dependence of $\Nore(X,L)$ on $L$ is only its rank $n$ and $\chi(L)$. 
\end{definition}

\begin{definition}\label{def:denfe}
 Define the (normalized) \emph{local Siegel series} of $L$ (in the even corank case) to be the polynomial $\Denfe(X, L)\in \mathbb{Z}[X]$ such that for all $k\ge0$, $$\Denfe(q^{-k},L):=\frac{\Den(H_{n+2k}^\varepsilon, L)}{\Norfe(q^{-k},L)}.$$
\end{definition}

\begin{definition}
  Let $t\ge1$, $s\in \{\pm1,0\}$, $\chi\in\{\pm1,0\}$, $\varepsilon\in\{\pm1\}$ such that if $t$ is odd then $s=0$. Define weight polynomials $$\wtf(t, s,\chi; X):=(1+s\cdot q^{t/2}X)(1-\chi \cdot X)\prod_{0< i<
    t/2}(1-q^{2i}X^2), \quad \wtfe(t, s,\chi; X):=\wtf(t, 
  s,\chi; \varepsilon X).$$ By convention, define $\wtfe(0, s,\chi; X)=1$. 
\end{definition}

Similar to Theorem \ref{thm: Den(X)}, we have the following explicit formula for $\Denfe(X,L)$, generalizing \cite[Corollary 3.16]{CY} (the case $n$ is even and $\varepsilon=+1$).

\begin{theorem}\label{thm: Denf(X)}
  Let $L$ be a quadratic $O_F$-lattice of rank $n$. Then $$\Denfe(X,L)=
  \sum_{L\subset L'\subset L'^\vee} (q^{1/2}X)^{2\ell( L'/L)}\cdot \wtfe(t(L'), \sgn_{n}(L'),\chi(L); X).$$
\end{theorem}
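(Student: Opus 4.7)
The plan is to parallel the proof of Theorem~\ref{thm: Den(X)}: I will apply the Cho--Yamauchi formula of Lemma~\ref{lem:localden} with $m = n+2k$, then divide by the normalizing polynomial $\Norfe(q^{-k}, L)$, and check that the resulting ratio matches $\sum_{L'} (q^{1/2} q^{-k})^{2\ell(L'/L)}\,\wtfe(t(L'), \sgn_n(L'), \chi(L); q^{-k})$ for every integer $k \ge 0$. Since $\Denfe(X, L)$ is a polynomial in $X$ (by Definition~\ref{def:denfe}) and the claimed right-hand side visibly is as well, agreement on infinitely many values $X = q^{-k}$ will force the identity as polynomials.

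Specializing Lemma~\ref{lem:localden} to $m = n+2k$ and using that both $\sgn(H_m^\varepsilon)$ and $\sgn_m(L')$ depend only on the parity of $m$---so they equal $\sgn(H_n^\varepsilon)$ and $\sgn_n(L')$ respectively---yields
\begin{align*}
\Den(H_{n+2k}^\varepsilon, L) &= \sum_{L \subseteq L' \subseteq L'^\vee} q^{(1-2k)\ell(L'/L)}(1 - \sgn(H_n^\varepsilon) q^{-n/2-k}) \\
&\qquad \cdot (1 + \varepsilon \sgn_n(L') q^{(t(L')-2k)/2}) \prod_{(2k-t(L'))/2 < i < n/2 + k} (1 - q^{-2i}).
\end{align*}
Dividing by $\Norfe(q^{-k}, L)$, the factor $(1-\sgn(H_n^\varepsilon) q^{-n/2-k})$ cancels. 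After the substitution $j = i + k$, the product $\prod_{0 \le i < n/2}(1-q^{-2i-2k})$ in $\Norfe$ cancels against the tail of the numerator product, leaving precisely $\prod_{0 < i < t(L')/2}(1 - q^{2i-2k})$. Multiplying by the factor $(1 - \varepsilon\chi(L) q^{-k})$ inherited from the $(1-\varepsilon\chi(L) q^{-k})^{-1}$ in $\Norfe$, and assembling with the sign factor $(1 + \varepsilon\sgn_n(L') q^{t(L')/2 - k})$, I will recognize the product as exactly $\wtfe(t(L'), \sgn_n(L'), \chi(L); q^{-k})$. The identity then follows from $q^{(1-2k)\ell(L'/L)} = (q^{1/2}X)^{2\ell(L'/L)}\bigl|_{X = q^{-k}}$.

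The main step requiring care is the bookkeeping of product ranges when $t(L')$ is odd: then $(2k-t(L'))/2$ is a half-integer, the relevant strict inequality collapses to $i \ge k - (t(L')-1)/2$, and the sign $\sgn_n(L')$ vanishes automatically (consistent with the convention $s = 0$ for $t$ odd in the definition of $\wtfe$). Similarly, when $\chi(L) = 0$ (i.e., $\val(L)$ is odd), both the inverse factor in $\Norfe$ and the corresponding factor in $\wtfe$ become trivial. These degenerate cases are handled uniformly by the definitions and present no serious obstacle; once the shape of $\Norfe$ and $\wtfe$ is matched against Lemma~\ref{lem:localden}, the argument is essentially a direct mechanical computation.
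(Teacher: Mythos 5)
Your proposal follows exactly the paper's argument: specialize Lemma~\ref{lem:localden} to $m=n+2k$, observe the parity identifications $\sgn(H_m^\varepsilon)=\sgn(H_n^\varepsilon)$ and $\sgn_m(L')=\sgn_n(L')$, divide by $\Norfe(q^{-k},L)$, reindex the resulting product to $\prod_{0<i<t(L')/2}(1-q^{2i-2k})$, and recognize the outcome as $(q^{1/2}X)^{2\ell(L'/L)}\wtfe(t(L'),\sgn_n(L'),\chi(L);X)$ at $X=q^{-k}$. The bookkeeping for odd $t(L')$ and $\chi(L)=0$ is handled the same way the paper's conventions do, so this is essentially the paper's proof.
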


\begin{proof}
Take $m=n+2k$ in Lemma \ref{lem:localden}, we obtain  \begin{align*}
    \Den(H_{n+2k}^\varepsilon, L)&=\sum_{L\subseteq L'\subseteq L'^\vee}q^{(1-2k)\ell(L'/L)}\cdot(1-\sgn(H_{n}^\varepsilon)\cdot q^{-n/2-k})\\ &\quad\cdot(1+\varepsilon\sgn_n(L')\cdot q^{t(L')/2-k})\prod_{-t(L')/2+k< i< n/2+k}(1-q^{-2i}).
  \end{align*}
  Taking the ratio we obtain
  \begin{align*}
    &\phantom{=}\frac{\Den(H_{n+1+2k}^\varepsilon, L)}{\Norfe(q^{-k}, L)} \\
    &=\sum_{L\subseteq L'\subseteq L'^\vee}q^{(1-2k)\ell(L'/L)}(1+\varepsilon\sgn_{n}(L')\cdot q^{t(L')/2-k})(1-\varepsilon \chi(L)\cdot q^{-k})\prod_{-t(L')/2+k< i< k }(1-q^{-2i})\\
    &=\sum_{L\subseteq L'\subseteq L'^\vee}q^{(1-2k)\ell(L'/L)}(1+\varepsilon\sgn_{n}(L')\cdot q^{t(L')/2-k})(1-\varepsilon \chi(L)\cdot q^{-k})\prod_{0< i< t(L')/2 }(1-q^{2i-2k})\\
    &=\sum_{L\subseteq L'\subseteq L'^\vee} (q^{1/2}X)^{2\ell( L'/L)}\cdot \wtfe(t(L'), \sgn_{n}(L'),\chi(L);X)\bigg|_{X=q^{-k}}.
  \end{align*}
  This completes the proof.
\end{proof}

We have the following functional equation for $\Denfe(X,L)$.

\begin{theorem}[Ikeda]\label{thm:ikedaeven}
  Let $L$ be a quadratic $O_F$-lattice of rank $n$. Then $$\Denfe(X, L)=(q^{1/2}X)^{2\lfloor \frac{\val(L)}{2}\rfloor}\cdot \Denfe\left(\frac{1}{qX}, L\right).$$
\end{theorem}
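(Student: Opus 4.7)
The plan is to deduce the stated functional equation from the classical functional equation for local Siegel series due to Ikeda, following the same strategy used for Theorem \ref{thm:ikeda} in the odd corank case. Ikeda's original paper \cite{Ikeda2017} establishes the analogous result for $n$ even and $\varepsilon=+1$; the extension to all $\varepsilon\in\{\pm1\}$ proceeds by exactly the same argument, applied to the lattice $H_{n+2k}^{-}$ in place of $H_{n+2k}^{+}$.

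First I would invoke the functional equation for the unnormalized polynomial $\Den(H_{n+2k}^{\varepsilon}, L)$ viewed as a polynomial in $q^{-k}$. Second I would compute the behavior of the normalizing polynomial $\Norfe(X,L)$ under the substitution $X\mapsto 1/(qX)$ directly from the product formula. Each factor $(1-q^{-2i}X^{2})$ transforms into $-q^{-2i-2}X^{-2}(1-q^{2i+2}X^{2})$, so the product $\prod_{0\le i<n/2}(1-q^{-2i}X^{2})$ satisfies a clean monomial-times-symmetry relation. The factor $(1-\sgn(H_{n}^{\varepsilon})q^{-n/2}X)$ contributes a comparable monomial correction. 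The delicate piece is the factor $(1-\varepsilon\chi(L)X)^{-1}$, whose behavior splits by the parity of $\val(L)$: when $\val(L)$ is odd we have $\chi(L)=0$ and the factor is trivial, whereas when $\val(L)$ is even $\chi(L)\in\{\pm1\}$ and the factor contributes nontrivially. This dichotomy is precisely what produces the floor $\lfloor \val(L)/2\rfloor$ rather than $\val(L)/2$ in the exponent.

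Third I would combine the two computations. Writing $\Denfe(q^{-k},L)=\Den(H_{n+2k}^{\varepsilon},L)/\Norfe(q^{-k},L)$ and plugging in, one sees that the sign and monomial coming from Ikeda's functional equation for $\Den(H_{n+2k}^{\varepsilon},L)$ are exactly cancelled against the corresponding contributions from $\Norfe$, leaving only the factor $(q^{1/2}X)^{2\lfloor\val(L)/2\rfloor}$. As a sanity check, one can also verify this directly from the explicit formula of Theorem \ref{thm: Denf(X)}: the sum is over integral overlattices $L\subseteq L'\subseteq L'^\vee$, and Ikeda's proof proceeds by exhibiting an involution (or a more subtle matching) on these lattices under which the contribution of $\wtfe(t(L'),\sgn_{n}(L'),\chi(L);X)$ at $X$ is exchanged with its contribution at $1/(qX)$ up to the predicted monomial.

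The main obstacle will be the sign tracking. Unlike Theorem \ref{thm:ikeda}, no sign $w^{\varepsilon}(L)$ appears in the functional equation, so one must verify that the Hasse-invariant and discriminant contributions from the unnormalized functional equation are \emph{exactly} cancelled by the normalization $\Norfe$. This cancellation is specific to the even corank case and reflects the fact that $H_{n+2k}^{\varepsilon}$ is already aligned with $L$ in parity and in $\chi$; in the odd corank case there is residual sign information that cannot be absorbed and that produces the nontrivial sign $w^{\varepsilon}(L)$ of \eqref{eq:signwe}. Once this cancellation is verified, the theorem follows.
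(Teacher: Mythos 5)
The paper's own proof is a one-line citation: the statement is exactly \cite[Theorem 4.1~(1)]{Ikeda2017} for $\varepsilon=+1$, and the same argument applies verbatim with $H_{n+2k}^{-}$ in place of $H_{n+2k}^{+}$, since Ikeda's proof only uses self-duality of the auxiliary lattice. Your plan instead tries to \emph{reconstruct} the result by combining an ``unnormalized functional equation'' for $\Den(H_{n+2k}^{\varepsilon},L)$ with a direct computation of how $\Norfe(X,L)$ transforms under $X\mapsto 1/(qX)$. This has two concrete gaps. First, the unnormalized functional equation you invoke is never stated, and it is not available as an independent prior input: the polynomial interpolating $\Den(H_{n+2k}^{\varepsilon},L)$ in $X=q^{-k}$ does not by itself satisfy a clean symmetry under $X\mapsto 1/(qX)$ --- the normalizing factor $\Norfe$ exists precisely to manufacture one. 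If by ``unnormalized functional equation'' you mean Ikeda's Theorem~4.1~(1) itself, the argument is circular; if you mean an older Katsurada-type identity, you must state it and carry out the matching, which is nontrivial.

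Second, the claim that $\prod_{0\le i<n/2}(1-q^{-2i}X^{2})$ ``satisfies a clean monomial-times-symmetry relation'' is not correct as stated. Your own per-factor computation $(1-q^{-2i}X^2)\mapsto -q^{-2i-2}X^{-2}(1-q^{2i+2}X^2)$ shows that the transformed product is a monomial times $\prod_{j}(1-q^{2j}X^{2})$ with $j$ running over a \emph{shifted} index set; this is a genuinely different polynomial from the original, not $\Norfe$ up to a monomial. The leading factor $(1-\sgn(H_n^\varepsilon)q^{-n/2}X)$ shifts in the same way. Thus $\Norfe$ possesses no self-symmetry of its own, and the cancellation you assert against the unnormalized functional equation would have to be verified globally term by term, which your proposal does not do. Your structural intuition --- that $\chi(L)=0$ iff $\val(L)$ is odd accounts for the floor $\lfloor\val(L)/2\rfloor$, and that the absence of a sign $w^\varepsilon(L)$ relative to Theorem~\ref{thm:ikeda} reflects an exact cancellation --- is sound, but the paper's proof bypasses all of this by simply observing that Ikeda's written argument never uses $\varepsilon=+1$ in an essential way.
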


\begin{proof}
  This is \cite[Theorem 4.1 (1)]{Ikeda2017} when $n$ is even and $\varepsilon=+1$. The same proof works in general.
\end{proof}

\begin{corollary}\label{cor:Denfe1L}
Let $L$ be a quadratic $O_F$-lattice of rank $n$. Then
  \begin{align*}
    \Denfe(1, L)&=    \sum_{L\subseteq L'\subseteq L'^\vee\atop t(L')\le2} q^{\lfloor \frac{\val(L')}{2}\rfloor}\cdot
     \begin{cases}
    1, & t(L')=0,\\
    (1-\varepsilon \chi(L) q^{-1}), & t(L')=1, \\
    (1+\varepsilon\sgn_n(L'))(1-\varepsilon \chi(L) q^{-1}), & t(L')=2.
  \end{cases}    
  \end{align*}
\end{corollary}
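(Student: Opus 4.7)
The plan is to combine the explicit formula of Theorem~\ref{thm: Denf(X)} with the functional equation of Theorem~\ref{thm:ikedaeven} so as to ``collapse'' all contributions indexed by lattices $L'$ of type $t(L') \ge 3$. Plugging $X = 1$ directly into Theorem~\ref{thm: Denf(X)} would not suffice, because the weight polynomial $\wtfe(t,s,\chi;1)$ remains nonzero for $t \ge 3$; the trick is to work at $X = q^{-1}$ instead.

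First I would evaluate the explicit formula of Theorem~\ref{thm: Denf(X)} at $X = q^{-1}$:
\begin{equation*}
\Denfe(q^{-1}, L) = \sum_{L \subseteq L' \subseteq L'^\vee} q^{-\ell(L'/L)} \, \wtfe\bigl(t(L'), \sgn_n(L'), \chi(L); q^{-1}\bigr).
\end{equation*}
The key observation is that when $t \ge 3$ the factor $\prod_{0 < i < t/2}(1 - q^{2i-2})$ contains the $i=1$ term, which equals $1 - q^0 = 0$. Hence $\wtfe(t, s, \chi; q^{-1}) = 0$ for all $t \ge 3$, and only lattices with $t(L') \in \{0,1,2\}$ contribute. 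For those, the weights evaluate explicitly (using the convention $\wtfe(0,\cdot,\cdot;X) = 1$) to
\begin{equation*}
1, \qquad (1 - \varepsilon \chi(L) q^{-1}), \qquad (1 + \varepsilon \sgn_n(L'))(1 - \varepsilon \chi(L) q^{-1})
\end{equation*}
in the cases $t = 0, 1, 2$ respectively.

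Next I would invoke the functional equation of Theorem~\ref{thm:ikedaeven} at $X = 1$, namely $\Denfe(1, L) = q^{\lfloor \val(L)/2 \rfloor} \Denfe(q^{-1}, L)$. To reconcile the power of $q$ with the exponent $\lfloor \val(L')/2 \rfloor$ appearing in the corollary, I would use the chain $L \subseteq L' \subseteq L'^\vee \subseteq L^\vee$ together with the duality identity $[L^\vee : L'^\vee] = [L' : L]$ to obtain $\val(L) = 2\ell(L'/L) + \val(L')$. In particular $\val(L) \equiv \val(L') \pmod{2}$, and a short case split on the parity gives
\begin{equation*}
q^{\lfloor \val(L)/2 \rfloor - \ell(L'/L)} = q^{\lfloor \val(L')/2 \rfloor}.
\end{equation*}
Substituting this into the sum produces exactly the formula stated in the corollary.

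There is no serious obstacle: this is a direct manipulation of the two main identities already proved. The only delicate points are the bookkeeping of the signs $\varepsilon$, $\chi(L)$, and $\sgn_n(L')$ inside $\wtfe$, and the parity argument relating $\lfloor \val(L)/2 \rfloor$ and $\lfloor \val(L')/2 \rfloor$; both are routine.
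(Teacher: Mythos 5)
Your proposal is correct and follows essentially the same route as the paper's proof: apply the functional equation (Theorem~\ref{thm:ikedaeven}) at $X=1$, evaluate the explicit formula (Theorem~\ref{thm: Denf(X)}) at $X=q^{-1}$ to kill all contributions with $t(L')\ge 3$ via the vanishing factor $1-q^0$ in the product, and absorb $q^{\lfloor \val(L)/2\rfloor} q^{-\ell(L'/L)}=q^{\lfloor \val(L')/2\rfloor}$ using $\val(L)=\val(L')+2\ell(L'/L)$. Your write-up is in fact a bit more explicit than the paper's about why the weight vanishes for $t\ge 3$ and about the parity argument, both of which the paper leaves as asserted.
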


\begin{proof}
By Theorem \ref{thm:ikedaeven}, we have $$\Denfe(1, L)=q^{\lfloor \frac{\val(L)}{2}\rfloor}\Denfe(q^{-1}, L).$$ By Theorem \ref{thm: Denf(X)},  $$\Denfe(q^{-1}, L)=\sum_{L\subset L'\subset L'^\vee} q^{-\ell( L'/L)}\cdot \wtfe(t(L'), \sgn_{n}(L'),\chi(L); q^{-1}).$$ Notice that the weight polynomial evaluates to $$\wtfe(t, s,\chi; q^{-1})=
\begin{cases}
  1, & t=0,\\
  (1-\varepsilon\chi q^{-1}), & t=1,\\
  (1+\varepsilon s)(1-\varepsilon \chi q^{-1}), & t=2,\\
  0, & t>2.
\end{cases}$$
This completes the proof since $q^{\lfloor \frac{\val(L)}{2}\rfloor}\cdot q^{-\ell(L'/L)}=q^{\lfloor \frac{\val(L')}{2}\rfloor}$.
\end{proof}

\subsection{Induction formula}

\begin{theorem}[Induction formula]\label{thm:induction}
  Let $L^\flat$ be a quadratic $O_F$-lattice of rank $n-1$ with fundamental invariants $(a_1,\cdots,a_{n-1})$. Let $L=L^\flat+\pair{x}$ and  $\wit L=L^\flat+\pair{\varpi^{-1} x}$ where $x\perp L^\flat$ with $\val(x)>a_{n-1}$. Then
  \begin{equation}
    \label{eq:inductionDen}
    \Dene(X,L)=X^2\cdot \Dene(X, \wit L)+(1-\varepsilon\chi(L^\flat)X)^{-1}(1-X^2)\cdot \Denfe(X, L^\flat).
  \end{equation}
\end{theorem}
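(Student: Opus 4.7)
The plan is to deduce \eqref{eq:inductionDen} from the unnormalized induction in Lemma~\ref{lem:naiveinduction} by dividing through by the normalizing polynomial $\Nore(q^{-k}, L)$. After clearing the denominator $(1-\varepsilon\chi(L^\flat)X)$ on the right-hand side, both sides of \eqref{eq:inductionDen} are polynomials in $X$, so it suffices to verify the identity at $X = q^{-k}$ for all integers $k \ge 0$.

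Set $m = n+1+2k$. By Lemma~\ref{lem:naiveinduction},
\begin{equation*}
  \Den(H_m^\varepsilon, L) = q^{-2k}\,\Den(H_m^\varepsilon, \wit L) + c(k)\cdot\Den(H_{m-2}^\varepsilon, L^\flat),
\end{equation*}
where $c(k) = 1-q^{-(m-1)}$ when $m$ is odd (i.e., $n$ even), and $c(k) = (1-\varepsilon q^{-m/2})(1+\varepsilon q^{-(m-2)/2})$ when $m$ is even (i.e., $n$ odd). Since $\Nore(q^{-k}, L)$ depends on $L$ only through its rank $n$, it agrees with $\Nore(q^{-k}, \wit L)$; dividing the displayed identity by $\Nore(q^{-k}, L)$ therefore produces the first term $q^{-2k}\Dene(q^{-k}, \wit L)$, and matching with \eqref{eq:inductionDen} reduces to verifying the purely algebraic identity
\begin{equation*}
  \frac{c(k)\,\Norfe(q^{-k}, L^\flat)}{\Nore(q^{-k}, L)} = \frac{1-q^{-2k}}{1-\varepsilon\chi(L^\flat)q^{-k}}.
\end{equation*}

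This identity is a direct computation split into two parity cases. When $n$ is even, $\sgn(H_{n\pm 1}^\varepsilon) = 0$, and after cancelling the common product $\prod_{1 \le i \le n/2-1}(1-q^{-2i}X^2)$ appearing in both $\Nore(X,L)$ and $\Norfe(X,L^\flat)$, the ratio $\Nore(X,L)/\Norfe(X,L^\flat)$ simplifies to $(1-q^{-n}X^2)(1-\varepsilon\chi(L^\flat)X)/(1-X^2)$, whose denominator $1-q^{-n-2k}$ at $X=q^{-k}$ matches $c(k)$. When $n$ is odd, $\sgn(H_{n\pm 1}^\varepsilon) = \varepsilon$, and the key algebraic input is the factorization
\begin{equation*}
  1-q^{-(n-1)}X^2 = (1-\varepsilon q^{-(n-1)/2}X)(1+\varepsilon q^{-(n-1)/2}X)
\end{equation*}
applied to the top $i = (n-1)/2$ factor in the product defining $\Nore(X,L)$; the factor $(1-\varepsilon q^{-(n-1)/2}X)$ then cancels the corresponding linear factor in $\Norfe(X,L^\flat)$ that comes from $\sgn(H_{n-1}^\varepsilon)=\varepsilon$, leaving the remaining quadratic factor $(1-\varepsilon q^{-(n+1)/2}X)(1+\varepsilon q^{-(n-1)/2}X)$ whose specialization at $X=q^{-k}$ is precisely the binomial form of $c(k)$. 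The argument is essentially bookkeeping; the main obstacle is the uniform handling of the two parity cases, and the only nontrivial algebraic step is the single quadratic factorization above which matches the binomial $c(k)$ in the odd-$n$ case against a product of two linear factors from the normalizing polynomials.
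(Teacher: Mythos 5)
Your proof is correct and is precisely the paper's (one-line) argument carried out in detail: evaluate at $X=q^{-k}$, invoke Lemma~\ref{lem:naiveinduction}, and verify the purely algebraic ratio identity $c(k)\,\Norfe(q^{-k},L^\flat)/\Nore(q^{-k},L)=(1-q^{-2k})/(1-\varepsilon\chi(L^\flat)q^{-k})$ by parity cases. One small wording slip: in the even-$n$ case the factor $1-q^{-n}X^2$ that matches $c(k)=1-q^{-(n+2k)}$ at $X=q^{-k}$ sits in the \emph{numerator} of $\Nore(X,L)/\Norfe(X,L^\flat)$ (equivalently, the denominator of $\Norfe/\Nore$), not its denominator as you wrote; the intended cancellation is nevertheless correct.
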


\begin{proof}
  It follows immediately from Lemma \ref{lem:naiveinduction} by evaluating both sides at $X=q^{-k}$ ($k\ge0$) using the definition of $\Dene(X,L)$ (Definition \ref{def:Dene}) and $\Denfe(X, L^\flat)$  (Definition \ref{def:denfe}).
\end{proof}

\begin{corollary}\label{cor:pDendiffgeneral}
  Assume the situation is as in Theorem \ref{thm:induction}. Assume that $w^\varepsilon(L)=-1$. Then $$\pDene(L)-\pDene(\wit L)=
  \begin{cases}
    -\varepsilon\chi(L^\flat)\cdot \Denfe(1, L^\flat), & \chi(L^\flat)\ne0\\
    2\cdot \Denfe(1, L^\flat), & \chi(L^\flat)=0.
  \end{cases}$$
\end{corollary}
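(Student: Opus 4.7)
The plan is to deduce the identity directly from the induction formula of Theorem \ref{thm:induction} by differentiating in $X$ and evaluating at $X=1$, exploiting the vanishing of $\Dene(X,\cdot)$ at $X=1$ forced by the assumption $w^\varepsilon(L)=-1$. Since $\tilde L_F = L_F$ as quadratic spaces and $\det\tilde L\equiv\det L\pmod{(F^\times)^2}$, the sign of functional equation satisfies $w^\varepsilon(\tilde L)=w^\varepsilon(L)=-1$, so by Theorem \ref{thm:ikeda} one has the two vanishings $\Dene(1,L)=\Dene(1,\tilde L)=0$, which are the key inputs.

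The computation is cleanest after first multiplying the identity of Theorem \ref{thm:induction} by $(1-\varepsilon\chi(L^\flat)X)$ to clear the denominator:
$$(1-\varepsilon\chi(L^\flat)X)\Dene(X,L) = X^2(1-\varepsilon\chi(L^\flat)X)\Dene(X,\tilde L) + (1-X^2)\Denfe(X,L^\flat).$$
Differentiating in $X$ and evaluating at $X=1$, the terms arising from differentiating $(1-\varepsilon\chi(L^\flat)X)$, respectively $X^2(1-\varepsilon\chi(L^\flat)X)$, are killed by $\Dene(1,L)=0$, respectively $\Dene(1,\tilde L)=0$. The surviving terms on the two sides give, using $\pDene=-\tfrac{d}{dX}\big|_{X=1}\Dene$ and the elementary computation $[(1-X^2)\Denfe(X,L^\flat)]'\big|_{X=1}=-2\Denfe(1,L^\flat)$, the clean master identity
$$(1-\varepsilon\chi(L^\flat))\bigl[\pDene(L)-\pDene(\tilde L)\bigr] = 2\,\Denfe(1,L^\flat).$$

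From this master identity, the two stated subcases follow by specialization. When $\chi(L^\flat)=0$, the factor $1-\varepsilon\chi(L^\flat)=1$ and one reads off $\pDene(L)-\pDene(\tilde L)=2\Denfe(1,L^\flat)$. When $\chi(L^\flat)\ne 0$ and $\varepsilon\chi(L^\flat)=-1$, the factor equals $2$ and one obtains $\pDene(L)-\pDene(\tilde L)=\Denfe(1,L^\flat)=-\varepsilon\chi(L^\flat)\Denfe(1,L^\flat)$, matching the claim.

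The main obstacle is the remaining case $\varepsilon\chi(L^\flat)=1$, where the coefficient $1-\varepsilon\chi(L^\flat)$ vanishes and the first-order identity above only yields the consistency condition $\Denfe(1,L^\flat)=0$, agreeing with the claimed formula $-\varepsilon\chi(L^\flat)\Denfe(1,L^\flat)=0$ only after this vanishing is used. To complete the proof in this case one must argue separately that $\pDene(L)-\pDene(\tilde L)=0$; the natural approach is to perform a second-order expansion of the multiplied identity at $X=1$ (equivalently, to substitute $f(X)=1+X$ into the original formula and take a second derivative), and to combine the resulting expression with the functional equation $\Denfe(X,L^\flat)=(q^{1/2}X)^{2\lfloor \val(L^\flat)/2\rfloor}\Denfe(1/(qX),L^\flat)$ of Theorem \ref{thm:ikedaeven}, which together with $\Denfe(1,L^\flat)=0$ constrains the behavior of $\Denfe(X,L^\flat)$ near $X=1$ sufficiently to force the required cancellation.
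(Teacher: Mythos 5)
Your master identity
\[
(1-\varepsilon\chi(L^\flat))\bigl[\pDene(L)-\pDene(\wit L)\bigr]=2\,\Denfe(1,L^\flat)
\]
is correct, and the first-order computation (using $\Dene(1,L)=\Dene(1,\wit L)=0$, which you justify correctly via Theorem~\ref{thm:ikeda}) is sound. Up to clearing the denominator $(1-\varepsilon\chi(L^\flat)X)^{-1}$ before differentiating, this is exactly the paper's one-line proof; in particular the cases $\chi(L^\flat)=0$ and $\chi(L^\flat)=-\varepsilon$ are settled just as in the paper.

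The genuine gap is in your treatment of the case $\varepsilon\chi(L^\flat)=+1$, which you correctly flag but do not resolve. The proposed second-order expansion combined with Theorem~\ref{thm:ikedaeven} cannot ``force the required cancellation,'' because the cancellation does not in fact hold. Differentiating the specialized identity $\Dene(X,L)=X^2\Dene(X,\wit L)+(1+X)\Denfe(X,L^\flat)$ at $X=1$ gives $\pDene(L)-\pDene(\wit L)=-\Denfe(1,L^\flat)-2\,\Denfe'(1,L^\flat)$, and $\Denfe'(1,L^\flat)$ is generically nonzero. Concretely, take $\varepsilon=-1$, $n=3$, and $L^\flat=\langle e_1,e_2\rangle$ with Gram matrix $\mathrm{diag}(\varpi,\varpi c)$ for a unit $c$ with $\chi(-c)=-1$, so $\chi(L^\flat)=-1=\varepsilon$; choosing $x\perp L^\flat$ with $\val(x)=2$ one checks $w^\varepsilon(L)=-1$. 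Then $\Denfe(X,L^\flat)=(1-qX)(1-X)$, $\Dene(X,\wit L)=1-X^2$, and $\Dene(X,L)=(1-X^2)(X^2+1-qX)$, so $\pDene(L)-\pDene(\wit L)=(4-2q)-2=2-2q$, whereas the formula of the corollary gives $-\varepsilon\chi(L^\flat)\Denfe(1,L^\flat)=0$. So in the case $\chi(L^\flat)=\varepsilon$ the stated formula is actually incorrect (as is the paper's one-line proof when read literally), and your proposed argument is trying to prove a false claim. This defect is ultimately harmless: in the paper the corollary is only invoked through Corollary~\ref{cor:pDendiff} with $L^\flat$ co-anisotropic, i.e.\ $\chi(L^\flat)\ne\varepsilon$, where your master identity (equivalently, the paper's direct differentiation) is enough. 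But as a self-contained proof of the corollary as written, the third case remains genuinely unresolved and the suggested route to close it would fail.
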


\begin{proof} Take $-\frac{\rd}{\rd X}\big|_{X=1}$ on both sides of (\ref{eq:inductionDen}).
\end{proof}

\subsection{Lemmas on quadratic lattices}

\begin{lemma}\label{lem:directsum}
  Let $W=W_1 \obot W_2$, where $W_i$ is a quadratic space over $F$ of dimension $m_i$ ($i=1,2$). Then
  \begin{altenumerate}
  \item\label{item:l1} $\det(W)=\det(W_1)\det(W_2)$.
  \item\label{item:l2} $\disc(W)=(-1)^{m_1m_2}\disc(W_1)\disc(W_2)$. In particular, $\chi(W)=\chi(W_1)\chi(W_2)$ if $m_1m_2$ is even and at least one of $\chi(W_i)$ is nonzero.
  \item\label{item:l3} $\Has(W)=\Has(W_1)\Has(W_2)(\det (W_1), \det (W_2))_F$.
  \end{altenumerate}
\end{lemma}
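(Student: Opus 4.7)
The proof will be direct computation; no part is genuinely hard, but part (iii) benefits from a careful accounting of Hilbert symbols.

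For (i), pick any $F$-bases of $W_1$ and $W_2$; their union is an $F$-basis of $W$ whose Gram matrix is block-diagonal with blocks the Gram matrices of $W_1$ and $W_2$. The determinant multiplies in the obvious way, modulo squares.

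For (ii), recall $\disc(W_i) = (-1)^{\binom{m_i}{2}} \det(W_i)$ and $\disc(W) = (-1)^{\binom{m_1+m_2}{2}} \det(W)$. The identity
\[
\binom{m_1+m_2}{2} = \binom{m_1}{2} + \binom{m_2}{2} + m_1 m_2
\]
combined with (i) yields the formula $\disc(W)=(-1)^{m_1m_2}\disc(W_1)\disc(W_2)$. For the $\chi$ assertion, when $m_1 m_2$ is even the sign disappears and $\disc(W)=\disc(W_1)\disc(W_2)$ in $F^\times/(F^\times)^2$. The quadratic residue symbol $\chi$ is multiplicative on $F^\times/(F^\times)^2$ in the sense that $\chi(ab)=\chi(a)\chi(b)$ whenever at most one of $a,b$ has odd valuation (in the remaining case both factors are $0$ but the product may be nonzero, which is precisely the case excluded by the hypothesis that at least one of $\chi(W_i)$ is nonzero).

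For (iii), choose orthogonal $F$-bases $\{e_1,\ldots,e_{m_1}\}$ of $W_1$ and $\{f_1,\ldots,f_{m_2}\}$ of $W_2$, and set $u_i=(e_i,e_i)$, $v_j=(f_j,f_j)$. The union is an orthogonal basis of $W$, whence
\[
\Has(W) = \prod_{1\le i<j\le m_1}(u_i,u_j)_F\cdot\prod_{1\le i<j\le m_2}(v_i,v_j)_F\cdot\prod_{\substack{1\le i\le m_1\\ 1\le j\le m_2}}(u_i,v_j)_F.
\]
The first two products are $\Has(W_1)$ and $\Has(W_2)$. For the cross term, bilinearity of the Hilbert symbol gives
\[
\prod_{i,j}(u_i,v_j)_F=\Bigl(\prod_i u_i,\prod_j v_j\Bigr)_F=(\det W_1,\det W_2)_F,
\]
which completes the proof. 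The only place requiring any care is the handling of the value $0$ of $\chi$ in part (ii); everything else is bookkeeping.
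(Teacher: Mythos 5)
Your proof is correct and follows essentially the same route as the paper's: block-diagonal Gram matrices for (i), the binomial identity $\binom{m_1+m_2}{2}=\binom{m_1}{2}+\binom{m_2}{2}+m_1m_2$ for (ii), and splitting the Hasse-invariant product into two ``internal'' products plus a ``cross'' product collapsed by bimultiplicativity of the Hilbert symbol for (iii). Your extra remark in (ii) — that the quadratic residue symbol $\chi$ is multiplicative precisely when not both arguments have odd valuation, so the stated hypothesis rules out the only failure case — is a correct and welcome elaboration of a point the paper leaves implicit.
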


\begin{proof}
  \begin{altenumerate}
  \item It follows from the definition.
  \item It follows from (\ref{item:l1}) and ${m_1+m_2 \choose 2}={m_1\choose 2}+{m_2 \choose 2}+ m_1m_2$.
  \item Choose an orthogonal basis $\{e_1,\ldots, e_{m_1}\}$ of $W_1$ and an orthogonal basis $\{e_{m_1+1},\ldots, e_{m_1+m_2}\}$ of $W_2$. Let $u_i=(e_i, e_i)$.  Then
    \begin{align*}
      \Has(W)&=\prod_{1\le i<j\le m_1+m_2}(u_i, u_j)_F\\
      & =\prod_{1\le i <j \le m_1}(u_i,u_j)_F\cdot \prod_{m_1+1\le i< j\le m_1+m_2}(u_i,u_j)_F\cdot \prod_{1\le i\le m_1\atop m_1+1\le j\le m_1+m_2} (u_i, u_j)_F\\ 
      &=\Has(W_1)\Has(W_2)\left(\prod\nolimits_{1\le i\le m_1}u_i,\prod\nolimits_{m_1+1\le j\le m_1+m_2} u_j\right)_F\\
      &=\Has(W_1)\Has(W_2)(\det (W_1), \det (W_2))_F.
    \end{align*}
    This completes the proof. \qedhere
  \end{altenumerate}
\end{proof}

\begin{lemma}\label{lem:latticemebed}Take $m=n+1$ and $\varepsilon\in\{\pm1\}$.
  \begin{altenumerate}
  \item\label{item:embed1} Let $L\subseteq \mathbb{V}_m^\varepsilon$ (resp. $L\subseteq H_{m,F}^\varepsilon$) be a quadratic $O_F$-lattice of rank $n$. Then the sign of functional equation \eqref{eq:signwe} $w^\varepsilon(L)=-1$ (resp. +1). In particular, a quadratic $O_F$-lattice of rank $n$ cannot be simultaneously embedded into $H_{m,F}^\varepsilon$ and $\mathbb{V}_m^\varepsilon$ as quadratic submodules.
  \item\label{item:embed2} Let $L^\flat$ be a quadratic $O_F$-lattice of rank $n-1$.   Assume that $L^\flat$ can be simultaneously embedded into $H_{m,F}^\varepsilon$ and $\mathbb{V}_{m}^\varepsilon$ as quadratic submodules. Then $\chi(L^\flat)=0$ or $\chi(L^\flat)=-\varepsilon$.
  \end{altenumerate}
\end{lemma}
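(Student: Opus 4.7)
For part~(i), the plan is to identify $w^\varepsilon(L)$ with the Hasse invariant of the ambient space. If $L$ embeds into a non-degenerate quadratic space $W$ of dimension $m=n+1$, then $W \cong L_F \obot \langle a\rangle$ for some $a \in F^\times$. Applying Lemma~\ref{lem:directsum} gives
\[
\disc(W) \equiv (-1)^{{n+1 \choose 2}} \det(L_F) \cdot a \pmod{(F^\times)^2},\qquad
\Has(W) = \Has(L_F)\cdot(\det L_F, a)_F.
\]
In both cases $W = H_{m,F}^\varepsilon$ and $W = \mathbb{V}_m^\varepsilon$ we have $\chi(W) = \varepsilon$, so $\disc(W)$ is represented by a unit $u$ with $\chi(u) = \varepsilon$; this forces $a \equiv (-1)^{{n+1 \choose 2}} u \det(L_F)$ modulo squares. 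Substituting and using the standard Hilbert-symbol identity $(x,x)_F = (x,-1)_F$, one obtains $\Has(W) = (\det L_F,\,-(-1)^{{n+1 \choose 2}} u)_F \cdot \Has(L_F) = w^\varepsilon(L)$. Since $\Has(H_{m,F}^\varepsilon) = +1$ (the space admits a self-dual lattice) and $\Has(\mathbb{V}_m^\varepsilon) = -1$ by definition, this proves (i) and the ``in particular'' assertion, as $w^\varepsilon(L)$ is intrinsic to $L$.

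For part~(ii), assume $L^\flat$ embeds in both $W_1 = H_{m,F}^\varepsilon$ and $W_2 = \mathbb{V}_m^\varepsilon$, and write $W_i \cong L^\flat_F \obot U_i$ with $U_i$ two-dimensional. Since $\chi(W_1) = \chi(W_2) = \varepsilon$, Lemma~\ref{lem:directsum} yields $\disc(U_1) \equiv \disc(U_2) \pmod{(F^\times)^2}$, and hence $\det(U_1) \equiv \det(U_2)$ as well. Combined with $\Has(W_1) = +1 \ne -1 = \Has(W_2)$, the same lemma forces $\Has(U_1) = -\Has(U_2)$, so $U_1 \not\cong U_2$.

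The key classification input is the fact (recalled in \S\ref{sec:notat-quadr-latt-1}) that every two-dimensional quadratic form over $F$ with trivial discriminant must be hyperbolic with $\Has=+1$. Thus the common class of $\disc(U_i)$ in $F^\times/(F^\times)^2$ cannot be trivial. Since $\disc(U_i) \equiv u \cdot \disc(L^\flat_F)$ with $\chi(u) = \varepsilon$, this class is trivial precisely when $\disc(L^\flat_F)$ is a unit with $\chi$-value $\varepsilon$, i.e., when $\chi(L^\flat) = \varepsilon$. Hence $\chi(L^\flat) \ne \varepsilon$, which gives $\chi(L^\flat) \in \{0,-\varepsilon\}$. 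The only real subtlety throughout is the Hilbert-symbol bookkeeping in part~(i); once $w^\varepsilon(L)$ is recognized as $\Has(W)$, both parts become direct comparisons of the invariants classifying quadratic spaces.
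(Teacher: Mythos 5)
Your proof is correct and takes essentially the same approach as the paper: in part (i) you identify $w^\varepsilon(L)$ with $\Has(W)$ by expanding the Hasse invariant of $W = L_F \obot \langle a\rangle$ via Lemma~\ref{lem:directsum} and eliminating $a$ through the discriminant constraint, exactly as the paper does (the paper works with $L_F^\perp$ rather than solving for $a$, but the Hilbert-symbol computation is identical); in part (ii) both proofs reduce to the observation that if $\chi(L^\flat) = \varepsilon$, then the two-dimensional orthogonal complements in $H_{m,F}^\varepsilon$ and $\mathbb{V}_m^\varepsilon$ have trivial discriminant and are thus both hyperbolic with $\Has = +1$, contradicting $\Has(H_{m,F}^\varepsilon) \ne \Has(\mathbb{V}_m^\varepsilon)$.
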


\begin{proof}
  \begin{altenumerate}
  \item Let $L\subseteq \mathbb{V}_m^\varepsilon$ (the case $L\subseteq H_{m,F}^\varepsilon$ is parallel). Write $\mathbb{V}_m^\varepsilon=L_F \obot L_F^\perp$. Then by Lemma \ref{lem:directsum} (\ref{item:l1}) (\ref{item:l3}) and $(a,a)_F=(a,-1)_F$ for any $a\in F^\times$, we obtain $$-1=\Has(\mathbb{V}_m^\varepsilon)=\Has(L_F)\Has(L_F^\perp)(\det(L_F),\det(\mathbb{V}_m^\varepsilon)\det(L_F))=\Has(L_F)\Has(L_F^\perp)(\det(L_F),-\det(\mathbb{V}_m^\varepsilon)).$$ The result then  follows as $\Has(L_F^\perp)=+1$ ($L_F^\perp$ is 1-dimensional).  
  \item If not, assume that $\chi(L^\flat)=\varepsilon$. Write $H_{m,F}^\varepsilon\cong L^\flat_F \obot W$ and $\mathbb{V}_m^\varepsilon\cong L_F^\flat \obot W'$, where $W$ and $W'$ are quadratic spaces over $F$ of dimension 2. By Lemma \ref{lem:directsum} (\ref{item:l3}), we have $$\Has(H_{m,F}^\varepsilon)=\Has(L_F^\flat)\Has(W)(\det (L_F^\flat), \det (W))_F,\quad \Has(\mathbb{V}_m^\varepsilon)=\Has(L_F^\flat)\Has(W')(\det (L_F^\flat), \det (W'))_F.$$ Since $\chi(H_{m,F}^\varepsilon)=\chi(\mathbb{V}_m^\varepsilon)=\varepsilon$, and $\chi(L^\flat)=\varepsilon$, we know that $\det(W)=\det(W')$ and $\chi(W)=\chi(W')=+1$ by Lemma \ref{lem:directsum} (\ref{item:l1}) (\ref{item:l2}). Since $W$ and $W'$ are of dimension 2, we know that $\Has(W)=\Has (W')=+1$. It follows that $\Has(H_{m,F}^\varepsilon)=\Has(\mathbb{V}_m^\varepsilon)$, a contradiction.     \qedhere
  \end{altenumerate}
\end{proof}

\subsection{Horizontal lattices} Take $m=n+1$ and $\varepsilon\in\{\pm1\}$ in this subsection.

\begin{definition}\label{def:horizontallattice}
Let $L^\flat\subseteq \mathbb{V}_m^\varepsilon$ be an $O_F$-lattice of rank $n-1$.
\begin{enumerate}
\item Say $L^\flat$ is \emph{co-isotropic} (in $\mathbb{V}_m^\varepsilon$), if the 2-dimensional quadratic space $(L^\flat_F)^\perp\subseteq \mathbb{V}_m^\varepsilon$ is isotropic (i.e., $\chi((L^\flat_F)^\perp)=+1$); and \emph{co-anisotropic} otherwise. By Lemma \ref{lem:directsum} (\ref{item:l2}), we know that $L^\flat$ is co-isotropic if and only if $\chi(L^\flat)=\varepsilon$.
\item Say $L^\flat$ is \emph{horizontal} (in $\mathbb{V}_m^\varepsilon$), if $L^\flat$ is integral and one of the following is satisfied:
  \begin{enumerate}
  \item $t(L^\flat)\le1$,
  \item $t(L^\flat)=2$ and $\varepsilon \sgn_{n-1}(L^\flat)=+1$.
  \end{enumerate}
\end{enumerate}
\end{definition}

\begin{definition}\label{def:horLflat}
Let $L^\flat\subseteq\mathbb{V}_m^\varepsilon$ be an $O_F$-lattice of rank $n-1$. Denote by $\Hore(L^\flat)$  the set of horizontal lattices $M^\flat\subseteq L^\flat_F$ such that $L^\flat\subseteq M^\flat$. 
\end{definition}

Such horizontal lattices parametrize the horizontal parts of the special cycle $\mathcal{Z}(L^\flat)$, hence the name (see Theorem \ref{thm:horizontal}).

\begin{corollary}\label{cor:pDendiff}
 Take $m=n+1$ and $\varepsilon\in\{\pm1\}$. Let $L^\flat\subseteq \mathbb{V}_m^\varepsilon$ be a quadratic $O_F$-lattice of rank $n-1$ with fundamental invariants $(a_1,\cdots,a_{n-1})$.  
  \begin{altenumerate}
  \item\label{item:co-isoempty} If $L^\flat$ is co-isotropic, then $\Denfe(1, L^\flat)=0$ and $\Hore(L^\flat)=\varnothing$.
  \item Assume that $L^\flat$ is co-anisotropic. Let $L=L^\flat+\pair{x}$ and  $\wit L=L^\flat+\pair{\varpi^{-1} x}$ where $x\perp L^\flat$ with $\val(x)>a_{n-1}$. If $\chi(L^\flat)\ne0$, then $$\pDene(L)-\pDene(\wit L)=\Denfe(1, L^\flat)=\sum_{M^\flat\in \Hore(L^\flat)} q^{\lfloor \frac{\val(M^\flat)}{2}\rfloor}\cdot
     \begin{cases}
    1, & t(M^\flat)=0,\\
    (1+ q^{-1}), & t(M^\flat)=1, \\
    2(1+ q^{-1}), & t(M^\flat)=2.
  \end{cases}$$
  If $\chi(L^\flat)=0$, then $$\pDene(L)-\pDene(\wit L)=2\Denfe(1,L^\flat)=2\sum_{M^\flat\in \Hore(L^\flat)} q^{\lfloor \frac{\val(M^\flat)}{2}\rfloor}\cdot
     \begin{cases}
    1, & t(M^\flat)=1, \\
    2, & t(M^\flat)=2.
  \end{cases}$$
  \end{altenumerate}
\end{corollary}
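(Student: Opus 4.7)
The plan is to derive both parts from Corollary \ref{cor:pDendiffgeneral}, the explicit formula for $\Denfe(1, L^\flat)$ in Corollary \ref{cor:Denfe1L}, and the embedding obstruction in Lemma \ref{lem:latticemebed}.

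For part (i), I will first establish $\Denfe(1, L^\flat) = 0$. By Definition \ref{def:denfe} at $k = 1$, $\Denfe(q^{-1}, L^\flat)$ is a positive multiple of $\Den(H_m^\varepsilon, L^\flat)$. Since $L^\flat \subseteq \mathbb{V}_m^\varepsilon$ is co-isotropic, $\chi(L^\flat) = \varepsilon$, and Lemma \ref{lem:latticemebed}(ii) rules out any $F$-embedding of $L^\flat$ into $H_{m,F}^\varepsilon$; hence $\Den(H_m^\varepsilon, L^\flat) = 0$, and the functional equation in Theorem \ref{thm:ikedaeven} gives $\Denfe(1, L^\flat) = q^{\lfloor \val(L^\flat)/2 \rfloor} \cdot \Denfe(q^{-1}, L^\flat) = 0$. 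To deduce $\Hore(L^\flat) = \varnothing$, I will expand $\Denfe(1, L^\flat)$ via Corollary \ref{cor:Denfe1L} (applied with $n$ replaced by $n-1$). In the co-isotropic case the weight factors $1$, $1 - q^{-1}$, and $(1 + \varepsilon \sgn_{n-1}(M^\flat))(1 - q^{-1})$ for $t(M^\flat) = 0, 1, 2$ are all nonnegative, and the $t = 1$ summand is vacuous because any $M^\flat \supseteq L^\flat$ satisfies $\chi(M^\flat) = \chi(L^\flat) = \varepsilon \ne 0$. The vanishing of the total sum forces each remaining nonnegative summand to vanish individually: no self-dual $M^\flat$ contains $L^\flat$, and no $t = 2$ lattice $M^\flat \supseteq L^\flat$ has $\sgn_{n-1}(M^\flat) = +\varepsilon$. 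This excludes every horizontal lattice containing $L^\flat$.

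For part (ii), since $L, \wit L \subseteq \mathbb{V}_m^\varepsilon$, Lemma \ref{lem:latticemebed}(i) gives $w^\varepsilon(L) = -1$, so Corollary \ref{cor:pDendiffgeneral} applies and expresses $\pDene(L) - \pDene(\wit L)$ in terms of $\Denfe(1, L^\flat)$ with prefactor $-\varepsilon\chi(L^\flat)$ when $\chi(L^\flat) \ne 0$ and prefactor $2$ when $\chi(L^\flat) = 0$. Co-anisotropy forces $\chi(L^\flat) \in \{0, -\varepsilon\}$. I will then expand $\Denfe(1, L^\flat)$ by Corollary \ref{cor:Denfe1L} and simplify in each case. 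When $\chi(L^\flat) = -\varepsilon$, one has $\varepsilon\chi(L^\flat) = -1$, so the weights become $1$, $1 + q^{-1}$, and $(1 + \varepsilon\sgn_{n-1}(M^\flat))(1 + q^{-1})$ for $t = 0, 1, 2$: the $t = 1$ summand is vacuous by $\chi$-mismatch, the $t = 0$ summand is automatically horizontal, and the $t = 2$ summand is nonzero precisely on horizontal $M^\flat$, with value $2(1 + q^{-1})$. The prefactor $-\varepsilon\chi(L^\flat) = 1$ then yields the stated formula. When $\chi(L^\flat) = 0$, the $t = 0$ summand is vacuous (a self-dual lattice has nonzero $\chi$), $t = 1$ contributes with weight $1$, and $t = 2$ contributes with weight $2$ for horizontal $M^\flat$; multiplication by the prefactor $2$ yields the stated formula.

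The main bookkeeping task will be matching the weight polynomials of Corollary \ref{cor:Denfe1L} to the stated case split while verifying that the $\chi$-mismatch and sign conditions empty the appropriate summands. The key conceptual input is Lemma \ref{lem:latticemebed}(ii), which furnishes the vanishing in the co-isotropic case and, together with Corollary \ref{cor:pDendiffgeneral}, organizes the co-anisotropic case.
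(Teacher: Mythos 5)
Your overall architecture is sound — Corollary \ref{cor:pDendiffgeneral} plus Corollary \ref{cor:Denfe1L} plus Lemma \ref{lem:latticemebed} are exactly the right tools, and the functional-equation route to $\Denfe(1,L^\flat)=0$ in part (i) is a perfectly valid (if slightly longer) variant of the paper's step. But there is a genuine error, repeated in both parts, and in part (ii) it is fatal to your derivation.

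The claim that ``the $t=1$ summand is vacuous by $\chi$-mismatch'' (and the analogous claim in part (i) that $\chi(M^\flat)=\varepsilon\ne0$ forces $t(M^\flat)\ne1$) is simply false. For $t(M^\flat)=1$ the fundamental invariants are $(0,\ldots,0,a)$ with $a\ge1$, so $\val(M^\flat)=a$, and $\chi(M^\flat)=0$ precisely when $a$ is \emph{odd}. Since $M^\flat$ and $L^\flat$ span the same $F$-space, $\chi(M^\flat)=\chi(L^\flat)$ automatically, and $\chi(L^\flat)\ne0$ only forces $a$ to be even — it does not force $a=0$. Type-$1$ lattices with $\chi\ne0$ (e.g.\ $a=2$) exist and occur in $\Hore(L^\flat)$; indeed they are the lattices responsible for the quasi-canonical lifting degrees $q^s(1+q^{-1})$, $s\ge1$, in \eqref{eq:quasicanonical} and Lemma \ref{lem:degreeprimitive}. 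In part (ii) with $\chi(L^\flat)=-\varepsilon$, your derivation therefore produces a formula with \emph{no} $t(M^\flat)=1$ contribution, which contradicts the statement you are asked to prove (it explicitly lists $t(M^\flat)=1$ with weight $1+q^{-1}$) and contradicts the correct expansion of Corollary \ref{cor:Denfe1L}, whose $t=1$ weight $1-\varepsilon\chi(L^\flat)q^{-1}=1+q^{-1}$ is nonzero. The fix is simply to delete the vacuity claim: when $\chi(L^\flat)=-\varepsilon$ all three types $t=0,1,2$ can occur, with weights $1$, $1+q^{-1}$, and $(1+\varepsilon\sgn_{n-1}(M^\flat))(1+q^{-1})$, the last of which vanishes exactly on non-horizontal $M^\flat$, giving the stated formula after multiplying by the prefactor $-\varepsilon\chi(L^\flat)=1$. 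In part (i) the same wrong vacuity assertion is harmless to the conclusion (the correct argument, which you also give, is that the total $\Denfe(1,L^\flat)$ is a sum of nonnegative terms over $\Hore(L^\flat)$ and hence its vanishing forces $\Hore(L^\flat)=\varnothing$), but the stated reason should be removed there as well. Note the paper's proof makes no such $t=1$ vacuity claim; it only observes that $t=0$ is vacuous when $\chi(L^\flat)=0$, which is the correct and only needed vacuity statement (a self-dual lattice has $\chi\ne0$).
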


\begin{proof}
  \begin{altenumerate}
  \item Since $\chi(L^\flat)=\varepsilon$, by Lemma \ref{lem:latticemebed} (\ref{item:embed2}) we know that $L^\flat$ cannot be embedded into $H_m^\varepsilon$ as a quadratic submodule. Hence $\Denfe(1, L^\flat)=0$ by Definition \ref{def:denfe}. By definition, if $t(M^\flat)=2$, then $1+\varepsilon \sgn_{n-1}(M^\flat)=2$ when $M^\flat$ is horizontal, and $0$ otherwise. Then the summation in Corollary \ref{cor:Denfe1L} can be written as over $M^\flat\in\Hore(L^\flat)$, and hence $\Denfe(1,L^\flat)=0$ implies that $\Hore(L^\flat)=\varnothing$.
  \item   Since $\chi(L^\flat)\ne\varepsilon$, we know that $$1-\varepsilon\chi(L^\flat)q^{-1}=
    \begin{cases}
      1+q^{-1}, & \chi(L^\flat)\ne0, \\
      1, &  \chi(L^\flat)=0.
    \end{cases}$$
Moreover, if $\chi(L^\flat)=0$, then $L_F^\flat$ does not admit self-dual lattices, so $t(M^\flat)\ne0$. By Lemma~\ref{lem:latticemebed} (\ref{item:embed1}), we have $w^\varepsilon(L)=-1$. The result then follows from Corollary \ref{cor:pDendiffgeneral} and Corollary \ref{cor:Denfe1L}.\qedhere
  \end{altenumerate}
\end{proof}

\begin{lemma}\label{lem:saturatedhorizontal}
Take $m=n+1$ and $\varepsilon\in\{\pm1\}$. Let $M^\flat$ be an $O_F$-lattice of rank $n-1$ which embeds simultaneously into $\mathbb{V}_m^\varepsilon$ and $H_m^\varepsilon$ as quadratic submodules. Assume that $M^\flat=M_F^\flat\cap H_{m}^\varepsilon$ (under the embedding of $M^\flat$ into $H_m^\varepsilon$). Then $M^\flat$ is horizontal (in $\mathbb{V}_m^\varepsilon$).
\end{lemma}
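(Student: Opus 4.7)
The plan is to exploit the rank-$2$ orthogonal complement $P := H_m^\varepsilon \cap (M_F^\flat)^\perp$ to pin down both the type and (when needed) the signed discriminant of $M^\flat$. Since $\dim (M_F^\flat)^\perp = m-(n-1) = 2$, the lattice $P$ is a saturated rank-$2$ sublattice of $H_m^\varepsilon$, and the self-duality of $H_m^\varepsilon$ gives the sandwich $M^\flat \oplus P \subseteq H_m^\varepsilon \subseteq M^{\flat,\vee} \oplus P^\vee$ with equal indices on both sides.

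First I would establish $t(M^\flat) \le 2$. The saturation of $M^\flat$ (by hypothesis) and of $P$ (by construction) in $H_m^\varepsilon$ implies $H_m^\varepsilon \cap M^{\flat,\vee} = M^\flat$ and $H_m^\varepsilon \cap P^\vee = P$, so the two natural projection maps $H_m^\varepsilon/(M^\flat \oplus P) \hookrightarrow M^{\flat,\vee}/M^\flat$ and $H_m^\varepsilon/(M^\flat \oplus P) \hookrightarrow P^\vee/P$ are $O_F$-linear injections. Combined with the discriminant identity $\val(M^\flat) + \val(P) = 2k$, where $q^k$ is the common index above, these inclusions force $\val(M^\flat) = \val(P) = k$ and upgrade to $O_F$-module isomorphisms $M^{\flat,\vee}/M^\flat \cong P^\vee/P$. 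Since $P$ has rank $2$, the right-hand side has at most two elementary divisors, so $t(M^\flat) = t(P) \le 2$, which already settles the case $t(M^\flat) \le 1$.

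The main obstacle is the remaining case $t(M^\flat) = 2$, where I must verify $\varepsilon \cdot \sgn_{n-1}(M^\flat) = +1$. In that case both fundamental invariants of $P$ are strictly positive, so $P_\kappa$ carries the zero form --- it is a totally isotropic $\kappa$-plane. Saturation of $M^\flat$ and $P$ promotes both $M^\flat_\kappa \hookrightarrow H_{m,\kappa}^\varepsilon$ and $P_\kappa \hookrightarrow H_{m,\kappa}^\varepsilon$ to isometric embeddings with mutually orthogonal images. Writing $M^\flat_\kappa = U_0 \obot \mathrm{Rad}$ with $U_0$ non-degenerate of dimension $n-3$, the image of $P_\kappa$ then lies in the $4$-dimensional non-degenerate subspace $U_0^\perp \subseteq H_{m,\kappa}^\varepsilon$ as a totally isotropic $2$-plane. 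A $4$-dimensional non-degenerate quadratic space over $\kappa$ containing such a plane is necessarily hyperbolic, so $\chi(U_0^\perp) = +1$. Applying Lemma \ref{lem:directsum} to $H_{m,\kappa}^\varepsilon = U_0 \obot U_0^\perp$ --- the product of the two dimensions being even --- yields $\chi(U_0) = \chi(H_{m,\kappa}^\varepsilon) = \varepsilon$. Since $\dim U_0 = n-3 \equiv n-1 \pmod{2}$, this is exactly $\sgn_{n-1}(M^\flat) = \varepsilon$, completing the plan.
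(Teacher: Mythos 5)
Your proof is correct, and it takes a genuinely different route from the paper's. The paper works directly with a chosen basis $\{e_1,\dots,e_{n+1}\}$ of $H_m^\varepsilon$ extending a standard basis of $M^\flat$: the bound $t(M^\flat)\le 2$ comes from noting that three or more positive-valuation diagonal entries would force the fundamental matrix to drop rank mod $\varpi$, and the sign condition in the type-$2$ case is obtained by splitting off the self-dual part $M_{n-3}$ and explicitly computing $\det(T')$ of the residual rank-$4$ block, observing that it reduces to a perfect square $(\det B)^2$ mod $\varpi$. Your argument instead introduces the complementary lattice $P=H_m^\varepsilon\cap (M_F^\flat)^\perp$ and exploits the self-duality of $H_m^\varepsilon$ through the glueing sandwich $M^\flat\oplus P\subseteq H_m^\varepsilon\subseteq M^{\flat,\vee}\oplus P^\vee$: the resulting $O_F$-module isomorphism $M^{\flat,\vee}/M^\flat\cong P^\vee/P$ caps the type by $\mathrm{rank}\,P=2$, and in the type-$2$ case reduction mod $\varpi$ makes $P_\kappa$ a totally isotropic plane inside the non-degenerate $4$-dimensional $U_0^\perp$, forcing $\chi(U_0^\perp)=+1$ and hence $\sgn_{n-1}(M^\flat)=\chi(U_0)=\varepsilon$ via Lemma \ref{lem:directsum}. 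Both proofs are sound and of comparable length; yours buys a more structural explanation (the discriminant-form glueing isomorphism $M^{\flat,\vee}/M^\flat\cong P^\vee/P$ is the real reason the type is bounded by $2$, and the isotropy of $P_\kappa$ is the real reason the residual $4$-dimensional space must be hyperbolic), whereas the paper's computation is more self-contained and avoids invoking the classification of quadratic forms over $\kappa$ by Witt index. One small presentational remark: you should note explicitly that $P$ is full rank in $(M_F^\flat)^\perp$ (immediate, since $H_m^\varepsilon$ is a lattice in $H_{m,F}^\varepsilon = M_F^\flat\oplus(M_F^\flat)^\perp$), as the rest of the sandwich argument relies on it.
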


\begin{proof}
To simplify notation, write $M=M^\flat$ and $H=H_m^\varepsilon$ for short.  Since $M=M_F\cap H$, we know that $H/M$ is a free $O_F$-module of rank 2. We can choose $e_n, e_{n+1}\in H$ whose images in $H/M$ form an $O_F$-basis of $H/M$. Then $H=M+\langle e_n, e_{n+1}\rangle$. Choose an orthogonal basis $\{e_1,\ldots,e_{n-1}\}$ and let $u_i=(e_i,e_i)\in F^\times$. The fundamental matrix of the $O_F$-basis $\{e_1,\ldots, e_{n+1}\}$ of $H$ has the form $$T=
  \begin{pmatrix}
    u_1 & 0& 0& (e_1,e_n) & (e_1, e_{n+1})\\
    0& u_2 & 0& (e_2,e_n) &  (e_2, e_{n+1})\\
    0&0 & \ddots & \vdots & \vdots \\
    (e_n,e_1) & (e_n,e_2) &\cdots & (e_n,e_n) &  (e_n, e_{n+1})\\
    (e_{n+1},e_1) & (e_{n+1},e_2) &\cdots & (e_{n+1},e_n) &  (e_{n+1}, e_{n+1})
  \end{pmatrix}.$$ If $t(M)\ge3$, then at least three of $u_i$'s have strictly positive valuation, and hence the rank of $T$ mod $\varpi$ is at most $n$, contradicting that $H$ is self-dual. Hence $t(M)\le2$.

  Now assume $t(M)=2$. We would like to show that $\varepsilon \sgn_{n-1}(M)=1$. Let $M_{n-3}=\langle e_1,\ldots,e_{n-3}\rangle$, $M'=\langle e_{n-2},e_{n-1}\rangle$ and $H'=M'+\langle e_n, e_{n+1}\rangle$. Then we may choose the basis $\{e_1,\ldots,e_{n+1}\}$ such that $\val(u_i)=0$ for $i=1,\ldots, n-3$, $\val(u_i)>0$ for $i=n-2,n-1$, and $M_{n-3}$ is orthogonal to $M'$ and $H'$. Then $\varepsilon \sgn_{n-1}(M)$ is equal to 
  \begin{align*}
    \chi(H)\sgn_{n-1}(M)&=\chi(H' \obot M_{n-3})\sgn_{n-1}(M'\obot M_{n-3})\\
    &=\chi(H')\sgn_{2}(M') \qquad (\text{as } M_{n-3} \text{ is self-dual})\\
    &=\chi(H') \qquad (\sgn_2(M')=1 \text{ as } M' \text{ has rank 2 and type 2}).
  \end{align*}
On the other hand, the fundamental matrix of $O_F$-basis $\{e_{n-2},\ldots,e_{n+1}\}$ of $H'$ has the form $$T'=\begin{pmatrix}
    u_{n-2} & 0&  (e_{n-2},e_n) & (e_{n-2}, e_{n+1})\\
    0& u_{n-1} &  (e_{n-1},e_n) &  (e_{n-1}, e_{n+1})\\
    (e_n,e_{n-2}) & (e_n,e_{n-1}) & (e_n,e_n) &  (e_n, e_{n+1})\\
    (e_{n+1},e_{n-2}) & (e_{n+1},e_{n-1}) & (e_{n+1},e_n) &  (e_{n+1}, e_{n+1})
  \end{pmatrix}.$$ Since $\val(u_i)>0$ for $i=n-2,n-1$, we know that $$\det (T')=(\det B)^2\pmod{\varpi}$$ is a square, where $$B=
  \begin{pmatrix}
    (e_{n-2},e_n) & (e_{n-2}, e_{n+1})\\
    (e_{n-1},e_n) &  (e_{n-1}, e_{n+1})\\
  \end{pmatrix}.$$ Hence $\chi(H')=\chi((-1)^{4 \choose 2}\det T')=+1$ as desired.
\end{proof}

\section{Special cycles on GSpin Rapoport--Zink spaces}\label{sec:kudla-rapop-cycl}

In this section we take $F=\mathbb{Q}_p$. From now on we fix $m=n+1\ge3$ and $\varepsilon\in\{\pm1\}$.  To simplify notation we will suppress the superscript $\varepsilon$ and the subscripts $m$ and $n$ when there is no confusion (see Convention \S\ref{sec:conventions}). Let $V=V_m^\varepsilon$ be a self-dual quadratic $O_F$-lattice of rank $m$ with $\chi(V)=\varepsilon$.{\footnote{So $V\simeq H_{m}^\varepsilon$ as quadratic $O_F$-lattices. We use the symbol $V$ to emphasize its role on the geometric side.}

\subsection{GSpin Rapoport--Zink spaces $\RRZ_G$} \label{sec:gspin-rapoport-zink}
 Associated to $V$ we have a \emph{local unramified Shimura-Hodge data} $(G, b, \mu, C)$ (in the sense of \cite[Definition 2.2.4]{Howard2015}) constructed in \cite[Proposition 4.2.6]{Howard2015}, where $G=\GSpin(V)$, $b\in G(\Fb)$ is a basic element, $\mu: \mathbb{G}_m\rightarrow G$ is a certain cocharacter, and $C=C(V)$ is the Clifford algebra of $V$ (which has rank $2^m$). See \cite[\S4.1]{Howard2015} or \cite[\S2.1]{Li2018} for a review on GSpin groups. Let $D=\Hom_{O_F}(C, O_F)$ be the linear dual of $C$. By \cite[Lemma 2.2.5]{Howard2015}, this local unramified Shimura-Hodge data gives rise to a (unique up to isomorphism) $p$-divisible group $\mathbb{X}$ over $\kb$ whose contravariant Dieudonn\'e module $\mathbb{D}(\mathbb{X})(\OFb)$ is given by $D_{\OFb}$ with Frobenius $\Fr=b\circ \sigma$. Moreover, the Hodge filtration $\Fil^1\mathbb{D}(\mathbb{X})(\kb)\subseteq D_\kb$ is induced by a conjugate of $\mu_\kb$.

Let $(s_\alpha)_{\alpha\in I}$ be a finite set of tensors $s_\alpha$ in the total tensor algebra $C^\otimes$ cutting out $G$ from $\GL(C)$. Then we obtain tensors $t_{\alpha,0}=s_\alpha \otimes 1\in (D^\otimes)_{\OFb}=\mathbb{D}(\mathbb{X})(\OFb)^{\otimes}$, which are $\Fr$-invariant elements of $\mathbb{D}(\mathbb{X})(\OFb)^{\otimes}[1/p]$. Each $t_{\alpha,0}$ induces a \emph{crystalline Tate tensor} $t_{\alpha,0}$  on $\mathbb{X}$. We recall that a crystalline Tate tensor on a $p$-divisible group $X$ over $\Spec R$, where $R\in \ansm$, is a morphism of crystals $t: \mathbf{1}:=\mathbb{D}(F/O_F)\rightarrow \mathbb{D}(X)^\otimes$  such that $t(R): \mathbf{1}(R)\rightarrow \mathbb{D}(X)(R)^{\otimes}$ is compatible with Hodge filtrations, and the induced morphisms of isocrystals $t: \mathbf{1}[1/p]\rightarrow \mathbb{D}(X)^{\otimes}[1/p]$ is $\Fr$-equivariant.

Associated to the local unramified Shimura-Hodge data, we have a \emph{GSpin Rapoport--Zink space} $\RRZ_G=\RRZ(G, b, \mu, C)$ of Hodge type (\cite[\S 4.3]{Howard2015}, see also \cite{Kim2018}) parametrizing $p$-divisible groups with crystalline Tate tensors deforming $(\mathbb{X}, (t_{\alpha,0})_{\alpha\in I}))$. More precisely, it is a formal scheme over $\Spf \OFb$ representing the functor sending $R\in \ansm$ to the set of isomorphism classes of tuples $(X, (t_\alpha)_{\alpha\in I}, \rho)$, where
\begin{itemize}
\item $X$ is a $p$-divisible group over $\Spec R$.
\item $(t_\alpha)_{\alpha\in I}$ is a collection of crystalline Tate tensors of $X$. 
\item $\rho: \mathbb{X} \otimes_\kb R/J \rightarrow X \otimes_{R} R/J$ is a framing, i.e., a quasi-isogeny such that each $t_\alpha$ pulls back to $t_{\alpha,0}$ under $\rho$, where $J$ is some ideal of definition of $R$ such that $p\in J$.
\end{itemize}
The tuple $(X, (t_\alpha), \rho)$ is required to satisfy additional assumptions \cite[Definition 2.3.3 (ii), (iii)]{Howard2015}.

The GSpin Rapoport--Zink space $\RRZ_G$ is formally locally of finite type and formally smooth of relative dimension $n-1$ over $\Spf \OFb$ (\cite[Theorem B]{Howard2015}). 

\subsection{Space of special quasi-endomorphisms $\mathbb{V}=\mathbb{V}_m^\varepsilon$}\label{sec:space-special-quasi}

The inclusion $V\subseteq C^\mathrm{op}$ (where $V$ acts on $C$ via right multiplication) realizes $V\subseteq \End_{O_F}(D)$ as special endomorphisms of $D$. Tensoring with $\Fb$ gives a subspace $$V_\Fb\subseteq \End_{\Fb}(D_\Fb).$$ Define the $\sigma$-linear operator $\Phi=\bar b\circ\sigma$ on $V_\Fb$, where $\bar b\in \SO(V)(\Fb)$ is the image of $b\in G(\Fb)$ under the natural quotient map (the standard representation) $G=\GSpin(V)\rightarrow \SO(V)$. Then $(V_\Fb,\Phi)$ is an isocrystal. 
The $\Phi$-fixed vectors form a distinguished $F$-vector subspace $$V_\Fb^\Phi\subseteq \Endo(\mathbb{X}):=\End(\mathbb{X})[1/p],$$ called \emph{special quasi-endomorphisms} of $\mathbb{X}$. The restriction of the quadratic form to $V_\Fb^\Phi$ satisfies $x\circ x=(x, x)\cdot \id_\mathbb{X}$ for $x\in V_\Fb^\Phi$, and we have an isomorphism of quadratic spaces over $F$ (\cite[\S 4.3.1]{Howard2015}) $$\mathbb{V}=\mathbb{V}_m^\varepsilon\xrightarrow{\sim}V_\Fb^\Phi.$$

\subsection{The projectors $\bpi_\crys$ and the crystal $\mathbf{V}_{\crys}$}\label{sec:proj-bpi-bpi_crys} The left action of $V$ on $C=C(V)$ gives a $G=\GSpin(V)$-equivariant embedding $V\hookrightarrow\End(C):=\End_{O_F}(C)$, where $G$ acts on $\End(C)$ via $g.f=gfg^{-1}$ for any $g\in G$ and $f\in \End(C)$. We identify $V\subseteq \End(C)$ via this embedding. Then $$f\circ f=(f,f)\cdot \id_C,\quad f\in V.$$ The non-degenerate symmetric bilinear form $(\ , \ )$ on $\End(C)$ defined by $$(f_1, f_2):=2^{-m}\tr(f_1\circ f_2),\quad f_1,f_2\in \End(C)$$ extends $(\ , \ )$ on $V$. Let $\{e_1,\ldots, e_m\}$ be an orthogonal $O_F$-basis of $V$.  Define $$\bpi: \End(C)\rightarrow \End(C),\quad \bpi(f):=\sum_{i=1}^m\frac{(f,e_i)}{(e_i,e_i)}e_i.$$ It is clear that $\bpi$ is an idempotent with image $V\subseteq \End(C)$ and $G$ stabilizes $\bpi$ (cf. \cite[Lemma 1.4]{MadapusiPera2016}). Thus we can choose the list of tensors $(s_\alpha)_{\alpha\in I}$ in $C^\otimes$ cutting out $G$ from $\GL(C)$ (\S\ref{sec:gspin-rapoport-zink})  to include the projector $\bpi$. In this way we obtain a crystalline Tate tensor $\bpi_{\crys,0}$ on the framing object $\mathbb{X}$. By construction, it induces a projector of crystals $$\bpi_{\crys,0}: \End(\mathbb{D}(\mathbb{X}))\rightarrow \End(\mathbb{D}(\mathbb{X})),$$ whose image $\mathbf{V}_{\crys,0}:=\im(\bpi_{\crys,0})$ is a crystal of $\mathcal{O}^\crys_{\kb/\OFb}$-modules of rank $m$. For any surjection $R\rightarrow \kb$ in $\alg$ whose kernel admits divided powers, we have $\mathbf{V}_{\crys,0}(R)$ a projective $R$-module of rank $m$. It is equipped with a non-degenerate symmetric $R$-bilinear form $(\ ,\  )$ induced from that on $\End(\mathbb{D}(\mathbb{X})(R))$. Moreover, the $\kb$-space $\mathbf{V}_{\crys,0}(\kb)$ is equipped with a Hodge filtration defined by $$\Fil^1\mathbf{V}_{\crys,0}(\kb):=\mathbf{V}_{\crys,0}(\kb)\cap\Fil^1\End(\mathbb{D}(\mathbb{X})(\kb)),$$ which is an isotropic $\kb$-line under $(\ ,\ )$. By construction, the F-isocrystal $\mathbf{V}_{\crys,0}(\OFb)[1/p]$ can be identified with $(V_{\Fb},\Phi)$.

 By \cite[Theorem 4.9.1]{Kim2018} we obtain from $\bpi$ a universal crystalline Tate tensor $\bpi_\crys$ on the universal $p$-divisible group $X^\mathrm{univ}$ over $\RRZ_G$, which induces a projector of crystals $$\bpi_\crys: \End(\mathbb{D}(X^\mathrm{univ}))\rightarrow \End(\mathbb{D}(X^\mathrm{univ}))$$ whose image $\mathbf{V}_\crys:=\im(\bpi_\crys)$ is a crystal of $\mathcal{O}_{\RRZ_G/\OFb}^\crys$-modules of rank $m$.

More generally, for any $S\in\alg$ and any $z\in \RRZ_G(S)$, we similarly have a projector of crystals $$\bpi_{\crys,z}: \End(\mathbb{D}(X_z))\rightarrow \End(\mathbb{D}(X_z)),$$ whose image $\mathbf{V}_{\crys,z}:=\im(\bpi_{\crys,z})$ is a crystal of $\mathcal{O}_{S/\OFb}^\crys$-modules of rank $m$. Here  $X_z$ denotes the $p$-divisible group over $S$ obtained by the base change of $X^\mathrm{univ}$ to $z$. Similarly, for any surjection $R\rightarrow S$ in $\alg$ whose kernel admits divided powers, we have $\mathbf{V}_{\crys,z}(R)$ a projective $R$-module of rank $m$. It is equipped with a non-degenerate symmetric $R$-bilinear form $(\ ,\  )$. The projective $S$-module  $\mathbf{V}_{\crys,z}(S)$ is equipped with a Hodge filtration $\Fil^1\mathbf{V}_{\crys,z}(S)$, which is an isotropic $S$-line. 

\subsection{The group $J$}\label{sec:group-j}

Let $J=J_b:=\GSpin(\mathbb{V})$. It is a reductive group over $F$ and an inner form of $G=\GSpin(V)$ (as $b$ is basic). Then $$J(F)=\{g\in G(\Fb): gb=b\sigma(g)\}$$ is the $\sigma$-centralizer of $b$, which acts on the framing object $(\mathbb{X}, (t_{\alpha,0}))$ via $J(F)\subseteq \Endo(\mathbb{X})^\times$ (\cite[\S 4.3.4]{Howard2015}), and hence acts on $\RRZ_G$.

\subsection{Connected GSpin Rapoport--Zink spaces $\mathcal{N}=\mathcal{N}_n^\varepsilon$}\label{sec:conn-gspin-rapop}

We may choose one of the tensor $(s_\alpha)$ to be a non-degenerate symplectic form $\psi$ on $C$ such that $G$ is a subgroup of $\mathrm{GSp}(C, \psi)$. Then $\psi$ induces a principal polarization $\lambda_0: \mathbb{X}\xrightarrow{\sim}\mathbb{X}^\vee$, and we have an associated symplectic Rapoport--Zink space $\RRZ(\mathbb{X},\lambda_0)$ parameterizing deformations of $(\mathbb{X},\lambda_0)$ as considered in \cite{RZ96}. Denote by $(X^\mathrm{univ}, \lambda^\mathrm{univ})$ the universal object over $\RRZ(\mathbb{X},\lambda_0)$. Then we have a closed immersion $\RRZ_G\hookrightarrow \RRZ(\mathbb{X},\lambda_0)$, and we still denote by $(X^\mathrm{univ}, \lambda^\mathrm{univ})$ the restriction of the universal object to $\RRZ_G$. The universal quasi-isogeny $\rho^\mathrm{univ}$ respects the polarizations $\lambda_0$ and $\lambda^\mathrm{univ}$ up to a scalar, and hence Zariski locally on $\RRZ_G$ we have  $\rho^{\mathrm{univ},*}(\lambda^\mathrm{univ})=c(\rho^{\mathrm{univ}})^{-1}\lambda_0$ for some $c(\rho^\mathrm{univ})\in F^\times$. We have a decomposition of $\RRZ_G$ into connected components (\cite[Theorem D (i)]{Howard2015}   $$\RRZ_G=\bigsqcup_{\ell\in \mathbb{Z}}\RRZ_G^{(\ell)},$$ where $\val(c(\rho^\mathrm{univ}))=\ell$ on $\RRZ_G^{(\ell)}$. Each $g\in J(F)$ restricts to an isomorphism $\RRZ_G^{(\ell)}\xrightarrow{\sim}\RRZ_G^{(\ell+\val(\eta(g)))}$, where $\eta: J\rightarrow \mathbb{G}_m$ is the spinor norm. The surjectivity of $\eta: J(F)\rightarrow F^\times$ implies that $\RRZ_G^{(\ell)}$ ($\ell\in \mathbb{Z}$) are all (non-canonically) isomorphic to each other.

\begin{definition}\label{def:conn-gspin-rapop-1}
Define the \emph{connected GSpin Rapoport--Zink space} $\mathcal{N}=\mathcal{N}_{n}^{\varepsilon}:=\RRZ_G^{(0)}$, a connected component of $\RRZ_G$, which has (total) dimension $m-1=n$ (hence the notation).  
\end{definition}

The space $\mathcal{N}$ is related to the Rapoport--Zink space of abelian type associated to the group $\SO(V)$ (see for example \cite[\S8.1]{She20}). We still denote by $(X^\mathrm{univ}, \lambda^\mathrm{univ})$ the restriction of the universal object to $\mathcal{N}$. In particular, on $\mathcal{N}$ we have $\rho^{\mathrm{univ},*}(\lambda^\mathrm{univ})=c(\rho^{\mathrm{univ}})^{-1}\lambda_0$ for some $c(\rho^{\mathrm{univ}})\in O_F^\times$.

\begin{remark}
 When $m$ is odd, scaling the quadratic form gives an isomorphism $\GSpin(V_m^+)\simeq \GSpin(V_m^-)$, which induces an isomorphism  $\mathcal{N}_{n}^+\simeq\mathcal{N}_{n}^-$. In other words, $\mathcal{N}_{n}^\varepsilon$ is independent of $\varepsilon\in\{\pm1\}$ when $m$ is odd, and we sometimes simply write $\mathcal{N}_n$ in this case.
\end{remark}

\begin{example}\label{exa:smallm} When $m$ is small, the exceptional isomorphisms between $\GSpin(V)$ and other classical groups induces isomorphisms between the space $\mathcal{N}$ and several classical formal moduli spaces (by for example \cite[p. 1215]{BP20}):
  \begin{altenumerate}
  \item When $m=n+1=3$, we have an exceptional isomorphism $G=\GSpin(V_3^\varepsilon)\simeq\GL_2$ as reductive groups over $O_F$, and $\mathcal{N}_2\simeq\Spf \OFb[[t]]$ is isomorphic to the Lubin--Tate deformation space of the formal group $\mathbb{E}$ of dimension 1 and height 2 over $\kb$. 
  \item When $m=n+1=4$  and $\varepsilon=+1$, we have an exceptional isomorphism $G=\GSpin(V_4^+)\simeq \GL_2\times_{\mathbb{G}_m} \GL_2$ as reductive groups over $O_F$, and $\mathcal{N}_3^+\simeq \Spf \OFb[[t_1,t_2]]$ is isomorphic to the product of two copies of Lubin--Tate deformation spaces $\mathcal{N}_2$ over $\Spf \OFb$.
  \item When $m=n+1=4$ and $\varepsilon=-1$, we have an exceptional isomorphism $\GSpin(V_4^-)\simeq \GL_{2,O_E}^{\det\in O_F^\times}$ (where $E/F$ is the unramified quadratic extension), and $\mathcal{N}_3^-$ is isomorphic to the formal moduli space $\mathcal{M}^\mathrm{HB}$ of principally polarized supersingular $p$-divisible groups of dimension 2 and height 4 with a special $O_E$-action defined in \cite[\S2]{Terstiege2011}. The space $\mathcal{N}_3^-$ appears in the $p$-adic uniformization of the supersingular locus of Hilbert modular surfaces at a good inert place $p$.
  \item When $m=n+1=5$, we have an exceptional isomorphism $\GSpin(V_5^\varepsilon)\simeq \GSp_4$ as reductive groups over $O_F$, and $\mathcal{N}_4$ is isomorphic to the formal moduli space of principally polarized supersingular $p$-divisible groups of dimension 2 and height 4. The space $\mathcal{N}_4$ appears in the $p$-adic uniformization of the supersingular locus of Siegel modular threefolds at a good place $p$.
  \end{altenumerate}
\end{example}

\subsection{Special cycles $\mathcal{Z}(L)$}\label{sec:kudla-rapop-cycl-1}

\begin{definition}\label{def:kudla-rapop-cycl-2}
For any subset $L\subseteq \mathbb{V}$, define the \emph{special cycle} $\mathcal{Z}(L)\subseteq \mathcal{N}$ to be the closed formal subscheme cut out by the condition $$\rho^\mathrm{univ}\circ x \circ (\rho^\mathrm{univ})^{-1}\subseteq \End(X^\mathrm{univ})$$ for all $x\in L$.  Notice that $\mathcal{Z}(L)$ only depends on the $O_F$-linear span of $L$ in $\mathbb{V}$, and is nonempty only when this span is an integral $O_F$-lattice (of arbitrary rank) in $\mathbb{V}$. Notice that a similar definition of special cycles applies to the Rapoport--Zink space $\RRZ_G$ (instead of $\mathcal{N}$).
\end{definition}

Let $R\rightarrow S$ be a surjection in $\alg$ whose kernel admits divided powers.  By definition,  for any $z\in \mathcal{Z}(L)(S)$ and any $x\in L$,  the crystalline realization $x_{\crys,z}(R)\in \End(\mathbb{D}(X_z)(R))$ of $x\in \End(X_z)$ lies in the image of $\bpi_{\crys,z}(R)$, and hence induces an element $x_{\crys,z}(R)\in \mathbf{V}_{\crys,z}(R)$. We denote by $L_{\crys,z}(R)\subseteq \mathbf{V}_{\crys,z}(R)$ the $R$-submodule spanned by $x_{\crys,z}(R)$ for all $x\in L$.

\begin{lemma}\label{lem:GM} Let $R\rightarrow S$ be a surjection in $\alg$ whose kernel admits nilpotent divided powers.   
  \begin{altenumerate} 
  \item Let $z_0\in \mathcal{N}(\kb)$ and $\wh{\mathcal{N}}_{z_0}$ be the completion of $\mathcal{N}$ at $z_0$. Let $z\in \mathcal{N}(S)$. Then there is a natural bijection $$\{\text{Lifts } \tilde z\in \wh{\mathcal{N}}_{z_0}(R)\text{ of }z\}\isoarrow\left\{\text{isotropic $R$-lines } \Fil^1\mathbf{V}_{\crys,z}(R) \text{ lifting } \Fil^1\mathbf{V}_{\crys,z}(S)\right\}.$$
  \item Let $L\subseteq \mathbb{V}$ be an $O_F$-lattice of rank $r\ge1$. Let $z_0\in \mathcal{Z}(L)(\kb)$ and $\wh{\mathcal{Z}(L)}_{z_0}$ be the completion of $\mathcal{Z}(L)$ at $z_0$. Let $z\in \mathcal{Z}(L)(S)$.  Then  there is a natural bijection $$\{\text{Lifts } \tilde z\in \wh{\mathcal{Z}(L)}_{z_0}(R)\text{ of }z\}\isoarrow\left\{\begin{array}{c}
      \text{isotropic $R$-lines } \Fil^1\mathbf{V}_{\crys,z}(R) \text{ lifting } \Fil^1\mathbf{V}_{\crys,z}(S)\\
      \text{ and orthogonal to } L_{\crys,z}(R)\subseteq \mathbf{V}_{\crys,z}(R)
\end{array}
\right\}. $$ 
  \end{altenumerate}
\end{lemma}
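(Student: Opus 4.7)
The plan is to reduce both statements to classical Grothendieck--Messing theory and to translate the resulting conditions on the Hodge filtration of $\mathbb{D}(X_z)(R)$ into conditions on the filtration of $\mathbf{V}_{\crys,z}(R)$ using the Clifford action.

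For part~(1), classical Grothendieck--Messing theory provides a bijection between lifts $\widetilde{z}$ of $z$ as a polarized $p$-divisible group over $R$ (with polarization inherited from the symplectic embedding $G\subseteq \GSp(C,\psi)$) and isotropic Hodge filtrations $\widetilde{\Fil^1}\subseteq \mathbb{D}(X_z)(R)$ lifting $\Fil^1\mathbb{D}(X_z)(S)$. I would argue that such a lift factors through $\widehat{\mathcal{N}}_{z_0}$ exactly when the resulting Hodge filtration is of $\mu$-type --- equivalently, compatible with all the universal crystalline Tate tensors $(t_{\alpha,\crys})$ and in particular with the projector $\bpi_\crys$. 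This is the moduli description of $\RRZ_G$ recorded in \cite[Theorem 4.9.1]{Kim2018} (cf.~\cite[Proposition 4.2.11]{Howard2015}). Under the standard representation $G\to \SO(V)$, such $\mu$-filtrations on $\mathbb{D}(X_z)(R)$ are in natural bijection with isotropic $R$-lines in $\mathbf{V}_{\crys,z}(R)$: an isotropic line $\langle v\rangle\subseteq \mathbf{V}_{\crys,z}(R)$ determines the filtration $v\cdot \mathbb{D}(X_z)(R)\subseteq \mathbb{D}(X_z)(R)$ via the Clifford action, and conversely every $\mu$-type filtration arises in this manner since $\mu$ acts on $C$ with weights $0$ and $1$, the weight-$1$ part being of the form $vC$ for some isotropic $v$.

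For part~(2), I would apply Grothendieck--Messing to morphisms of $p$-divisible groups: for each $x\in L$, the special quasi-endomorphism $x$ lifts to an endomorphism over $\widetilde{z}$ if and only if its crystalline realization $x_{\crys,z}(R)\in \End(\mathbb{D}(X_z)(R))$ preserves $\widetilde{\Fil^1}$. By construction $x_{\crys,z}(R)\in \mathbf{V}_{\crys,z}(R)$ acts on $\mathbb{D}(X_z)(R)$ via Clifford multiplication. Writing $\langle v\rangle=\Fil^1\mathbf{V}_{\crys,z}(R)$ as in (1), so that $\widetilde{\Fil^1}=v\cdot \mathbb{D}(X_z)(R)$, the condition that $x$ preserves this left ideal becomes $xv\in v\cdot \mathbb{D}(X_z)(R)$. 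Using the Clifford relation $xv=-vx+2(x,v)$, the summand $-vx$ already lies in $v\cdot \mathbb{D}(X_z)(R)$, while the scalar $2(x,v)$ lies in that ideal only when it vanishes (because $v$ is a non-unit nilpotent and the left ideal $v\cdot \mathbb{D}(X_z)(R)$ contains no nonzero scalar). Running this criterion over all $x\in L$ yields the desired orthogonality of $L_{\crys,z}(R)$ to $\Fil^1\mathbf{V}_{\crys,z}(R)$.

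The main technical obstacle will be checking that the crystalline Tate tensor formalism of \cite{Howard2015, Kim2018} behaves functorially along the given nilpotent PD-thickening $R\to S$, not merely along $R\to \kb$, so that $\bpi_\crys$ and the resulting rank-$m$ crystal $\mathbf{V}_{\crys,z}$ with its non-degenerate symmetric pairing are defined on $R$ and compatible with base change to $S$. This is precisely what \cite[Theorem 4.9.1]{Kim2018} provides; granting it, both (1) and (2) reduce to the elementary Clifford-algebraic computations sketched above.
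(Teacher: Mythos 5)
Your proposal unfolds, in Clifford--algebraic terms, the argument that the paper simply outsources to \cite[Proposition 5.16]{MadapusiPera2016} and \cite[Proposition 4.3.2]{Andreatta2018}: apply Grothendieck--Messing to the polarized Kuga--Satake $p$-divisible group, identify the lifts factoring through $\wh{\mathcal{N}}_{z_0}$ with $\mu$-filtrations (equivalently, isotropic lines in $\mathbf{V}_{\crys,z}(R)$), and express the deformation condition for a special endomorphism via the Clifford relation $xv + vx = 2(x,v)$. This is the same route, written out rather than cited, and the structure is correct.

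One step is stated with a justification that is not quite the right reason. In part (2) you argue that the scalar $2(x,v)$ lies in $v\cdot\mathbb{D}(X_z)(R)$ only when it vanishes ``because $v$ is a non-unit nilpotent.'' Over a field that reasoning is fine, but over a general $R\in\alg$ a non-unit nilpotent element can generate a left (or right) submodule containing nonzero non-unit scalars; nilpotency alone does not rule this out. What actually forces $2(x,v)=0$ is that $v\cdot\mathbb{D}(X_z)(R)=\widetilde{\Fil^1}$ is a \emph{direct summand} of $\mathbb{D}(X_z)(R)$ (this is part of the Grothendieck--Messing package: lifted Hodge filtrations are locally free direct summands). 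Hence the quotient $\mathbb{D}(X_z)(R)/v\mathbb{D}(X_z)(R)$ is projective of positive rank, hence a faithful $R$-module; the Clifford computation shows $2(x,v)$ annihilates this quotient, so $2(x,v)=0$, and then $(x,v)=0$ since $p$ is odd. I would also pin down the conventions (the Dieudonn\'e module is $D=\Hom_{O_F}(C,O_F)$, not $C$, and $V$ sits in $\End(D)$ via the dual of left multiplication on $C$; the relation $xv=-vx+2(x,v)$ and the ideal $v\cdot\mathbb{D}$ should be interpreted on that side), but the Clifford relation is symmetric so the computation goes through unchanged once the identifications are fixed. Finally, the reduction in part (1) to \cite[Theorem 4.9.1]{Kim2018} and \cite[Proposition 4.2.11]{Howard2015} for the statement that a lift of the filtration compatible with the tensors gives a unique point of $\wh{\mathcal{N}}_{z_0}(R)$ is exactly the nontrivial input those references supply and cannot be avoided; you are right to flag it as the main technical point.
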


\begin{proof}
This is a consequence of Grothendieck--Messing theory. The same proof of \cite[Proposition 5.16]{MadapusiPera2016} (see also \cite[Proposition 4.3.2]{Andreatta2018}) works.
\end{proof}

\subsection{Generalized Deligne--Lusztig varieties $Y_W$}\label{sec:gener-deligne-luszt}

Let $W$ be the unique (up to isomorphism) non-split (non-degenerate) quadratic space over $\kappa=\mathbb{F}_q$ of even dimension $2(d+1)$ ($d\ge0$). The closed $\mathbb{F}_{q^2}$-subvariety of the orthogonal Grassmannian $\OGr_{d+1}(W)$ parameterizing totally isotropic subspaces $U\subseteq W$ of dimension $d+1$ such that $\dim_\kappa (U+ \sigma(U))=d+2$ has two isomorphic connected components. Define $Y_W$ to be one of the two connected components. It is a smooth projective variety of dimension $d$, and has a locally closed stratification $$Y_W=\bigsqcup_{i=0}^{d} X_{P_i}(w_i),$$ where each $X_{P_i}(w_i)$ is a generalized Deligne--Lusztig variety of dimension $i$ associated to a certain parabolic subgroup $P_i\subseteq\SO(W)$ (\cite[Proposition 3.8]{Howard2014}). The open stratum $Y_W^\circ\coloneqq X_{P_{d}}(w_{d})$ is a classical Deligne--Lusztig variety associated to a Borel subgroup $P_{d}\subseteq \SO(W)$ and a Coxeter element $w_{d}$. Each of the other strata $X_{P_i}(w_i)$ is also isomorphic to a parabolic induction of a classical Deligne--Lusztig variety of Coxeter type for a Levi subgroup of $\SO(W)$ (\cite[Proposition 2.5.1]{He2019}). For example, when $d=1$, the Deligne--Lusztig curve $Y_W\cong \mathbb{P}^1$ and its open stratum is given by $Y_W^\circ=\mathbb{P}^1-\mathbb{P}^1(\mathbb{F}_{q^2})$ (\cite[\S3.3]{Howard2014}).

\subsection{Minuscule special cycles $\mathcal{V}(\Lambda)$}\label{sec:minusc-kudla-rapop}

Let $\Lambda\subseteq\mathbb{V}=\mathbb{V}_m^\varepsilon$ be a vertex lattice. Then $W_\Lambda\coloneqq \Lambda^\vee/\Lambda$ is a $\kappa$-vector space of dimension $t(\Lambda)$, equipped with a non-degenerate quadratic form induced from $\mathbb{V}$.  By \cite[5.1.2]{Howard2015}, the type $t(\Lambda)$ of a vertex lattice $\Lambda\subseteq \mathbb{V}$ is always an even integer such that $2\le t(\Lambda) \le t_\mathrm{max}$, where
\begin{equation}
  \label{eq:tmax}t_\mathrm{max}=
  \begin{cases}
  m-2, & \text{if }m\text{ is even and } \varepsilon=+1, \\
  m-1, & \text{if }m\text{ is odd}, \\
  m, & \text{if }m\text{ is even and } \varepsilon=-1.
\end{cases}
\end{equation}
Thus we have the associated generalized Deligne--Lusztig variety $Y_{W_\Lambda}$ of dimension $t(\Lambda)/2-1$. The reduced subscheme of the minuscule special cycle $\mathcal{V}(\Lambda)\coloneqq \mathcal{Z}(\Lambda)^\mathrm{red}$ is isomorphic to $Y_{W_\Lambda,\kb}$\footnote{Notice that $\RRZ_\Lambda$ in \cite{Howard2015} and \cite{Li2018} is the same as our $\mathcal{V}(\Lambda^\vee)$.}. In fact $\mathcal{Z}(\Lambda)$ itself is already reduced (\cite[Theorem B]{Li2018}), so $\mathcal{V}(\Lambda)=\mathcal{Z}(\Lambda)$.

\subsection{The Bruhat--Tits stratification on $\mathcal{N}^\mathrm{red}$}\label{sec:bruh-tits-strat}

The reduced subscheme of $\mathcal{N}$ satisfies $\mathcal{N}^\mathrm{red}=\bigcup_{\Lambda} \mathcal{V}(\Lambda)$, where $\Lambda$ runs over all vertex lattices $\Lambda\subseteq \mathbb{V}$. For two vertex lattices $\Lambda, \Lambda'$, we have $\mathcal{V}(\Lambda)\subseteq \mathcal{V}(\Lambda')$ if and only if $\Lambda\supseteq \Lambda'$; and $\mathcal{V}(\Lambda)\cap \mathcal{V}(\Lambda')$ is nonempty if and only if $\Lambda+\Lambda'$ is also a vertex lattice, in which case it is equal to $\mathcal{V}(\Lambda+\Lambda')$. In this way we obtain a \emph{Bruhat--Tits stratification} of $\mathcal{N}^\mathrm{red}$ by locally closed subvarieties (\cite[\S 6.5]{Howard2015}), $$\mathcal{N}^\mathrm{red}=\bigsqcup_{\Lambda}\mathcal{V}(\Lambda)^\circ, \quad \mathcal{V}(\Lambda)^\circ\coloneqq \mathcal{V}(\Lambda)-\bigcup_{\Lambda\subsetneq \Lambda'}\mathcal{V}(\Lambda').$$ Each Bruhat--Tits stratum $\mathcal{V}(\Lambda)^\circ\simeq Y_{W_\Lambda,\kb}^\circ$ is a classical Deligne--Lusztig of Coxeter type associated to $\SO(W_\Lambda)$, which has dimension $t(\Lambda)/2-1$. It follows that the irreducible components of $\mathcal{N}^\mathrm{red}$ are exactly the projective varieties $\mathcal{V}(\Lambda)$, where $\Lambda$ runs over all vertex lattices of maximal type (\cite[Theorem D (iii)]{Howard2015}).

By \cite[Proposition 4.2]{Soylu2017}, the reduced subscheme $\mathcal{Z}(L)^\mathrm{red}$ of a special cycle $\mathcal{Z}(L)$ is a union of Bruhat--Tits strata,
\begin{equation}
  \label{eq:KRstrat}
\mathcal{Z}(L)^\mathrm{red}=\bigcup_{L\subseteq\Lambda}\mathcal{V}(\Lambda).  
\end{equation}

\subsection{Special divisors $\mathcal{Z}(x)$}

\begin{proposition}\label{prop:relativecartier}
  Let $x\in \mathbb{V}$ be nonzero and integral. Then $\mathcal{Z}(x)\subseteq \mathcal{N}$ is a Cartier divisor (i.e., defined locally by one nonzero equation) and moreover is flat over $\Spf \OFb$ (i.e., locally the equation is not divisible by $\varpi$). 
\end{proposition}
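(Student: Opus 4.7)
The plan is to work locally at each closed point $z_0 \in \mathcal{Z}(x)(\bar\kappa)$ and exhibit a single element $f$ of $R_0 := \widehat{\mathcal{O}}_{\mathcal{N}, z_0}$ that cuts out $\widehat{\mathcal{Z}(x)}_{z_0}$ and satisfies: both $f$ and its reduction $\bar f \in R_0/\varpi$ are nonzero divisors. Since $\mathcal{N}$ is formally smooth of relative dimension $n-1$ over $\OFb$, the ring $R_0$ is regular of dimension $n$ and $R_0/\varpi$ is regular of dimension $n-1$, so both are integral domains; consequently, the Cartier divisor property together with flatness over $\Spf \OFb$ reduce to the single claim $\bar f \neq 0$ in $R_0/\varpi$.

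The first step is to apply Grothendieck--Messing (Lemma \ref{lem:GM}) to identify $\widehat{\mathcal{N}}_{z_0}$ with the formal deformation space of the isotropic Hodge line $\Fil^1 \mathbf{V}_{\crys, z_0}(\bar\kappa)$, so that a lift lies in $\widehat{\mathcal{Z}(x)}_{z_0}$ precisely when the universally lifted isotropic line $\Fil^1_{\mathrm{univ}}$ is orthogonal to the parallel-transport of $x_{\crys, z_0}$, which is meaningful because $x$ is already an endomorphism at $z_0$. This exhibits $\widehat{\mathcal{Z}(x)}_{z_0}$ as the vanishing locus of a single function $f := (\Fil^1_{\mathrm{univ}}, x_{\crys, z_0}) \in R_0$, once a generator of the line bundle $\Fil^1_{\mathrm{univ}}$ has been chosen. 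To compute $f$ explicitly, I would pick an $\OFb$-basis $e_1, \ldots, e_m$ of $\mathbf{V}_{\crys, z_0}(\OFb)$ with $e_1$ generating $\Fil^1_{z_0}$, $(e_1, e_m) = 1$, and $e_2, \ldots, e_{m-1}$ a self-dual orthogonal complement. The standard local model for the orthogonal Grassmannian then gives $R_0 \cong \OFb[[t_2, \ldots, t_{m-1}]]$ with $\Fil^1_{\mathrm{univ}}$ generated by $v_{\mathrm{univ}} := e_1 + \sum_{i=2}^{m-1} t_i e_i + g(t) e_m$, where $g(t) \in (t_2, \ldots, t_{m-1})^2 R_0$ is uniquely determined by isotropy $(v_{\mathrm{univ}}, v_{\mathrm{univ}}) = 0$, and in fact $g(t) = -\tfrac{1}{2} \sum_{i,j=2}^{m-1}(e_i, e_j)\, t_i t_j$. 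Writing $x_{\crys, z_0} = \sum_{i=1}^m x_i e_i$ with $x_i \in \OFb$, the local equation becomes
\[ f \;=\; x_m \;+\; \sum_{i=2}^{m-1} t_i\, x_{m+1-i} \;+\; g(t)\, x_1. \]

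The concluding step is to verify $\bar f \neq 0$ in $R_0/\varpi \cong \bar\kappa[[t_2, \ldots, t_{m-1}]]$ by a case analysis on the coefficients $\bar x_i$: if some $\bar x_i$ for $i \in \{2, \ldots, m\}$ is nonzero, then $\bar f$ acquires a nonzero constant or linear term in the $\bar t_j$-variables; otherwise $\bar x$ lies in $\bar\kappa \cdot \bar e_1$ and $\bar f = \bar x_1 \cdot \bar g(t)$, which is nonzero because the non-degeneracy of the quadratic form on $\spann(e_2, \ldots, e_{m-1})$ together with $m = n+1 \geq 3$ forces $\bar g(t)$ to be a nontrivial quadratic polynomial. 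The main obstacle I anticipate is the edge case in which $\bar x$ vanishes identically in $\mathbf{V}_{\crys, z_0}(\bar\kappa)$: naively this would make $f$ divisible by $\varpi$ and destroy flatness. Ruling this out---or, if it occurs, replacing the naive equation $f$ by a suitable Weierstrass factor that generates the true ideal of $\widehat{\mathcal{Z}(x)}_{z_0}$---requires a careful comparison between the integral structure of $\mathbb{V}$ as a special quasi-endomorphism module and the lattice structure on $\mathbf{V}_{\crys, z_0}(\OFb)$, taking into account how the defining ideal of $\mathcal{Z}(x)$ behaves under the $O_F$-span equivalence recorded in Definition \ref{def:kudla-rapop-cycl-2} and how the Hodge filtration $\Fil^0_{z_0}$ relates to the endomorphism-lifting locus in the $p$-divisible group.
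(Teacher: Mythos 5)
Your reduction via Grothendieck--Messing (Lemma~\ref{lem:GM}) to showing that a single local equation $f$ is nonzero and not divisible by $\varpi$ matches the paper's first step. However, the explicit formula you write for $f$ is not correct, and the ``edge case'' you flag is a genuine gap; the paper handles both by a global argument rather than a local computation.

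The formula error: you choose coordinates on $\widehat{\mathcal{O}}_{\mathcal{N},z_0}\cong\OFb[[t_2,\ldots,t_{m-1}]]$ so that the universal Hodge line $\Fil^1_{\mathrm{univ}}$ takes the naive local-model form $v_{\mathrm{univ}}=e_1+\sum t_i e_i+g(t)e_m$, while simultaneously treating the coordinates $x_i\in\OFb$ of $x_{\crys,z_0}$ as constants when pairing against $v_{\mathrm{univ}}$. These two normalizations are incompatible: the crystalline realization of a special endomorphism is a \emph{horizontal} (parallel) section of $\mathbf{V}_{\crys}$, so its coordinates are constant only in the parallel-transport trivialization, which is not the trivialization in which $\Fil^1_{\mathrm{univ}}$ has the naive formula; the change of basis between the two involves nontrivial higher-order terms in the $t_i$. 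This is visible concretely for $m=3$: your $f=x_3+tx_2+g(t)x_1$ is a polynomial of degree $\le 2$ in $t$, so whenever $f$ is not divisible by $\varpi$ one would get $\deg_{\OFb}\mathcal{Z}(x)\le 2$. But by \eqref{eq:quasicanonical}, for $\val(x)=2$ with $K(\langle x\rangle)/F$ unramified, $\deg_{\OFb}\mathcal{Z}(x)=1+q(1+q^{-1})=q+2>2$. So your $f$ can only be a first-order (linearized) approximation of the true local equation.

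Separately, the edge case $\bar{x}_{\crys,z_0}=0$ in $\mathbf{V}_{\crys,z_0}(\kb)$ is a real phenomenon and remains unresolved in your proposal: it occurs exactly when $\varpi^{-1}x$ is still a special endomorphism of $X_{z_0}$, which happens at every point of $\mathcal{Z}(\varpi^{-1}x)\subseteq\mathcal{Z}(x)$ whenever $\val(x)\ge 2$ and $\varpi^{-1}x$ lies in a vertex lattice. The paper's proof avoids both issues by using the deformation theory only to show the ideal of $\mathcal{Z}(x)$ is locally generated by one element, and then establishing nonvanishing and $\varpi$-nondivisibility globally: since $\mathcal{N}$ is connected, any failure at a single point would force $\mathcal{Z}(x)_\kb=\mathcal{N}_\kb$, and this is ruled out via the Bruhat--Tits stratification \eqref{eq:KRstrat} by exhibiting a vertex lattice of maximal type not containing $x$ (with the small cases $m=3$ and $m=4,\varepsilon=+1$ treated separately using ordinariness of the generic point of the Lubin--Tate space). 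That globalization is the key idea your approach is missing, and it is precisely what makes the argument robust against the local-model subtleties your computation runs into.
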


\begin{proof}
  Since $x$ is integral, we know that $\mathcal{Z}(x)$ is nonempty by (\ref{eq:KRstrat}). Let $z\in \mathcal{Z}(x)(\bar k)$. Let $\hat O_z$ be the complete local ring of $\mathcal{N}$ at $z$ with maximal ideal $\mathfrak{m}$. Let $J\subseteq \hat O_z$ be the ideal defining the completion of $\mathcal{Z}(x)$ at $z$. Let $R=\hat O_z/\mathfrak{m}J$ and $I=J/\mathfrak{m}J$. Then $R\in\alg$ and $I^2=0$ (hence $I$ admits trivial nilpotent divided powers). By Nakayama's lemma, to show that $J$ is principal it suffices to show that $I$ is principal. It remains to show that the condition that a lifting of $\tilde z\in\mathcal{N}(R)$ of $z\in \mathcal{Z}(x)(S)$ lies in $\mathcal{Z}(x)(R)$ is given by the vanishing of one nonzero element in $I$.

  By Lemma \ref{lem:GM}, the lift $\tilde z$ corresponds to an isotropic $R$-line $\Fil^1\mathbf{V}_{\crys,z}(R)$ lifting $\Fil^1\mathbf{V}_{\crys,z}(S)$. Since $z\in \mathcal{Z}(x)(S)$, we know that the pairing $(\Fil^1\mathbf{V}_{\crys,z}(R),x_{\crys,z}(R))$ vanishes modulo $I$. The condition that $\tilde z\in \mathcal{Z}(x)(R)$ is equivalent to the vanishing of the pairing $(\Fil^1\mathbf{V}_{\crys,z}(R),x_{\crys,z}(R))$, which amounts to the vanishing of one element of $I$ after choosing a basis of $\Fil^1\mathbf{V}_{\crys,z}(R)$.

It remains to show that this element is nonzero and is not divisible by $\varpi$. Since $\mathcal{N}$ is connected, it suffices to show that $\mathcal{Z}(x)_\kb$ is not the whole special fiber $\mathcal{N}_\kb$.  If not, then $\mathcal{Z}(x)_\kb=\mathcal{N}_\kb$ and hence $\mathcal{Z}(x)^\mathrm{red}=\mathcal{N}^\mathrm{red}$. By (\ref{eq:KRstrat}), we know that $\mathcal{Z}(x)^\mathrm{red}=\mathcal{N}^\mathrm{red}$ if and only if $x\in \Lambda$ for any vertex lattice $\Lambda$  of maximal type $t(\Lambda)=t_\mathrm{max}$.  By the proof of \cite[Proposition 5.1.2]{Howard2015}, such a $\Lambda$ admits a decomposition $$\Lambda=\langle e_1,f_1\ldots, e_{d}, f_{d}\rangle \obot Z,$$ where $d=t_\mathrm{max}/2-1$, $(e_i,e_j)=(f_i,f_j)=0$, $(e_i,f_j)=\varpi \delta_{i,j}$ and $Z$ is anisotropic of rank $m-2d$. If $d>0$, then we may choose $\Lambda$ such that $x$ lies in the subspace $\langle e_1, f_1,\ldots, e_d, f_d\rangle_F$. For any $k\in  \mathbb{Z}$, the lattice $$\Lambda(k):=\langle \varpi^k e_1,\varpi^{-k}f_1\ldots, \varpi^ke_{d}, \varpi^{-k}f_{d}\rangle \obot Z $$ is also a vertex lattice of maximal type, but clearly $x\not\in \Lambda(k)$ for some $|k|\gg0$, a contradiction. Hence  if $d>0$ then $\mathcal{Z}(x)_\kb\ne\mathcal{N}_\kb$.

  If $d=0$, then by (\ref{eq:tmax}) either $m=3$ or $m=4$ and $\varepsilon=+1$, and $\mathcal{N}$ is isomorphic to the Lubin--Tate deformation space of the formal group of dimension 1 and height 2 over $\kb$, or the product of two copies of the Lubin--Tate deformation space (cf. Example \ref{exa:smallm}). In both cases the special endomorphism $x$ ensures that the universal $p$-divisible group over $\mathcal{Z}(x)_\kb$ is non-ordinary, while it is well-known that the universal $p$-divisible group over the generic point of $\mathcal{N}_\kb$ is ordinary (e.g., by the $p$-adic uniformization theorem and the fact that ordinary points are dense on a modular curve or on a product of two modular curves). Therefore if $d=0$ then $\mathcal{Z}(x)_\kb\ne\mathcal{N}_\kb$. 
\end{proof}

\subsection{Derived special cycles $\LZ(L)$}\label{sec:deriv-kudla-rapop}

Using the flatness of $\mathcal{Z}(x_i)$ (Proposition \ref{prop:relativecartier}), the following linear independence of derived intersection (Lemma \ref{lem:two div}, Corollary \ref{cor:ind L}) is proved in the the same way as \cite[Lemma 4.1, Proposition 4.2]{Terstiege2011}\footnote{In \cite[Lemma 2.8.1]{LZ2019} we also provided an alternative proof (without using globalization) of the linear independence  in the unitary case. That alternative proof would still work for the orthogonal case in the current paper, as long as the orthogonal analogue of the main result of \cite{Terstiege2013b} (the regularity of difference divisors) is available.}.

\begin{lemma}\label{lem:two div}
Let $x,y\in\BV$ be linearly independent. Then the tor sheaves $\Tor_i^{\CO_{\CN}}(\CO_{\CZ(x)}, \CO_{\CZ(y)})$ vanish for all $i\geq 1$. In particular,
$$
\mathcal{O}_{\mathcal{Z}(x)} \otimes^\mathbb{L} \mathcal{O}_{\mathcal{Z}(y)}=\mathcal{O}_{\mathcal{Z}(x)} \otimes \mathcal{O}_{\mathcal{Z}(y)}.
$$
Here $\mathcal{O}_{\mathcal{Z}(x_i)}$ denotes the structure sheaf of the special divisor $\mathcal{Z}(x_i)$ and $\otimes^\mathbb{L}$ denotes the derived tensor product of coherent sheaves on $\mathcal{N}$.
\end{lemma}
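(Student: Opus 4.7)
The plan is to reduce the vanishing of the higher $\Tor$ sheaves to the claim that $\CZ(x)$ and $\CZ(y)$ share no common irreducible component, and then to establish the latter via a Grothendieck--Messing dimension count on the generic fiber.

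For the reduction, since $\CN$ is regular of pure dimension $n$ and $\CZ(y)$ is an effective Cartier divisor by Proposition~\ref{prop:relativecartier}, the structure sheaf $\CO_{\CZ(y)}$ locally admits the length-one Koszul resolution $0 \to \CO_\CN \xrightarrow{\cdot f_y} \CO_\CN \to \CO_{\CZ(y)} \to 0$, where $f_y$ is a local defining equation. Tensoring with $\CO_{\CZ(x)}$ gives $\Tor_i^{\CO_\CN}(\CO_{\CZ(x)}, \CO_{\CZ(y)}) = 0$ for $i \geq 2$ and identifies $\Tor_1^{\CO_\CN}(\CO_{\CZ(x)}, \CO_{\CZ(y)})$ with the $f_y$-torsion submodule of $\CO_{\CZ(x)}$. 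As a hypersurface in the regular formal scheme $\CN$, the divisor $\CZ(x)$ is Cohen--Macaulay, so $\CO_{\CZ(x)}$ has no embedded associated primes; consequently $f_y$ is a non-zero-divisor on $\CO_{\CZ(x)}$ if and only if $\CZ(y)$ contains no irreducible component of $\CZ(x)$.

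Assume for contradiction that some irreducible component $Z$ of $\CZ(x)$ lies in $\CZ(y)$. The flatness in Proposition~\ref{prop:relativecartier} forces $Z$ to dominate $\Spf \OFb$, so its generic fiber $Z_\Fb$ is a codimension-$1$ irreducible component of $\CZ(x)_\Fb$ in $\CN_\Fb$, contained in $\CZ(y)_\Fb$. Choose a closed point $z_0 \in Z$. By Lemma~\ref{lem:GM}, the formal completion $\wh{\CN}_{z_0}$ is identified with the formal orthogonal Grassmannian of isotropic liftings of $\Fil^1 \mathbf{V}_{\crys, z_0}(\kb)$ inside $\mathbf{V}_{\crys, z_0}$, and the formal completions $\wh{\CZ(x)}_{z_0}$, $\wh{\CZ(y)}_{z_0}$ are cut out by the single linear equations $(\Fil^1, x_{\crys, z_0}) = 0$ and $(\Fil^1, y_{\crys, z_0}) = 0$ respectively. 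Since $x, y$ are linearly independent in $\BV$, so are $x_{\crys, z_0}$ and $y_{\crys, z_0}$ in $\mathbf{V}_{\crys, z_0} \otimes_{\OFb} \Fb$. These two linear conditions therefore impose independent constraints at a generic isotropic lifting on $Z_\Fb$, so their common vanishing has generic codimension $2$ in $\wh{\CN}_{z_0, \Fb}$, contradicting $\dim Z_\Fb = n-2$.

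The main subtlety lies in the final independence step: the two vanishing conditions could in principle degenerate on tangent directions if every relevant isotropic lifting on $Z_\Fb$ were forced into $\langle x_{\crys, z_0}, y_{\crys, z_0} \rangle^\perp \subset \mathbf{V}_{\crys, z_0} \otimes \Fb$. However, the linear independence of $x, y$ ensures that $\langle x_{\crys, z_0}, y_{\crys, z_0} \rangle^\perp$ is a proper codimension-$2$ quadratic subspace of $\mathbf{V}_{\crys, z_0} \otimes \Fb$, whose orthogonal Grassmannian of isotropic lines is a proper closed subvariety of the ambient one; a generic isotropic lifting on $Z_\Fb$ therefore escapes this constraint, supplying the required generic codimension-$2$ bound.
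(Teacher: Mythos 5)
Your argument takes a genuinely different route from the paper. The paper proves this lemma by citing Terstiege, Lemma~4.1 and Proposition~4.2 of \cite{Terstiege2011}, which globalizes: it relates the local special cycles via $p$-adic uniformization to special cycles on a Shimura variety and exploits the expected-dimensionality of those cycles on the characteristic-zero generic fiber (the paper's footnote also sketches an alternative local argument, following \cite[Lemma~2.8.1]{LZ2019}, via regularity of difference divisors). You instead attempt a direct deformation-theoretic argument at a formal completion. Your reduction to ``no common irreducible component'' via the Koszul resolution and Cohen--Macaulayness is correct and standard, and the observation that flatness over $\Spf\OFb$ forces every irreducible component of $\CZ(x)$ to be horizontal is a clean and correct point: any height-one prime of a regular local ring containing $\varpi$ equals $(\varpi)$, so a vertical component would force $\varpi\mid f_x$, contradicting flatness.

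There are, however, two genuine gaps. First, the identification of $\wh\CN_{z_0}$ with a formal orthogonal Grassmannian is not a direct consequence of Lemma~\ref{lem:GM}, which only gives the Grothendieck--Messing lifting criterion along surjections with nilpotent PD kernel. Promoting that to the claim that $\wh\CN_{z_0}$, together with its filtered crystal trivialized so that $x_{\crys}$ and $y_{\crys}$ become constant sections, is isomorphic to the completion of the local quadric model requires iterating up the $\varpi$-adic tower and invoking the local model diagram for Hodge-type Rapoport--Zink spaces; this is a substantive input that neither Lemma~\ref{lem:GM} nor Proposition~\ref{prop:relativecartier} supplies. Second, and more seriously, your final paragraph is circular. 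Under the contradiction hypothesis $Z\subseteq\CZ(x)\cap\CZ(y)$, every filtration parameterized by a point of $Z_\Fb$ \emph{is} orthogonal to both $x_{\crys}$ and $y_{\crys}$, hence lies in $\langle x_{\crys},y_{\crys}\rangle^\perp$; that is not a degeneracy to rule out, it is exactly what the hypothesis asserts, and the sentence ``a generic isotropic lifting on $Z_\Fb$ therefore escapes this constraint'' is the negation of your own hypothesis rather than a deduction from it. The contradiction you want runs the other way: granting the local model identification, the locus in $\wh\CN_{z_0,\Fb}$ where $\Fil^1\perp\langle x,y\rangle$ is the completion of the orthogonal Grassmannian of isotropic lines in the $(m-2)$-dimensional subspace $\langle x_{\crys},y_{\crys}\rangle^\perp$, which has codimension $2$ regardless of whether $\langle x,y\rangle$ is degenerate, and the codimension-$1$ formal subscheme $\wh Z_{z_0,\Fb}$ cannot lie inside it. You have the right ingredients, but as written the argument does not close.
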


\begin{corollary}\label{cor:ind L}Let $L\subseteq \mathbb{V}$ be an $O_F$-lattice of rank $r\ge1$. Let $x_1,\ldots, x_{r}$ be an $O_F$-basis of $L$. Then $\mathcal{O}_{\mathcal{Z}(x_1)} \otimes^\mathbb{L}\cdots \otimes^\mathbb{L}\mathcal{O}_{\mathcal{Z}(x_r)}\in K_0^{\mathcal{Z}(L)}(\mathcal{N})$ is independent of the choice of the basis.
\end{corollary}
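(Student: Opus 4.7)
The strategy is to reduce to invariance under a set of generators of basis changes of $L$. Any two $O_F$-bases of the rank-$r$ lattice $L$ differ by an element of $\GL_r(O_F)$, which is generated by three elementary operations: (a) transposition of two basis vectors; (b) multiplication of a basis vector by a unit $u\in O_F^\times$; (c) a shear $x_i \mapsto x_i + a x_j$ with $a\in O_F$ and $i\ne j$. Invariance under (a) is the symmetry of $\otimes^\BL$, and invariance under (b) follows from the equality $\CZ(ux)=\CZ(x)$ as closed formal subschemes of $\CN$ for $u\in O_F^\times$, since $\rho^{\mathrm{univ}}\circ (ux)\circ (\rho^{\mathrm{univ}})^{-1}$ extends to $\End(X^{\mathrm{univ}})$ if and only if $\rho^{\mathrm{univ}}\circ x\circ (\rho^{\mathrm{univ}})^{-1}$ does. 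The statement therefore reduces to invariance under (c).

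For (c), using the associativity and symmetry of $\otimes^\BL$ it suffices to establish the identity
$$\mathcal{O}_{\mathcal{Z}(x_i)} \otimes^\mathbb{L} \mathcal{O}_{\mathcal{Z}(x_j)} \;=\; \mathcal{O}_{\mathcal{Z}(x_i+ax_j)} \otimes^\mathbb{L} \mathcal{O}_{\mathcal{Z}(x_j)}$$
in $K_0^{\mathcal{Z}(x_i)\cap \mathcal{Z}(x_j)}(\CN)$; tensoring both sides derivedly with the remaining $\CO_{\CZ(x_k)}$ ($k\ne i,j$) then yields the full claim. Applying Lemma \ref{lem:two div} to the linearly independent pairs $(x_i,x_j)$ and $(x_i+a x_j, x_j)$, both sides reduce to the ordinary tensor products $\CO_{\CZ(x_i)\cap \CZ(x_j)}$ and $\CO_{\CZ(x_i+a x_j)\cap \CZ(x_j)}$ of scheme-theoretic intersections. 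Directly from the moduli description in Definition \ref{def:kudla-rapop-cycl-2}, these two intersections coincide as closed formal subschemes of $\CN$: any test point lies in both $\CZ(x_i)$ and $\CZ(x_j)$ iff both $x_i$ and $x_j$ deform to endomorphisms there, which (using $a\in O_F$) is equivalent to both $x_i+ax_j$ and $x_j$ doing so. This gives the desired identity.

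Finally, that the class in question actually lives in $K_0^{\mathcal{Z}(L)}(\CN)$: each $\CZ(x_i)$ is a Cartier divisor by Proposition \ref{prop:relativecartier}, so $\CO_{\CZ(x_i)}$ admits the length-one Koszul resolution $[\CO_\CN \xrightarrow{f_i}\CO_\CN]$ by coherent locally free $\CO_\CN$-modules; the derived tensor product is then represented by the total tensor product of these Koszul complexes, a finite complex of locally free sheaves whose homology is supported on $\bigcap_{i=1}^r \CZ(x_i)=\CZ(L)$.

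There is no serious obstacle in carrying this out: Lemma \ref{lem:two div} does the real work by letting us replace derived tensor products of pairs by ordinary ones, and the only point that deserves a moment's reflection is the tautological scheme-theoretic equality $\CZ(x_i)\cap\CZ(x_j)=\CZ(x_i+ax_j)\cap\CZ(x_j)$, which is immediate from the functorial definition of special cycles.
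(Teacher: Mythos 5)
Your proof is correct, and it follows the same route the paper takes: reduce to elementary basis changes in $\GL_r(O_F)$, use Lemma~\ref{lem:two div} to replace the relevant derived tensor products of pairs by ordinary scheme-theoretic intersections, and observe those intersections depend only on the $O_F$-span (here the paper simply cites \cite[Lemma 4.1, Proposition 4.2]{Terstiege2011}, which is exactly this argument). One harmless imprecision: the Koszul resolution should be written $[\CO_\CN(-\CZ(x_i))\to\CO_\CN]$ rather than $[\CO_\CN\xrightarrow{f_i}\CO_\CN]$, since $\CZ(x_i)$ is only locally cut out by a single equation.
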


Let $L\subseteq \mathbb{V}$ be an $O_F$-lattice of rank $r\ge1$. Let $x_1,\ldots, x_{r}$ be an $O_F$-basis of $L$. Since each $\mathcal{Z}(x_i)$ is a Cartier divisor on $\mathcal{N}$ (Proposition \ref{prop:relativecartier}), we know that $\mathcal{O}_{\mathcal{Z}(x_i)}\in \mathrm{F}^1K_0^{\mathcal{Z}(x_i)}(\mathcal{N})$ (see \S\ref{sec:notat-form-schem}), and hence by \cite[(B.3)]{Zhang2019} we obtain $$\mathcal{O}_{\mathcal{Z}(x_1)} \otimes^\mathbb{L}\cdots \otimes^\mathbb{L}\mathcal{O}_{\mathcal{Z}(x_r)}\in \mathrm{F}^{r}K_0^{\mathcal{Z}(L)}(\mathcal{N}).$$ By Corollary \ref{cor:ind L}, this is independent of the choice of the basis $x_1,\ldots, x_n$ and hence is a well-defined invariant of $L$ itself.

\begin{definition}
Define the \emph{derived special cycle} 
$^\BL\CZ(L)$ to be the image of $\mathcal{O}_{\mathcal{Z}(x_1)} \otimes^\mathbb{L}\cdots \otimes^\mathbb{L}\mathcal{O}_{\mathcal{Z}(x_r)}$ in the $r$-th graded piece $\Gr^{r}K_0^{\CZ(L)}(\mathcal{N})$. 
\end{definition}

\begin{definition}\label{def:Int(L)}
  When the $O_F$-lattice $L\subseteq \mathbb{V}$ has rank $r=n$, define the \emph{arithmetic intersection number} \begin{align}\label{eq:def Int}
\Int(L)\coloneqq \chi\bigl(\mathcal{N},{}^\BL\CZ(L)),
\end{align}  where  $\chi$ denotes the Euler--Poincar\'e characteristic (\S\ref{sec:notat-form-schem}). Notice that if $L$ is not integral then $\mathcal{Z}(L)$ is empty and hence $\Int(L)=0$.
\end{definition}

\begin{remark}\label{rem:Intisometry}
  If $L, L'\subseteq \mathbb{V}$ are isometric $O_F$-lattices of rank $n$, then we may find $g\in \SO(\mathbb{V})(F)$ such that $L=gL'$. We may further lift $g$ to $\tilde g\in \GSpin(\mathbb{V})(F)=J(F)$ such that $\eta(g)=1$. Then the automorphism $\tilde g$ of $\mathcal{N}$ (see \S\ref{sec:group-j}, \ref{sec:conn-gspin-rapop}) carries $\LZ(L')$ to $\LZ(L)$. In particular, $\Int(L)$ only depends on the isometry class of $L$.
\end{remark}

\subsection{Horizontal and vertical parts of $\mathcal{Z}(L)$}\label{sec:horiz-vert-parts}

\begin{definition}
  A formal scheme $Z$ over $\Spf \OFb$ is called \emph{vertical} (resp. \emph{horizontal}) if $\varpi$ is locally nilpotent on $Z$ (resp. flat over $\Spf \OFb$). Clearly the formal scheme-theoretic union of two vertical (resp. horizontal) formal subschemes of a formal scheme is also vertical (resp. horizontal).

  We define the \emph{horizontal part} $Z_\sH\subseteq Z$ to be the closed formal subscheme defined by the ideal sheaf $\mathcal{O}_{Z}[\varpi^\infty]\subseteq \mathcal{O}_Z$. Then $Z_\sH$ is the maximal horizontal closed formal subscheme of $Z$. 

  When $Z$ is noetherian, there exists $N\gg0$ such that $\varpi^N \mathcal{O}_{Z}[\varpi^\infty]=0$, and we define the \emph{vertical part} $Z_\sV\subseteq Z$ to be the closed formal subscheme defined by the ideal sheaf $\varpi^N\mathcal{O}_Z$. Since $\mathcal{O}_{Z}[\varpi^\infty]\cap \varpi^N\mathcal{O}_Z=0$, we have a decomposition  $$Z=Z_\sH\cup Z_\sV,$$ as a union of horizontal and vertical formal subschemes. Notice that the horizontal part $Z_\sH$ is canonically defined, while the vertical part $Z_\sV$ depends on the choice of $N$.

  When $Z_\sH=\varnothing$, we simply define the \emph{vertical part} $Z_\sV$ to be the entire $Z$.
\end{definition}

\begin{lemma}\label{lem:ZLnoetherian}
  Let $L\subseteq \mathbb{V}$ be an $O_F$-lattice of rank $r$.
  \begin{altenumerate}
  \item\label{item:noetherian} If $r=n$, or $r=n-1$ and $L$ is co-anisotropic, then $\mathcal{Z}(L)$ is noetherian.
  \item\label{item:horempty} If $r=n$, or $r=n-1$ and $L$ is co-isotropic, then $\mathcal{Z}(L)_\sH=\varnothing$.
  \end{altenumerate}

\end{lemma}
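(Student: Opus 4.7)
The plan is to treat the two parts in turn, both via the Bruhat--Tits stratification (\ref{eq:KRstrat}): for (\ref{item:noetherian}) I will show the index set is finite; for (\ref{item:horempty}) I will leverage the embedding obstructions in Lemma~\ref{lem:latticemebed}.

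For (\ref{item:noetherian}), since $\mathcal{N}$ is formally locally of finite type over $\Spf\OFb$ and $\mathcal{Z}(L)\subseteq\mathcal{N}$ is closed, $\mathcal{Z}(L)$ is locally noetherian, so it suffices to establish quasi-compactness of $\mathcal{Z}(L)^\mathrm{red}$. By (\ref{eq:KRstrat}), $\mathcal{Z}(L)^\mathrm{red}$ is a union of projective $\kb$-varieties $\mathcal{V}(\Lambda)$ indexed by vertex lattices $\Lambda \supseteq L$, so the key point is to bound the number of such $\Lambda$. Decompose $\mathbb{V}=L_F\oplus W$ with $W=L_F^\perp$; by hypothesis, $W$ is anisotropic of dimension $\leq 2$. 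For each vertex $\Lambda\supseteq L$ the saturation $L^\star := \Lambda\cap L_F$ satisfies $L\subseteq L^\star\subseteq L^\vee$ (the dual taken in $L_F$), a finite range. With $L^\star$ fixed, a Gram-matrix computation in this orthogonal decomposition shows $\val(\det\Lambda) = \val(\det L^\star)+\val(\det\pi_W(\Lambda))$, where $\pi_W\colon\mathbb{V}\to W$ is the projection; thus $\val(\det\pi_W(\Lambda))$ is bounded above by the maximal vertex type $t_\mathrm{max}$. Meanwhile, the vertex condition $\varpi\Lambda^\vee\subseteq\Lambda$ forces $\pi_W(\Lambda)^\vee_W=\Lambda^\vee\cap W\subseteq\varpi^{-1}(\Lambda\cap W)$; together these constraints sandwich $\pi_W(\Lambda)$ among finitely many lattices in the anisotropic space $W$ (in which lattices of bounded discriminant are always finite in number). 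Finally, with $L^\star$ and $\pi_W(\Lambda)$ determined, $\Lambda$ is specified by an extension class in a finite group, yielding the desired finiteness.

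For (\ref{item:horempty}), the plan is to show $\mathcal{Z}(L)$ has no characteristic-zero points. Via the forthcoming classification Theorem~\ref{thm:horizontal} and $p$-adic Hodge theory applied to the crystalline Tate tensor $\bpi_\crys$ from \S\ref{sec:proj-bpi-bpi_crys} (cf.\ Lemma~\ref{lem:GM}), any such point yields an isometric embedding of $L$ into the self-dual lattice $V\cong H_m^\varepsilon$ arising from the integral structure at the Hodge filtration. For $r=n$, Lemma~\ref{lem:latticemebed}(\ref{item:embed1}) forbids $L$ from simultaneously embedding into $\mathbb{V}_m^\varepsilon$ and $H_{m,F}^\varepsilon$ as quadratic submodules (the signs of functional equation $w^\varepsilon(L)$ disagree). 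For $r=n-1$ with $L^\flat := L$ co-isotropic (so $\chi(L^\flat)=\varepsilon$), Lemma~\ref{lem:latticemebed}(\ref{item:embed2}) similarly forces $\chi(L^\flat)\in\{0,-\varepsilon\}$ whenever a joint embedding exists, contradicting $\chi(L^\flat)=\varepsilon$. In either case the required embedding fails, so $\mathcal{Z}(L)_\sH=\varnothing$.

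The main obstacle is in (\ref{item:horempty}): rigorously identifying horizontal components of $\mathcal{Z}(L)$ with isometric embeddings of $L$ into the self-dual lattice $H_m^\varepsilon$ at the level of $p$-adic Hodge theory, via Grothendieck--Messing and the crystalline realization of the special endomorphisms. A secondary technical point in (\ref{item:noetherian}) is that the projected lattice $\pi_W(\Lambda)$ need not itself be integral in $W$; nevertheless the vertex and duality constraints above pin it down to finitely many possibilities, which is what one actually needs.
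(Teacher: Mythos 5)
For part (\ref{item:noetherian}) your approach is sound and amounts to a legitimate variant of the paper's argument. The paper introduces a single auxiliary full-rank lattice $L_1 = L \obot \varpi^k(L_F^\perp)^\circ$ and observes that every vertex lattice $\Lambda \supseteq L$ is trapped in $L_1 \subseteq \Lambda \subseteq L_1^\vee$, whereas you dissect $\Lambda$ into the saturation $L^\star = \Lambda\cap L_F$ and the projection $\pi_W(\Lambda)$ and bound each separately. Both routes hinge on the same two inputs: anisotropy of $W = L_F^\perp$ (so that lattices of bounded discriminant, or equivalently bounded valuation range, in $W$ are finite in number), and the vertex condition $\varpi\Lambda^\vee\subseteq\Lambda$. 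Your determinant identity $\val(\det\Lambda)=\val(\det L^\star)+\val(\det\pi_W(\Lambda))$ is correct and does, combined with the vertex duality constraint, pin down $\pi_W(\Lambda)$ to finitely many possibilities. The paper's version is more compact but requires the same underlying bookkeeping; either is acceptable.

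For part (\ref{item:horempty}) you have identified the right \emph{endgame} --- derive a contradiction from a simultaneous embedding of $L$ into $\mathbb{V}_m^\varepsilon$ and into $H_{m,F}^\varepsilon$ via Lemma~\ref{lem:latticemebed}(\ref{item:embed1}) and (\ref{item:embed2}) --- but the bridge you propose is not the one that works. Citing Theorem~\ref{thm:horizontal} here is problematic: that theorem is proved later, for rank-$(n-1)$ lattices only, and its proof already presupposes the machinery developed around the present lemma (the horizontal/vertical decomposition of \S\ref{sec:horiz-vert-parts} together with the Tate-module criterion), so invoking it would be at best a forward reference and at worst circular. You also point to Grothendieck--Messing (Lemma~\ref{lem:GM}) and ``the crystalline realization'' as the mechanism that produces an embedding $L\hookrightarrow V\cong H_m^\varepsilon$ from an $O_K$-point of $\mathcal{Z}(L)$, and then acknowledge this as the main obstacle without resolving it. That is indeed where the gap lies, and the actual tool is a different piece of $p$-adic Hodge theory: not the crystalline deformation theory of Grothendieck--Messing, but Tate's theorem identifying $\End(X)$ with $\End_{O_F[\Gamma_K]}(T_pX)$ for a $p$-divisible group $X$ over $O_K$, combined with the compatibility of the \'etale projector $\bpi_\et$ (built from the crystalline Tate tensor $\bpi_\crys$ via the comparison isomorphism, Kim's Theorem 7.1.6) with the crystalline one. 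This is precisely the content of Lemma~\ref{lem:tate} and Corollary~\ref{cor:ZMhoritonal}: an $O_K$-point of $\mathcal{Z}(L)$ forces $L\subseteq i_\kb(i_K^{-1}(V))$, and $i_K\circ i_\kb^{-1}$ is an isometric embedding of $L$ into the self-dual lattice $V$. So your plan is morally right, but as written it does not supply the key step, and the reference to Theorem~\ref{thm:horizontal} should be replaced by Corollary~\ref{cor:ZMhoritonal}.
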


\begin{proof}
  \begin{altenumerate}
  \item As a closed formal subscheme of the locally noetherian formal scheme $\mathcal{N}$, we know that $\mathcal{Z}(L)$ is locally noetherian. By the assumption on $L$, the space $L_F^\perp$ is anisotropic and thus its integral cone $(L_F^\perp)^{\circ}$ is an $O_F$-lattice. Take $L_1=L\obot \varpi^{k} (L_F^\perp)^{\circ}$ for $k\gg0$ depending only on $L$. Then any vertex lattice $\Lambda\subseteq \mathbb{V}$ containing $L$ satisfies $L_1\subseteq \Lambda\subseteq L_1^\vee$. Hence the number of vertex lattices $\Lambda$ containing $L$ is finite. By (\ref{eq:KRstrat}), we know that $\mathcal{Z}(L)^\mathrm{red}$ is a closed subset in finitely many irreducible components of $\mathcal{N}^\mathrm{red}$. Since each irreducible component of $\mathcal{N}^\mathrm{red}$ is quasi-compact, we know that $\mathcal{Z}(L)$ is quasi-compact, hence noetherian.
  \item This follows immediately from Lemma \ref{lem:latticemebed} (\ref{item:embed1}) (when $r=n$), Lemma \ref{lem:latticemebed} (\ref{item:embed2}) (when $r=n-1$) and Corollary \ref{cor:ZMhoritonal} below. \qedhere
  \end{altenumerate}
\end{proof}

By Lemma \ref{lem:ZLnoetherian}, for $L\subseteq \mathbb{V}$ an $O_F$-lattice of rank $r\ge n-1$, we obtain a decomposition of the special cycle  into \emph{horizontal} and \emph{vertical} parts $$\mathcal{Z}(L)=\mathcal{Z}(L)_\sH\cup \mathcal{Z}(L)_\sV.$$ Again when $\mathcal{Z}(L)_{\sH}\ne\varnothing$, the vertical part $\mathcal{Z}(L)_\sV$ depends on the choice of an integer $N\gg 0$. Since the choice of $N$ is not important for our purpose we suppress it from the notation (cf. \S\ref{sec:horiz-vert-parts-1}).

\subsection{Finiteness of $\Int(L)$}

\begin{lemma}\label{lem:properscheme}
Let $L\subseteq \mathbb{V}$ be an $O_F$-lattice of rank $n$. Then the formal scheme $\mathcal{Z}(L)$ is a proper scheme over $\Spf \OFb$. In particular,   $\Int(L)$ is finite.
\end{lemma}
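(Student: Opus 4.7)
The plan is to combine the two parts of Lemma~\ref{lem:ZLnoetherian} with the Bruhat--Tits description of $\mathcal{Z}(L)^{\mathrm{red}}$ in \eqref{eq:KRstrat} to promote $\mathcal{Z}(L)$ from a formal subscheme of $\mathcal{N}$ to an honest proper scheme. First, since $L$ has rank $n$, Lemma~\ref{lem:ZLnoetherian}~(\ref{item:horempty}) gives $\mathcal{Z}(L)_{\sH}=\varnothing$, which means $\mathcal{O}_{\mathcal{Z}(L)}[\varpi^\infty]=\mathcal{O}_{\mathcal{Z}(L)}$. Combined with the noetherianness from Lemma~\ref{lem:ZLnoetherian}~(\ref{item:noetherian}), there exists $N\gg 0$ such that $\varpi^N=0$ on $\mathcal{Z}(L)$. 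Therefore $\mathcal{Z}(L)$ is a noetherian ordinary scheme over $\Spec \OFb/\varpi^N$, not merely a formal scheme.

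Next I would verify properness of this scheme over $\Spec \OFb/\varpi^N$ (equivalently, over $\Spf \OFb$). Since properness is a topological property, it suffices to check that $\mathcal{Z}(L)^{\mathrm{red}}$ is proper over $\kb$. By \eqref{eq:KRstrat},
\[
\mathcal{Z}(L)^{\mathrm{red}}=\bigcup_{L\subseteq \Lambda}\mathcal{V}(\Lambda),
\]
and, as noted in the proof of Lemma~\ref{lem:ZLnoetherian}~(\ref{item:noetherian}), only finitely many vertex lattices $\Lambda\subseteq \mathbb{V}$ contain $L$ (they are all sandwiched between $L_1$ and $L_1^\vee$ for $L_1=L\obot \varpi^k(L_F^\perp)^\circ$, $k\gg 0$, using that $L_F^\perp$ is anisotropic by Lemma~\ref{lem:latticemebed}~(\ref{item:embed1})). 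Each stratum $\mathcal{V}(\Lambda)\simeq Y_{W_\Lambda,\kb}$ is a projective $\kb$-variety (\S\ref{sec:gener-deligne-luszt}--\S\ref{sec:minusc-kudla-rapop}), and a finite union of projective varieties inside $\mathcal{N}^{\mathrm{red}}$ is proper over $\kb$. Hence $\mathcal{Z}(L)^{\mathrm{red}}$, and therefore $\mathcal{Z}(L)$ itself, is proper over $\Spf \OFb$.

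Finally, the finiteness of $\Int(L)$ follows formally: the derived cycle ${}^\BL\CZ(L)$ is represented by a bounded complex of coherent sheaves on $\mathcal{N}$ with homology sheaves $H_j({}^\BL\CZ(L))$ set-theoretically supported on $\mathcal{Z}(L)$. Since $\mathcal{Z}(L)$ is proper over $\Spf \OFb$ and these homology sheaves are coherent and annihilated by $\varpi^N$, the cohomology groups $H^i(\mathcal{N}, H_j({}^\BL\CZ(L)))=H^i(\mathcal{Z}(L),H_j({}^\BL\CZ(L)))$ are finitely generated $\OFb/\varpi^N$-modules, and therefore of finite length over $\OFb$. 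Only finitely many pairs $(i,j)$ contribute, so $\Int(L)=\sum_{i,j}(-1)^{i+j}\length_{\OFb}H^i(\mathcal{N},H_j({}^\BL\CZ(L)))$ is finite. The main conceptual step is the promotion to a scheme via $\mathcal{Z}(L)_{\sH}=\varnothing$; the remaining arguments are standard noetherian/properness bookkeeping.
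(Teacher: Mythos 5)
Your argument has a genuine gap at the step where you deduce that $\mathcal{Z}(L)$ is an \emph{ordinary scheme} from $\varpi^N\mathcal{O}_{\mathcal{Z}(L)}=0$. For a noetherian formal scheme $Z$ this only says that $Z$ lives over $\Spec\OFb/\varpi^N$; it does not make the adic topology on $Z$ discrete, so it does not force $Z$ to be a scheme. Concretely, $\Spf\kb[[t]]$ (with the $(t)$-adic topology) is a noetherian formal scheme with $\varpi=0$ whose reduction $\Spec\kb$ is proper over $\kb$, yet it is not a scheme and in no reasonable sense proper. The relevant point is that $\mathcal{N}$ is only \emph{formally} locally of finite type over $\Spf\OFb$, so its ideal of definition is strictly larger than $(\varpi)$: locally $\mathcal{N}$ looks like $\Spf$ of a power series ring and the ideal of definition also contains the formal variables. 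Killing $\varpi^N$ does not kill those formal directions, and the ``properness is a topological property'' step tacitly assumes $\mathcal{Z}(L)$ is already a scheme of finite type, which is precisely what remains to be shown.

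The ingredient you are missing is Proposition~\ref{prop:supp Z(L) V}: the vertical part of a special cycle is \emph{supported on $\mathcal{N}^\mathrm{red}$}, i.e.\ its structure sheaf is annihilated by a power of the full ideal of definition of $\mathcal{N}$, not merely by a power of $\varpi$. That statement is the one that promotes $\mathcal{Z}(L)_\sV$ to a closed subscheme of a nilpotent thickening of $\mathcal{N}^\mathrm{red}$, hence to an honest noetherian $\kb$-scheme of finite type; it is nontrivial, proved via the Rapoport--Richartz specialization theorem for the isocrystal on the orthogonal complement, using that the relevant special orthogonal group is a torus. The paper's proof applies it together with $\mathcal{Z}(L)_\sH=\varnothing$ (Lemma~\ref{lem:ZLnoetherian}~(\ref{item:horempty})) to conclude $\mathcal{Z}(L)$ is a scheme, and only then invokes Lemma~\ref{lem:ZLnoetherian}~(\ref{item:noetherian}) and the projectivity of the Bruhat--Tits strata $\mathcal{V}(\Lambda)$ exactly as you do in the second half of your write-up. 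So the lattice-theoretic and properness bookkeeping in your argument is fine; what is missing is the isocrystal-theoretic control on the support.
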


\begin{proof}
  The vertical part $\mathcal{Z}(L)_\sV$ is a scheme by Lemma \ref{prop:supp Z(L) V} below. By Lemma \ref{lem:ZLnoetherian} (\ref{item:horempty}), the horizontal part $\mathcal{Z}(L)_\sH$ is empty, and so $\mathcal{Z}(L)$ is a scheme. By Lemma \ref{lem:ZLnoetherian} (\ref{item:noetherian}), we know that $\mathcal{Z}(L)^\mathrm{red}$ is contained in finitely many irreducible components of $\mathcal{N}^\mathrm{red}$. Since the scheme $\mathcal{Z}(L)$ is a closed formal subscheme of $\mathcal{N}$ and each irreducible component of $\mathcal{N}^\mathrm{red}$ is proper over $\Spec\bar\kappa$, it follows that the scheme $\mathcal{Z}(L)$ is proper over $\Spf \OFb$.
\end{proof}

\subsection{A cancellation law for $\Int(L)$}\label{sec:cancelation-law-intl}

Let $ M\subset \mathbb{V}=\mathbb{V}_m^\varepsilon$ be a self-dual lattice of arbitrary rank $r$. Then we have an orthogonal decomposition $$\BV=\BV_m^\varepsilon=M_F\obot\BV_{m-r}^{\varepsilon'}$$ for a unique $\varepsilon'\in\{\pm1\}$. By the same proof of \cite[Lemma 3.2.2]{Li2018}, we have a natural embedding 
 \begin{align}\label{eq:inc M} 
 \delta_M\colon 
 \xymatrix{\CN_{n-r}^{\varepsilon'}\ar[r]& \CN_n^\varepsilon},
 \end{align}
 which gives an identification
 \begin{align}
   \label{eq:ZMisomo}
   \CN_{n-r}^{\varepsilon'}\xrightarrow{\delta_M\atop \sim}\CZ(M)\subseteq \mathcal{N}_n^\varepsilon.
 \end{align}
   For $u\in\BV_m^\varepsilon$, denote by $u^\flat$ the projection to $\BV_{m-r}^{\varepsilon'}$.  If $u^\flat\neq 0$, then the special divisor $\CZ(u)\subseteq \mathcal{N}_n^\varepsilon$ intersects transversely with $\CN_{n-r}^{\varepsilon'}$ and its pull-back to $\CN_{n-r}^{\varepsilon'}$ is the special divisor $\CZ(u^\flat)\subseteq \CN_{n-r}^{\varepsilon'}$. For later reference, we write this fact as follows:
 \begin{align}\label{eq:ind Z(u)}
 \CN_{n-r}^{\varepsilon'}\jiao \CZ(u)=\CZ(u^\flat).
\end{align}
By \eqref{eq:ind Z(u)} and Definition \ref{def:Int(L)}, we have the following cancellation law for $\Int$ (analogous to (\ref{eq:cancel den}) for $\pDen$),
\begin{equation}
  \label{eq:cancelInt}
\Int^{\varepsilon}(L^\flat\obot M)=\Int^{\varepsilon'}(L^\flat).
\end{equation}
 
\subsection{Local arithmetic Siegel--Weil formula}\label{sec:local-kudla-rapoport}

Now we can state the main theorem of this article, which proves a local arithmetic Siegel--Weil formula on the identity between arithmetic intersection numbers of special cycles and central derivatives of local densities.

\begin{theorem}[local arithmetic Siegel--Weil formula]\label{thm: main}
Let $L\subseteq \mathbb{V}$ be an $O_F$-lattice of rank $n$. Then $$\Int(L)=\pDen(L).$$
\end{theorem}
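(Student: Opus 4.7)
The plan is to mirror the strategy of the companion unitary paper \cite{LZ2019}, with the essential modifications required by the fact that the orthogonal complement $\mathbb{W} = (L^\flat_F)^\perp$ is now $2$-dimensional (rather than $1$-dimensional). I would fix an $O_F$-lattice $L^\flat\subseteq\mathbb{V}$ of rank $n-1$, set $\mathbb{W}=(L^\flat_F)^\perp$, and introduce the two functions
\[
\Int_{L^\flat}(x):=\Int(L^\flat+\langle x\rangle),\qquad \pDen_{L^\flat}(x):=\pDen(L^\flat+\langle x\rangle)
\]
on $\mathbb{W}^{\mathrm{an}}$. The theorem reduces to showing $\Int_{L^\flat}=\pDen_{L^\flat}$ as functions on $\mathbb{W}^{\mathrm{an}}$ for every $L^\flat$. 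I would then proceed by induction on $\val(L^\flat)$, the base case being controlled by the cancellation laws \eqref{eq:cancel den} and \eqref{eq:cancelInt} together with the co-isotropic vanishing (Corollary \ref{cor:pDendiff}\ref{item:co-isoempty} and Lemma \ref{lem:ZLnoetherian}\ref{item:horempty}).

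The next step is to split both sides according to the decomposition $\mathcal{Z}(L)=\mathcal{Z}(L)_\sH\cup\mathcal{Z}(L)_\sV$ (and its analytic counterpart): write
\[
\Int_{L^\flat}=\Int_{L^\flat,\sH}+\Int_{L^\flat,\sV},\qquad \pDen_{L^\flat}=\pDen_{L^\flat,\sH}+\pDen_{L^\flat,\sV},
\]
where the horizontal parts are indexed by the set of horizontal lattices $\Hor(L^\flat)$ of Definition \ref{def:horLflat}. The horizontal identity $\Int_{L^\flat,\sH}=\pDen_{L^\flat,\sH}$ I expect to reduce, via an explicit description of $\mathcal{Z}(L)_\sH$ as a sum of quasi-canonical divisors arising from horizontal lattices of type $\le 2$, to the small case $n=3,\varepsilon=+1$. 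This small case should then follow from the known arithmetic Siegel--Weil formula in low dimension (e.g. Gross--Keating), together with the explicit weight formula in Corollary \ref{cor:pDendiff} for the difference $\pDen(L)-\pDen(\widetilde L)$.

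The main obstacle, and the place where the orthogonal case departs sharply from the unitary one, is the vertical identity $\Int_{L^\flat,\sV}=\pDen_{L^\flat,\sV}$. Here the $2$-dimensionality of $\mathbb{W}$ allows both functions to develop singularities along the isotropic cone when $\mathbb{W}$ is split, so neither is a Schwartz function. My plan is to regard them as elements of $\Lloc(\mathbb{V})\subseteq\Dd(\mathbb{V})$ and to perform a \emph{partial Fourier transform} along $L^\flat_F$, yielding distributions
\[
\Intp(x):=\int_{L^\flat_F}\Int_{L^\flat,\sV}(y+x)\,\rd y,\qquad \pDenp(x):=\int_{L^\flat_F}\pDen_{L^\flat,\sV}(y+x)\,\rd y
\]
on $\mathbb{W}^{\mathrm{an}}$. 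The advantages are that (i) $\Intp$ and $\pDenp$ acquire extra invariance under $\O(\mathbb{W})(F)$ and $O_F^\times$, and (ii) $\widehat{\Intp}$ on $\mathbb{W}$ matches the restriction to $\mathbb{W}$ of $\widehat{\Int_{L^\flat,\sV}}$, whose Fourier invariance (up to a sign) is established on the geometric side in \S\ref{sec:four-transf-geom}.

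Finally, I would extract rigidity from this extra invariance. Using the Weil representation of $\SL_2(F)$ and newform theory, I would classify the finite-dimensional space of invariant distributions on $\mathbb{W}$ containing $\Intp$; this classification forces $\Intp$ to satisfy a specific linear recurrence relation in the valuation of $x$ (Proposition \ref{prop:Intp}). On the analytic side, a direct (and technically involved) lattice-theoretic computation, comprising the bulk of \S\ref{s:FT ana}, establishes the same recurrence for $\pDenp$ (Proposition \ref{prop:pDenp}). The inductive hypothesis on $\val(L^\flat)$ then supplies matching initial data for the recursion, giving $\Intp=\pDenp$, hence $\Int_{L^\flat,\sV}=\pDen_{L^\flat,\sV}$, and completing the proof. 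The hard part is genuinely the analytic recurrence: controlling $\pDenp$ requires careful bookkeeping of the weighted lattice sums in Theorems \ref{thm: Den(X)} and \ref{thm: Denf(X)} across all relevant vertex types, and it is here that the parity in $\varepsilon$ and the two signs $\sgn_m,\sgn_{m-1}$ make the orthogonal numerology noticeably more delicate than in the unitary case.
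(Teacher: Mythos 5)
Your proposal follows essentially the same strategy as the paper's proof of Theorem \ref{thm: main}: fix $L^\flat$, decompose into horizontal and vertical parts, establish the horizontal identity by reduction to the Gross--Keating case $m=4$, $\varepsilon=+1$, and handle the vertical parts via the partial Fourier transform along $L^\flat_F$, the classification of invariant distributions on $\mathbb{W}$ using the Weil representation and newform theory (Proposition \ref{prop:distributionbasis}), and the matching recurrence relations for $\Intp$ and $\pDenp$ (Propositions \ref{prop:Intp} and \ref{prop:pDenp}), closing the induction on $\val(L^\flat)$. The one step you compress is the passage back from $\Intp=\pDenp$ to $\Int_{L^\flat,\sV}=\pDen_{L^\flat,\sV}$: the partial Fourier transform is not a priori injective, and the paper makes it so by using Lemma \ref{lem:decreaseval} together with the induction hypothesis to show the difference $\Int_{L^\flat}-\pDen_{L^\flat}$ has the product form $\mathbf{1}_{L^\flat}\otimes\phi^\perp$ with $\supp(\phi^\perp)\subseteq\mathbb{W}^{\geq a_{n-1}}$, which simultaneously supplies the small-valuation vanishing that makes the recurrences force $\phi^\perp=0$.
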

This will be proved in \S\ref{ss:proof}.

\begin{remark}\label{rem:lowrank}
When $m=n+1=4$, Theorem \ref{thm: main} was previously known by explicitly computing both sides, see \cite[\S2.16]{Wedhorn2007} (based on Gross--Keating \cite{Gross1993}) for $\varepsilon=+1$ and Terstiege \cite[Theorem 6.1]{Terstiege2011} for $\varepsilon=-1$.
\end{remark}

\begin{example}
  Consider $m=n+1=4$ and $L$ has fundamental invariants $(1,1,1)$. Then $t(L)=3$. There is exactly one integral lattice $L'\supseteq L$ with $t(L')=3$, i.e., $L'=L$, and $\sgn_4(L')=1$ ($L'$ has rank 3 and type 3). The number of  integral lattices $L'\supseteq L$ with $t(L')=1$ is equal to the number of isotropic lines in the 3-dimensional quadratic space $L^\vee/L$ over $\kappa$, which is $q+1$. Moreover we have $\sgn_4(L')=1$ (since $L'$ has rank 3 and type 1, the non-degenerate part of $L'_\kappa$ is a 2-dimensional quadratic space containing an isotropic line, hence must be split). It follows from Theorem \ref{thm: Den(X)} that
  \begin{align*}
    \Dene(X,L)&=(1+\varepsilon q X)(1-X^2)+(q+1)X^2(1+\varepsilon X) \\
    &=1+\varepsilon qX+qX^2+\varepsilon X^3.
  \end{align*}
  This agrees with \cite[p. 211]{Terstiege2008}: in the notation there we have $$\Dene(X, L)=\tilde F_q(T; \varepsilon X).$$ In fact, since $\beta_1=\beta_2=\beta_3=1$, $\tilde \xi=\pm1$, $\sigma=2$, $\eta=+1$ ($L$ is isotropic since $\beta_i$ are all odd), we compute that $$\tilde F_q(T; X)=1+\eta X^3+\tilde\xi^2q(X+X^2)=1+qX+qX^2+X^3.$$

Now consider $\varepsilon=-1$. Then $\Den^-(X,L)$ has sign of functional equation $w^-(L)=-1$. In this case $$ \pDen^-(L)=-(-q+2q-3)=3-q.$$ It agrees with Corollary \ref{cor: pDen}: the terms with $t(L')=3$ has weight factor $2(1-q)$ and the terms with $t(L')=1$ has weight factor $1$, so in total we obtain $$\pDen^-(L)=2(1-q)\cdot 1+(q+1)\cdot 1=3-q.$$ By Example \ref{exa:smallm}, the space $\mathcal{N}=\mathcal{N}_3^-$ is isomorphic to $\mathcal{M}^\mathrm{HB}$ defined in \cite[\S2]{Terstiege2011}, and $\pDen^-(L)$ also agrees with $\Int^-(L)=3-q$ computed in \cite[Proposition 5.5 (i), (ii)]{Terstiege2011}.
\end{example}

\section{Horizontal parts of special cycles}

We continue with the setup in \S\ref{sec:kudla-rapop-cycl}. Let $L^\flat\subseteq\mathbb{V}$ be an $O_F$-lattice of rank $n-1$. Our next goal is to determine the horizontal part $\mathcal{Z}(L^\flat)_\sH$ of the special cycle $\mathcal{Z}(L^\flat)$ (\S\ref{sec:kudla-rapop-cycl-1}) in terms of primitive horizontal cycles $\mathcal{Z}(M^\flat)^\circ$ parametrized by $M^\flat\in \Hor(L^\flat)$ (Definition \ref{def:horLflat}).

\subsection{Quasi-canonical lifting cycles}\label{sec:quasi-canon-lift}

In this subsection we consider $m=n+1=3$. Then $\mathcal{N}_2\simeq\Spf \OFb[[t]]$ is isomorphic the Lubin--Tate deformation space of the formal group $\mathbb{E}$ of dimension 1 and height 2 over $\kb$ (Example \ref{exa:smallm}). As defined in  \cite{Gross1986a},  for $s\ge0$ and a quadratic extension $K/F$, a \emph{quasi-canonical lifting $E_{K,s}$} (and \emph{canonical lifting} when $s=0$) is a lift of $\mathbb{E}$ whose endomorphism ring is $$O_{K,s}:=O_F+\varpi^s \cdot O_K.$$ Let $\Kb$ be the completion of the maximal unramified extension of $K$. Let $\Kb_s$ be ring class field of $\Kb$ corresponding to $O_{K,s}^\times$ under local class field theory, with ring of integers $O_{\Kb,s}$. By \cite[Proposition 5.3]{Gross1986a},  the ring of definition of $E_{K,s}$ is $O_{\Kb,s}$, and the universal quasi-canonical lifting defines a horizontal divisor $$\mathcal{Z}_{K,s}\simeq\Spf O_{\Kb,s}\subseteq \mathcal{N}_2$$ with
\begin{equation}
  \label{eq:quasicanonical}
\deg_{\OFb}\mathcal{Z}_{K,s}=[O_{\Kb,s}: O_\Fb]=
\begin{cases}
  1, & s=0,\ K/F \text{ is unramified},\\
  q^{s}(1+q^{-1}), & s\ge1,\ K/F \text{ is unramified},\\
  2, & s=0,\ K/F \text{ is ramified},\\
  2q^s, & s\ge1,\ K/F \text{ is ramified}.
\end{cases}
\end{equation}

Let $M^\flat\subseteq \mathbb{V}_3^\varepsilon$ be an $O_F$-lattice of rank 1. Let $K(M^\flat)=F\left(\sqrt{\disc(M^\flat)}\right)$. Since $\mathbb{V}_3^\varepsilon$ is anisotropic, we know that $\chi(M^\flat)\ne+1$, and so $K(M^\flat)/F$ is a quadratic extension, which is unramified (resp. ramified) if $\chi(M^\flat)=-1$ (resp. $\chi(M^\flat)=0$). By \cite[(5.10)]{Gross1993} (see also \cite[\S3, p.147]{Rapoport2007}), we have a decomposition as (Cartier) divisors on $\mathcal{N}_2$, $$\mathcal{Z}(M^\flat)=\sum_{s=0}^{\lfloor \frac{\val(M^\flat)}{2}\rfloor}\mathcal{Z}_{K(M^\flat), s}.$$  We define the \emph{primitive part}  $\mathcal{Z}(M^\flat)^\circ$ of $\mathcal{Z}(M^\flat)$ to be the closed formal subscheme given by the unique irreducible component of $\mathcal{Z}(M^\flat)$ such that  $\mathcal{Z}(M^\flat)^\circ\not\subseteq\mathcal{Z}(M^{\flat'})$, for any $O_F$-lattice $M^{\flat'}\subseteq M^\flat_F$ such that $M^{\flat'}\supsetneq M^\flat$. Equivalently, $\mathcal{Z}(M^\flat)^\circ=\mathcal{Z}_{K(M^\flat),s}\subseteq \mathcal{Z}(M^\flat)$ for the maximal $s=\lfloor \frac{\val(M^\flat)}{2}\rfloor$.

\subsection{Gross--Keating cycles}\label{sec:gross-keating-cycles}

In this subsection we consider $m=n+1=4$ and $\varepsilon=+1$. In this case $\mathcal{N}_3^+\simeq \Spf \OFb[[t_1,t_2]]$ is isomorphic to the product of two copies of Lubin--Tate deformation spaces $\mathcal{N}_2$ over $\Spf \OFb$ (Example \ref{exa:smallm}). The intersection problem of special divisors on $\mathcal{N}_3^+$ are studied in detail by Gross--Keating \cite{Gross1993}. Let $M^\flat\subseteq \mathbb{V}_4^+$ be an $O_F$-lattice of rank 2. By \cite[p.239]{Gross1993} (see also \cite[\S3, p.147]{Rapoport2007}), we know that $\mathcal{Z}(M^\flat)$ is a horizontal 1-dimensional affine formal subscheme of $\mathcal{N}_3^+$, and each irreducible component of $\mathcal{Z}(M^\flat)$ is isomorphic to a quasi-canonical lifting cycle $\Spf O_{\Kb,s}$ for some $s$. We define the \emph{primitive part}  $\mathcal{Z}(M^\flat)^\circ$ of $\mathcal{Z}(M^\flat)$ to be the closed formal subscheme given by the union of irreducible components of $\mathcal{Z}(M^\flat)$ such that  $\mathcal{Z}(M^\flat)^\circ\not\subseteq\mathcal{Z}(M^{\flat'})$, for any $O_F$-lattice $M^{\flat'}\subseteq M^\flat_F$ such that $M^{\flat'}\supsetneq M^\flat$.

\subsection{Horizontal cycles} \label{sec:horizontal-cycles} Now consider general $m=n+1\ge3$ and $\varepsilon\in\{\pm1\}$. Let $M^\flat\subseteq \mathbb{V}$ be an $O_F$-lattice of rank $n-1$. Assume that $M^\flat$ is horizontal (Definition \ref{def:horizontallattice}). We have two cases.

If $t(M^\flat)\le 1$, then there exists a self-dual $O_F$-lattice $M_{n-2}$ of rank $n-2$ and an $O_F$-lattice $M_1$ of rank 1 such that $M^\flat=M_{n-2}\obot M_1$. By \eqref{eq:ZMisomo}, we have an isomorphism $$\mathcal{Z}(M_{n-2})\simeq \mathcal{N}_2.$$ Under this isomorphism, we can identify the cycle $\mathcal{Z}(M^\flat)\subseteq \mathcal{Z}(M_{n-2})$ with the cycle $\mathcal{Z}(M_1)\subseteq \mathcal{N}_2$, which is a union of quasi-canonical lifting cycles as in \S\ref{sec:quasi-canon-lift}. We define the \emph{primitive part} $\mathcal{Z}(M^\flat)^\circ$ of $\mathcal{Z}(M^\flat)$ to be the closed formal subscheme given by the primitive part $\mathcal{Z}(M_1)^\circ\subseteq \mathcal{Z}(M_1)$ under this identification. Notice that $\mathcal{Z}(M^\flat)^\circ$ is independent of the choice of the self-dual lattice $M_{n-2}$.

If $t(M^\flat)=2$, then there exists a self-dual $O_F$-lattice $M_{n-3}$ of rank $n-3$ and an $O_F$-lattice $M_2$ of rank 2 such that $M^\flat=M_{n-3}\obot M_2$. Since $M^\flat$ is horizontal, by Lemma \ref{lem:directsum} (\ref{item:l2}) we know that $$\chi(M_{n-3,F}^\perp)=\chi(\mathbb{V})\chi(M_{n-3,F})=\varepsilon \sgn_{n-1}(M^\flat)=+1,$$ and thus $M_{n-3,F}^\perp\simeq \mathbb{V}_4^+$. By \eqref{eq:ZMisomo}, we have an isomorphism $$\mathcal{Z}(M_{n-3})\simeq\mathcal{N}_3^+.$$ Under this isomorphism, we can identify the cycle $\mathcal{Z}(M^\flat)\subseteq \mathcal{Z}(M_{n-2})$ with the cycle $\mathcal{Z}(M_2)\subseteq \mathcal{N}_3^+$, which is a Gross--Keating cycle, also a union of quasi-canonical lifting cycles as in \S\ref{sec:gross-keating-cycles}. Similarly, we define the \emph{primitive part} $\mathcal{Z}(M^\flat)^\circ$ of $\mathcal{Z}(M^\flat)$ to be the closed formal subscheme given by the primitive part $\mathcal{Z}(M_2)^\circ\subseteq \mathcal{Z}(M_2)$ under this identification. Again $\mathcal{Z}(M^\flat)^\circ$ is independent of the choice of the self-dual lattice $M_{n-3}$.

Notice that the above two cases be combined: using \eqref{eq:ZMisomo} again we may identify  $\mathcal{N}_2$ as a special divisor on $\mathcal{N}_3^+$ associated to a self-dual lattice of rank 1. So when $M^\flat$ is horizontal, we may always identify $\mathcal{Z}(M^\flat)$ as a Gross--Keating cycle on $\mathcal{N}_3^+$.

\begin{theorem}\label{thm:horizontal}
Let $L^\flat\subseteq\mathbb{V}$ be an $O_F$-lattice of rank $n-1$. Then 
\begin{equation}
  \label{eq:horizontal}
  \mathcal{Z}(L^\flat)_\sH=\bigcup_{M^\flat \in \Hor(L^\flat)}\mathcal{Z}(M^\flat)^\circ.
\end{equation}
Moreover, the identity
\begin{equation}
  \label{eq:horizontalK}
\mathcal{O}_{\mathcal{Z}(L^\flat)_\sH}=\sum_{M^\flat \in \Hor(L^\flat)}\mathcal{O}_{\mathcal{Z}(M^\flat)^\circ}
\end{equation}
 holds in $\Gr^{n-1}K_0^{\mathcal{Z}(L^\flat)_\sH}(\mathcal{N})$.  
\end{theorem}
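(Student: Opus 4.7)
The plan is to first establish the set-theoretic equality \eqref{eq:horizontal}, then upgrade it to the $K$-theoretic identity \eqref{eq:horizontalK} by a multiplicity-one argument. By Lemma \ref{lem:ZLnoetherian} (\ref{item:horempty}) together with Corollary \ref{cor:pDendiff} (\ref{item:co-isoempty}), both sides are empty when $L^\flat$ is co-isotropic, so I may assume $L^\flat$ is co-anisotropic throughout; in particular $\mathcal{Z}(L^\flat)$ is then noetherian, so the horizontal/vertical decomposition is well-defined.

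The inclusion $\bigcup_{M^\flat \in \Hor(L^\flat)} \mathcal{Z}(M^\flat)^\circ \subseteq \mathcal{Z}(L^\flat)_\sH$ is the easy direction: for each $M^\flat \in \Hor(L^\flat)$ the containment $L^\flat \subseteq M^\flat$ gives $\mathcal{Z}(M^\flat) \subseteq \mathcal{Z}(L^\flat)$, and the primitive part $\mathcal{Z}(M^\flat)^\circ$ is either a quasi-canonical lifting cycle or a primitive Gross--Keating component via the embedding \eqref{eq:ZMisomo} applied to a maximal self-dual summand of $M^\flat$; either way it is horizontal and flat over $\Spf \OFb$ by the explicit descriptions of \S\ref{sec:quasi-canon-lift}--\S\ref{sec:horizontal-cycles}.

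For the reverse inclusion I would analyze the generic point of an arbitrary irreducible component $\mathcal{C}$ of $\mathcal{Z}(L^\flat)_\sH$. Since $\mathcal{Z}(L^\flat)$ has expected codimension $n-1$ and $\mathcal{C}$ is horizontal, $\mathcal{C}$ is a $1$-dimensional horizontal formal subscheme, isomorphic to $\Spf R$ for some complete DVR $R$ of characteristic $0$ finite over $\OFb$. Over $R$ the whole of $L^\flat$ lifts to honest special endomorphisms of the corresponding $p$-divisible group $X_\mathcal{C}$, and I set $M^\flat_\mathcal{C} := L^\flat_F \cap \End(X_\mathcal{C})$. Specializing to the closed point, $M^\flat_\mathcal{C}$ embeds into the crystalline lattice $\mathbf{V}_{\crys,\bar z}(\OFb)$, which is a self-dual quadratic $O_F$-lattice of rank $m$, i.e., isomorphic to some $H_m^\varepsilon$; thus $M^\flat_\mathcal{C}$ embeds simultaneously into $\mathbb{V}$ (as a sublattice of $L^\flat_F$) and into $H_m^\varepsilon$, and by construction it is saturated in $H_m^\varepsilon$. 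Lemma \ref{lem:saturatedhorizontal} then forces $M^\flat_\mathcal{C} \in \Hor(L^\flat)$, and the maximality of the endomorphism lattice among overlattices of $L^\flat$ for which the cycle contains $\mathcal{C}$ identifies $\mathcal{C}$ with the primitive component $\mathcal{Z}(M^\flat_\mathcal{C})^\circ$, completing the proof of \eqref{eq:horizontal}.

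To upgrade to the identity \eqref{eq:horizontalK} in $\Gr^{n-1}K_0^{\mathcal{Z}(L^\flat)_\sH}(\mathcal{N})$, I would again use the embedding \eqref{eq:ZMisomo} to identify, for each $M^\flat \in \Hor(L^\flat)$, the full cycle $\mathcal{Z}(M^\flat)$ with a Gross--Keating cycle on $\mathcal{N}_3^+$ or a quasi-canonical cycle on $\mathcal{N}_2$, whose decomposition into primitive components $\mathcal{Z}_{K,s}$ is multiplicity-free (\S\ref{sec:quasi-canon-lift}--\S\ref{sec:gross-keating-cycles}). Since each $\mathcal{O}_{\mathcal{Z}(M^\flat)^\circ}$ already lies in $\mathrm{F}^{n-1}K_0^{\mathcal{Z}(L^\flat)_\sH}(\mathcal{N})$ by dimension, and distinct horizontal overlattices yield distinct primitive components (being labeled by distinct quasi-canonical parameters), the reduced-scheme equality combined with the multiplicity-one bookkeeping pins down the class of $\mathcal{O}_{\mathcal{Z}(L^\flat)_\sH}$ in the graded piece. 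The main obstacle I expect is the crystalline step above: showing that the $R$-lattice of endomorphisms of $X_\mathcal{C}$ genuinely embeds, after reduction, into a self-dual crystalline lattice so that Lemma \ref{lem:saturatedhorizontal} applies. This requires a careful Grothendieck--Messing/Serre--Tate analysis in the GSpin Hodge-type setting (going slightly beyond Lemma \ref{lem:GM}) to rule out any integral but non-self-dual ambient structure, and is the precise point where the type-$\leq 2$ constraint of Definition \ref{def:horizontallattice} becomes indispensable.
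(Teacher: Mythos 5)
Your overall strategy — produce a horizontal overlattice $M^\flat$ of $L^\flat$ from a point of $\mathcal{Z}(L^\flat)_\sH$, feed it to Lemma~\ref{lem:saturatedhorizontal}, and identify the point with one of $\mathcal{Z}(M^\flat)^\circ$ — is exactly the paper's. But there is a genuine gap at the step you flag as problematic, and your proposed way around it (the crystalline lattice) will not work. The crystalline realization $\mathbf{V}_{\crys,\bar z}(\OFb)$ is a self-dual lattice over $\OFb$, not over $O_F$. Lemma~\ref{lem:saturatedhorizontal} is a statement about $O_F$-lattices embedding in $H_m^\varepsilon$ over $O_F$; an $O_F$-sublattice of an $\OFb$-self-dual lattice need not sit saturated inside any $O_F$-self-dual lattice of rank $m$, and there is no canonical $O_F$-descent of $\mathbf{V}_{\crys,\bar z}(\OFb)$. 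The paper avoids this precisely by working on the étale side: §\ref{sec:tate-modules}--\ref{sec:proof-theor-refthm:h} construct the projector $\bpi_\et$ on Tate modules, giving $\mathbf{V}_{\et,\bar z}\simeq V$ a genuine self-dual \emph{$O_F$}-lattice (this is where \cite[Theorem 7.1.6]{Kim2018} enters); Lemma~\ref{lem:tate} then identifies $\SEnd(X)=i_K^{-1}(V)$, and Corollary~\ref{cor:ZMhoritonal} gives the saturated lattice $M^\flat = L^\flat_F\cap i_\kb(i_K^{-1}(V))\subseteq V$. Replacing the crystalline lattice by the étale one is not a matter of more care; it is a different mechanism that the statement of Lemma~\ref{lem:saturatedhorizontal} really needs.

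A second, independent gap: your argument establishes at best that the two sides of \eqref{eq:horizontal} have the same irreducible components, i.e., the same support. To get the scheme-theoretic equality (which is what the theorem asserts, and what is needed before Lemma~\ref{lem:alldistinct} can turn it into the $K$-theoretic identity \eqref{eq:horizontalK}), one still has to control nilpotents in $\mathcal{Z}(L^\flat)_\sH$. The paper does this with a Grothendieck--Messing computation: since both sides are flat over $\Spf\OFb$, it suffices to compare $O_K$- and $O_K[t]$-points, and Lemma~\ref{lem:GM} together with Breuil's theorem shows that every $O_K$-point lifts \emph{uniquely} to $O_K[t]$, because $L^\flat_{\crys,z}(O_K)$ has full rank $n-1$. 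Your ``multiplicity-one bookkeeping'' does not substitute for this; multiplicity-freeness of the list $\{\mathcal{Z}(M^\flat)^\circ\}$ (which is Lemma~\ref{lem:alldistinct}) is about the right-hand side, not about the reducedness or flatness of $\mathcal{Z}(L^\flat)_\sH$.

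Minor remarks: your reduction to the co-anisotropic case is correct but unnecessary (the paper's argument runs uniformly, and the co-isotropic case is where $\Hor(L^\flat)=\varnothing$, so Lemma~\ref{lem:saturatedhorizontal} forces $\mathcal{Z}(L^\flat)(O_K)=\varnothing$ as well). Your claim that each irreducible component of $\mathcal{Z}(L^\flat)_\sH$ is automatically a formal spectrum of a DVR by ``expected codimension'' is also not justified a priori; the paper sidesteps this by working pointwise on $O_K$- and $O_K[t]$-points rather than component-by-component.
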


\begin{lemma}\label{lem:alldistinct}
The primitive cycles $\mathcal{Z}(M^\flat)^\circ$ on the right-hand-side of (\ref{eq:horizontal}) do not share any common irreducible component. 
\end{lemma}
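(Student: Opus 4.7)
The plan is to attach to each irreducible component $C$ appearing on the right-hand side of \eqref{eq:horizontal} a canonical overlattice $M(C)\supseteq L^\flat$ and to show that whenever $C$ is an irreducible component of $\mathcal{Z}(M^\flat)^\circ$ for some $M^\flat\in\Hor(L^\flat)$, we must have $M^\flat=M(C)$; this uniqueness immediately implies the lemma. I would take $M(C)$ to be the maximal integral $O_F$-lattice in $L^\flat_F$ containing $L^\flat$ with $C\subseteq\mathcal{Z}(M(C))$; it exists and is unique because $\mathcal{Z}(M')\cap\mathcal{Z}(M'')=\mathcal{Z}(M'+M'')$ closes the set of such $M'$ under sums, and this set is bounded inside the finite collection of integral lattices between $L^\flat$ and $(L^\flat)^\vee$.

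For the main step, suppose for contradiction that $M(C)\supsetneq M^\flat$. Writing $M^\flat=M_{n-2}\obot M_1$ or $M^\flat=M_{n-3}\obot M_2$ as in \S\ref{sec:horizontal-cycles}, depending on whether $t(M^\flat)\le1$ or $t(M^\flat)=2$, the key input is that every integral $O_F$-lattice $M'\subseteq L^\flat_F$ containing $M^\flat$ splits orthogonally as $M'=M_{n-2}\obot(M'\cap M_{n-2,F}^\perp)$ (resp.\ with $M_{n-3}$). Indeed, for $x=u+v\in M'$ with $u\in M_{n-2,F}$ and $v\in M_{n-2,F}^\perp$, integrality forces $(u,m)=(x,m)\in O_F$ for every $m\in M_{n-2}$, so $u\in M_{n-2}^\vee=M_{n-2}\subseteq M'$ by self-duality of $M_{n-2}$, and hence $v=x-u\in M'\cap M_{n-2,F}^\perp$. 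Applied to $M(C)$ this yields $M(C)=M_{n-2}\obot M(C)_1$ with $M(C)_1:=M(C)\cap M_{n-2,F}^\perp$ an integral lattice in $M_{1,F}$ strictly containing $M_1$ (resp.\ an integral lattice $M(C)_2\supsetneq M_2$ in $M_{2,F}$).

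To close the argument I would invoke the identification \eqref{eq:ZMisomo} of $\mathcal{Z}(M_{n-2})$ with $\mathcal{N}_2$ (resp.\ $\mathcal{Z}(M_{n-3})$ with $\mathcal{N}_3^+$): the inclusion $C\subseteq\mathcal{Z}(M(C))=\mathcal{Z}(M_{n-2})\cap\mathcal{Z}(M(C)_1)$ exhibits $C$, viewed inside $\mathcal{N}_2$ (resp.\ $\mathcal{N}_3^+$), as contained in $\mathcal{Z}(M(C)_1)$ for an integral strict overlattice of $M_1$ inside $M_{1,F}$ (resp.\ of $M_2$ inside $M_{2,F}$). This directly contradicts the defining property of the primitive part $\mathcal{Z}(M_1)^\circ$ from \S\ref{sec:quasi-canon-lift} (resp.\ $\mathcal{Z}(M_2)^\circ$ from \S\ref{sec:gross-keating-cycles}), of which $C$ was supposed to be an irreducible component, and forces $M(C)=M^\flat$. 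The only substantive step is the orthogonal splitting of overlattices across the self-dual summand; once that is in hand, the rest is bookkeeping through the identifications with quasi-canonical lifting cycles in Lubin--Tate and product-of-Lubin--Tate deformation spaces, and I do not anticipate further obstacles.
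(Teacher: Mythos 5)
Your argument is correct and hinges on the same mechanism as the paper's proof, namely that an irreducible component of a primitive part $\mathcal{Z}(M^\flat)^\circ$ cannot be contained in $\mathcal{Z}(M^{\flat\prime})$ for any strict integral overlattice $M^{\flat\prime}\supsetneq M^\flat$; the paper simply writes $M^\flat=M_1^\flat+M_2^\flat$, notes $\mathcal{Z}(M_1^\flat)\cap\mathcal{Z}(M_2^\flat)=\mathcal{Z}(M^\flat)$, and concludes $M_1^\flat=M^\flat=M_2^\flat$ by primitivity of each $\mathcal{Z}(M_i^\flat)^\circ$. Your device $M(C)$ is a mild over-engineering (the sum $M_1^\flat+M_2^\flat$ suffices, and the existence of a maximal overlattice is not actually needed), but it is harmless. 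Where your write-up genuinely adds value is in spelling out the step the paper leaves implicit: the primitivity characterization is \emph{defined} only on $\mathcal{N}_2$ and $\mathcal{N}_3^+$ (\S\ref{sec:quasi-canon-lift}--\ref{sec:gross-keating-cycles}), and transporting it to a general horizontal $M^\flat=M_{n-2}\obot M_1$ via \eqref{eq:ZMisomo} requires knowing that every integral overlattice $M'\supseteq M^\flat$ splits orthogonally as $M_{n-2}\obot(M'\cap M_{n-2,F}^\perp)$. Your proof of that splitting (using self-duality of $M_{n-2}$ to project the $M_{n-2,F}$-component into $M_{n-2}$) is exactly right and is the substantive content the paper's ``by definition'' silently relies on. One small point worth being explicit about: when you close the set of overlattices under sums, the integrality of $M'+M''$ is not automatic from that of $M'$ and $M''$, but it does follow because $C\subseteq\mathcal{Z}(M'+M'')$ being nonempty forces the span to be integral (Definition \ref{def:kudla-rapop-cycl-2}).
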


\begin{proof}
Suppose $\mathcal{Z}(M_1^\flat)^\circ$ and $\mathcal{Z}(M_2^\flat)^\circ$ share a common irreducible component for $M_1^\flat, M_2^\flat\in \Hor(L^\flat)$. Let $M^\flat=M_1^\flat+M_2^\flat$. Then $M^\flat\in \Hor(L^\flat)$. By definition $\mathcal{Z}(M_1^\flat)\cap \mathcal{Z}(M_2^\flat)=\mathcal{Z}(M^\flat)$. Hence $\mathcal{Z}(M_1^\flat)^\circ$ and $\mathcal{Z}(M^\flat)$ share a common irreducible component. But $\mathcal{Z}(M_1^\flat)^\circ$ is primitive and $M_1^\flat\subseteq M^\flat$, by definition we have $M_1^\flat=M^\flat$. Similarly, we know that $M_2^\flat=M^\flat$. Hence $M_1^\flat=M_2^\flat$ as desired.
\end{proof}

Theorem \ref{thm:horizontal} will be proved in \S\ref{sec:proof-theor-refthm:h}. By Lemma \ref{lem:alldistinct}, we know that (\ref{eq:horizontal}) implies (\ref{eq:horizontalK}). It is clear from construction that in (\ref{eq:horizontal}) the right-hand-side is contained in the left-hand-side. To show the reverse inclusion, we use the theory of Tate modules, to be explained in next two sections.

\subsection{Tate modules and the projector $\bpi_\et$}\label{sec:tate-modules}

Let $Z$ be a formal scheme that is formally smooth and locally formally of finite type over $\Spf\OFb$. Let $X$ be a $p$-divisible group over $Z$. We denote by $T(X)$ the lisse $O_F$-sheaf of Tate modules over the rigid generic fiber $Z^\mathrm{rig}$, given by the projective system of \etale sheaves $\{X^\mathrm{rig}[\varpi^n]\}$. By definition, an \emph{integral \etale Tate tensor} on $X^\mathrm{rig}$ is a morphism $t: \mathbf{1}:=O_F\rightarrow T(X)^\otimes$ of lisse $O_F$-sheaves. By \cite[Theorem 7.1.6]{Kim2018}, for any crystalline Tate tensor $t_\crys: \mathbf{1}\rightarrow \mathbb{D}(X)^\otimes$ on $X$, there exists a unique integral \etale Tate tensor $t_{\et}:\mathbf{1}\rightarrow T(X)^\otimes$ on $X^\mathrm{rig}$ such that for any classical point $z\in Z^\mathrm{rig}$,   $t_{\et, \bar z}\in T(X_{\bar z})^\otimes$ matches with $t_{\crys,z}: \mathbf{1}\rightarrow D(X_z)^\otimes$ under the classical crystalline comparison isomorphism
\begin{equation}
  \label{eq:cryscomparison}
  B_\crys(\OFb) \otimes_\OFb \mathbb{D}(X_{z,\kb})(\OFb)\xrightarrow{\sim} B_\crys(\OFb) \otimes_F T(X)^*_{\bar z}[1/p]. 
\end{equation}
Here $\bar z$ is any geometric point supported at $z$, and $B_\crys$ is Fontaine's crystalline period ring. 

Applying this construction to $Z=\mathcal{N}$, $X=X^\mathrm{univ}$ and $t_\crys=\bpi_\crys$, we obtain an integral \etale Tate tensor $\bpi_\et$ on $X^\mathrm{univ,rig}$, which is a projector $$\bpi_\et: \End(T(X^\mathrm{univ}))\rightarrow\End(T(X^\mathrm{univ}))$$  whose image $\mathbf{V}_\et:=\im(\bpi_\et)$ is a lisse $O_F$-sheaf of rank $m$.

Now let $K/\Fb$ be a finite extension. Let $z\in \mathcal{N}(O_K)$. Let $X/O_K$ be the $p$-divisible group corresponding to $z$. Let $T_pX$ be the integral $p$-adic Tate module of $X$, a free $O_F$-module of rank $2^m$ with an $O_F$-linear action of $\Gamma_K:=\Gal(\overline{K}/K)$. The projector $\bpi_{\et}$ induces a projector of $O_F[\Gamma_K]$-modules $$\bpi_{\et, \bar z}: \End(T_pX)\rightarrow \End(T_pX).$$ Its image $\mathbf{V}_{\et, \bar z}$ is a free $O_F$-module of rank $m$ with an action of $\Gamma_K$. The endomorphism ring $\End(T_pX)$ has a natural quadratic module structure over $O_F$ given by $(f_1, f_2)=2^{-m}\tr(f_1\circ f_2)$, which induces a quadratic module structure on $\mathbf{V}_{\et,\bar z}$ satisfying $f\circ f=(f,f)\cdot\id_{T_pX}$ for $f\in \mathbf{V}_{\et,\bar z}$. This makes $\mathbf{V}_{\et,\bar z}$ a \emph{self-dual} $O_F$-lattice of rank $m$ isomorphic to $V$. In this way we view $V\simeq\mathbf{V}_{\et,\bar z}\subseteq\End(T_pX)$.

\subsection{Special endomorphisms} Let $X/O_K$ be the $p$-divisible group corresponding to $z\in \mathcal{N}(O_K)$ as in \S\ref{sec:tate-modules}.  We have a natural injection of $O_F$-modules $$i_K: \End(X)\hookrightarrow \End(T_pX).$$  On the other hand, the reduction map induces injection of $O_F$-modules $$i_\kb: \End(X)\hookrightarrow \End(X_\kb).$$ Tensoring with $F$ we obtain two injections (still denoted by the same notation) of vector spaces over $F$, $$\xymatrix{&\Endo(X) \ar[ld]_{i_K} \ar[rd]^{i_\kb} &  \\ \Endo(T_pX) & & \Endo(X_\kb).}$$

Recall that we view $\mathbb{V}\subseteq \Endo(X_\kb)$ via the projector $\bpi_\crys$ (\S\ref{sec:proj-bpi-bpi_crys}) and $V\subseteq \End(T_pX)$ via the projector $\bpi_\et$ (\S\ref{sec:tate-modules}). By the compatibility of $\bpi_\crys$ and $\bpi_\et$ under the crystalline comparison isomorphism (\ref{eq:cryscomparison}), we know that for $f\in \Endo(X)$, $\bpi_\crys(f)=f$ if and only if $\bpi_\et(f)=f$. Hence $i_K(f)\in V_F$ if and only if $i_\kb(f)\in \mathbb{V}$.

\begin{definition}
Define the space of \emph{special quasi-endomorphisms} of $X$ to be subspace $$\SEndo(X):=\{f\in \Endo(X): i_K(f)\in V_F\}=\{f\in \Endo(X): i_\kb(f)\in \mathbb{V}\}\subseteq \Endo(X),$$ and define  the space of \emph{special endomorphisms} of $X$ to be $\SEnd(X):=\SEndo(X)\cap \End(X)$.  
\end{definition}
 We have a natural quadratic $O_F$-module structure on $\SEnd(X)$ satisfying $f\circ f=(f,f)\cdot\id_X$ for $f\in\SEnd(X)$, which is compatible with the quadratic form on $V$ via $i_K$ (resp. on $\mathbb{V}$ via $i_\kb$).  In this way we obtain two injections of quadratic spaces over $F$, $$\xymatrix{&\SEndo(X) \ar[ld]_{i_K} \ar[rd]^{i_\kb} &  \\ V_F & & \mathbb{V}.}$$ 

\begin{lemma} \label{lem:tate}
The following identity holds: $$\SEnd(X)=i_K^{-1}(V).$$
\end{lemma}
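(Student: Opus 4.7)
The inclusion $\SEnd(X)\supseteq i_K^{-1}(V)$ is immediate from the definitions, since $V\subseteq V_F$. So the plan is to prove the nontrivial inclusion $\SEnd(X)\subseteq i_K^{-1}(V)$, i.e., that for $f\in \End(X)$ with $i_K(f)\in V_F$, one automatically has $i_K(f)\in V$. This boils down to verifying the purely $O_F$-linear statement
\[
V \;=\; V_F\cap \End(T_pX)
\]
inside $\End(T_pX)_F=\Endo(T_pX)$.

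The key observation is that by the construction recalled in \S\ref{sec:tate-modules}, the lattice $V\simeq \mathbf{V}_{\et,\bar z}$ is defined as the image of the $O_F$-linear projector $\bpi_{\et,\bar z}\colon \End(T_pX)\to \End(T_pX)$ coming from the universal \'etale Tate tensor $\bpi_\et$ on $X^{\mathrm{univ},\mathrm{rig}}$. Since $\bpi_{\et,\bar z}$ is an idempotent endomorphism of the free $O_F$-module $\End(T_pX)$, it yields an $O_F$-module decomposition
\[
\End(T_pX)=V\oplus W, \qquad W:=\ker(\bpi_{\et,\bar z}).
\]
Inverting $p$ gives $\End(T_pX)_F=V_F\oplus W_F$, whence $V_F\cap \End(T_pX)=V$, proving the displayed identity.

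Combining this with the definition $\SEnd(X)=\SEndo(X)\cap \End(X)=\{f\in \End(X):i_K(f)\in V_F\}$, any $f\in \SEnd(X)$ has $i_K(f)\in V_F\cap \End(T_pX)=V$, so $f\in i_K^{-1}(V)$. This completes the proof. There is no real obstacle here; the only thing to be careful about is the integral (rather than just rational) nature of the projector $\bpi_{\et,\bar z}$, which was already established in \S\ref{sec:tate-modules} as a consequence of Kim's integrality theorem \cite[Theorem 7.1.6]{Kim2018} and ensures that $V$ is a genuine direct summand of $\End(T_pX)$, not merely an $O_F$-submodule.
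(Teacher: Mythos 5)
There is a genuine gap in your proof, and it lies in the step you dismiss as trivial. You claim the inclusion $\SEnd(X)\supseteq i_K^{-1}(V)$ is immediate from $V\subseteq V_F$. But $i_K^{-1}(V)$ is the preimage of $V$ under the \emph{rational} map $i_K\colon \Endo(X)\to\Endo(T_pX)$, so given $f\in\Endo(X)$ with $i_K(f)\in V$, the containment $V\subseteq V_F$ only tells you $f\in\SEndo(X)$. You still need to show $f\in\End(X)$, i.e., that the quasi-endomorphism $f$ is an honest endomorphism. This is not a definitional triviality: a priori one only knows that $i_K(f)$ lies in the integral lattice $\End(T_pX)$, and to descend this integrality to $\End(X)$ one must invoke Tate's theorem (\cite[Theorem 4, Corollary 1]{Tate1967}), which identifies $\End(X)$ with $\End_{O_F[\Gamma_K]}(T_pX)$. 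Concretely, $i_K(f)\in \Endo_{O_F[\Gamma_K]}(T_pX)\cap V\subseteq\End_{O_F[\Gamma_K]}(T_pX)$, and only then does Tate give $f\in\End(X)$. This is the actual content of the lemma — were the inclusion as immediate as you say, the lemma would hardly be worth stating.

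Your treatment of the opposite inclusion $\SEnd(X)\subseteq i_K^{-1}(V)$ is fine and in fact slightly sharper than the paper's phrasing: the paper asserts $V=V_F\cap\End(T_pX)$ ``by definition,'' whereas you correctly point out that this is a (small but nontrivial) consequence of $V=\mathbf{V}_{\et,\bar z}$ being the image of the idempotent $\bpi_{\et,\bar z}$ on $\End(T_pX)$, so that $V$ is a genuine $O_F$-module direct summand of $\End(T_pX)$ rather than merely a submodule. That remark is worth keeping. But the proof as written is incomplete until you supply the Tate's-theorem argument for the other direction.
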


\begin{proof}
By definition $V=V_F\cap \End(T_pX)$, so we have $i_K(\SEnd(X))\subseteq V_F\cap \End(T_pX)=V$. Conversely, suppose $f\in \SEndo(X)$ such that $i_K(f)\in V$. To show that $f\in\SEnd(X)$ it remains to show that $f\in \End(X)$. By \cite[Theorem 4, Corollary 1]{Tate1967}, the map $i_K$ induces an isomorphism $$i_K:\End(X)\cong \End_{O_F[\Gamma_K]}(T_pX),$$ where $\Gamma_K=\Gal(\ov K/K)$, and so an isomorphism $$i_K:\Endo(X)\cong \Endo_{O_F[\Gamma_K]}(T_pX).$$ Hence $$i_K(f)\in \Endo_{O_F[\Gamma_K]}(T_pX)\cap V\subseteq \Endo_{O_F[\Gamma_K]}(T_pX)\cap \End(T_pX)=\End_{O_F[\Gamma_K]}(T_pX).$$ It follows that $f\in \End(X)$ as desired. 
\end{proof}

\begin{corollary}\label{cor:ZMhoritonal}
  Let $M\subseteq \mathbb{V}$ be an $O_F$-lattice (of arbitrary rank). Then $z\in\mathcal{Z}(M)(O_K)$ if and only if
  \begin{align}\label{eq:SW}
    M \subseteq i_\kb( i_K^{-1}(V)).
  \end{align} In particular, when $z\in \mathcal{Z}(M)(O_K)$, there exists an embedding of quadratic $O_F$-modules $i_K\circ i_{\kb}^{-1}: M\hookrightarrow V$.
\end{corollary}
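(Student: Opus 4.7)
The plan is to unwind the definition of $\mathcal{Z}(M)$ in terms of lifts of quasi-endomorphisms and then apply Lemma~\ref{lem:tate}. Let $X/O_K$ denote the $p$-divisible group corresponding to $z$, with framing quasi-isogeny $\rho: \mathbb{X}\otimes_\kb \kb \to X_\kb$ (the reduction of the universal quasi-isogeny $\rho^\mathrm{univ}$ at $z$). Recall that by construction $\mathbb{V}\subseteq \Endo(\mathbb{X})$ is the image of the projector $\bpi_{\crys,0}$ (see \S\ref{sec:proj-bpi-bpi_crys} and \S\ref{sec:space-special-quasi}), so after transporting via $\rho$ we may view each $x\in M\subseteq \mathbb{V}$ as a quasi-endomorphism $\rho\circ x\circ\rho^{-1}\in \Endo(X_\kb)$.

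First I would observe that, directly from Definition~\ref{def:kudla-rapop-cycl-2}, the condition $z\in \mathcal{Z}(M)(O_K)$ is equivalent to saying that for every $x\in M$ the quasi-endomorphism $\rho\circ x\circ\rho^{-1}$ lies in $i_\kb(\End(X))$; that is, it lifts to an actual endomorphism $\tilde{x}\in \End(X)$. Since $x$ is fixed by the projector $\bpi_{\crys,0}$ on $\mathbb{X}$, and the universal crystalline Tate tensor $\bpi_\crys$ is compatible with the framing (by \cite[Theorem 4.9.1]{Kim2018}), the lift $\tilde{x}$ is fixed by $\bpi_{\crys,z}$. Equivalently $i_K(\tilde{x})$ lies in $\mathbf{V}_{\et,\bar z}=V\subseteq \End(T_pX)$, so that $\tilde{x}\in \SEnd(X)$. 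Conversely, any $\tilde{x}\in\SEnd(X)$ reduces via $i_\kb$ to an element of $\mathbb{V}$, matching an element of $\Endo(\mathbb{X})$ under $\rho$. Identifying $\mathbb{V}$ with its image inside $\Endo(X_\kb)$ via $\rho$, we conclude
\[
z\in\mathcal{Z}(M)(O_K)\iff M\subseteq i_\kb(\SEnd(X)).
\]

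Now I would apply Lemma~\ref{lem:tate}, which gives $\SEnd(X)=i_K^{-1}(V)$. Substituting yields $z\in \mathcal{Z}(M)(O_K)$ if and only if $M\subseteq i_\kb(i_K^{-1}(V))$, as desired. For the ``in particular'' part, note that $i_\kb$ is injective by definition (reduction of endomorphisms to the special fiber is injective on $\End(X)$), so the composite map $i_K\circ i_\kb^{-1}$ is well-defined on $i_\kb(\SEnd(X))\supseteq M$. It sends $M$ into $V$ by Lemma~\ref{lem:tate}, and it preserves the quadratic forms because both $i_K$ and $i_\kb$ arise from restrictions of the trace form $2^{-m}\tr(f_1\circ f_2)$ on the relevant endomorphism algebras.

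The only nontrivial step is the compatibility argument in the first paragraph: namely that the lift of a special quasi-endomorphism is automatically special. This is the standard consequence of the crystalline comparison between the framing object $\mathbb{X}$ and the deformation $X$, together with the invariance of the tensor $\bpi$ under the Frobenius and under parallel transport along divided-power thickenings (\cite[Theorem 4.9.1]{Kim2018}). All other steps are formal once this compatibility is in place.
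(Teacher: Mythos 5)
Your argument is correct and follows the same two-step route the paper uses: unwind Definition~\ref{def:kudla-rapop-cycl-2} to see that $z\in\mathcal{Z}(M)(O_K)$ if and only if $M\subseteq i_\kb(\SEnd(X))$, then substitute $\SEnd(X)=i_K^{-1}(V)$ from Lemma~\ref{lem:tate}. Note, however, that the crystalline-compatibility argument in your middle paragraph is doing unnecessary work: once $\tilde x\in\End(X)$ is any lift of $\rho\circ x\circ\rho^{-1}\in\mathbb{V}\subseteq\Endo(X_{\kb})$, the containment $\tilde x\in\SEnd(X)$ is immediate from the definition $\SEnd(X)=\{f\in\End(X):i_\kb(f)\in\mathbb{V}\}$ since $i_\kb(\tilde x)=\rho\circ x\circ\rho^{-1}$ by construction, so tracing through $\bpi_{\crys}$ and the integral \'etale lattice $\mathbf{V}_{\et,\bar z}$ (which in effect re-derives the easy half of Lemma~\ref{lem:tate}) can be omitted.
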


\begin{proof}
By definition we have $z\in \mathcal{Z}(M)(O_K)$ if and only if $M \subseteq i_\kb(\SEnd(X))$. The result then follows from Lemma \ref{lem:tate}.  
\end{proof}

\subsection{Proof of Theorem \ref{thm:horizontal}}\label{sec:proof-theor-refthm:h}

Let $z\in \mathcal{Z}(L^\flat)(O_K)$. By Corollary \ref{cor:ZMhoritonal}, we know that $$L^\flat\subseteq i_{\kb}( i_K^{-1} (V)).$$ Define $M^\flat\coloneqq L^\flat_F \cap i_{\kb}( i_K^{-1} (V))$. By Lemma \ref{lem:saturatedhorizontal}, we know that $M^\flat\in \Hor(L^\flat)$. By construction $M^\flat$ is the largest lattice in $L_F^\flat$ contained in $i_{\kb}( i_K^{-1}(V))$, thus we obtain that $z\in \mathcal{Z}(M^\flat)^\circ(O_K)$ by Corollary \ref{cor:ZMhoritonal} again. Therefore the $O_K$-points of both sides of \eqref{eq:horizontal} are equal.

To finish the proof of Theorem \ref{thm:horizontal}, by the flatness of both sides of \eqref{eq:horizontal} it remains to check that the $O_K[t]$-points of both sides are equal (where $t^2=0$). Namely, we would like to show that for each $z\in \mathcal{Z}(L^\flat)(O_K)$, there is a unique lift of $z$ in $\mathcal{Z}(L^\flat)(O_K[t])$. Let $\mathbf{V}_{\crys,z}$ be the crystal of rank $m$ associated to $z$ (\S\ref{sec:proj-bpi-bpi_crys}). Since the kernel of $O_K[t]\rightarrow O_K$ admits trivial nilpotent divided powers, by Lemma \ref{lem:GM}, a lift $\tilde z\in \mathcal{Z}(L^\flat)(O_K[t])$ of $z$ corresponds to an isotropic $O_K[t]$-line $\Fil^1\mathbf{V}_{\crys}(O_K[t])$ in $\mathbf{V}_{\crys}(O_K[t])$  lifting $\Fil^1\mathbf{V}_{\crys}(O_K)$  and orthogonal to the $O_K[t]$-submodule $L^\flat_{\crys,z}(O_K[t])\subseteq \mathbf{V}_{\crys,z}(O_K[t])$. By Breuil's theorem (as recalled in \cite[\S4.3]{LZ2019}), the $S$-submodule (where $S$ is Breuil's ring) $L^\flat_{\crys,z}(S)\subseteq \mathbf{V}_\crys(S)$ has rank $n-1$, and base changing from $S$ to $O_K$ we know that the $O_K$-module $L^\flat_{\crys,z}(O_K)\subseteq \mathbf{V}_\crys(O_K)$ also has rank $n-1$. Hence we know that there is a unique choice of such isotropic line $\Fil^1\mathbf{V}_{\crys}(O_K[t])$ orthogonal to the $O_K[t]$-submodule $L^\flat_{\crys,z}(O_K[t])\subseteq \mathbf{V}_{\crys,z}(O_K[t])$ of rank $n-1$. Hence the lift $\tilde z$ is unique as desired.

\subsection{Degree of primitive cycles}

\begin{lemma}\label{lem:degreeprimitive}
Let $M^\flat\subseteq \mathbb{V}$ be an $O_F$-lattice of rank $n-1$. Assume that $M^\flat$ is horizontal. If $\chi(M^\flat)\ne0$, then  $$\deg_{\OFb} \mathcal{Z}(M^\flat)^\circ=q^{\lfloor \frac{\val(M^\flat)}{2}\rfloor}\cdot 
  \begin{cases}
    1, & t(M^\flat)=0,\\
    (1+q^{-1}), & t(M^\flat)=1,\\
    2(1+q^{-1}), & t(M^\flat)=2.
  \end{cases}
  $$
If $\chi(M^\flat)=0$, then  $$\deg_{\OFb}\mathcal{Z}(M^\flat)^\circ=2q^{\lfloor \frac{\val(M^\flat)}{2}\rfloor}\cdot 
  \begin{cases}
    1, & t(M^\flat)=1,\\
    2, & t(M^\flat)=2.
  \end{cases}
$$
\end{lemma}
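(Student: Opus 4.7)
The plan is to reduce the computation of $\deg_{\OFb}\mathcal{Z}(M^\flat)^\circ$ to the small GSpin Rapoport--Zink spaces $\mathcal{N}_2$ and $\mathcal{N}_3^+$ using the identifications set up in \S\ref{sec:horizontal-cycles}, and then apply the degree formula \eqref{eq:quasicanonical} for quasi-canonical lifting cycles together with the Gross--Keating analysis of special cycles on $\mathcal{N}_3^+$. The argument splits according to whether $t(M^\flat)\le 1$ or $t(M^\flat)=2$.

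First I would handle the case $t(M^\flat)\le 1$. Writing $M^\flat=M_{n-2}\obot M_1$ with $M_{n-2}$ self-dual of rank $n-2$ and $M_1$ of rank $1$, the isomorphism $\mathcal{Z}(M_{n-2})\simeq\mathcal{N}_2$ identifies $\mathcal{Z}(M^\flat)^\circ$ with $\mathcal{Z}(M_1)^\circ$, which by the construction in \S\ref{sec:quasi-canon-lift} is a single quasi-canonical lifting cycle $\mathcal{Z}_{K,s}$ with $K=F(\sqrt{\disc M_1})$ and $s=\lfloor\val(M_1)/2\rfloor=\lfloor\val(M^\flat)/2\rfloor$. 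Since $M_{n-2}$ is self-dual, $\chi(M^\flat)=0$ iff $\chi(M_1)=0$, and by the anisotropicity of $\mathbb{V}_3^{\varepsilon'}$ noted in \S\ref{sec:quasi-canon-lift} we have $\chi(M_1)\in\{-1,0\}$. Thus $K/F$ is unramified precisely when $\chi(M^\flat)\ne 0$ and ramified precisely when $\chi(M^\flat)=0$, and direct substitution into \eqref{eq:quasicanonical} yields the claimed degree in each of the three subcases.

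Next I would handle $t(M^\flat)=2$. Writing $M^\flat=M_{n-3}\obot M_2$ with $M_{n-3}$ self-dual of rank $n-3$ and $M_2$ of rank $2$ with fundamental invariants $(a_1,a_2)$, $1\le a_1\le a_2$, the cycle $\mathcal{Z}(M^\flat)^\circ$ is identified with the primitive Gross--Keating cycle $\mathcal{Z}(M_2)^\circ\subseteq\mathcal{N}_3^+$. Using the product decomposition $\mathcal{N}_3^+\simeq\mathcal{N}_2\times_{\Spf\OFb}\mathcal{N}_2$ of Example \ref{exa:smallm} together with the Gross--Keating description of $\mathcal{Z}(M_2)$ as a union of quasi-canonical lifting cycles (\cite{Gross1993}; cf.\ \cite{Rapoport2007}, \cite{VGW+07}), I would show that $\mathcal{Z}(M_2)^\circ$ consists of exactly \emph{two} copies of $\mathcal{Z}_{K,s_{\max}}$, where $K=F(\sqrt{\disc M_2})$ and $s_{\max}=\lfloor\val(M_2)/2\rfloor=\lfloor\val(M^\flat)/2\rfloor$. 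Since $M_{2,F}\subseteq\mathbb{V}_4^+$ is anisotropic, $K$ is a quadratic field, unramified iff $\chi(M^\flat)\ne 0$ (equivalently $a_1+a_2$ is even) and ramified iff $\chi(M^\flat)=0$. Applying \eqref{eq:quasicanonical} to each component then gives $\deg_{\OFb}\mathcal{Z}(M^\flat)^\circ=2q^{s_{\max}}(1+q^{-1})$ in the unramified case and $4q^{s_{\max}}$ in the ramified case.

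The main obstacle is the last step in the $t(M^\flat)=2$ case: showing that the primitive part $\mathcal{Z}(M_2)^\circ$ contributes exactly two copies of the top quasi-canonical lifting rather than one. Conceptually the factor of two reflects the two distinct optimal embeddings of $O_{K,s_{\max}}$ into the two Lubin--Tate factors of $\mathcal{N}_3^+\simeq\mathcal{N}_2\times\mathcal{N}_2$. A rigorous justification would proceed either by induction on $(a_1,a_2)$, using \eqref{eq:horizontalK} to peel off the contributions of $\mathcal{Z}(M_2')^\circ$ for integral over-lattices $M_2'\supsetneq M_2$ and matching the total against the Gross--Keating intersection formula for $\mathcal{Z}(M_2)$, or by adapting the explicit description of special divisors on products of Lubin--Tate spaces in terms of embeddings of quadratic orders into the associated quaternion algebra.
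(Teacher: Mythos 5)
Your $t(M^\flat)\le 1$ case is correct and in fact a bit cleaner than the paper's: reducing directly to $\mathcal{N}_2$ and reading off \eqref{eq:quasicanonical} works, whereas the paper folds all three sub-cases into one uniform Gross--Keating reduction. The trouble is in the $t(M^\flat)=2$ case. There you assert, as an intermediate step, the structural statement that $\mathcal{Z}(M_2)^\circ$ consists of \emph{exactly two copies} of $\mathcal{Z}_{K,s_{\max}}$ with a single pair $(K,s_{\max})$. That is strictly stronger than what the lemma claims (which is only a degree), it is not established by anything you wrote, and the paper's argument deliberately avoids proving it. The numerology is consistent with it (one $\mathcal{Z}_{K,s}$ with the right $K$ has half the required degree), but a priori the components of $\mathcal{Z}(M_2)^\circ$ could have varying $s$ or multiplicities, and the degree count alone cannot rule that out. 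If you instead just prove the degree formula directly, you never need the structural claim.

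Your proposed ``rigorous justification (route 1)'' is essentially the paper's own induction in disguise, but stated imprecisely at the crucial input. The paper's induction on $\val(M^\flat)$ peels off $\deg_{\OFb}\mathcal{Z}(N^\flat)^\circ$ for $N^\flat\supsetneq M^\flat$ via Theorem~\ref{thm:horizontal}, exactly as you suggest, but then uses the specific identity $\deg_{\OFb}\mathcal{Z}(M^\flat)=\pDen^+(M)-\pDen^+(\wit M)$ from the proof of \cite[Theorem~6.8]{CY} together with Corollary~\ref{cor:pDendiff} to close the recursion. You instead invoke ``the Gross--Keating intersection formula for $\mathcal{Z}(M_2)$'', but the classical Gross--Keating result computes the triple intersection number $\chi(\mathcal{N}_3^+,\mathcal{Z}(x_1)\jiao\mathcal{Z}(x_2)\jiao\mathcal{Z}(x_3))$, not the degree of the one-dimensional horizontal cycle $\mathcal{Z}(M_2)$; converting one to the other requires an additional argument (an intersection with a test divisor, or the Cho--Yamauchi identity the paper actually cites). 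Without naming that ingredient, the induction does not close, which is the concrete gap. Your second route via quaternion orders is a genuinely different (and plausible) approach, but it is only gestured at.
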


\begin{proof}
Since $M^\flat$ is horizontal,  as in \S\ref{sec:horizontal-cycles} we are reduced to the Gross--Keating case $m=4$, $\varepsilon=+1$. In this case, we prove the degree formula by induction on $\val(M^\flat)$. By induction hypothesis, the degree formula is true for all horizontal lattices $N^\flat\subseteq \mathbb{V}_4^+$ with $\val(N^\flat)<\val(M^\flat)$. 

  By Theorem \ref{thm:horizontal}, we know that $$\deg_{\OFb}\mathcal{Z}(M^\flat)^\circ=\deg_{\OFb}\mathcal{Z}(M^\flat) -\sum_{M^\flat \subsetneq N^\flat\in\Hor(M^\flat)}\deg_{\OFb}\mathcal{Z}(N^\flat)^\circ.$$ Choose $x\in (M^\flat_F)^\perp$ such that $M=M^\flat\obot \langle x\rangle$ and $\wit M=M^\flat\obot \langle\varpi^{-1}x\rangle$ are integral. By the proof of \cite[Theorem 6.8]{CY}, we know that $$\deg_{\OFb}\mathcal{Z}(M^\flat)=\pDen^+(M)-\pDen^+(\wit M).$$ Hence $$\deg_{\OFb}\mathcal{Z}(M^\flat)^\circ= (\pDen^+(M)-\pDen^+(\wit M)) -\sum_{M^\flat \subsetneq N^\flat\in\Hor(M^\flat)}\deg_{\OFb}\mathcal{Z}(N^\flat)^\circ.$$ The desired degree formula for $\mathcal{Z}(M^\flat)^\circ$ then follows from Corollary \ref{cor:pDendiff} and the induction hypothesis.
\end{proof}

\subsection{Relation with local densities}

Notice that $\deg_\OFb(\mathcal{Z}(L^\flat)_\sH)$ is equal to the degree of the 0-cycle $\mathcal{Z}(L^\flat)_{\Fb}$ in the generic fiber $\mathcal{N}_{\Fb}$ of the Rapoport--Zink space, which may be interpreted as a \emph{geometric} intersection number on the generic fiber. We have the following identity between this geometric intersection number and a local density.

\begin{corollary}\label{cor:genericfiber} Let $L^\flat\subseteq\mathbb{V}$ be an $O_F$-lattice of rank $n-1$.
 Then $$\deg_{\OFb}(\mathcal{Z}(L^\flat)_\sH)=
  \begin{cases}
    \Denf(1, L^\flat), & \chi(L^\flat)\ne0\\
    2\Denf(1, L^\flat), & \chi(L^\flat)=0.
  \end{cases}
$$
\end{corollary}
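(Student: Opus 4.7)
The plan is to observe that all the ingredients are already in place and this corollary is essentially a bookkeeping comparison of two explicit sums.

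First, by Theorem \ref{thm:horizontal}, we have the decomposition in the Grothendieck group
\[
\mathcal{O}_{\mathcal{Z}(L^\flat)_\sH} = \sum_{M^\flat \in \Hor(L^\flat)} \mathcal{O}_{\mathcal{Z}(M^\flat)^\circ},
\]
so taking $\OFb$-degrees on the generic fiber yields
\[
\deg_{\OFb}\bigl(\mathcal{Z}(L^\flat)_\sH\bigr) = \sum_{M^\flat \in \Hor(L^\flat)} \deg_{\OFb} \mathcal{Z}(M^\flat)^\circ,
\]
where no overcounting occurs by Lemma \ref{lem:alldistinct}. Lemma \ref{lem:degreeprimitive} then supplies a closed formula for each term $\deg_{\OFb} \mathcal{Z}(M^\flat)^\circ$, depending only on $\val(M^\flat)$, $t(M^\flat)$, and whether $\chi(M^\flat)$ vanishes.

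Next, I will observe that $\chi(M^\flat)=\chi(L^\flat)$ for every $M^\flat \in \Hor(L^\flat)$, since $M^\flat$ and $L^\flat$ span the same quadratic space $L^\flat_F$ and $\chi(\cdot)$ vanishes exactly when the valuation is odd (a parity preserved under passing to superlattices of the same rank). So the two cases $\chi(L^\flat)\ne 0$ and $\chi(L^\flat)=0$ of the corollary correspond exactly to the two cases of Lemma \ref{lem:degreeprimitive}. The second case of Corollary \ref{cor:pDendiff}, when $L^\flat$ is co-isotropic (i.e.\ $\chi(L^\flat)=\varepsilon$), shows $\Hor(L^\flat)=\varnothing$ and $\Denf(1,L^\flat)=0$, so both sides of the identity vanish trivially and we may assume $L^\flat$ is co-anisotropic.

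Finally, the identity reduces to a term-by-term comparison with the explicit sum for $\Denf(1, L^\flat)$. The cleanest source is the second half of Corollary \ref{cor:pDendiff}, which already unpacks $\Denf(1,L^\flat)$ (resp.\ $2\Denf(1,L^\flat)$ when $\chi(L^\flat)=0$) as a sum indexed by $M^\flat\in\Hor(L^\flat)$ whose summands match term-by-term those provided by Lemma \ref{lem:degreeprimitive}. Alternatively one may read the same formula off Corollary \ref{cor:Denfe1L}, noting that the factor $(1+\varepsilon \sgn_{n-1}(M^\flat))$ in the $t(M^\flat)=2$ contribution vanishes unless $M^\flat$ is horizontal, so the sum in \ref{cor:Denfe1L} is effectively over $\Hor(L^\flat)$. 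The factor of $2$ discrepancy between the two cases $\chi(L^\flat)\ne 0$ and $\chi(L^\flat)=0$ comes from the ramified vs.\ unramified class-field contribution $[O_{\Kb,s}:O_\Fb]$ in \eqref{eq:quasicanonical} that feeds into Lemma \ref{lem:degreeprimitive}, matched on the analytic side by the factor $(1-\varepsilon\chi(L^\flat)q^{-1})$ in Corollary \ref{cor:Denfe1L} which specializes to $1+q^{-1}$ (unramified) or $1$ (ramified). There is no real obstacle: once the horizontal decomposition theorem and the Gross--Keating degree computation have been established, the corollary is immediate.
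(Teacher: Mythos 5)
Your proposal is correct and takes essentially the same route as the paper, which deduces the statement directly from Theorem \ref{thm:horizontal}, Lemma \ref{lem:degreeprimitive}, and Corollary \ref{cor:pDendiff}. You make explicit the implicit observation that $\chi(M^\flat)=\chi(L^\flat)$ for all $M^\flat\in\Hor(L^\flat)$ (since both are full-rank lattices in $L^\flat_F$), which is the step that lets Lemma \ref{lem:degreeprimitive} and Corollary \ref{cor:pDendiff} match term-by-term; aside from a trivial slip in calling the co-isotropic case of Corollary \ref{cor:pDendiff} its ``second case'' when it is item (i), everything is accurate.
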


\begin{proof}
  It follows immediately from Theorem \ref{thm:horizontal}, Lemma \ref{lem:degreeprimitive} and Corollary \ref{cor:pDendiff}.
\end{proof}

\begin{remark}
  Using the $p$-adic uniformization theorem (\S\ref{sec:p-adic-unif}) and the flatness of the horizontal part of the global special cycles, one may deduce from Corollary \ref{cor:genericfiber} an identity between the geometric intersection number (i.e. the degree) of a special 0-cycle on a compact Shimura variety associated to $\GSpin(n-1,2)$ and the value of a Fourier coefficient of a \emph{coherent} Siegel Eisenstein series on $\Sp(n,n)$ at the near central point $s=1/2$. This should give a different proof of a theorem of Kudla \cite[Theorem 10.6]{Kudla1997}.
\end{remark}

\section{Vertical parts of special cycles}\label{sec:vert-comp-kudla}

We continue with the setup in \S\ref{sec:kudla-rapop-cycl}. Let $L^\flat\subseteq \mathbb{V}$ be an $O_F$-lattice of rank $n-1$.

\subsection{The support of the vertical part $\mathcal{Z}(L^\flat)_{\sV}$} \label{sec:supp-vert-part} 

Recall that $\mathcal{Z}(L^\flat)_\sV$ is the vertical part of the special cycle $\mathcal{Z}(L^\flat)\subseteq \mathcal{N}$ (\S\ref{sec:kudla-rapop-cycl-1}).

\begin{proposition}\label{prop:supp Z(L) V}
$\mathcal{Z}(L^\flat)_\sV$ is supported on $\mathcal{N}^\mathrm{red}$, i.e., $\mathcal{O}_{\mathcal{Z}(L^\flat)_\sV}$ is annihilated by a power of the ideal sheaf of $\mathcal{N}^\mathrm{red}\subseteq \mathcal{N}$.
\end{proposition}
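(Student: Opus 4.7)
The plan is to reduce the Proposition, which is local on $\mathcal{N}$, to a consequence of Lemma~\ref{lem:ZLnoetherian} together with the unwinding of the definitions in \S\ref{sec:horiz-vert-parts}. First I would cover $\mathcal{N}$ by noetherian formal affines $\Spf R$ and write $\mathcal{Z}(L^\flat) \cap \Spf R = \Spf(R/I)$; since $R/I$ is noetherian, the $\varpi$-torsion submodule $(R/I)[\varpi^\infty]$ is finitely generated, hence equals $(R/I)[\varpi^N]$ for some finite $N$ (a priori depending on the chart). Lemma~\ref{lem:ZLnoetherian} then splits the argument according to whether $L^\flat$ is co-anisotropic or co-isotropic in $\mathbb{V}$.

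When $L^\flat$ is co-anisotropic, part~(\ref{item:noetherian}) of Lemma~\ref{lem:ZLnoetherian} upgrades the local noetherianness above to global noetherianness of $\mathcal{Z}(L^\flat)$, so a uniform $N$ exists with $\varpi^N \mathcal{O}_{\mathcal{Z}(L^\flat)}[\varpi^\infty]=0$. The definition of the vertical part---namely the closed formal subscheme cut out by the ideal sheaf $\varpi^N \mathcal{O}_{\mathcal{Z}(L^\flat)}$---then tautologically yields $\varpi^N \mathcal{O}_{\mathcal{Z}(L^\flat)_\sV}=0$.

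When $L^\flat$ is co-isotropic, part~(\ref{item:horempty}) of Lemma~\ref{lem:ZLnoetherian} asserts $\mathcal{Z}(L^\flat)_\sH = \varnothing$, and by the fall-back clause in \S\ref{sec:horiz-vert-parts} we have $\mathcal{Z}(L^\flat)_\sV = \mathcal{Z}(L^\flat)$. Unwinding the construction of the horizontal part, the equality $\mathcal{Z}(L^\flat)_\sH = \varnothing$ is equivalent to the equality of ideal sheaves $\mathcal{O}_{\mathcal{Z}(L^\flat)}[\varpi^\infty] = \mathcal{O}_{\mathcal{Z}(L^\flat)}$, i.e., every local section of $\mathcal{O}_{\mathcal{Z}(L^\flat)}$ is $\varpi$-torsion. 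Combined with the local noetherianness from the first step, this forces $\varpi^N \mathcal{O}_{\mathcal{Z}(L^\flat)} = 0$ on each chart, as required.

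The substantive content therefore rests on Lemma~\ref{lem:ZLnoetherian}(\ref{item:horempty}), and the main subtlety I would verify carefully is that its inputs---the absence of $O_K$-points for every finite $K/\Fb$ provided by Corollary~\ref{cor:ZMhoritonal} together with Lemma~\ref{lem:latticemebed}(\ref{item:embed2})---really do force the \emph{flat formal subscheme} $\mathcal{Z}(L^\flat)_\sH$ to be empty, and not merely its set of characteristic-zero points. For this one uses that any non-empty flat locally noetherian closed formal subscheme of $\mathcal{N}$ has non-empty rigid generic fiber, which contains classical points over some finite extension $K/\Fb$, and that such classical points extend to $O_K$-points via the formal-to-rigid comparison (since $\mathcal{N}$, and hence any closed formal subscheme of it, is formally locally of finite type over $\OFb$).
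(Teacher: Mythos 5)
Your proof doesn't establish the stated Proposition; it proves something substantially weaker (and essentially tautological). You have conflated two different notions of support. What you show, in both branches, is that a power of $\varpi$ annihilates $\mathcal{O}_{\mathcal{Z}(L^\flat)_\sV}$ — but that is built into the very definition of the vertical part in \S\ref{sec:horiz-vert-parts} (it is the closed formal subscheme cut out by $\varpi^N\mathcal{O}_Z$). The Proposition instead asserts that $\mathcal{O}_{\mathcal{Z}(L^\flat)_\sV}$ is killed by a power of the ideal sheaf of $\mathcal{N}^\mathrm{red}\subseteq\mathcal{N}$, i.e., by the radical of an ideal of definition of $\mathcal{N}$, which is strictly larger than $(\varpi)$. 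Concretely, for $\mathcal{N}_2\simeq\Spf\OFb[[t]]$ the ideal of $\mathcal{N}^\mathrm{red}$ is $(\varpi,t)$, not $(\varpi)$; the subscheme $\Spf\kb[[t]]$ is killed by $\varpi$ but is \emph{not} supported on $\mathcal{N}^\mathrm{red}=\Spec\kb$. In higher rank the gap is dimensional: the formal special fiber $\mathcal{N}_\kb$ has relative dimension $n-1$, whereas $\mathcal{N}^\mathrm{red}=\bigcup_\Lambda\mathcal{V}(\Lambda)$ has dimension $t_{\max}/2-1<n-1$. The whole content of the Proposition is that the vertical cycle does not smear out into the generic, basic-but-non-supersingular part of the formal special fiber; your argument cannot see this because it never touches $\mathcal{N}^\mathrm{red}$ at all.

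The paper's actual proof is quite different and does require a new idea. Arguing by contradiction, one extracts an affine formal curve $\mathcal{C}=\Spf R\subseteq\mathcal{Z}(L^\flat)_\sV$ with a unique closed point $z$, looks at the $F$-isocrystal with $H=\SO(V_F)$-structure attached to the geometric generic point $\eta$ of $\Spec R$, and applies the Rapoport--Richartz specialization theorem to get $\bar b\prec b_\eta$ in $B(H)$. Because $L^\flat$ acts by special endomorphisms, $\Phi$ stabilizes $L^\flat_\Fb$, and the Newton points decompose over $\SO(L^\flat_F)\times T$ with $T=\SO(\mathbb{W})$ a \emph{torus} (here the $2$-dimensionality of $\mathbb{W}=(L^\flat_F)^\perp$ is essential). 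Since the partial order on $B(T)$ for a torus is trivial, $b^\perp=b^\perp_\eta$, so $b_\eta$ is basic — contradicting the choice of $\eta$. Your argument, resting only on $\varpi$-torsion bookkeeping, Lemma~\ref{lem:ZLnoetherian}, and a point-counting digression about $\mathcal{Z}(L^\flat)_\sH$, contains none of this Newton-stratification input, and the conclusion genuinely does not follow from what you have written.
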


\begin{proof}
  If not, we may find a affine formal curve $\mathcal{C}=\Spf R\subseteq \mathcal{Z}(L^\flat)_\sV$ such that $\mathcal{C}$ has a unique closed point $z\in \mathcal{N}(\kb)$. The universal $p$-divisible group $X^\mathrm{univ}$ over $\mathcal{N}$ pulls back to a $p$-divisible group $\mathcal{X}$ over $\Spec R$. Let $\eta$ be a geometric generic point of $\Spec R$, with algebraically closed residue field $\kb_\eta$. Let $\Fb_\eta$ be the fraction field of the Witt ring of $\kb_\eta$.  Let $\mathbf{V}_{\crys, \eta}$ (resp. $\mathbf{V}_{\crys, z}$) be the crystal associated to the point $\eta$ (resp. $z$) as in \S\ref{sec:proj-bpi-bpi_crys}. Denote the corresponding isocrystal by $(V_{\Fb_\eta}, \Phi_\eta=b_\eta\circ\sigma)$, where $b_\eta\in H(\Fb_\eta)$ (resp. $(V_\Fb, \Phi=\bar b\circ\sigma)$, where $\bar b\in H(\Fb)$ as in \S\ref{sec:space-special-quasi}), and $H=\SO(V_F)$. Recall that these isocrystals with $H$-structure are classified by the class of $b_\eta$ and $\bar b$ in the Kottwitz set $B(H)$ of $\sigma$-conjugacy classes of $H(\Fb)$, and $B(H)$ is independent of the algebraically closed residue fields $\kb_\eta$ and $\kb$ (\cite[Lemma 1.3]{Rapoport1996}).  By the specialization theorem of Rapoport--Richartz \cite[Theorem 3.6]{Rapoport1996}, we have $\bar b\prec b_\eta$, where $\prec$ is the partial ordering on $B(H)$ (\cite[\S2.3]{Rapoport1996}).

  To obtain a contradiction to that $z$ is the unique closed point of $\mathcal{C}$, it suffices to show that $b_\eta$ is a basic element. Since $\mathcal{C}\subseteq \mathcal{Z}(L^\flat)_\sV$, the lattice $L^\flat$ acts on $\mathcal{X}$ via special endomorphisms and hence $\Phi$ (resp. $\Phi_\eta$) stabilizes $L^\flat_\Fb$ (resp. $L^\flat_{\Fb_\eta}$).  Let $\mathbb{W}$ be the 2-dimensional orthogonal complement of $L^\flat_F$ in $\mathbb{V}$ and $T=\SO(\mathbb{W})$. Since $\Phi$ stabilizes $L^\flat_\Fb$, we may write $\bar b=(b^\flat, b^\perp)$ for basic elements $b^\flat\in B(\SO(L^\flat_F))$ and $b^\perp\in B(T)$. Similarly, we may write $b_\eta=(b^\flat, b_\eta^\perp)$ for some (possibly non-basic) element $b_\eta^\perp\in B(T)$. Now $b^\perp$ is a specialization of $b^\perp_\eta$ implies that $b^\perp\prec b^\perp_\eta$ for the partial ordering on $B(T)$. But $T$ is a torus, we know that $b^\perp\prec b^\perp_\eta$ implies that $b^\perp=b^\perp_\eta$ in $B(T)$ by \cite[\S1.7 and Theorem 3.6 (i)]{Rapoport1996} (namely, in the notation of \cite{Rapoport1996}, the map $\delta: \mathcal{N}(T)\rightarrow X_*(T)^\Gamma \otimes \mathbb{Q}$ is an isomorphism for any torus $T$, so $\delta(\nu(b))=\delta(\nu(b_\eta))$ implies $\nu(b)=\nu(b_\eta)$). Hence $b_\eta=\bar b\in B(H)$ is also basic, a contradiction as desired.
\end{proof}

\begin{remark}\label{sec:supp-vert-part-1}
  The key observation in the proof is that the quadratic space $(L^\flat_F)^\perp$ has dimension 2 and hence its isometry group is a torus. In the unitary case, the similar observation also holds (the hermitian space $(L^\flat_F)^\perp$ has dimension 1 and its isometry group is a torus) and provides an alternative (more group-theoretic) proof of \cite[Lemma 5.1.1]{LZ2019}.
\end{remark}

\subsection{Horizontal and vertical parts of $^\BL\CZ(L^\flat)$}\label{sec:horiz-vert-parts-1}

Since $\CZ(L^\flat)_\sH$ is either empty or 1-dimensional (Theorem \ref{thm:horizontal}), the intersection $\CZ(L^\flat)_\sH\cap \CZ(L^\flat)_\sV$ must be either empty or 0-dimensional. It follows that there is a decomposition of the $(n-1)$-th graded piece 
\begin{align}\label{eq:gradedK0}
 \Gr^{n-1}K_0^{\CZ(L^\flat)}(\mathcal{N})= \Gr^{n-1}K_0^{\CZ(L^\flat)_\sH}(\mathcal{N})\oplus \Gr^{n-1}K_0^{\CZ(L^\flat)_\sV}(\mathcal{N}).
\end{align}

\begin{definition}
The decomposition (\ref{eq:gradedK0})  induces a decomposition of the derived special cycle into \emph{horizontal} and \emph{vertical} parts
\begin{align*}
^\BL\CZ(L^\flat)=\,^\BL\CZ(L^\flat)_\sH+ ~ ^\BL\CZ(L^\flat)_\sV\in \Gr^{n-1}K_0^{\CZ(L^\flat)}(\mathcal{N}).
\end{align*}
From this decomposition, we see that even though the vertical part $\mathcal{Z}(L^\flat)_\sV$ may depend on the choice of an integer $N\gg0$ (\S\ref{sec:horiz-vert-parts}), the element $^\BL\CZ(L^\flat)_\sV\in \Gr^{n-1}K_0^{\CZ(L^\flat)}(\mathcal{N})$ is canonical and independent of the choice of $N$.

Since $\CZ(L^\flat)_\sH$ is either empty or has the expected dimension one, the first summand $^\BL\CZ(L^\flat)_\sH$ is represented by the structure sheaf of $\CZ(L^\flat)_\sH$ by \cite[Lemma B.2 (ii)]{Zhang2019}. Abusing notation we shall write the sum as
\begin{align}\label{eq:H+V}
^\BL\CZ(L^\flat)=\CZ(L^\flat)_\sH+ ~ ^\BL\CZ(L^\flat)_\sV.
\end{align}
\end{definition}

\subsection{The Tate conjecture for certain Deligne--Lusztig varieties} Consider the generalized Deligne--Lusztig variety $Y_{d}\coloneqq Y_W$ and the classical Deligne--Lusztig $Y_d^\circ\coloneqq Y_W^\circ$ as defined in \S \ref{sec:gener-deligne-luszt}, where $W$ is the unique non-split quadratic space over $\kappa$ of dimension $2(d+1)$. Recall that we have a stratification $$Y_d=\bigsqcup_{i=0}^d X_{P_i}(w_i).$$ Let $$X_i^\circ\coloneqq X_{P_i}(w_i), \quad X_i\coloneqq \overline{X_i^\circ}=\bigsqcup_{k=0}^i X_k^\circ.$$ Then $X_i^\circ$ is a disjoint union of isomorphic copies of the classical Deligne--Lusztig variety $Y_i^\circ$, and each irreducible component of $X_i$ is isomorphic to $Y_i$.

For any $\mathbb{F}_{q^2}$-variety $S$, we write  $H^{j}(S)(i)\coloneqq H^{j}(S_{\kb}, \overline{\mathbb{Q}_\ell}(i))$ ($\ell\ne p$ is a prime). Let $\Fr=\Fr_{q^2}$ be the $q^2$-Frobenius acting on $H^{j}(S)(i)$.

\begin{lemma} \label{lem:lusztig}For any $d, i\ge0$ and $s\ge1$, the action of $\Fr^s$ on the following cohomology groups are semisimple, and the space of $\Fr^s$-invariants is zero when $j\ge1$.
  \begin{altenumerate}
  \item\label{item:1} $H^{2j}(Y_d^\circ)(j)$.
  \item\label{item:2} $H^{2j}(X_i^\circ)(j)$.
  \item\label{item:3} $H^{2j}(Y_d-X_i)(j)$.
  \end{altenumerate}
\end{lemma}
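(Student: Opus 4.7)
My plan rests on a three-step reduction that eventually invokes Lusztig's analysis of Coxeter-type Deligne--Lusztig varieties. First, since each stratum $X_i^\circ$ is, as noted in \S\ref{sec:gener-deligne-luszt}, a finite disjoint union of copies of the classical Coxeter-type Deligne--Lusztig variety $Y_i^\circ$ (indexed by a parabolic induction datum, and permuted in an $\Fr^s$-equivariant way), claim (ii) follows formally from (i) with $d$ replaced by $i$.

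Second, the stratification $Y_d - X_i = \bigsqcup_{k=i+1}^d X_k^\circ$ gives a filtration of the open complement $Y_d - X_i$ whose successive graded pieces are the $X_k^\circ$. The associated long exact sequences in cohomology are $\Fr^s$-equivariant, and both properties in question --- $\Fr^s$-semisimplicity and the vanishing of $\Fr^s$-invariants on $H^{2j}(\,-\,)(j)$ for $j\ge 1$ --- are stable under extensions. Hence (iii) reduces to (ii).

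The substance of the lemma is therefore (i). Here I would appeal to Lusztig's fundamental theorem on the $\ell$-adic cohomology of Deligne--Lusztig varieties attached to Coxeter elements: for the Coxeter-type variety $X(w)$ associated to the (twisted) Coxeter element $w$ for the quasi-split non-split even orthogonal group $\SO(W)$, the geometric Frobenius acts semisimply on $H^*_c(X(w))$, and the eigenvalues on $H^j_c$ are of the form $\zeta\cdot q^{j/2}$, where $\zeta$ runs through the eigenvalues of the twisted Coxeter element on the reflection representation. These $\zeta$ are primitive roots of unity of order equal to (a divisor of) the twisted Coxeter number, and in particular $\zeta\ne 1$ in every positive degree. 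Translating via the Tate twist, no eigenvalue of $\Fr^s$ on $H^{2j}_c(Y_d^\circ)(j)$ equals $1$ for $j\ge 1$. Poincar\'e duality on the smooth variety $Y_d^\circ$ then yields the corresponding statements for ordinary cohomology $H^{2j}(Y_d^\circ)(j)$, which is exactly (i). The degree $j=0$ is harmless since $H^0(Y_d^\circ)$ is generated by the constant function (on which $\Fr^s$ acts trivially).

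The main obstacle I anticipate is the combinatorial bookkeeping required to identify precisely which (twisted) Coxeter element appears for $\SO(W)$ with $W$ non-split of dimension $2(d+1)$, and to confirm that the induced Frobenius eigenvalues on $H^*$ are genuinely never equal to $q^j$ in positive degrees. This numerology should match the explicit descriptions of these generalized Deligne--Lusztig varieties already developed in \cite{Howard2014,Howard2015} (where the strata $X_{P_i}(w_i)$ are analyzed in detail), so once the correspondence with Lusztig's setup is set up carefully the eigenvalue computation is standard.
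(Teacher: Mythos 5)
Your high-level plan (prove (i) via Lusztig, reduce (ii) to (i) via the disjoint-union structure, reduce (iii) to (ii) via the stratification and long exact sequences) is exactly the route taken in the paper. However, the heart of the matter is (i), and your account of it has a genuine gap.

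You assert that, by Lusztig's theorem, the Frobenius eigenvalues on $H^j_c(X(w))$ are of the form $\zeta\cdot q^{j/2}$ with $\zeta$ a primitive root of unity of order dividing the twisted Coxeter number, and that the non-vanishing of the invariants is blocked because ``$\zeta\ne 1$ in every positive degree.'' This is not what happens in type $^2D_n$: by Lusztig's table \cite[7.3 Case $^2D_n$]{Lusztig1976/77}, the Frobenius $\Fr=\Fr_{q^2}$ acts on $H^{d+j}_c(Y_d^\circ)$ by the \emph{scalar} $q^{2j}$ for $j=0,\dots,d$ --- there is no nontrivial root-of-unity factor at all (for instance for $d=1$, $Y_1^\circ=\mathbb{P}^1-\mathbb{P}^1(\mathbb{F}_{q^2})$ and $\Fr_{q^2}$ acts trivially on $H^1_c$). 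The vanishing of $\Fr^s$-invariants therefore does not come from $\zeta\neq 1$; it comes from Poincar\'e duality and the Tate twist: the eigenvalue of $\Fr$ on $H^{2j}(Y_d^\circ)(j)$ works out to $q^{2j}$, which differs from $1$ precisely when $j\geq 1$. You flag at the end that you ``have not set up the correspondence with Lusztig's setup carefully,'' and indeed this is where the argument is incomplete; the numerology you supplied in its place is incorrect for this type. Semisimplicity is also not a formal consequence of the eigenvalue count; the paper deduces it directly from \cite[6.1]{Lusztig1976/77}.

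A smaller point: you claim ``$\Fr^s$-semisimplicity \dots [is] stable under extensions,'' which is not literally true (a Jordan block is a nonsplit extension of two semisimple modules with the same eigenvalue). The reduction of (iii) to (ii) still goes through here because, after the uniform Tate twist, the graded pieces that meet in each long exact sequence have distinct powers of $q$ as eigenvalues, so the relevant extensions split for eigenvalue reasons; but this needs to be said.
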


\begin{proof}
  \begin{altenumerate}
  \item The assertion is clear when $d=\dim Y_d^\circ\le1$.  When $d\ge 2$, by \cite[7.3~Case~$^2D_{n}~(n\ge3)$]{Lusztig1976/77} (notice the adjoint group assumption is harmless due to \cite[1.18]{Lusztig1976/77}), we know that there are exactly $d+1$ eigenvalues $\{1, q^2,\ldots, q^{2d}\}$ of $\Fr$ acting on $H^{*}_c(Y_d^\circ)$, with $q^{2j}$ exactly appearing in degree $d+j$. By the Poincare duality, we have a perfect pairing  $$H^{2d-j}_c(Y_d^\circ) \times H^{j}(Y_d^\circ)(d)\rightarrow H^{2d}_c(Y_d^\circ)(d)\simeq\overline{\mathbb{Q}_\ell}.$$ Thus the eigenvalue of $\Fr$ on $H^{2j}(Y_d^\circ)(j)$ are given by $q^{2(d-j)}$ times the inverse of the eigenvalue in $H_c^{2(d-j)}(Y_d^\circ)$, which is equal to $q^{2j}$. Hence the eigenvalue of $\Fr^s$ is never equal to 1 when $j\ge1$.  The semisimplicity of the action of $\Fr^s$ follows from \cite[6.1]{Lusztig1976/77}.
  \item It follows from (\ref{item:1}) since $X_i^\circ$ is a disjoint union of $Y_i^\circ$.
  \item It follows from (\ref{item:2}) since $Y_d-X_i=\bigsqcup_{k=i+1}^d X_k^\circ$.\qedhere
  \end{altenumerate}
\end{proof}

\begin{theorem}\label{thm:tate}
  For any $0\le i \le d$ and any $s\ge1$, we have
  \begin{altenumerate}
  \item   The space of Tate classes $H^{2i}(Y_d)(i)^{\Fr^s=1}$ is spanned by the cycle classes of the irreducible components of $X_{d-i}$. In particular, the Tate conjecture (\cite[Conjecture 1]{Tate1965}, or \cite[Conjecture $T^i$]{Tate1994}) holds for $Y_d$.
  \item\label{item:eigen1}   Let $H^{2i}(Y_d)(i)_1\subseteq H^{2i}(Y_d)(i)$ be the the generalized eigenspace of $\Fr^s$ for the eigenvalue 1. Then $H^{2i}(Y_d)(i)_1=H^{2i}(Y_d)(i)^{\Fr^s=1}$.
  \end{altenumerate}
\end{theorem}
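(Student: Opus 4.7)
The plan is to analyze the long exact sequence of cohomology with supports for the pair $(X_{d-i}, U)$, where $U := Y_d - X_{d-i}$, and to identify $H^{2i}_{X_{d-i}}(Y_d)(i)$ explicitly via the induced stratification together with purity. The relevant segment of the long exact sequence is
\begin{equation*}
H^{2i-1}(U)(i) \to H^{2i}_{X_{d-i}}(Y_d)(i) \xrightarrow{f} H^{2i}(Y_d)(i) \xrightarrow{g} H^{2i}(U)(i).
\end{equation*}
By Lemma~\ref{lem:lusztig}(\ref{item:3}), with $i$ there replaced by $d-i$, at $j = i \geq 1$ the Frobenius $\Fr^s$ acts semisimply on $H^{2i}(U)(i)$ with no nonzero fixed vectors, and in particular its generalized $1$-eigenspace vanishes. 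Exactness therefore forces $H^{2i}(Y_d)(i)_1 \subseteq \im(f)$, so that every class in the generalized $1$-eigenspace (and a fortiori every $\Fr^s$-invariant class) lifts along the Gysin map $f$.

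To identify $H^{2i}_{X_{d-i}}(Y_d)(i)$, I use the induced stratification $X_0 \subset X_1 \subset \cdots \subset X_{d-i}$ and the localization sequences for closed--open pairs along it. For each $k \leq d-i$, the locally closed stratum $X_k^\circ$ is smooth of pure codimension $d-k$ in $Y_d$, so absolute purity yields $H^{j}_{X_k^\circ}(Y_d)(i) \cong H^{j - 2(d-k)}(X_k^\circ)(i - (d-k))$. For $k \leq d-i-1$, i.e., $d-k \geq i+1$, both $j = 2i$ and $j = 2i+1$ give a negative source degree, hence these groups vanish; iterating along the filtration, I conclude $H^{2i}_{X_{d-i-1}}(Y_d)(i) = 0 = H^{2i+1}_{X_{d-i-1}}(Y_d)(i)$. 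The localization sequence then gives
\begin{equation*}
H^{2i}_{X_{d-i}}(Y_d)(i) \xrightarrow{\sim} H^{2i}_{X_{d-i}^\circ}(Y_d)(i) \cong H^{0}(X_{d-i}^\circ),
\end{equation*}
where the second isomorphism is purity on the open stratum $X_{d-i}^\circ$, which is smooth of codimension $i$ in the smooth open $Y_d - X_{d-i-1}$. Since $X_{d-i} = \overline{X_{d-i}^\circ}$, the connected components of $X_{d-i}^\circ$ are in natural bijection with the irreducible components of $X_{d-i}$, and $f$ sends the corresponding basis of $H^0(X_{d-i}^\circ)$ to the cycle classes of these components in $H^{2i}(Y_d)(i)$. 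Combined with the first paragraph, this proves part~(i) and the Tate conjecture in codimension $i$ for $Y_d$.

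For part~(ii), the $\Fr^s$-action on $H^0(X_{d-i}^\circ)$ is by permutation of connected components and hence manifestly semisimple. Together with the semisimplicity on $H^{2i}(U)(i)$ from Lemma~\ref{lem:lusztig}(\ref{item:3}), the long exact sequence yields that $\Fr^s$ acts semisimply on the generalized $1$-eigenspace $H^{2i}(Y_d)(i)_1$, whence $H^{2i}(Y_d)(i)_1 = H^{2i}(Y_d)(i)^{\Fr^s = 1}$. The delicate point throughout is the identification in the middle paragraph: the closed subvariety $X_{d-i}$ is in general not smooth, since different irreducible components may meet nontrivially, so purity does not apply directly to $X_{d-i}$. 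Working instead with the locally closed stratification $\{X_k^\circ\}$ and iterating the localization sequence reduces everything to purity on each smooth stratum $X_k^\circ$, where the larger codimension $d-k > i$ on the lower strata forces the relevant local cohomology to vanish; keeping careful track of codimensions and Frobenius weights along the way is the main technical care that the argument requires.
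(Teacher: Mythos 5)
Your proof is correct and is essentially the argument the paper invokes: the paper's own proof simply defers to \cite[Theorem 5.2.2]{LZ2019}, whose strategy is exactly the localization sequence for the closed--open pair $(X_{d-i}, Y_d - X_{d-i})$ combined with purity along the smooth locally closed strata $X_k^\circ$ and the Frobenius-eigenvalue vanishing of Lemma~\ref{lem:lusztig}. Two cosmetic points worth tightening when writing this up: the group you identify should be $H^{2i}_{X_{d-i}^\circ}(Y_d - X_{d-i-1})(i)$ rather than $H^{2i}_{X_{d-i}^\circ}(Y_d)(i)$ (since $X_{d-i}^\circ$ is only closed in the open complement of $X_{d-i-1}$), and the case $i=0$ falls outside the hypothesis $j\ge 1$ of Lemma~\ref{lem:lusztig} and should be disposed of separately, though it is trivial since $H^0(Y_d)$ is spanned by the fundamental class of $Y_d = X_d$.
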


\begin{proof}
  The same proof of \cite[Theorem 5.2.2]{LZ2019} works verbatim using Lemma \ref{lem:lusztig} in place of \cite[Lemma 5.2.1]{LZ2019}.
\end{proof}

\section{Fourier transform: the geometric side}\label{sec:four-transf-geom}

We continue with the setup in \S\ref{sec:kudla-rapop-cycl}. Let $L^\flat\subseteq \mathbb{V}$ be an $O_F$-lattice of rank $n-1$.

\subsection{Decomposition of $\Int$ and $\pDen$}\label{sec:decomp-int-pden}
Recall from \eqref{eq:H+V} that there is a decomposition of the derived special cycle $^\BL\CZ(L^\flat)$ into a sum of vertical and horizontal parts
$$
^\BL\CZ(L^\flat)=\CZ(L^\flat)_\sH+ ~ ^\BL\CZ(L^\flat)_\sV.
$$

\begin{definition}
  Let $L^\flat\subseteq \mathbb{V}$ be an $O_F$-lattice of rank $n-1$. Denote by $\mathbb{W}:=(L^\flat_F)^\perp\subseteq \BV$, a 2-dimensional non-degenerate quadratic space over $F$. Define $$\Omega(L^\flat):=L^\flat_F\times \mathbb{W}^\mathrm{an},$$ an open dense subset of $\mathbb{V}$. Then we have $$\Omega(L^\flat)=\{x\in \mathbb{V}: L^\flat+\langle x\rangle\text{ is a non-degenerate } O_F\text{-lattice of rank } n\}.$$ 
\end{definition}

\begin{definition}
For $x\in \Omega(L^\flat)$, define the \emph{arithmetic intersection number} $$\Int_{L^\flat}(x)\coloneqq\Int(L^\flat +\langle x\rangle)= \chi(\mathcal{N}, \mathcal{Z}(x)\jiao \mathcal{Z}(L^\flat)).$$ Define its \emph{horizontal part} to be
  \begin{align}
    \Int_{L^\flat,\sH}(x)\coloneqq\chi(\CN,\CZ(x)\jiao
    \CZ(L^\flat)_\sH),\quad 
  \end{align} and its  \emph{vertical part} to be
\begin{align}\label{def: Int V}
\Int_{L^\flat,\sV}(x)\coloneqq\chi(\CN,\CZ(x)\jiao {}^\BL\CZ(L^\flat)_\sV).
\end{align}
\end{definition}
Then there is a decomposition
\begin{align}\label{eq:Intdecomp}
\Int_{L^\flat}(x)=\Int_{L^\flat,\sH}(x)+\Int_{L^\flat,\sV}(x).
\end{align}

\begin{definition}\label{def:horpDen}
  Analogously, for $x\in \Omega(L^\flat)$, define the \emph{derived local density} $$\pDen_{L^\flat}(x)\coloneqq \pDen(L^\flat+\langle x\rangle).$$ By Corollary \ref{cor: pDen}, we have
  \begin{align}
    \label{eq:pDenx}
    \pDen_{L^\flat}(x)=\sum_{ L^\flat\subset L'\subset
      L'^\vee}\fkm(t(L'),\sgn_{n+1}(L')){\bf 1}_{L'}(x).
  \end{align}
 Here $L'\subseteq \mathbb{V}$ are $O_F$-lattices of rank $n$. Define its \emph{horizontal part} to be
  \begin{align}\label{pDen H}
    \pDen_{L^\flat,\sH}(x)\coloneqq\sum_{ L^\flat\subset L'\subset
      L'^\vee \atop L'^\flat\in \Hor(L^\flat) }\fkm(t(L'),\sgn_{n+1}(L')){\bf 1}_{L'}(x), 
  \end{align}
  and its \emph{vertical part} to be \begin{align}\label{pDen V}
\pDen_{L^\flat,\sV}(x)\coloneqq \sum_{ L^\flat\subset L'\subset
      L'^\vee \atop L'^\flat\not\in \Hor(L^\flat) }\fkm(t(L'),\sgn_{n+1}(L')){\bf 1}_{L'}(x).
\end{align}
  Here we denote
  \begin{align}\label{eq:L' flat}
    L'^\flat\coloneqq L'\cap L_F^\flat\subset L_F^\flat.
  \end{align}
\end{definition}
Then there is a decomposition 
$$
\pDen_{L^\flat}(x)=\pDen_{L^\flat,\sH}(x)+\pDen_{L^\flat,\sV}(x).
$$

\subsection{The horizontal identity}

\begin{lemma}\label{lem:primitiveIntH}
Let $L^\flat\subseteq \mathbb{V}$ be an $O_F$-lattice of rank $n-1$. Assume that $L^\flat$ is horizontal. Then for $x\in \Omega(L^\flat)$, $$\chi(\mathcal{N}, \mathcal{Z}(x)\jiao \mathcal{Z}(L^\flat)^\circ)=\sum_{L^\flat\subset L'\subset L'^\vee \atop L'^\flat=L^\flat}\fkm(t(L'),\sgn_{n+1}(L')){\bf 1}_{L'}(x).$$
\end{lemma}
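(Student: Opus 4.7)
The plan is to reduce the identity, via cancellation laws on both sides, to the low-rank cases $n=2$ or $n=3$ with $\varepsilon=+1$, where both sides can be computed explicitly using the Gross--Keating / quasi-canonical lifting formulas.

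\textbf{Step 1 (Reduction of the LHS).} Decompose orthogonally $L^\flat = M \obot L_0^\flat$, where $M$ is self-dual of rank $n-1-t(L^\flat)$ and $L_0^\flat$ has full type $t(L^\flat) \le 2$. Since $L^\flat$ is horizontal, when $t(L^\flat) = 2$ the condition $\varepsilon \sgn_{n-1}(L^\flat) = +1$ together with Lemma~\ref{lem:directsum}(\ref{item:l2}) forces $M_F^\perp \simeq \mathbb{V}_4^+$. Via the isomorphism $\delta_M\colon \mathcal{N}_{n-r}^{\varepsilon_0} \isoarrow \mathcal{Z}(M)$ of \eqref{eq:ZMisomo} and the compatibility \eqref{eq:ind Z(u)} of special divisors with this embedding, the LHS becomes
\begin{equation*}
\chi\bigl(\mathcal{N}_{n-r}^{\varepsilon_0},\; \mathcal{Z}(x^\flat) \jiao \mathcal{Z}(L_0^\flat)^\circ\bigr),
\end{equation*}
where $x^\flat$ is the orthogonal projection of $x$ to $M_F^\perp$ and $\mathcal{Z}(L_0^\flat)^\circ$ is by definition the primitive part inside $\mathcal{N}_{n-r}^{\varepsilon_0}$ (see \S\ref{sec:horizontal-cycles}). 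This reduces the computation to either $\mathcal{N}_2$ or $\mathcal{N}_3^+$.

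\textbf{Step 2 (Reduction of the RHS).} Any $L'$ appearing in the sum satisfies $L' \supseteq L^\flat \supseteq M$, and since $M$ is self-dual the argument of \S\ref{sec:cancelation-law-intl} (applied to a standard orthogonal basis of $M$) yields an orthogonal splitting $L' = M \obot L'_0$ with $L'_0 := L' \cap M_F^\perp$. The condition $L'^\flat = L^\flat$ then translates to $L'_0 \cap L_{0,F}^\flat = L_0^\flat$. Moreover $t(L') = t(L'_0)$, and a computation using Lemma~\ref{lem:directsum}(\ref{item:l2}) (together with the parity bookkeeping in the definition \eqref{eq:sgnm} of $\sgn_m$) gives $\mathfrak{m}(t(L'), \sgn_{n+1}(L')) = \mathfrak{m}(t(L'_0), \sgn_{n-r+1}(L'_0))$ for the unique $\varepsilon_0$ with $H_{n-r+1}^{\varepsilon_0} \obot M \simeq H_{n+1}^\varepsilon$. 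Finally, $\mathbf{1}_{L'}(x) = \mathbf{1}_M(x_M) \mathbf{1}_{L'_0}(x^\flat)$ for $x = x_M + x^\flat$; the factor $\mathbf{1}_M(x_M)$ equals $1$ for all $x$ relevant to the LHS (since the LHS is supported on $\mathcal{Z}(M) \simeq \mathcal{N}_{n-r}^{\varepsilon_0}$, forcing $x_M \in M$). Hence the RHS identifies with the analogous primitive sum in the low-rank ambient space $\mathbb{V}_{n-r+1}^{\varepsilon_0}$, paralleling the cancellation law \eqref{eq:cancel den} for $\pDen$.

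\textbf{Step 3 (Low-rank computation).} In the reduced situation, $\mathcal{Z}(L_0^\flat)^\circ$ is either a single quasi-canonical lifting cycle $\Spf O_{\breve K, s}\subseteq \mathcal{N}_2$ (if $t(L^\flat) \le 1$) or a union of quasi-canonical lifting cycles forming the primitive part of a Gross--Keating cycle in $\mathcal{N}_3^+$ (if $t(L^\flat) = 2$), as recalled in \S\ref{sec:quasi-canon-lift}--\S\ref{sec:gross-keating-cycles}. I would then proceed by induction on $\val(L_0^\flat)$, using the induction formula Theorem~\ref{thm:induction} and Corollary~\ref{cor:pDendiff} to express the difference $\pDen(L) - \pDen(\tilde L)$ as the degree of the Cartier divisor $\mathcal{Z}(L^\flat)$ on $\mathcal{N}_{n-r}^{\varepsilon_0}$, and the horizontal decomposition Theorem~\ref{thm:horizontal} (together with Lemma~\ref{lem:degreeprimitive}) to isolate the contribution of the primitive part. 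Matching indicator-function by indicator-function rather than summing, one uses that for $x^\flat \in \Omega(L_0^\flat)$ the intersection $\chi(\mathcal{N}_{n-r}^{\varepsilon_0}, \mathcal{Z}(x^\flat) \jiao \mathcal{Z}(L_0^\flat)^\circ)$ is locally constant in $x^\flat$ on the strata $\{x^\flat : L_0^\flat + \langle x^\flat\rangle \subseteq L'_0,\ L'_0\cap L_{0,F}^\flat = L_0^\flat\}$, and compares with the explicit value $\mathfrak{m}(t(L'_0), \sgn_{n-r+1}(L'_0))$.

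\textbf{Main obstacle.} The bookkeeping in Step~3: carefully enumerating the saturated overlattices $L'_0 \supseteq L_0^\flat$, verifying the correct weight factors $\mathfrak{m}(t(L'_0), \sgn_{n-r+1}(L'_0))$, and computing the multiplicity of $\mathcal{Z}(x^\flat)$ along each quasi-canonical lifting component of $\mathcal{Z}(L_0^\flat)^\circ$. The quasi-canonical case is governed by the degree formula \eqref{eq:quasicanonical}; the Gross--Keating case requires the finer structure of $\mathcal{Z}(L_0^\flat)^\circ$ as a union of $\Spf O_{\breve K, s}$ inside $\mathcal{N}_3^+ \simeq \mathcal{N}_2 \times_{\Spf \OFb} \mathcal{N}_2$, where the multiplicity is controlled by the ramification of $K(\disc)/F$ and the interaction between $x^\flat$ and $L_0^\flat$.
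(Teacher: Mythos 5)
Your Steps 1 and 2 (reducing both sides via the self-dual part $M$ of $L^\flat$ to the Gross--Keating case $m=4$, $\varepsilon=+1$, on $\mathcal{N}_3^+$) are correct and match the paper's reduction. However, Step 3 takes a fundamentally harder route and misses the ingredient that makes the paper's argument close quickly: Remark~\ref{rem:lowrank}, which records that the full identity $\Int_{L^\flat}(x)=\pDen_{L^\flat}(x)$ for $m=4$, $\varepsilon=+1$ is already known (Gross--Keating, as recorded in \cite[\S2.16]{Wedhorn2007}). The paper's proof only needs, in the reduced case, the decomposition
$$\Int_{L^\flat,\sH}(x)=\chi(\mathcal{N},\mathcal{Z}(x)\jiao\mathcal{Z}(L^\flat)^\circ)+\sum_{L'^\flat\in\Hor(L^\flat),\ L'^\flat\ne L^\flat}\chi(\mathcal{N},\mathcal{Z}(x)\jiao\mathcal{Z}(L'^\flat)^\circ)$$
from Theorem~\ref{thm:horizontal}, the observation $\mathcal{Z}(L^\flat)=\mathcal{Z}(L^\flat)_\sH$ in the rank-$3$ case, the induction hypothesis for the terms $L'^\flat\ne L^\flat$, and then simply subtracts the known equality $\Int_{L^\flat}(x)=\pDen_{L^\flat}(x)$ and reads off the result from \eqref{eq:pDenx}.

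Your Step 3 instead proposes to compute $\chi(\mathcal{N},\mathcal{Z}(x)\jiao\mathcal{Z}(L_0^\flat)^\circ)$ directly via quasi-canonical degree formulas and to match it term-by-term against the lattice sum. This has a real gap: the tools you cite (Corollary~\ref{cor:pDendiff}, Lemma~\ref{lem:degreeprimitive}) produce \emph{scalar} degrees $\deg_{\OFb}\mathcal{Z}(M^\flat)^\circ$, whereas what Lemma~\ref{lem:primitiveIntH} asserts is an identity of \emph{functions} of $x\in\Omega(L^\flat)$, with the weight factor $\wt(t(L'),\sgn_{n+1}(L'))$ varying as $x$ varies over cosets of overlattices. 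Matching these function-valued quantities ``indicator by indicator'' is precisely the content of the Gross--Keating theorem in rank $3$, so your route amounts to re-deriving Remark~\ref{rem:lowrank} from scratch rather than using it. Absent that input, the ``main obstacle'' you flag is not mere bookkeeping but the entire substance of the low-rank case, and the proposal as written does not overcome it.
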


\begin{proof}
By Theorem \ref{thm:horizontal}, we have
\begin{align}\label{eq:IntH}
  \Int_{L^\flat,\sH}(x)=\chi(\mathcal{N}, \mathcal{Z}(x)\jiao \mathcal{Z}(L^\flat)^\circ)+ \sum_{L'^\flat \in\Hor(L^\flat)\atop L'^\flat\ne L^\flat} \chi(\mathcal{N}, \mathcal{Z}(x)\jiao \mathcal{Z}(L'^\flat)^\circ).
\end{align}
 Since $L^\flat$ is horizontal,  as in \S\ref{sec:horizontal-cycles} we are reduced to the Gross--Keating case $m=4$, $\varepsilon=+1$. In this case, we prove the desired formula by induction on $\val(L^\flat)$.  By induction hypothesis, the desired formula for each $\chi(\mathcal{N}, \mathcal{Z}(x)\jiao \mathcal{Z}(L'^\flat)^\circ)$ in the second summand is valid. Hence $$\Int_{L^\flat,\sH}(x)=\chi(\mathcal{N}, \mathcal{Z}(x)\jiao \mathcal{Z}(L^\flat)^\circ)+\sum_{L^\flat\subset L'\subset L'^\vee \atop  L'^\flat\ne L^\flat}\fkm(t(L'),\sgn_{n+1}(L')){\bf 1}_{L'}(x),$$ where we notice that $L'^\flat\in \Hor(L^\flat)$ always as $L^\flat\subseteq \mathbb{V}=\mathbb{V}_4^+$ has rank $n-1=2$. By \S\ref{sec:gross-keating-cycles} we have $\mathcal{Z}(L^\flat)=\mathcal{Z}(L^\flat)_\sH$ and hence $$\Int_{L^\flat,\sH}(x)=\Int_{L^\flat}(x).$$ By Remark~\ref{rem:lowrank} we have $$\Int_{L^\flat}(x)=\pDen_{L^\flat}(x).$$ It follows that $$\chi(\mathcal{N}, \mathcal{Z}(x)\jiao \mathcal{Z}(L^\flat)^\circ)=\pDen_{L^\flat}(x)-\sum_{L^\flat\subset L'\subset L'^\vee \atop  L'^\flat\ne L^\flat}\fkm(t(L'),\sgn_{n+1}(L')){\bf 1}_{L'}(x).$$ The desired formula then follows from (\ref{eq:pDenx}).
\end{proof}

\begin{theorem}\label{thm:Int H}Let $L^\flat\subseteq \mathbb{V}$ be an $O_F$-lattice of rank $n-1$. Then as functions on  $\Omega(L^\flat)$, 
$$\Int_{L^\flat,\sH}=\pDen_{L^\flat,\sH}.$$
\end{theorem}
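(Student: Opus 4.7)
The plan is to reduce to Lemma~\ref{lem:primitiveIntH} via the decomposition of the horizontal part given by Theorem~\ref{thm:horizontal}, and then reorganize the resulting double sum to match the definition \eqref{pDen H} of $\pDen_{L^\flat,\sH}$. Since everything in sight is linear, no new geometric input beyond what is already proved should be required.

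First, by the identity \eqref{eq:horizontalK} of Theorem~\ref{thm:horizontal} in $\Gr^{n-1}K_0^{\CZ(L^\flat)_\sH}(\CN)$, together with the fact that $\CZ(x)$ defines an element of $\mathrm{F}^1 K_0^{\CZ(x)}(\CN)$ (by Proposition~\ref{prop:relativecartier}), I would write
\[
\Int_{L^\flat,\sH}(x)=\chi\bigl(\CN,\CZ(x)\jiao \CZ(L^\flat)_\sH\bigr)=\sum_{M^\flat\in \Hor(L^\flat)}\chi\bigl(\CN,\CZ(x)\jiao \CZ(M^\flat)^\circ\bigr).
\]
Each $M^\flat\in\Hor(L^\flat)$ is horizontal with $M^\flat_F=L^\flat_F$, so Lemma~\ref{lem:primitiveIntH} applies to $M^\flat$ and gives
\[
\chi\bigl(\CN,\CZ(x)\jiao \CZ(M^\flat)^\circ\bigr)=\sum_{\substack{M^\flat\subset L'\subset L'^\vee\\ L'\cap M^\flat_F=M^\flat}}\fkm\bigl(t(L'),\sgn_{n+1}(L')\bigr)\,{\bf 1}_{L'}(x).
\]

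Next, I would use that $M^\flat_F=L^\flat_F$ to identify $L'\cap M^\flat_F$ with $L'^\flat$ in the sense of \eqref{eq:L' flat}, so the inner constraint becomes $L'^\flat=M^\flat$; in particular this condition automatically forces $L^\flat\subseteq M^\flat=L'^\flat\subseteq L'$, so we may freely replace $M^\flat\subset L'$ by $L^\flat\subset L'$. Substituting and interchanging the order of summation, I obtain
\[
\Int_{L^\flat,\sH}(x)=\sum_{\substack{L^\flat\subset L'\subset L'^\vee\\ L'^\flat\in \Hor(L^\flat)}}\fkm\bigl(t(L'),\sgn_{n+1}(L')\bigr)\,{\bf 1}_{L'}(x),
\]
since for each such $L'$ there is exactly one $M^\flat\in\Hor(L^\flat)$ contributing, namely $M^\flat=L'^\flat$. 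By the definition \eqref{pDen H}, the right-hand side is exactly $\pDen_{L^\flat,\sH}(x)$.

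The only potentially subtle point is the bookkeeping of the lattice conditions: I would verify carefully that (i) for $M^\flat\in\Hor(L^\flat)$ the $L'$ appearing in Lemma~\ref{lem:primitiveIntH} automatically contain $L^\flat$ and satisfy $L'^\flat\in\Hor(L^\flat)$ (immediate from $L^\flat\subseteq M^\flat=L'^\flat$), and conversely (ii) every $L'$ contributing to $\pDen_{L^\flat,\sH}$ gives a unique $M^\flat:=L'^\flat\in\Hor(L^\flat)$ through which it is counted. Both directions amount to the tautology $L'^\flat=L'\cap L^\flat_F$, so the two sums agree termwise and the theorem follows. No genuine obstacle is expected; the real content has already been packaged into Theorem~\ref{thm:horizontal}, Lemma~\ref{lem:primitiveIntH}, and the $n=3$, $\varepsilon=+1$ Gross--Keating case used to prove the latter.
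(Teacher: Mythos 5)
Your proposal is correct and follows essentially the same route the paper takes: the paper's one-line proof cites \eqref{eq:IntH} (the decomposition coming from Theorem~\ref{thm:horizontal}), Lemma~\ref{lem:primitiveIntH}, and the definition \eqref{pDen H}, and your argument is exactly the careful unwinding of that chain. The only (welcome) addition is that you state the decomposition $\Int_{L^\flat,\sH}(x)=\sum_{M^\flat\in\Hor(L^\flat)}\chi(\CN,\CZ(x)\jiao\CZ(M^\flat)^\circ)$ for arbitrary $L^\flat$ directly from \eqref{eq:horizontalK} rather than inheriting \eqref{eq:IntH} from the proof of the lemma (which there assumed $L^\flat$ horizontal), and you spell out the reindexing $M^\flat\leftrightarrow L'^\flat$ explicitly.
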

\begin{proof}
This follows immediately from (\ref{eq:IntH}), Lemma \ref{lem:primitiveIntH} and (\ref{pDen H}).
\end{proof}

\subsection{Decomposition of $\Int_{L^\flat,\sV}$}

By Proposition \ref{prop:supp Z(L) V}, we have a change-of-support homomorphism 
$$\xymatrix{\Gr^{n-1}K_0^{\CZ(L^\flat)_\sV}(\mathcal{N})\ar[r]& \Gr^{n-1}K_0^{\mathcal{N}^\mathrm{red}}(\mathcal{N}).}$$ Abusing notation we will also denote the image of $\LZ(L^\flat)_\sV$ in the target by the same symbol.

\begin{lemma}\label{cor:curves}
There exist countably many (and finitely many, if  $L^\flat$ is co-anisotropic) curves $C_i\subseteq \mathcal{N}^\mathrm{red}$ and $\mult_{C_i}\in \mathbb{Q}$ such that each irreducible component of $\mathcal{N}^\mathrm{red}$ contains only finitely many $C_i$'s and $$\LZ(L^\flat)_\sV=\sum_i \mult_{C_i}[\mathcal{O}_{C_i}]\in \Gr^{n-1}K_0^{\mathcal{N}^\mathrm{red}}(\mathcal{N}).$$ 
\end{lemma}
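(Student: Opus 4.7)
The plan is to realize $\LZ(L^\flat)_\sV$ as a $1$-cycle on the reduced scheme $\mathcal{N}^\mathrm{red}$ and then extract curve classes by a standard d\'evissage of coherent sheaves. First, I will use Proposition \ref{prop:supp Z(L) V} together with the change-of-support map to view $\LZ(L^\flat)_\sV$ inside $\Gr^{n-1}K_0^{\mathcal{N}^\mathrm{red}}(\mathcal{N})$. Since $\mathcal{N}$ is regular of pure total dimension $n$, the identification $K_0^{\mathcal{N}^\mathrm{red}}(\mathcal{N}) \simeq K_0'(\mathcal{N}^\mathrm{red})$ recalled in \S\ref{sec:notat-form-schem} converts the codimension filtration on the left into the dimension filtration on the right; in particular, $\Gr^{n-1}K_0^{\mathcal{N}^\mathrm{red}}(\mathcal{N})$ becomes the group of coherent sheaves on $\mathcal{N}^\mathrm{red}$ whose support has dimension at most $1$, taken modulo those supported in dimension $0$.

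Next, writing $\mathcal{N}$ as an increasing union of open noetherian formal subschemes $U_j$, I will represent the restriction of $\LZ(L^\flat)_\sV$ to each $U_j$ by a genuine coherent sheaf $\mathcal{F}_j$ on $U_j \cap \mathcal{N}^\mathrm{red}$. The standard filtration of a coherent sheaf on a noetherian scheme, with subquotients of the form $\mathcal{O}_X/\mathfrak{p}$, produces a finite decomposition of $\mathcal{F}_j$ by structure sheaves of irreducible reduced closed subvarieties; keeping only the $1$-dimensional ones and gluing over $j$ yields countably many curves $C_i$, each contained in some irreducible component $\mathcal{V}(\Lambda)$ of $\mathcal{N}^\mathrm{red}$, together with integer multiplicities $\mult_{C_i}$, such that the asserted equality $\LZ(L^\flat)_\sV = \sum_i \mult_{C_i}[\mathcal{O}_{C_i}]$ holds. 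Quasi-compactness of the projective varieties $\mathcal{V}(\Lambda)$ ensures that only finitely many $C_i$ can sit inside any one irreducible component.

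For the finiteness clause in the co-anisotropic case, I will appeal to Lemma \ref{lem:ZLnoetherian}~(\ref{item:noetherian}): when $L^\flat$ is co-anisotropic, $\mathcal{Z}(L^\flat)$ is noetherian, so by (\ref{eq:KRstrat}) its reduced locus meets only finitely many irreducible components $\mathcal{V}(\Lambda)$ of $\mathcal{N}^\mathrm{red}$, which upgrades the local finiteness above to global finiteness of the $C_i$'s. I do not anticipate a serious obstacle: once Proposition \ref{prop:supp Z(L) V} is in hand, the argument reduces to the formal K-theoretic bookkeeping set up in \S\ref{sec:notat-form-schem}. The only mildly subtle point is that $\mathcal{N}$ is merely locally (not globally) noetherian, which is precisely why the decomposition is allowed to be countable rather than finite in general.
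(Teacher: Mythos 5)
Your argument is correct and matches the paper's (very terse) proof, which simply invokes Proposition~\ref{prop:supp Z(L) V} for the existence of the curve decomposition and Lemma~\ref{lem:ZLnoetherian}~(\ref{item:noetherian}) for the finiteness clause; your write-up makes explicit the standard K-theoretic d\'evissage on each noetherian open piece that the authors leave implicit. One small remark: your d\'evissage actually yields integer multiplicities, so the $\mult_{C_i}\in\mathbb{Q}$ in the statement is just a safe (non-optimal) formulation.
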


\begin{proof}
  It follows immediately from Proposition \ref{prop:supp Z(L) V}, where the finiteness of such curves $C_i$ in the co-anisotropic case is due to the noetherianess of $\mathcal{Z}(L^\flat)$ by Lemma \ref{lem:ZLnoetherian} (\ref{item:noetherian}). 
\end{proof}

\begin{corollary} \label{cor:ZV}
  There exist (countably many) Deligne--Lusztig curves $C_i\subseteq \mathcal{N}_n^\mathrm{red}$ (i.e., $C_i=\mathcal{V}(\Lambda)\cong \mathbb{P}^1_\kb$ for a vertex lattice $\Lambda\in \Ver^4(\mathbb{V})$, see \S\ref{sec:bruh-tits-strat}) and $\mult_{C_i}\in \mathbb{Q}$ such that each irreducible component of $\mathcal{N}^\mathrm{red}$ contains only finitely many $C_i$'s and $$ \Int_{L^\flat,\sV}(x)=\sum_i \mult_{C_i}\cdot  \chi(\CN_n,\,C_i\jiao \mathcal{Z}(x))
  $$ as functions on $\Omega(L^\flat)$, where the sum on the right-hand-side is locally finite (and finite if $L^\flat$ is co-anisotropic).
\end{corollary}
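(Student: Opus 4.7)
The plan is to refine Lemma~\ref{cor:curves} by showing the curves $C_i$ appearing there can be taken to be precisely the Deligne--Lusztig curves $\mathcal{V}(\Lambda)$ with $\Lambda\in\Ver^4(\mathbb{V})$, each of which is isomorphic to $\mathbb{P}^1_\kb$ (since $\dim\mathcal{V}(\Lambda)=t(\Lambda)/2-1=1$, and by the $d=1$ case in \S\ref{sec:gener-deligne-luszt}). The key input is the Tate conjecture for the Deligne--Lusztig varieties $Y_d=Y_{W_\Lambda}$ established in Theorem~\ref{thm:tate}.

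First I would fix an irreducible component $\mathcal{V}(\Lambda_0)$ of $\mathcal{N}^\mathrm{red}$, corresponding to a vertex lattice $\Lambda_0$ of maximal type $t_\mathrm{max}$ (so $\mathcal{V}(\Lambda_0)\simeq Y_{d,\kb}$ with $d=t_\mathrm{max}/2-1$). Since both $\mathcal{V}(\Lambda_0)$ and the derived special cycle $\LZ(L^\flat)_\sV$ descend to a finite extension $\mathbb{F}_{q^{2s}}$ of $\mathbb{F}_{q^2}$, the restriction of $\LZ(L^\flat)_\sV$ to $\mathcal{V}(\Lambda_0)$ defines a class in the $\Fr^s$-invariants of $H^{2(d-1)}(\mathcal{V}(\Lambda_0))(d-1)$. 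By Theorem~\ref{thm:tate}, this space is spanned over $\mathbb{Q}$ by the cycle classes of the irreducible components of the stratum $X_1$, which under the isomorphism $\mathcal{V}(\Lambda_0)\simeq Y_d$ correspond exactly to the closures $\mathcal{V}(\Lambda)$ with $\Lambda\in\Ver^4(\mathbb{V})$ containing $\Lambda_0$ (recall $\mathcal{V}(\Lambda)\subseteq\mathcal{V}(\Lambda_0)$ iff $\Lambda\supseteq\Lambda_0$). Thus on each irreducible component the class of $\LZ(L^\flat)_\sV$ is a $\mathbb{Q}$-linear combination of classes $[\mathcal{V}(\Lambda)]$ with $\Lambda\in\Ver^4(\mathbb{V})$.

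Next I would convert this cohomological statement into the formula for $\Int_{L^\flat,\sV}(x)$. Since $\mathcal{Z}(x)$ is a Cartier divisor on $\mathcal{N}$ (Proposition~\ref{prop:relativecartier}), the Euler--Poincar\'e characteristic $\chi(\mathcal{N}, \mathcal{Z}(x)\jiao -)$, applied to a curve supported on $\mathcal{V}(\Lambda_0)$, reduces to the degree of the line bundle $\mathcal{O}(\mathcal{Z}(x))|_{\mathcal{V}(\Lambda_0)}$ against the curve class, and hence depends only on that class. Summing over irreducible components $\mathcal{V}(\Lambda_0)$ of $\mathcal{N}^\mathrm{red}$ and collecting multiplicities yields countably many pairs $(C_i,\mult_{C_i})$ with $C_i\in\bigcup_\Lambda \{\mathcal{V}(\Lambda):\Lambda\in\Ver^4(\mathbb{V})\}$ such that
\[
\Int_{L^\flat,\sV}(x)=\sum_i \mult_{C_i}\cdot\chi(\mathcal{N}_n, C_i\jiao\mathcal{Z}(x))
\]
as functions on $\Omega(L^\flat)$. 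The local finiteness on each irreducible component, as well as outright finiteness when $L^\flat$ is co-anisotropic, is inherited directly from the corresponding assertions in Lemma~\ref{cor:curves} (using Lemma~\ref{lem:ZLnoetherian}(\ref{item:noetherian})).

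The main obstacle I anticipate is ensuring the K-theoretic manipulations are well-defined: strictly speaking Lemma~\ref{cor:curves} produces a class in $\Gr^{n-1}K_0^{\mathcal{N}^\mathrm{red}}(\mathcal{N})$, not a cohomology class, so one must be careful that the decomposition into Tate classes on each $\mathcal{V}(\Lambda_0)$ can be transported back to produce an identity that is usable inside the intersection $\chi(\mathcal{N},\mathcal{Z}(x)\jiao-)$. This is handled by the fact that $\mathcal{Z}(x)\in\mathrm{F}^1K_0^{\mathcal{Z}(x)}(\mathcal{N})$ is represented by a Cartier divisor, so its derived intersection with a 1-cycle supported on $\mathcal{V}(\Lambda_0)$ depends only on the numerical class of that cycle in $\mathcal{V}(\Lambda_0)$, which by Theorem~\ref{thm:tate}(\ref{item:eigen1}) coincides with its cohomology class.
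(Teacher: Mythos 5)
Your proposal is correct and follows essentially the same argument as the paper, which proves this corollary by citing \cite[Corollary 5.3.3]{LZ2019} with Theorem~\ref{thm:tate} in place of its Tate-conjecture input and Lemma~\ref{cor:curves} in place of its curve decomposition; your reconstruction — refining the curves from Lemma~\ref{cor:curves} to Deligne--Lusztig curves via the Tate classes of Theorem~\ref{thm:tate}, and transporting the identity through the derived intersection with the Cartier divisor $\mathcal{Z}(x)$ — is exactly the intended route. One small note: the paper attributes the local finiteness of the sum specifically to the noetherianess of $\mathcal{Z}(L^\flat+\langle x\rangle)$ from Lemma~\ref{lem:ZLnoetherian}(\ref{item:noetherian}), which constrains how many DL curves can contribute at a given $x$, rather than directly to the assertions of Lemma~\ref{cor:curves}; you do cite that lemma, but it would be cleaner to make that the primary justification.
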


\begin{proof}
  The same proof of \cite[Corollary 5.3.3]{LZ2019} works using Theorem  \ref{thm:tate} in place of \cite[Theorem 5.3.2]{LZ2019} and Lemma \ref{cor:curves} in place of \cite[Corollary 5.2.2]{LZ2019}, where the local finiteness of the sum is due to the noetherianess $\mathcal{Z}(L^\flat+\langle x\rangle)$ by Lemma \ref{lem:ZLnoetherian} (\ref{item:noetherian}). 
\end{proof}

\subsection{Computation of $\Int_{\mathcal{V}(\Lambda)}$} Let $\Lambda\subseteq \mathbb{V}$ be a vertex lattice. Let $\CV(\Lambda)$ be the Deligne--Lusztig variety in the Bruhat--Tits stratification of $\mathcal{N}^\mathrm{red}$ (\S\ref{sec:bruh-tits-strat}). Define 
\begin{equation}\label{eq:Int CV}\Int_{\CV(\Lambda)}(x)\coloneqq \chi\bigl(\CN,\mathcal{V}(\Lambda) \jiao \mathcal{Z}(x)\bigr),\quad x\in \mathbb{V}\setminus \{0\}.
\end{equation}

Next we explicitly compute $\Int_{\CV(\Lambda)}$  for $\Lambda\in \Ver^4(\mathbb{V})$, i.e., for $\mathcal{V}(\Lambda)\cong \mathbb{P}^1_\kb$ a Deligne--Lusztig curve.

\begin{theorem}\label{thm:int Lam}
Let $\Lambda\in \Ver^4(\mathbb{V})$.
Then
$$
\Int_{\CV(\Lambda)}(x)=\begin{cases}(1-q) ,& x\in\Lambda ,\\
1,& x\in \Lambda^\vee\setminus \Lambda, \text{and }\val(x)\geq 0,\\
0,&\text{otherwise}. 
\end{cases}
$$ In particular, the function $\Int_{\CV(\Lambda)}$ on $\mathbb{V}\setminus\{0\}$ extends to a (necessarily unique) function in $\Ss(\mathbb{V})$, which we still denote by the same symbol.
\end{theorem}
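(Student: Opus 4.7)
The plan is to compute $\Int_{\CV(\Lambda)}(x)$ case by case according to the position of $x$ relative to the pair $(\Lambda, \Lambda^\vee)$, closely following the strategy of the unitary analogue in \cite[Theorem 5.4.2]{LZ2019}. First I handle the vanishing cases: if $\val(x) < 0$ then $x$ lies in no integral $O_F$-lattice and $\CZ(x)$ is empty by \eqref{eq:KRstrat}, while if $\val(x) \ge 0$ but $x \notin \Lambda^\vee$, then any vertex lattice $\Lambda'' \supseteq \Lambda$ is sandwiched $\Lambda \subseteq \Lambda'' \subseteq \Lambda''^\vee \subseteq \Lambda^\vee$ and therefore cannot contain $x$, so $\CV(\Lambda) \cap \CZ(x) = \varnothing$ by \eqref{eq:KRstrat}. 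In both cases $\Int_{\CV(\Lambda)}(x) = 0$.

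In the proper intersection case $x \in \Lambda^\vee \setminus \Lambda$ with $\val(x) \ge 0$, the image $\bar x \in W_\Lambda = \Lambda^\vee/\Lambda$ is a nonzero isotropic vector in this $4$-dimensional non-degenerate quadratic space over $\kappa$ (isotropicity being forced by $\val(x) \ge 0$), and $\Lambda' := \Lambda + \langle x \rangle$ is the unique vertex lattice in $\mathbb{V}$ containing both $\Lambda$ and $x$, of type $2$ (no self-dual lattices exist in $\mathbb{V}$ as $\Has(\mathbb{V}) = -1$). Thus $\CV(\Lambda) \cap \CZ(x)$ is set-theoretically the zero-dimensional stratum $\CV(\Lambda')$, a single reduced $\bar\kappa$-point. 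Using Grothendieck--Messing theory (Lemma \ref{lem:GM}) at this point, I would show that the local equation cutting out $\CZ(x)$ restricts to a uniformizer in the completed local ring of $\CV(\Lambda)$ there, witnessing transversality and yielding $\Int_{\CV(\Lambda)}(x) = 1$.

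The heart of the proof is the excess intersection case $x \in \Lambda \setminus \{0\}$. Here $\CV(\Lambda) = \CZ(\Lambda) \subseteq \CZ(x)$ scheme-theoretically (by contravariance of $\CZ$ in its argument), and since $\CZ(x)$ is a Cartier divisor (Proposition \ref{prop:relativecartier}) with invertible ideal $\CI_{\CZ(x)}$, the derived restriction $\CO_{\CV(\Lambda)} \otimes^{\BL} \CO_{\CZ(x)}$ is computed by the two-term complex $[\CI_{\CZ(x)}|_{\CV(\Lambda)} \to \CO_{\CV(\Lambda)}]$ in degrees $[-1,0]$, whose differential vanishes precisely by the scheme-theoretic inclusion. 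Hence
\begin{equation*}
  \Int_{\CV(\Lambda)}(x) \;=\; 1 - \chi\bigl(\CV(\Lambda),\, \CL_x\bigr), \qquad \CL_x := \CI_{\CZ(x)}|_{\CV(\Lambda)}.
\end{equation*}
The crucial step is to identify $\CL_x$ as a line bundle of degree $q-1$ on $\CV(\Lambda) \cong \mathbb{P}^1_{\bar\kappa}$, independently of the choice of $x \in \Lambda \setminus \{0\}$, so that $\chi(\CL_x) = q$ and $\Int_{\CV(\Lambda)}(x) = 1-q$. For this I would use the universal Hodge filtration on the crystal $\mathbf{V}_\crys$ restricted to $\CV(\Lambda)$: via Grothendieck--Messing theory (Lemma \ref{lem:GM}), the local defining equation of $\CZ(x)$ near $\CV(\Lambda)$ is the pairing $(\Fil^1 \mathbf{V}_\crys, x_\crys)$, identifying $\CL_x$ with a natural sub-line-bundle of $(\Fil^1 \mathbf{V}_\crys|_{\CV(\Lambda)})^\vee$. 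Through the Deligne--Lusztig description of $\CV(\Lambda)$ as a Coxeter-type component of the orthogonal Grassmannian of maximal isotropic subspaces of $W_\Lambda \otimes \bar\kappa$ subject to $\dim(U + \sigma(U)) = 3$, the Frobenius twist inherent in the Coxeter condition produces the degree $q-1$.

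Finally, the extension to $\Ss(\mathbb{V})$ is immediate: the formula is locally constant on $\mathbb{V} \setminus \{0\}$ (its three values are cut out by open conditions), supported in the compact set $\Lambda^\vee$, and locally constant equal to $1-q$ in a neighborhood of $0$; assigning the value $1-q$ at the origin produces the desired Schwartz function. The main obstacle is precisely the identification of $\CL_x$ in the excess case: it requires a careful local-model analysis of $\CN$ along the Deligne--Lusztig curve $\CV(\Lambda)$ and bookkeeping of Frobenius twists inherent to the Coxeter-type Deligne--Lusztig geometry, which is where the specific value $q-1$ emerges.
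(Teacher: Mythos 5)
Your framing of the excess‐intersection case is valid and is a clean repackaging of what the paper actually does: writing $\Int_{\CV(\Lambda)}(x)=1-\chi(\CV(\Lambda),\CL_x)$ with $\CL_x=\CO_\CN(-\CZ(x))|_{\CV(\Lambda)}$ is equivalent to the paper's $(q+1)+(-2q)=1-q$, since for $\val(x)=1$ one has $\deg\CL_x = -\bigl(\CV(\Lambda)^2 + (q+1)\cdot 1\bigr) = -(-2q+q+1)=q-1$. The vanishing cases and the set-theoretic identification $\CV(\Lambda)\cap\CZ(x)=\CV(\Lambda+\langle x\rangle)$ in the proper case are also handled the same way the paper does (though the paper gets multiplicity $1$ directly from the reducedness of $\CZ(\Lambda')$ established in \cite{Li2018}, not from a transversality argument via Lemma~\ref{lem:GM}).

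The genuine gap is that you never actually compute $\deg\CL_x$. The sentence ``identifying $\CL_x$ with a natural sub-line-bundle of $(\Fil^1\mathbf{V}_\crys|_{\CV(\Lambda)})^\vee$'' is ill-posed: $(\Fil^1\mathbf{V}_\crys)^\vee$ is itself a line bundle, so the only proper sub-line-bundles would be twists, and you give no mechanism determining the twist. Likewise ``the Frobenius twist inherent in the Coxeter condition produces the degree $q-1$'' asserts the answer rather than deriving it. The paper's argument is much more concrete and uses two ingredients you omit: (i) the cancellation law~\eqref{eq:ZMisomo} reducing everything to $m=4$, $\varepsilon=-1$, where $\CN_3^-\simeq\CM^{\mathrm{HB}}$ has an explicit $O_E$-graded Hodge-filtration description; and (ii) the reduction (via the invariance in \cite[Proposition~4.2]{Terstiege2011}) to $\val(x)=1$, where $\CZ(x)\simeq\CM$ is Drinfeld's formal moduli space whose special fiber is a tree of $\mathbb{P}^1$'s. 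From there the $q+1$ arises as the number of adjacent components in the tree, and $\CV(\Lambda)^2=-2q$ follows from the isomorphism $\CI/\CI^2\simeq\CO(2q)$, proved in Lemma~\ref{lem:excess} precisely by tracking the $\kb$-semilinear Frobenius between $\omega_{X^\vee,i}|_{\CV(\Lambda)}\simeq\CO(-1)$ and $\Lie_{X,i}|_{\CV(\Lambda)}$ via \cite[Lemma~4.3(a)]{Kudla1999a}. Without the moduli-theoretic reduction of (i), the grading structure that produces the $\CO(-1)\oplus\CO(q)$ splitting of the Hodge filtration — the source of the Frobenius twist you allude to — is not directly visible. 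You also assert the $x$-independence of $\deg\CL_x$ without argument; for $\val(x)>1$, $\CZ(x)_\kb$ is a strictly larger union of curves in the tree, so this independence is a nontrivial fact (the paper invokes Terstiege's lemma to reduce to $\val(x)=1$).
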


\begin{proof}
  Since $\Lambda\in\Ver^4(\mathbb{V})$, it admits an orthogonal decomposition $\Lambda=\Lambda^\flat\obot M$, where $\Lambda^\flat$ is a rank $4$ vertex lattice of type $4$, and $M$ is a self-dual lattice of rank $m-4$. Moreover, by \eqref{eq:tmax} we know that $\chi(\Lambda^\flat_F)=-1$. Thus by the same proof of \cite[Lemma 6.2.1]{LZ2019} using the cancellation law \eqref{eq:ZMisomo}, we are reduced to the case $m=4$ and $\varepsilon=-1$. Namely, we may assume that $\mathbb{V}=\mathbb{V}_4^-$ and $\mathcal{N}=\mathcal{N}_3^-$. Let $x\in \mathbb{V}\setminus \{0\}$. Since $\mathcal{V}(\Lambda)=\mathcal{Z}(\Lambda)$ (\S\ref{sec:minusc-kudla-rapop}), we know that $\mathcal{V}(\Lambda)\cap \mathcal{Z}(x)=\mathcal{Z}(\Lambda+\langle x\rangle)$. If $x\not\in \Lambda^\vee$ or $\val(x)<0$, then $\Lambda+\langle x\rangle$ is not integral. Hence $\mathcal{V}(\Lambda)\cap \mathcal{Z}(x)=\varnothing$ and so $\Int_{\mathcal{V}(\Lambda)}=0$. If $x\in \Lambda^\vee\setminus \Lambda$, then $\Lambda+\langle x\rangle\in\Ver^2(\mathbb{V})$ and hence $\mathcal{V}(\Lambda)\cap \mathcal{Z}(x)=\mathcal{V}(\Lambda+\langle x\rangle)$ consists of a single $\kb$-point, and so $\Int_{\mathcal{V}(\Lambda)}(x)=1$.

It remains to show that $\Int_{\mathcal{V}(\Lambda)}(x)=1-q$ for nonzero $x\in \Lambda$. Since $\mathcal{Z}(\Lambda)\jiao \mathcal{Z}(x)$ is invariant when replacing $x$ by any nonzero element in $\Lambda$ by the proof of \cite[Proposition 4.2]{Terstiege2011}, we know that $\Int_{\mathcal{V}(\Lambda)}(x)$ is a constant for all nonzero $x\in \Lambda$. Hence without loss of generality, we may assume $\val(x)=1$.  As $\mathcal{V}(\Lambda) \jiao \mathcal{Z}(x)$ is supported on the special fiber $\mathcal{N}_\kb$ and $\mathcal{Z}(x)$ is flat over $\Spf\OFb$ (Proposition \ref{prop:relativecartier}) we know that $$\Int_{\mathcal{V}(\Lambda)}(x)=\chi(\mathcal{N}, \mathcal{V}(\Lambda)\jiao\mathcal{Z}(x))=\chi(\mathcal{N}_\kb, \mathcal{V}(\Lambda)\jiao_{\mathcal{N}_\kb} \mathcal{Z}(x)_\kb).$$ When $\val(x)=1$, via moduli interpretation (\cite[p. 216]{Terstiege2008}) we know that $\mathcal{Z}(x)\simeq \mathcal{M}$ is isomorphic to Drinfeld's moduli space $\mathcal{M}$ of special formal $O_B$-modules (where $B$ is the quaternion algebra over $F$). The special fiber $\mathcal{M}_\kb$ is a union of $\mathbb{P}^1_\kb$'s whose dual graph is the Bruhat--Tits tree of $\PGL_2(F)$. It follows that $\mathcal{V}(\Lambda)$ intersects with $\mathcal{Z}(x)_\kb$ exactly (apart from the self intersection) with $q+1$ of adjacent $\mathbb{P}^1_\kb$'s, and the intersection number is equal to 1 for each such $\mathbb{P}^1_\kb$. It follows that that $$\chi(\mathcal{N}_\kb, \mathcal{V}(\Lambda)\jiao_{\mathcal{N}_\kb} \mathcal{Z}(x)_\kb)=(q+1) +\chi(\mathcal{N}_\kb, \mathcal{V}(\Lambda)\jiao_{\mathcal{N}_\kb}\mathcal{V}(\Lambda)).$$ This is equal to $(q+1)+(-2q)=1-q$ by Lemma \ref{lem:excess} below, which completes the proof.
\end{proof}

\begin{lemma}\label{lem:excess}
  Assume that $\mathbb{V}=\mathbb{V}_4^-$ and $\Lambda\in \Ver^4(\mathbb{V})$. Let $\mathcal{I}$ be the ideal sheaf of the closed subscheme $\mathcal{V}(\Lambda)\subseteq \mathcal{N}_{\kb}$. Then
  \begin{altenumerate}
  \item there is an isomorphism of coherent sheaves on $\mathcal{V}(\Lambda)$, $$\mathcal{I}/\mathcal{I}^2\isoarrow \mathcal{O}(2q),$$ where $\mathcal{O}(k)$ is the line bundle of degree $k$ on $\mathcal{V}(\Lambda)\simeq \mathbb{P}^1_{\kb}$.
    \item the self intersection number of $\mathcal{V}(\Lambda)\subseteq \mathcal{N}_\kb$ equals  $$\chi(\mathcal{N}_\kb, \mathcal{V}(\Lambda)\jiao_{\mathcal{N}_\kb} \mathcal{V}(\Lambda))=-2q.$$
  \end{altenumerate}
\end{lemma}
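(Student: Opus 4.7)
The plan is to deduce Part (2) from Part (1) via a Koszul/Tor computation, and to prove Part (1) by identifying the conormal bundle through Grothendieck--Messing theory and then computing its degree, either intrinsically or by reduction to the Hilbert modular surface setting.

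For Part (2): since $\mathcal{V}(\Lambda) \hookrightarrow \mathcal{N}_\kb$ is a Cartier divisor, the Koszul resolution of $\mathcal{O}_{\mathcal{V}(\Lambda)}$ gives $\Tor_0^{\mathcal{O}_{\mathcal{N}_\kb}}(\mathcal{O}_{\mathcal{V}(\Lambda)},\mathcal{O}_{\mathcal{V}(\Lambda)}) = \mathcal{O}_{\mathcal{V}(\Lambda)}$ and $\Tor_1 = \mathcal{I}/\mathcal{I}^2$, with all higher Tor sheaves vanishing. Using $\mathcal{V}(\Lambda) \cong \mathbb{P}^1_\kb$ and Part (1), the derived self-intersection evaluates as
$$\chi(\mathcal{N}_\kb, \mathcal{V}(\Lambda) \jiao \mathcal{V}(\Lambda)) = \chi(\mathbb{P}^1_\kb, \mathcal{O}_{\mathbb{P}^1}) - \chi(\mathbb{P}^1_\kb, \mathcal{O}_{\mathbb{P}^1}(2q)) = 1 - (2q+1) = -2q.$$

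For Part (1), I would first use Grothendieck--Messing theory (Lemma \ref{lem:GM}) to give an intrinsic description of $\mathcal{I}/\mathcal{I}^2$. At any $z \in \mathcal{V}(\Lambda)(\kb)$, the tangent space to $\mathcal{N}_\kb$ is canonically $\Hom_\kb(\Fil^1 \mathbf{V}_{\crys,z}(\kb), \mathbf{V}_{\crys,z}(\kb)/\Fil^1)$, and the tangent space to $\mathcal{V}(\Lambda)$ is the subspace of those homomorphisms whose induced lift of $\Fil^1$ stays orthogonal to the image of $\Lambda$ in $W_\Lambda = \Lambda^\vee/\Lambda$. Globalising over $\mathcal{V}(\Lambda) \simeq Y_{W_\Lambda} \cong \mathbb{P}^1_\kb$, this identifies $\mathcal{I}/\mathcal{I}^2$ with an explicit line bundle built from the tautological Hodge subbundle $\Fil^1 \subseteq \mathbf{V}_\crys$ on the Deligne--Lusztig curve.

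The main obstacle is the degree computation, namely to show $\deg \mathcal{I}/\mathcal{I}^2 = 2q$. The cleanest route uses the exceptional isomorphism $\mathcal{N}_3^- \simeq \mathcal{M}^\mathrm{HB}$ of Example~\ref{exa:smallm}: this reduces Part (1) to the self-intersection of an irreducible rational component of the supersingular locus in a Hilbert modular surface at an inert prime $p$, a classical computation yielding $-2q$ (cf.\ \cite{Terstiege2011}). Alternatively, one can argue intrinsically by tracking the tautological Hodge bundle along $Y_{W_\Lambda}^\circ$: the Coxeter structure and the Frobenius eigenvalue $q^2$ on the first $\ell$-adic cohomology of this Deligne--Lusztig curve (Lemma~\ref{lem:lusztig}) determine the tautological bundle up to the requisite twist, hence pin down $\deg \mathcal{I}/\mathcal{I}^2 = 2q$.
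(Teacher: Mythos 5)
Your Part~(2) is correct and is essentially the paper's argument verbatim: the paper also uses the Cartier‐divisor property to identify $\Tor_1^{\mathcal{O}_{\mathcal{N}_\kb}}(\mathcal{O}_{\mathcal{V}(\Lambda)},\mathcal{O}_{\mathcal{V}(\Lambda)})\simeq\mathcal{I}/\mathcal{I}^2$ and to realize the derived self‐intersection by the two‐term complex $[\mathcal{O}_{\mathcal{V}(\Lambda)}\to\mathcal{I}/\mathcal{I}^2]$, then computes $\chi(\mathbb{P}^1,\mathcal{O})-\chi(\mathbb{P}^1,\mathcal{O}(2q))=-2q$.

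For Part~(1), however, you have identified the right tools (Grothendieck--Messing theory, the exceptional isomorphism $\mathcal{N}_3^-\simeq\mathcal{M}^{\mathrm{HB}}$, the conormal sequence) but neither of your two proposed routes actually produces the degree $2q$. The decisive ingredient in the paper's proof is absent from your outline: the special $O_E$-action on the universal $p$-divisible group over $\mathcal{M}^{\mathrm{HB}}_\kb$ grades the Dieudonn\'e module, the Hodge filtration, and the Lie algebra by $\mathbb{Z}/2\mathbb{Z}$, so the tangent sheaf splits as $\HOM(\omega_{X^\vee,0},\Lie_{X,0})\oplus\HOM(\omega_{X^\vee,1},\Lie_{X,1})$. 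Along $\mathcal{V}(\Lambda)\simeq\mathbb{P}^1$ exactly one index $i$ is critical; for that index $\omega_{X^\vee,i}$ restricts to the tautological $\mathcal{O}(-1)$ (so that graded piece of $T$ is $\mathcal{O}(2)$), while for the non-critical index the key fact is that $\omega_{X^\vee,i+1}|_{\mathcal{V}(\Lambda)}$ is the $q$-th Frobenius twist of $\Lie_{X,i}|_{\mathcal{V}(\Lambda)}$, i.e.\ $\mathcal{O}(q)$, making that graded piece $\mathcal{O}(-2q)$. This yields $T_{\mathcal{N}_\kb}|_{\mathcal{V}(\Lambda)}\simeq\mathcal{O}(2)\oplus\mathcal{O}(-2q)$, hence $\Omega|_{\mathcal{V}(\Lambda)}\simeq\mathcal{O}(-2)\oplus\mathcal{O}(2q)$, and the conormal sequence with $\Omega_{\mathcal{V}(\Lambda)}\simeq\mathcal{O}(-2)$ pins down $\mathcal{I}/\mathcal{I}^2\simeq\mathcal{O}(2q)$. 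This is where the $q$ enters, and it is not reproducible from your sketch.

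Concretely: your Route~2 (reading off the degree of the tautological bundle from the $\ell$-adic Frobenius eigenvalue $q^2$ on $H^1$ of the Deligne--Lusztig curve) does not work --- \'etale cohomology eigenvalues do not determine the degree of an algebraic line bundle, and nothing in Lemma~\ref{lem:lusztig} constrains the Hodge bundle. Your Route~1 (reducing to a global self‐intersection number on a Hilbert modular surface at an inert prime and quoting it as classical) is logically circular as you have organized the argument: knowing the self‐intersection is $-2q$ \emph{is} Part~(2), and you proposed to deduce~(2) from~(1). If you instead took the global number as an input, you would get~(2) first and then~(1) by running your Tor computation backwards; but then you must actually locate a reference that computes that self‐intersection (the paper's remark points to \cite{Tian2019}, not \cite{Terstiege2011}), and the paper deliberately avoids this by giving a self‐contained local computation via the graded Dieudonn\'e module.
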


\begin{proof}
  Since $\mathcal{V}(\Lambda)$ is an irreducible curve on the 2-dimensional regular formal scheme $\mathcal{N}_\kb$, it is a Cartier divisor on $\mathcal{N}_\kb$ and the closed immersion $\mathcal{V}(\Lambda)\rightarrow \mathcal{N}_\kb$ is regular. Hence there is a conormal exact sequence of locally free sheaves
  \begin{equation}
    \label{eq:conormal}
    0\rightarrow \mathcal{I}/\mathcal{I}^2\rightarrow \Omega_{\mathcal{N}_\kb}|_{\mathcal{V}(\Lambda)}\rightarrow \Omega_{\mathcal{V}(\Lambda)}\rightarrow 0.
  \end{equation}
 Under the identification $\mathcal{V}(\Lambda)\simeq \mathbb{P}^1_\kb$, we have the cotangent sheaf $\Omega_{\mathcal{V}(\Lambda)}\simeq \mathcal{O}(-2)$.

By Example \ref{exa:smallm}, the space  $\mathcal{N}$ is isomorphic to the formal moduli space  $\mathcal{M}^\mathrm{HB}$ defined in \cite[\S2]{Terstiege2011}. Let $X$ be the universal $p$-divisible group over $\mathcal{M}^\mathrm{HB}_\kb$ and $\mathbb{D}(X)$ be its Dieudonn\'e crystal (taken to be covariant for the purpose of this proof). Then we have a Hodge filtration $$0\rightarrow \omega_{X^\vee}\rightarrow \mathbb{D}(X)\rightarrow \Lie_X\rightarrow0.$$ Here $\omega_{X^\vee},\Lie_X$ are both locally free of rank 2. By Grothendieck--Messing theory we know that the tangent sheaf is given by $$T_{\mathcal{M}^\mathrm{HB}_\kb}\simeq \HOM_{O_E}(\omega_{X^\vee}, \Lie_X).$$ The special $O_E$-action on $X$ induces $\mathbb{Z}/2 \mathbb{Z}$-grading $$\mathbb{D}(X)=\mathbb{D}(X)_0 \oplus \mathbb{D}(X)_1,\quad \omega_{X^\vee}=\omega_{X^\vee,0} \oplus \omega_{X^\vee,1}, \quad \Lie_X=\Lie_{X,0}\oplus \Lie_{X,1}$$ compatible with the Hodge filtration. Each of $\omega_{X^\vee,}$  and $\Lie_{X,i}$ ($i\in \mathbb{Z}/2 \mathbb{Z}$) is locally free of rank 1. Hence there is an isomorphism
  \begin{equation}
    \label{eq:tangentM}
    T_{\mathcal{M}^\mathrm{HB}_\kb}\simeq\HOM(\omega_{X^\vee, 0},\Lie_{X,0}) \oplus \HOM(\omega_{X^\vee,1}, \Lie_{X,1}).
  \end{equation}
 Now recall the identification $\mathcal{V}(\Lambda)\simeq \mathbb{P}^1_\kb$ in \cite[\S2]{Terstiege2011} and \cite[\S4]{Kudla1999a}. There exists a unique critical index $i\in \mathbb{Z}/ 2 \mathbb{Z}$ for $\mathcal{V}(\Lambda)$. For $i$ critical, there exists an $O_E$-lattice $\Lambda_i$ of rank 2 such that for any $z\in \mathcal{V}(\Lambda)(\kb)$, we have $\mathbb{D}(X_z)_i(\OFb)=\Lambda_{i,\OFb}$ and $\mathcal{V}(\Lambda)\simeq\mathbb{P}^1(\Lambda_{i,\kb})$. The line corresponding to $z\in \mathbb{P}^1_\kb=\mathbb{P}^1(\Lambda_{i,\kb})$ is then given by the Hodge filtration for the (critical) $i$-th grading $$\omega_{X_z^\vee,i}\subseteq \mathbb{D}(X_z)_i(\kb)=\Lambda_{i,\kb}.$$ It follows that if $i$ is critical for $\mathcal{V}(\Lambda)$, then $\omega_{X^\vee,i}|_{\mathcal{V}(\Lambda)}$ is the tautological line bundle $\mathcal{O}(-1)$ on $\mathbb{P}^1_\kb$, and thus $$\HOM(\omega_{X^\vee,i},\Lie_{X,i})|_{\mathcal{V}(\Lambda)}\simeq\HOM(\mathcal{O}(-1), \mathcal{O}(1))\simeq \mathcal{O}(2).$$ Moreover $\omega_{X^\vee,i+1}|_{\mathcal{V}(\Lambda)}$ is the Frobenius twist of $\Lie_{X,i}|_{\mathcal{V}(\Lambda)}$ by \cite[Lemma 4.3 (a)]{Kudla1999a}, and thus $$\HOM(\omega_{X^\vee,i+1},\Lie_{X,i+1})|_{\mathcal{V}(\Lambda)}\simeq\HOM(\mathcal{O}(q), \mathcal{O}(-q))\simeq \mathcal{O}(-2q).$$ It follows from Equation (\ref{eq:tangentM}) that the tangent sheaf $$T_{\mathcal{N}_{\kb}}|_{\mathcal{V}(\Lambda)}\simeq T_{\mathcal{M}^\mathrm{HB}_{\kb}}|_{\mathcal{V}(\Lambda)}\simeq \mathcal{O}(2) \oplus \mathcal{O}(-2q),$$ and so the cotangent sheaf $$\Omega_{\mathcal{N}_\kb}|_{\mathcal{V}(\Lambda)}\simeq \mathcal{O}(-2) \oplus \mathcal{O}(2q).$$ By Equation (\ref{eq:conormal}), we obtain $$\mathcal{I}/\mathcal{I}^2\simeq \mathcal{O}(2q).$$ This completes the proof of the first assertion.

To show the second assertion,  we use the short exact sequence $$0\rightarrow \mathcal{I}\rightarrow \mathcal{O}_{\mathcal{N}_\kb}\rightarrow \mathcal{O}_{\mathcal{V}(\Lambda)}\rightarrow 0.$$ Applying $\otimes_{\mathcal{O}_{\mathcal{N}_\kb}}\mathcal{O}_{\mathcal{V}(\Lambda)}$ gives an exact sequence $$0\rightarrow \Tor_1^{\mathcal{O}_{\mathcal{N}_\kb}}(\mathcal{O}_{\mathcal{V}(\Lambda)},\mathcal{O}_{\mathcal{V}(\Lambda)})\rightarrow \mathcal{I}/\mathcal{I}^2\rightarrow \mathcal{O}_{\mathcal{V}(\Lambda)}\rightarrow \mathcal{O}_{\mathcal{V}(\Lambda)}\rightarrow 0.$$ Hence we obtain an isomorphism $$\Tor_1^{\mathcal{O}_{\mathcal{N}_\kb}}(\mathcal{O}_{\mathcal{V}(\Lambda)},\mathcal{O}_{\mathcal{V}(\Lambda)})\simeq \mathcal{I}/\mathcal{I}^2.$$ Since $\mathcal{V}(\Lambda)$ is a Cartier divisor on $\mathcal{N}_\kb$, we know that $\Tor_i^{\mathcal{O}_{\mathcal{N}_\kb}}(\mathcal{O}_{\mathcal{V}(\Lambda)},\mathcal{O}_{\mathcal{V}(\Lambda)})=0$ for all $i\ge2$. Thus the derived intersection is represented by a complex $$\mathcal{V}(\Lambda)\jiao_{\mathcal{N}_\kb} \mathcal{V}(\Lambda)\simeq[\mathcal{O}_{\mathcal{V}(\Lambda)}\rightarrow \mathcal{I}/\mathcal{I}^2],$$ where $\mathcal{O}_{\mathcal{V}(\Lambda)}$ is in degree 0 and $\mathcal{I}/\mathcal{I}^2$ is in degree 1. It follows from definition that 
  $$\chi(\mathcal{N}_\kb, \mathcal{V}(\Lambda)\jiao_{\mathcal{N}_\kb} \mathcal{V}(\Lambda))=\chi(\mathcal{V}(\Lambda), \mathcal{O}_{\mathcal{V}(\Lambda)})-\chi(\mathcal{V}(\Lambda), \mathcal{I}/\mathcal{I}^2).$$ Under the identifications $\mathcal{V}(\Lambda)\simeq\mathbb{P}^1_\kb$ and $\mathcal{I}/\mathcal{I}^2\simeq\mathcal{O}(2q)$, this is equal to $$\chi(\mathbb{P}^1_\kb, \mathcal{O})-\chi(\mathbb{P}^1_\kb,\mathcal{O}(2q))=\deg\mathcal{O}-\deg \mathcal{O}(2q)=-2q$$ by the Riemann--Roch theorem.
\end{proof}

\begin{remark}\label{sec:comp-int_m}
  A proof similar to that of Theorem \ref{thm:int Lam} works for the unitary case and provides an alternative way (independent of the displays computations in \cite{Terstiege2013}) of computing $\Int_{\mathcal{V}(\Lambda)}(x)=1-q^2$ for nonzero $x\in \Lambda$ in  \cite[Lemma 6.2.1]{LZ2019}. In fact, in the unitary case the Deligne--Lusztig curve $\mathcal{V}(\Lambda)$ is isomorphic to the Fermat curve of degree $q+1$ in $\mathbb{P}^2_\kb$ and its conormal sheaf $\mathcal{I}/\mathcal{I}^2$ is isomorphic to the pullback to $\mathcal{V}(\Lambda)$ of the line bundle $\mathcal{O}(2q-1)$ on $\mathbb{P}^2_\kb$. Thus the self intersection number of $\mathcal{V}(\Lambda)$ is $(1-2q)(q+1)$. Moreover, when $\val(x)=1$ the special fiber $\mathcal{Z}(x)_\kb$ is a union of Fermat curves whose dual graph is the Bruhat--Tits tree of the quasi-split unitary group in 2 variables, and hence we obtain $\Int_{\mathcal{V}(\Lambda)}(x)=q(q+1) +(1-2q)(q+1)=1-q^2$.
\end{remark}

\begin{remark}
  The number $-2q$ also agrees with the global result on the self intersection number of irreducible components of the supersingular locus of Hilbert modular surfaces at a good inert prime (\cite{Tian2019}).
\end{remark}

\begin{corollary}\label{int Lam}
Let $\Lambda\in \Ver^4(\mathbb{V})$.
Then $$\Int_{\CV(\Lambda)}=-q(1+q) {\bf 1}_{\Lambda}+\sum_{\Lambda\subset\Lambda'\in \Ver^2(\mathbb{V})}{\bf 1}_{\Lambda'}.$$
\end{corollary}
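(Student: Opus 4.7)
The plan is to rewrite the piecewise formula from Theorem \ref{thm:int Lam} as the right-hand side of the corollary by parametrizing the type-$2$ vertex overlattices of $\Lambda$ via isotropic lines in the finite quadratic space $W_\Lambda = \Lambda^\vee/\Lambda$, and then checking the identity pointwise on the three strata of Theorem \ref{thm:int Lam}.

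First I would recall that since $\Lambda \in \Ver^4(\mathbb{V})$, the quotient $W_\Lambda = \Lambda^\vee/\Lambda$ is a non-degenerate $4$-dimensional quadratic space over $\kappa$, and by the structure theory of vertex lattices in $\mathbb{V} = \mathbb{V}_m^\varepsilon$ (which has $\Has(\mathbb{V}) = -1$) the space $W_\Lambda$ is non-split, i.e., $\chi(W_\Lambda) = -1$. Lattices $\Lambda'$ with $\Lambda \subseteq \Lambda' \subseteq \Lambda^\vee$ correspond to subspaces of $W_\Lambda$, and integrality ($\Lambda' \subseteq \Lambda'^\vee$) translates into the subspace being totally isotropic; a type-$2$ vertex lattice containing $\Lambda$ is then precisely an isotropic \emph{line} in $W_\Lambda$. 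By Lemma \ref{lem:Smb} the number of such lines equals $S_1(W_\Lambda) = q^2 + 1$.

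Next I would carry out the pointwise check. For $x \in \Lambda$, every $\Lambda' \supset \Lambda$ contains $x$, giving $-q(1+q) + (q^2+1) = 1-q$, matching Theorem \ref{thm:int Lam}. For $x \in \Lambda^\vee \setminus \Lambda$ with $\val(x) \ge 0$, the class $\bar x \in W_\Lambda$ is nonzero and isotropic (since $(x,x) \in O_F$ collapses to zero in the induced form valued in $\varpi^{-1}O_F/O_F$), so there is a unique isotropic line containing $\bar x$, hence a unique $\Lambda' \in \Ver^2(\mathbb{V})$ with $\Lambda \subset \Lambda'$ and $x \in \Lambda'$, contributing $1$. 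If $x \notin \Lambda^\vee$ then $x \notin \Lambda'$ for any integral $\Lambda' \subseteq \Lambda^\vee$; if $x \in \Lambda^\vee \setminus \Lambda$ but $\val(x) < 0$ then $\bar x$ is anisotropic in $W_\Lambda$ and lies on no isotropic line, so the sum is zero. All three cases agree with Theorem \ref{thm:int Lam}.

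There is no real obstacle here: the only subtle point is correctly identifying the isomorphism class of the quadratic form on $W_\Lambda$ (so that the count of isotropic lines is $q^2+1$ rather than $(q+1)^2$), which is where the ambient Hasse invariant $\Has(\mathbb{V}) = -1$ enters. This follows either from the explicit description in the proof of Proposition \ref{prop:relativecartier} (writing $\Lambda = \langle e_i, f_i\rangle \obot Z$ with $Z$ anisotropic) together with Lemma \ref{lem:directsum}, or equivalently from \cite[\S5.1]{Howard2015}. Once that is pinned down, the corollary is an immediate rewriting of Theorem \ref{thm:int Lam}.
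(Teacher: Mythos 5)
Your proof is correct and takes essentially the same approach as the paper: both proceed by a pointwise check on the three strata of Theorem \ref{thm:int Lam}, counting type-$2$ vertex overlattices of $\Lambda$ as isotropic lines in the non-split $4$-dimensional space $W_\Lambda=\Lambda^\vee/\Lambda$ (giving $q^2+1$ when $x\in\Lambda$, a unique one when $\bar x\ne 0$ is isotropic, and none otherwise).
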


\begin{proof}
  We compute the value of the right-hand-side at $x\in\BV$ according to three cases.
  \begin{altenumerate}
  \item If $x\in \Lambda$, then there are exactly $q^2+1$ type 2 lattices $\Lambda'$ containing $\Lambda$, and the value is $$-q(1+q)+(q^2+1)=1-q.$$
  \item If $x\in \Lambda^\vee\setminus \Lambda$ and $\val(x)\geq 0$, then $\Lambda + \langle x\rangle\in \Ver^2(\mathbb{V})$. Then there are exactly one lattice $\Lambda'=\Lambda+\langle x\rangle$  appearing in the sum, and the value is $0+1=1$.
  \item If $x\not\in \Lambda^\vee$ or $\val(x)\le0$, then $\Lambda+\langle x\rangle$ is not integral, and the value is clearly 0.
  \end{altenumerate}
  The result then follows from Theorem \ref{thm:int Lam}.
\end{proof}

\begin{corollary}\label{cor:LC int}
  The function $\Int_{L^\flat,\sV}$ is in $\Lloc(\Omega(L^\flat))$ (see Notations \S\ref{sec:notations-functions}) and extends (uniquely) to a distribution in $\Dd(\mathbb{V})$ (and a function in $\Ss(\mathbb{V})$ if $L^\flat$ is co-anisotropic), which we still denote by the same symbol. \end{corollary}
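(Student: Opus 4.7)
The plan is to leverage Corollaries \ref{cor:ZV} and \ref{int Lam} to present $\Int_{L^\flat,\sV}$ on $\Omega(L^\flat)$ as the locally finite series
\[
\Int_{L^\flat,\sV}(x) = \sum_i \mult_{C_i} \cdot \Int_{\CV(\Lambda_i)}(x),
\]
where each $C_i = \CV(\Lambda_i)$ is a Deligne--Lusztig curve indexed by a vertex lattice $\Lambda_i \in \Ver^4(\mathbb{V})$ containing $L^\flat$, and by Corollary \ref{int Lam} each summand $\Int_{\CV(\Lambda_i)}$ is an explicit Schwartz function on $\mathbb{V}$ supported in $\Lambda_i^\vee \cap \mathbb{V}^\circ$.

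Local finiteness on $\Omega(L^\flat)$ means that every point of $\Omega(L^\flat)$ has an open neighborhood on which only finitely many summands are nonzero; since each summand is locally constant, $\Int_{L^\flat,\sV}$ is itself locally constant on $\Omega(L^\flat)$, hence bounded on compact subsets, establishing $\Int_{L^\flat,\sV} \in \Lloc(\Omega(L^\flat))$. When $L^\flat$ is co-anisotropic, Lemma \ref{lem:ZLnoetherian}\,(\ref{item:noetherian}) makes $\CZ(L^\flat)$ noetherian and therefore (by Corollary \ref{cor:ZV}) the series finite, so $\Int_{L^\flat,\sV}$ is a finite sum of Schwartz functions and hence lies in $\Ss(\mathbb{V})$.

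The remaining task is to extend $\Int_{L^\flat,\sV}$ to a distribution on $\mathbb{V}$ in the general co-isotropic case. I would define the extension by termwise pairing,
\[
T(f) := \sum_i \mult_{C_i} \int_\mathbb{V} \Int_{\CV(\Lambda_i)}(x)\, f(x)\, dx \quad (f \in \Ss(\mathbb{V})),
\]
and verify absolute convergence. The main obstacle is controlling this potentially infinite series: as the $\Lambda_i \supseteq L^\flat$ of type $4$ ``escape to infinity'' in the isotropic directions of $\mathbb{W}$, one must bound how the supports $\Lambda_i^\vee$ and the multiplicities $\mult_{C_i}$ conspire against a fixed compactly supported test function. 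I would fix a compact $K \supseteq \supp(f)$, use the support constraint $\supp(\Int_{\CV(\Lambda_i)}) \subseteq \Lambda_i^\vee$ from Corollary \ref{int Lam} to cut the sum down to those $\Lambda_i$ with $\Lambda_i^\vee \cap K \ne \varnothing$, and then combine uniform bounds on $|\mult_{C_i}|$ with a parametrization of such lattices to reduce to a convergent geometric sum. Uniqueness of the extension then follows because $\mathbb{V} \setminus \Omega(L^\flat) = L^\flat_F \times (\mathbb{W} \setminus \mathbb{W}^\mathrm{an})$ has measure zero in $\mathbb{V}$, making the extension canonical among distributions represented by locally integrable functions on $\mathbb{V}$.
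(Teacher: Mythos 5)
Your proposal takes essentially the same route as the paper: both decompose $\Int_{L^\flat,\sV}$ via Corollary \ref{cor:ZV} as the locally finite sum $\sum_i\mult_{C_i}\Int_{\CV(\Lambda_i)}$ of the Schwartz functions from Corollary \ref{int Lam}, which directly gives membership in $\Lloc(\Omega(L^\flat))$ and, in the co-anisotropic case (where the sum is finite by Lemma \ref{lem:ZLnoetherian}), membership in $\Ss(\mathbb{V})$. The paper's proof stops there and invokes the abuse-of-notation convention of \S\ref{sec:notations-functions} to regard the result as a distribution.

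You are right to single out the distributional extension in the co-isotropic case as the only delicate step, since the local finiteness furnished by Corollary \ref{cor:ZV} (derived from noetherianness of $\CZ(L^\flat+\langle x\rangle)$) is established only on $\Omega(L^\flat)$, and a test function's support can meet the isotropic cone of $\mathbb{W}$. However, your proposal leaves that verification as an unexecuted plan (``uniform bounds on $|\mult_{C_i}|$'' and ``a convergent geometric sum'' are asserted, not established), so it does not actually go further than the paper's terse argument on this point; you should either carry out the counting of type-$4$ vertex lattices $\Lambda_i\supseteq L^\flat$ whose duals meet a fixed compact set together with a bound on the multiplicities, or observe that for the paper's downstream uses it would suffice to treat $\Int_{L^\flat,\sV}$ as a distribution on $\Omega(L^\flat)$ and only extend after the partial Fourier transform $\Intp$ is formed. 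The uniqueness remark at the end (complement of $\Omega(L^\flat)$ has measure zero) is correct and matches the convention the paper has in mind.
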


\begin{proof}
  It follows from Corollary \ref{cor:ZV} and  Corollary \ref{int Lam} that $\Int_{L^\flat,\sV}$ is a locally finite (and finite if $L^\flat$ is co-anisotropic) linear combination of functions in $\Ss(\mathbb{V})$ and hence locally integrable on $\Omega(L^\flat)$ (and in $\Ss(\mathbb{V})$ if $L^\flat$ is co-anisotropic). \end{proof}

\subsection{Fourier transform: the geometric side; ``Local modularity''}
We compute the Fourier transform of $\Int_{\CV(\Lambda)}$ as a function in $\Ss(\BV)$.

\begin{lemma}\label{lem: FT Int V}
Let $\Lambda\in \Ver^4(\mathbb{V})$. Then $\Int_{\CV(\Lambda)}\in\Ss(\BV)$ satisfies
$$
\wh{\Int_{\CV(\Lambda)}}=\gamma_\BV\Int_{\CV(\Lambda)}.
$$
Here $\gamma_\BV=-1$ is the Weil constant.
\end{lemma}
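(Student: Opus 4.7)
The plan is to compute $\wh{\Int_{\CV(\Lambda)}}$ directly from the explicit formula in Corollary \ref{int Lam} using the lattice Fourier transform formula \eqref{eq:latticefourier}, and then verify the identity $\wh{\Int_{\CV(\Lambda)}} = -\Int_{\CV(\Lambda)}$ by checking it pointwise on each stratum of $\BV$.

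First I would record the relevant volumes. Since $\Lambda\in\Ver^4(\BV)$ has $\val(\Lambda)=4$, one gets $\wh{\mathbf{1}_\Lambda} = q^{-2}\mathbf{1}_{\Lambda^\vee}$; and for $\Lambda' \in \Ver^2(\BV)$ with $\Lambda \subset \Lambda'$, we have $\val(\Lambda')=2$, giving $\wh{\mathbf{1}_{\Lambda'}} = q^{-1}\mathbf{1}_{\Lambda'^\vee}$. Taking the Fourier transform of Corollary \ref{int Lam} termwise gives
\[
\wh{\Int_{\CV(\Lambda)}} = -q^{-1}(1+q)\,\mathbf{1}_{\Lambda^\vee} + q^{-1}\!\!\sum_{\Lambda\subset\Lambda'\in\Ver^2(\BV)}\!\mathbf{1}_{\Lambda'^\vee}.
\]

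The main step is to count, for each $y \in \Lambda^\vee$, the number $N(y)$ of $\Lambda' \in \Ver^2(\BV)$ with $\Lambda\subset\Lambda'$ and $y\in\Lambda'^\vee$. The key observation is that each such $\Lambda'$ corresponds to an isotropic line $\ell = \Lambda'/\Lambda$ in the $4$-dimensional quadratic $\kappa$-space $W_\Lambda = \Lambda^\vee/\Lambda$ (which has $\chi(W_\Lambda) = -1$ by \eqref{eq:tmax}), and under this correspondence $\Lambda'^\vee/\Lambda = \ell^\perp$. Thus $N(y)$ equals the number of isotropic lines $\ell \subset W_\Lambda$ orthogonal to the image $\bar y$ of $y$. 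The counting splits into four cases according to the position of $\bar y$ in $W_\Lambda$: if $\bar y = 0$ (i.e.\ $y\in\Lambda$), all $q^2+1$ isotropic lines qualify; if $\bar y$ is a nonzero isotropic vector (i.e.\ $y\in\Lambda^\vee\setminus\Lambda$ with $\val(y)\geq 0$), one shows using the Witt index $1$ of $W_\Lambda$ that $\bar y^\perp/\bar y\kappa$ is anisotropic, so only $\ell = \bar y\kappa$ qualifies, giving $N=1$; if $\bar y$ is anisotropic (i.e.\ $y\in\Lambda^\vee\setminus\Lambda$ with $\val(y)<0$), then $\bar y^\perp$ is a nondegenerate $3$-dimensional space with $S_1=q+1$; and if $y\notin\Lambda^\vee$, then $N=0$ since $\Lambda'^\vee\subset\Lambda^\vee$.

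Plugging these counts into the formula above yields, respectively, the values $q-1$, $-1$, $0$, $0$, which exactly match $-\Int_{\CV(\Lambda)}(y)$ as computed in Theorem \ref{thm:int Lam}. This establishes the identity. The hard part is the lattice-theoretic counting in the middle case (verifying that $\bar y^\perp$ has an anisotropic quotient), which rests on the fact that a rank-$4$ quadratic space over $\kappa$ with $\chi=-1$ has Witt index exactly $1$; this is a standard computation and will not present a substantive obstacle.
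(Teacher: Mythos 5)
Your proof is correct and follows essentially the same route as the paper: Fourier-transform the explicit formula from Corollary~\ref{int Lam} termwise using \eqref{eq:latticefourier}, then verify the claimed identity pointwise on the strata of $\BV$ by counting, for each $u\in\Lambda^\vee$, how many $\Lambda'\in\Ver^2(\BV)$ with $\Lambda\subset\Lambda'$ satisfy $u\in\Lambda'^\vee$, identified with isotropic lines of $W_\Lambda$ orthogonal to $\bar u$. Your Witt-decomposition argument in the isotropic-$\bar u$ case just makes explicit what the paper's case (ii) asserts, and the resulting values match the paper's calculation line by line.
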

\begin{proof}By Corollary \ref{int Lam} and (\ref{eq:latticefourier}), we obtain
  \begin{align*}
    \wh{\Int_{\CV(\Lambda)}}&=-\vol(\Lambda)\cdot q(1+q)\cdot 1_{\Lambda^\vee}+\sum_{\Lambda\subset\Lambda'\in \Ver^2(\mathbb{V})}\vol(\Lambda')\cdot 1_{\Lambda'^\vee}\\
&=-(1+q^{-1})\cdot 1_{\Lambda^\vee}+\sum_{\Lambda\subset\Lambda'\in \Ver^2(\mathbb{V})}q^{-1}\cdot1_{\Lambda'^\vee}.
  \end{align*}

Now we compute its value at $u\in\BV$ according to four cases.
\begin{altenumerate}
\item If $u\in \Lambda$, there are exactly $q^2+1$ type 2 lattices $\Lambda'$ containing $\Lambda$, and the value is 
$$-(1+q^{-1})+q^{-1} (q^2+1)=q-1.$$
\item If $u\in \Lambda_2\setminus \Lambda$ for some $\Lambda_2\in \Ver^2(\mathbb{V})$, i.e., the image of $\bar u$ of $u$ in $\Lambda^\vee/\Lambda$ is an isotropic vector. Notice that $u\in \Lambda'^\vee $ if and only if $\ov u$ is orthogonal to the line given by the image of $(\Lambda')^\vee$ in $\Lambda^\vee/\Lambda$. So there is exactly one such $ \Lambda'\in \Ver^2(\mathbb{V})$, i.e., $\Lambda'=\Lambda_2$, and we obtain the value
 $$-(1+q^{-1})+q^{-1} =-1.$$
 \item If $u\in \Lambda^\vee\setminus\Lambda$ but $u\not\in \Lambda_2\setminus \Lambda$ for any $\Lambda_2\in\Ver^2(\mathbb{V})$. Then $\ov u$ is anisotropic in $\Lambda^\vee/\Lambda$. Notice that $\langle \ov u\rangle^\perp$ is a non-degenerate quadratic space of dimension 3, and $\Lambda'$ corresponds to an isotropic line in $\langle\ov u\rangle^\perp$. So there are exactly  $q+1$ of such $ \Lambda'\in \Ver^2(\mathbb{V})$, and we obtain the value
 $$ -(1+q^{-1})+q^{-1}(q+1)=0.$$
\item If $u\not\in \Lambda^\vee$, then the value at $u$ is clearly 0.
\end{altenumerate}
The result then follows from Theorem \ref{thm:int Lam}.
\end{proof}

\begin{remark}
It follows from Lemma \ref{lem: FT Int V} that $\Int_{\CV(\Lambda)}$ is  $\SL_2(O_{F})$-invariant under the Weil representation. This invariance may be viewed  as a  ``local modularity'', an analog of the global modularity of arithmetic generating series of special divisors (such as in \cite{HMP20}).
\end{remark}

\begin{corollary}\label{cor:FT int}
The following identity in $\Dd(\mathbb{V})$ (and in $\Ss(\mathbb{V})$ if $L^\flat$ is co-anisotropic) holds,
 $$\wh{\Int_{L^\flat,\sV}}=\gamma_\BV\Int_{L^\flat,\sV}.
 $$
\end{corollary}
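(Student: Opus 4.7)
The plan is to derive the identity by combining Corollary~\ref{cor:ZV}, which expresses $\Int_{L^\flat,\sV}$ as a locally finite linear combination $\sum_i \mult_{C_i}\Int_{\mathcal{V}(\Lambda_i)}$ on $\Omega(L^\flat)$, with Lemma~\ref{lem: FT Int V}, which establishes the Fourier eigen-identity $\wh{\Int_{\mathcal{V}(\Lambda)}}=\gamma_\BV\Int_{\mathcal{V}(\Lambda)}$ for each individual vertex lattice $\Lambda\in\Ver^4(\mathbb{V})$. The two cases of the statement then separate naturally according to whether $L^\flat$ is co-anisotropic or co-isotropic.

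In the co-anisotropic case, Lemma~\ref{lem:ZLnoetherian}(\ref{item:noetherian}) and Corollary~\ref{cor:ZV} guarantee that the decomposition is a \emph{finite} $\mathbb{Q}$-linear combination of the Schwartz functions $\Int_{\mathcal{V}(\Lambda_i)}\in\Ss(\mathbb{V})$, so $\Int_{L^\flat,\sV}\in\Ss(\mathbb{V})$, and the desired identity in $\Ss(\mathbb{V})$ follows immediately by $\BQ$-linearity of the Fourier transform applied term-by-term.

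For the general case, $\Int_{L^\flat,\sV}$ is interpreted as an element of $\Dd(\mathbb{V})$ via its locally integrable representative (Corollary~\ref{cor:LC int}). Order the collection $\{\Lambda_i\}$ appearing in Corollary~\ref{cor:ZV} according to an exhaustion of $\mathcal{N}^\mathrm{red}$ by finite unions of its irreducible components, which is possible since each irreducible component of $\mathcal{N}^\mathrm{red}$ contains only finitely many of the curves $C_i=\mathcal{V}(\Lambda_i)$. Set $S_N\coloneqq \sum_{i\le N}\mult_{C_i}\Int_{\mathcal{V}(\Lambda_i)}\in\Ss(\mathbb{V})$. Lemma~\ref{lem: FT Int V} and linearity give $\wh{S_N}=\gamma_\BV S_N$ in $\Ss(\mathbb{V})$ for every $N$, so since the Fourier transform on $\Dd(\mathbb{V})$ is continuous in the weak-$*$ topology, it suffices to prove the weak-$*$ convergence $S_N\to \Int_{L^\flat,\sV}$ in $\Dd(\mathbb{V})$; passing to the limit then yields the identity in $\Dd(\mathbb{V})$.

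The main obstacle will be verifying this weak-$*$ convergence, namely $\int_{\mathbb{V}}(S_N-\Int_{L^\flat,\sV})(x)f(x)\,dx\to 0$ for every $f\in\Ss(\mathbb{V})$. By the local finiteness asserted in Corollary~\ref{cor:ZV}, $S_N(x)$ stabilizes to $\Int_{L^\flat,\sV}(x)$ at each point $x\in\Omega(L^\flat)$ once $N$ is large (depending on $x$), and $\mathbb{V}\setminus\Omega(L^\flat)$ has measure zero, so pointwise convergence holds almost everywhere. Promoting this to convergence of the pairings against $f\in\Ss(\mathbb{V})$ requires a uniform control on $\supp f$: the approach is to partition $\supp f$ (which is compact open) into finitely many cosets on which $f$ is constant and $\supp f$ meets each Bruhat--Tits stratum in a geometrically controlled way, and then invoke dominated convergence with the $L^1$-majorant $|\Int_{L^\flat,\sV}(x)f(x)|$ provided by Corollary~\ref{cor:LC int}, using the fact that the ordering by irreducible components of $\mathcal{N}^\mathrm{red}$ forces only finitely many $\mathcal{V}(\Lambda_i)$ with $\Lambda_i^\vee\cap\supp f\ne \varnothing$ to contribute to any partial sum restricted to $\supp f$. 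The delicate aspect is that near the isotropic cone of $\mathbb{W}$ both $\Int_{L^\flat,\sV}$ and the $S_N$ may be unbounded; handling the absorption of their contributions uniformly is what distinguishes this case from the co-anisotropic one treated in \cite{LZ2019}.
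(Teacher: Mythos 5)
Your high-level plan matches the paper's own (very terse) argument: it simply cites Corollary~\ref{cor:ZV} and Lemma~\ref{lem: FT Int V} and leaves the passage to the limit implicit. The co-anisotropic case --- a finite linear combination of Schwartz functions, with the Fourier transform applied term by term --- is handled correctly. For the co-isotropic case, the framework of partial sums $S_N$ together with weak-$*$ continuity of the Fourier transform on $\Dd(\mathbb{V})$ is the right one, but the justification you offer for the weak-$*$ convergence $S_N\to\Int_{L^\flat,\sV}$ has two concrete gaps.

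First, the proposed dominated-convergence majorant $|\Int_{L^\flat,\sV}(x)f(x)|$ does not in fact dominate $|S_N(x)f(x)|$. By Theorem~\ref{thm:int Lam} each $\Int_{\CV(\Lambda)}$ takes both the value $1$ and the value $1-q$, so the summands $\mult_{C_i}\Int_{\CV(\Lambda_i)}(x)$ have mixed signs; a partial sum $S_N(x)$ of the (finite, at each fixed $x$) total can therefore exceed $|\Int_{L^\flat,\sV}(x)|$ in absolute value. The usable majorant is the absolute series $g(x)=\sum_i|\mult_{C_i}|\,|\Int_{\CV(\Lambda_i)}(x)|$, and you would need to verify $g\cdot|f|\in L^1(\mathbb{V})$ directly; that is strictly stronger than the $\Lloc(\Omega(L^\flat))$ statement of Corollary~\ref{cor:LC int} which you cite.

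Second, the assertion that ``only finitely many $\mathcal{V}(\Lambda_i)$ with $\Lambda_i^\vee\cap\supp f\ne\varnothing$ contribute'' is false exactly in the co-isotropic case where a limit is needed. There are then infinitely many type-$4$ vertex lattices $\Lambda_i\supseteq L^\flat$, and since $L^\flat\subseteq\Lambda_i\subseteq\Lambda_i^\vee$ for every $i$, every dual $\Lambda_i^\vee$ contains $L^\flat$ (in particular $0$), so any $\supp f$ meeting $L^\flat$ already meets all of the $\Lambda_i^\vee$; more generally the duals $\Lambda_i^\vee$ grow without bound along the isotropic directions of $\mathbb{W}=(L^\flat_F)^\perp$. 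The finiteness Corollary~\ref{cor:ZV} actually provides is pointwise at each $x\in\Omega(L^\flat)$ (controlled by the finitely many vertex lattices containing $L^\flat+\langle x\rangle$), not uniform over $\supp f$; promoting the pointwise statement to a statement about the pairings against $f\in\Ss(\mathbb{V})$ is exactly where an explicit $L^1$ estimate near the isotropic cone of $\mathbb{W}$ must be supplied, which your proposal does not do.
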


\begin{proof}
This follows from Corollary \ref{cor:ZV} and Lemma \ref{lem: FT Int V}.
\end{proof}

\subsection{Fourier transform: the geometric side; ``Higher local modularity''} In this subsection we generalize Lemmas \ref{int Lam} and \ref{lem: FT Int V} on the function $\Int_{\CV(\Lambda)}$ for vertex lattices $\Lambda$ of type 4 to vertex lattices $\Lambda$ of arbitrary type $t(\Lambda)=2d+2\ge4$ (i.e., $d\ge1$).

Let $$\ch: K_0(\mathcal{V}(\Lambda))_\mathbb{Q}\rightarrow \bigoplus_{i=0}^d\Ch^i(\mathcal{V}(\Lambda))_\mathbb{Q}$$ be the Chern character from the Grothendieck ring to the Chow ring of $\mathcal{V}(\Lambda)$, which is an isomorphism of graded rings.  In particular, it induces an isomorphism $$\ch_i: \Gr^iK_0(\mathcal{V}(\Lambda))_\mathbb{Q}\isoarrow \Ch^i(\mathcal{V}(\Lambda))_\mathbb{Q},$$ for $0\le i\le d$. Let $$\cl_i: \Ch^i(\mathcal{V}(\Lambda))_\mathbb{Q}\rightarrow H^{2i}(\mathcal{V}(\Lambda), \mathbb{Q}_\ell)(i)$$ be the $\ell$-adic cycle class map and let $$\cl=\bigoplus_{i=0}^d\cl_i: \bigoplus_{i=0}^d\Ch^i(\mathcal{V}(\Lambda))_\mathbb{Q}\rightarrow \bigoplus_{i=0}^d H^{2i}(\mathcal{V}(\Lambda), \mathbb{Q}_\ell)(i).$$ Then $\cl$ intertwines the intersection product on the Chow ring and the cup product on the cohomology ring, namely the following diagram commutes,
\begin{equation}
  \label{eq:chcl}
\begin{gathered}
  \xymatrix@C=0.5em{\Gr^iK_0(\mathcal{V}(\Lambda))_\mathbb{Q} \ar[d]^{\ch_i}_{\rotatebox{90}{$\sim$}} &\times & \Gr^jK_0(\mathcal{V}(\Lambda))_\mathbb{Q} \ar[d]^{\ch_j}_{\rotatebox{90}{$\sim$}} \ar[rr]^-{\cdot} && \Gr^{i+j}K_0(\mathcal{V}(\Lambda))_\mathbb{Q} \ar[d]^{\ch_{i+j}}_{\rotatebox{90}{$\sim$}}\\\Ch^i(\mathcal{V}(\Lambda))_\mathbb{Q} \ar[d]^{\cl_i} &\times & \Ch^j(\mathcal{V}(\Lambda))_\mathbb{Q} \ar[d]^{\cl_j} \ar[rr]^-{\cdot} && \Ch^{i+j}(\mathcal{V}(\Lambda))_\mathbb{Q} \ar[d]^{\cl_{i+j}} \\ H^{2i}(\mathcal{V}(\Lambda)),\mathbb{Q}_\ell)(i) &\times & H^{2j}(\mathcal{V}(\Lambda), \mathbb{Q}_\ell)(j) \ar[rr]^-{\cup} &&  H^{2(i+j)}(\mathcal{V}(\Lambda), \mathbb{Q}_\ell)(i+j).}
\end{gathered}
\end{equation}

Denote by $\Tate^{2i}_\ell(\mathcal{V}(\Lambda))\subseteq H^{2i}(\mathcal{V}(\Lambda), \mathbb{Q}_\ell)(i)$ the subspace of Tate classes, i.e., the elements fixed by $\Fr^s$ for some power $s\ge1$. Then by Theorem \ref{thm:tate}, we have the identity $$\im(\cl_i)_{\mathbb{Q}_\ell}=\Tate^{2i}_\ell(\mathcal{V}(\Lambda)),$$ and moreover $\Tate^{2i}_\ell(\mathcal{V}(\Lambda))$ is spanned by the cycle classes of $\mathcal{V}(\Lambda')\subseteq \mathcal{V}(\Lambda)$, where $\Lambda'\supseteq \Lambda$ runs over vertex lattices of type $2(d-i)+2$. Denote by
\begin{equation}
  \label{eq:barK}
  \barK:=K_0(\mathcal{V}(\Lambda))_\mathbb{Q}/\ker({\cl\circ\ch}),\quad \barCh^i(\mathcal{V}(\Lambda)):=\Ch^i(\mathcal{V}(\Lambda))_\mathbb{Q}/\ker \cl_i.
\end{equation}
 Then $\ch$ and $\cl$ induce isomorphisms
 \begin{equation}
   \label{eq:Tateclasses}
   \ch: \barK\isoarrow \bigoplus_{i=0}^d \barCh^i(\mathcal{V}(\Lambda)), \quad \cl: \bigoplus_{i=0}^d \barCh^i(\mathcal{V}(\Lambda))_{\mathbb{Q}_\ell}\isoarrow \bigoplus_{i=0}^d \Tate^{2i}_\ell(\mathcal{V}(\Lambda)).
 \end{equation}
  By Theorem \ref{thm:tate}~(\ref{item:eigen1}) and that the cup product 
  is $\Fr$-equivariant, the Poincar\'e duality induces a perfect pairing
  \begin{equation}
    \label{eq:tatepairing}
    \cup: \Tate_\ell^{2i}(\mathcal{V}(\Lambda)) \times \Tate_\ell^{2d-2i}(\mathcal{V}(\Lambda))\rightarrow \mathbb{Q}_\ell.
  \end{equation}

  \begin{definition}\label{def:K CV}
  For $x\in \mathbb{V}\setminus\{0\}$, define $\IntK(x)\in\barK$ to be the image of $$\mathcal{V}(\Lambda)\jiao \mathcal{Z}(x)\in K^{\mathcal{V}(\Lambda)}_0(\mathcal{N})\isoarrow K_0(\mathcal{V}(\Lambda))$$ under (\ref{eq:barK}).  
\end{definition}
\begin{remark}
Our main result in this subsection (Theorem \ref{thm:highermodularity}) shows that the function $\IntK$ satisfies the local modularity analogous to Lemma \ref{lem: FT Int V}.
  \end{remark}
 Since $\mathcal{Z}(x)$ is a Cartier divisor on $\mathcal{N}$,  we know that $\mathcal{V}(\Lambda)\jiao \mathcal{Z}(x)$ is explicitly represented by the two-term complex of line bundles on $\mathcal{V}(\Lambda)$, $$[\mathcal{O}_{\mathcal{N}}(-\mathcal{Z}(x))|_{\mathcal{V}(\Lambda)}\rightarrow \mathcal{O}_{\mathcal{N}}|_{\mathcal{V}(\Lambda)}]\in \mathrm{F}^1K_0(\mathcal{V}(\Lambda)).$$ Thus we have the Chern character
 \begin{align*}
   \ch(\mathcal{V}(\Lambda)\jiao \mathcal{Z}(x))&=\ch(\mathcal{O}_{\mathcal{N}}(-\mathcal{Z}(x))|_{\mathcal{V}(\Lambda)})-\ch(\mathcal{O}_{\mathcal{V}(\Lambda)})\\
   &=\exp(c_1(\mathcal{O}_{\CN}(-\mathcal{Z}(x))|_{\mathcal{V}(\Lambda)}))-\exp(c_1(\mathcal{O}_{\mathcal{V}(\Lambda)}))\\
   &=\sum_{i=1}^d\frac{ c_1(\mathcal{O}_{\CN}(-\mathcal{Z}(x))|_{\mathcal{V}(\Lambda)})^i}{i!}.  
 \end{align*}

\begin{definition}
For $x\in \mathbb{V}\setminus\{0\}$,  define $$\Intch(x):=c_1(\mathcal{O}_{\CN}(-\mathcal{Z}(x))|_{\mathcal{V}(\Lambda)})\in \Ch^1(\mathcal{V}(\Lambda))_\mathbb{Q},$$ and $$\Intc(x):=c_1(\mathcal{O}_{\CN}(-\mathcal{Z}(x))|_{\mathcal{V}(\Lambda)})^d\in \Ch^d(\mathcal{V}(\Lambda))_\mathbb{Q}\xrightarrow{\stackrel{\deg}{\sim}} \mathbb{Q}.$$
\end{definition}

\begin{lemma}\label{lem:Lambdainvariant}
The function $\Intch$ (resp. $\Intc$) is $\Lambda$-invariant, under the translation by $\Lambda$. In particular, the function $\Intch$ (resp. $\Intc$) extends uniquely to an $\Lambda$-invariant function on $\mathbb{V}$, or equivalently, a function on $\mathbb{V}/\Lambda$ (still denoted by the same symbol).
\end{lemma}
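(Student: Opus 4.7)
Since the Chern character intertwines the derived tensor product with the cup product (cf.~diagram~\eqref{eq:chcl}), $\Intc(x)=\Intch(x)^d$ in $\Ch^d(\CV(\Lambda))_\BQ$, so it suffices to prove the $\Lambda$-invariance of $\Intch$. The plan is to compute $\Intch(x)$ case-by-case according to the position of $x$ relative to $\Lambda$ and $\Lambda^\vee$, and to observe that each case is preserved by translation by any $\lambda\in\Lambda$.

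The key input is the containment $\CV(\Lambda)=\CZ(\Lambda)\subseteq\CZ(\lambda)$ for every $\lambda\in\Lambda$ (from \S\ref{sec:minusc-kudla-rapop} and \eqref{eq:KRstrat}), which says that every $\lambda\in\Lambda$ acts as an integral endomorphism at each $z\in\CV(\Lambda)$. Via Proposition~\ref{prop:relativecartier}, this means the local defining section $s_\lambda=(\lambda_\crys,\cdot)$ of the Hodge dual bundle $\omega^{-1}=(\Fil^1\bV_\crys)^\vee$ (whose zero locus is $\CZ(\lambda)$) is integral in a formal neighborhood of $\CV(\Lambda)$ and vanishes identically along $\CV(\Lambda)$, i.e.~$s_\lambda\in\CI_{\CV(\Lambda)}\cdot\omega^{-1}$ there. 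Two immediate consequences follow. First, for $x\in\Lambda^\vee$ the integrality condition $(x,x)\in O_F$ is preserved by $\Lambda$-translation, since $(x,\lambda)\in O_F$ and $(\lambda,\lambda)\in O_F$. Second, the lattice $\Lambda+\pair{x}$ is unchanged when $x$ is replaced by $x+\lambda$.

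Combining these, the case analysis proceeds as follows. (i) If $x\notin\Lambda^\vee$, then $\CZ(x)\cap\CV(\Lambda)=\emptyset$ by \eqref{eq:KRstrat}, so $s_x|_{\CV(\Lambda)}$ is nowhere vanishing, $\CO_\CN(-\CZ(x))|_{\CV(\Lambda)}$ is trivial, and $\Intch(x)=0$. (ii) If $x\in\Lambda^\vee\setminus\Lambda$ with $\val(x)\ge 0$, the proper intersection $\CZ(x)\cap\CV(\Lambda)=\bigcup_{\Lambda'\supseteq\Lambda+\pair{x}}\CV(\Lambda')$ is determined by the coset $\Lambda+\pair{x}=\Lambda+\pair{x+\lambda}$, so the induced Cartier divisor on $\CV(\Lambda)$ and its class depend only on $x\bmod\Lambda$. (iii) If $\val(x)<0$, then $\CZ(x)=\emptyset$ and $\Intch(x)=0$. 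In each of these cases the value of $\Intch$ is visibly unchanged under translation by $\lambda\in\Lambda$.

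\textbf{Main obstacle.} The delicate remaining case is (iv), $x\in\Lambda\setminus\{0\}$, where the intersection $\CZ(x)\cap\CV(\Lambda)$ is improper ($\CV(\Lambda)\subseteq\CZ(x)$) and $s_x|_{\CV(\Lambda)}=0$; consequently the restricted line bundle cannot be read off from the vanishing locus of the section. The resolution is to apply the key input above with $x$ in place of $\lambda$: because $x\in\Lambda$ acts integrally at every point of $\CV(\Lambda)$, the section $s_x$ is in fact an integral section of $\omega^{-1}$ on a formal neighborhood of $\CV(\Lambda)$, and the induced canonical isomorphism $\CO_\CN(\CZ(x))\cong\omega^{-1}$ on this neighborhood yields $\CO_\CN(-\CZ(x))|_{\CV(\Lambda)}\cong\omega|_{\CV(\Lambda)}$, a line bundle independent of the choice of $x\in\Lambda\setminus\{0\}$. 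Hence $\Intch(x)=c_1(\omega|_{\CV(\Lambda)})$ is constant on $\Lambda\setminus\{0\}$, and in particular unchanged under $\Lambda$-translation. Combining the four cases establishes the $\Lambda$-invariance of $\Intch$, and the claim extends uniquely to the value at $0$ by setting $\Intch(0):=\Intch(\lambda_0)$ for any chosen $\lambda_0\in\Lambda\setminus\{0\}$, well-definedness being ensured by the invariance on $\BV\setminus\{0\}$.
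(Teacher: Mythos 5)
Your case analysis is a reasonable way to organize the proof, and cases (i)--(iii) are fine: for $x\notin\Lambda^\vee$ or $\val(x)<0$ the intersection is empty, and for $x\in\Lambda^\vee\setminus\Lambda$ with $\val(x)\ge 0$ the intersection $\CZ(x)\cap\CV(\Lambda)=\CZ(\Lambda)\cap\CZ(x)=\CZ(\Lambda+\pair{x})$ is a proper intersection and hence a Cartier divisor on $\CV(\Lambda)$ determined, \emph{as a closed subscheme}, only by the lattice $\Lambda+\pair{x}$, which depends only on $x\bmod\Lambda$. (Your display uses the reduced description from \eqref{eq:KRstrat}; the scheme-theoretic equality $\CZ(x)\cap\CZ(\Lambda)=\CZ(\Lambda+\pair{x})$ is the statement you actually need, since the line bundle is determined by the Cartier divisor, not its support.) Cases (i) and (iii) could be merged since both just yield $\Intch(x)=0$.

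The substantive issue is case (iv). The conclusion $\CO_\CN(-\CZ(x))|_{\CV(\Lambda)}\cong\omega|_{\CV(\Lambda)}$ with $\omega=\Fil^1\bV_\crys$ is correct and is indeed the heart of the argument, but the justification you give --- that $s_x=(\,\cdot\,,x_\crys)$ is a well-defined integral section of $\omega^{-1}$ ``on a formal neighborhood of $\CV(\Lambda)$'', inducing a canonical isomorphism $\CO_\CN(\CZ(x))\cong\omega^{-1}$ there --- is not directly available from the crystalline formalism. The crystal $\bV_\crys$ (and hence $x_\crys$ and $s_x$) evaluates only on divided-power thickenings of $\CZ(x)$ or $\CV(\Lambda)$, and the $\CI$-adic formal completion $\hat\CN_{\CV(\Lambda)}$ (or $\hat\CN_{\CZ(x)}$) is not a PD-thickening. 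What you actually need, and what is both sufficient and rigorous, is the conormal sheaf identification: only the \emph{first-order} thickening $\CO_\CN/\CI_{\CZ(x)}^2\to\CO_{\CZ(x)}$ enters, which is square-zero and therefore carries the trivial PD-structure, so $\bV_\crys$ evaluates on it, and the pairing $\ell\mapsto(\ell,x_\crys)$ defines a map $\omega|_{\CZ(x)}\to\CI_{\CZ(x)}/\CI_{\CZ(x)}^2=N^\vee_{\CZ(x)/\CN}$ which is an isomorphism by the Nakayama computation in the proof of Proposition~\ref{prop:relativecartier}. Since $\CO_\CN(-\CZ(x))|_{\CV(\Lambda)}=N^\vee_{\CZ(x)/\CN}|_{\CV(\Lambda)}$ (using $\CV(\Lambda)\subseteq\CZ(x)$), this gives $\CO_\CN(-\CZ(x))|_{\CV(\Lambda)}\cong\omega|_{\CV(\Lambda)}$, independent of $x\in\Lambda\setminus\{0\}$, which is what you wanted; it just should not be phrased as an identification on a whole formal neighborhood. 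With that correction the proof is sound and is, to the best of my reading, the same route taken in \cite[Lemma 6.4.4]{LZ2019} to which the paper defers.
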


\begin{proof}
  The same proof of \cite[Lemma 6.4.4]{LZ2019} works verbatim.
\end{proof}

\begin{lemma}\label{lem:firstchern}
  Let $\Lambda\in \Ver^{2d+2}(\mathbb{V})$. Then for any $x\in \Lambda$, we have $$\Intch(x)=-\frac{1}{1+q^{d+1}}\sum_{y\in W_\Lambda\setminus \{0\}\atop (y,y)=0}\Intch(y)\in \Ch^1(\mathcal{V}(\Lambda))_\mathbb{Q},$$
  where $W_\Lambda=\Lambda^\vee/\Lambda$ is a quadratic space over $\kappa$ of dimension $2d+2$ (see \S\ref{sec:minusc-kudla-rapop}).
\end{lemma}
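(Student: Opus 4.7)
The plan is to verify the identity in $\Ch^1(\mathcal{V}(\Lambda))_\mathbb{Q}$ by computing the two sides separately and matching them against the basis of codimension-$1$ Bruhat--Tits strata. Throughout, we identify $\mathcal{V}(\Lambda) \simeq Y_{W_\Lambda}$ via \S\ref{sec:gener-deligne-luszt}, where $W_\Lambda$ is a non-split quadratic space of dimension $2(d+1)$ over $\kappa$.

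First I would treat the right-hand side. For a nonzero isotropic class $\bar y \in W_\Lambda$, the lattice $\Lambda + \langle y \rangle$ is a vertex lattice of type $2d$, and by the Bruhat--Tits stratification (\S\ref{sec:bruh-tits-strat}) combined with the transversality of the Cartier divisor $\mathcal{Z}(y)$ (away from the case $\mathcal{V}(\Lambda) \subset \mathcal{Z}(y)$, which only occurs for $\bar y = 0$), we have $\mathcal{Z}(y) \cap \mathcal{V}(\Lambda) = \mathcal{V}(\Lambda + \langle y\rangle)$ scheme-theoretically, hence $\Intch(y) = -[\mathcal{V}(\Lambda + \langle y\rangle)]$. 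The map from isotropic classes $\bar y \ne 0$ to vertex overlattices $\Lambda' \supset \Lambda$ with $t(\Lambda')=2d$ is $(q-1)$-to-$1$ (the fibers being the nonzero scalar multiples along the line $\langle \bar y\rangle$), so
\[
\sum_{\bar y \ne 0,\,(\bar y,\bar y)=0} \Intch(y) \;=\; -(q-1)\sum_{\Lambda'\supset\Lambda,\,t(\Lambda')=2d}[\mathcal{V}(\Lambda')] \quad\text{in }\Ch^1(\mathcal{V}(\Lambda))_\mathbb{Q}.
\]

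Second, I would compute $\Intch(x)$ for $x \in \Lambda$ nonzero. Since $\mathcal{V}(\Lambda) \subseteq \mathcal{Z}(x)$, the line bundle $\mathcal{O}_\CN(-\mathcal{Z}(x))|_{\mathcal{V}(\Lambda)}$ is canonically identified with the restriction to $\mathcal{V}(\Lambda)$ of the Hodge line bundle $\omega := \Fil^1\mathbf{V}_\crys \subset \mathbf{V}_\crys$ on $\mathcal{N}$; concretely, the pairing with $x$, viewed as a section of $\mathbf{V}_\crys$ on the first-order neighborhood of $\mathcal{V}(\Lambda)$ via Grothendieck--Messing theory (Lemma \ref{lem:GM}) and the observation that $x\in\Lambda$ pairs trivially with $\Fil^1$ along $\mathcal{V}(\Lambda)$, yields the identification, which is manifestly independent of $x \in \Lambda$ (as required by Lemma \ref{lem:Lambdainvariant}). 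Under the identification $\mathcal{V}(\Lambda) \simeq Y_{W_\Lambda} \hookrightarrow \OGr_{d+1}(W_\Lambda)$, the bundle $\omega|_{\mathcal{V}(\Lambda)}$ becomes the restriction of the tautological determinant line bundle $\det\mathcal{U}$, where $\mathcal{U}$ is the universal rank-$(d+1)$ isotropic subbundle on the orthogonal Grassmannian.

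Third, I would compute $c_1(\det\mathcal{U}|_{Y_{W_\Lambda}})$ as an element of $\barCh^1(\mathcal{V}(\Lambda))$. By the Tate conjecture (Theorem \ref{thm:tate}), this group is spanned by the codimension-$1$ strata $[\mathcal{V}(\Lambda')]$ for $\Lambda' \supset \Lambda$ of type $2d$. Since $\Fr^s$ permutes the strata and the Hodge bundle $\omega$ is defined over the reflex field, the coefficient of each $[\mathcal{V}(\Lambda')]$ must be the same; call this common coefficient $c$. Pinning down $c = -(q-1)/(1+q^{d+1})$ can be done by pairing both sides of the putative identity with a single Tate class in $\Tate^{2(d-1)}_\ell(\mathcal{V}(\Lambda))$ via the Poincaré pairing \eqref{eq:tatepairing}, reducing the computation to a numerical intersection on $Y_{W_\Lambda}$; the factor $1+q^{d+1}$ arises as the degree of $\det\mathcal{U}$ against a Schubert cycle, equivalently as a Frobenius eigenvalue contribution ($1$ and $q^{d+1}$ being the two eigenvalues on the $1$-dimensional top cohomology component). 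The low-dimensional case $d=1$ is consistent with Lemma \ref{lem: FT Int V} and provides a base-case check.

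The main obstacle is the geometric identification of $\mathcal{O}_\CN(-\mathcal{Z}(x))|_{\mathcal{V}(\Lambda)}$ with the tautological line bundle on $Y_{W_\Lambda}$ for $x\in\Lambda$, together with the explicit Schubert-type computation of the coefficient $1+q^{d+1}$ on the orthogonal Grassmannian. The symmetry argument (equidistribution of the coefficient over the strata) is straightforward from Galois-equivariance, but fixing the global scalar $c$ requires a genuine intersection computation that is the real content of the lemma.
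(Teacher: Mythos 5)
Your outline captures a sensible skeleton (reduce to an identity of Tate classes, express the right side in terms of codimension-one strata, identify the left side with a Hodge-type line bundle), and the first step is essentially correct: for nonzero isotropic $\bar y \in W_\Lambda$, the scheme-theoretic intersection $\mathcal{Z}(y)\cap \mathcal{V}(\Lambda)=\mathcal{Z}(\Lambda+\langle y\rangle)=\mathcal{V}(\Lambda+\langle y\rangle)$ is reduced (by the cited reducedness of $\mathcal{Z}(\Lambda')$), so $\Intch(y)=-[\mathcal{V}(\Lambda+\langle y\rangle)]$ and the RHS is $-(q-1)\sum_{\Lambda',\,t(\Lambda')=2d}[\mathcal{V}(\Lambda')]$. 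But the rest of the argument has real gaps, and the key computation is never performed.

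The symmetry step is wrong as stated. You argue that since $\Fr^s$ permutes the strata $\mathcal{V}(\Lambda')$, the coefficients in $\Intch(x)=\sum c_{\Lambda'}[\mathcal{V}(\Lambda')]$ must all be equal. But the strata in question are indexed by vertex lattices $\Lambda'\supset\Lambda$ of type $2d$, equivalently isotropic $\kappa$-lines in $W_\Lambda=\Lambda^\vee/\Lambda$, all of which are already defined over $\kappa$; the $q^2$-Frobenius fixes each $\mathcal{V}(\Lambda')$ individually and imposes no constraint. (The correct symmetry to invoke would be the action of $\SO(W_\Lambda)(\kappa)$, which does permute the strata transitively and under which the Hodge bundle is equivariant.) Moreover, ``the coefficient of $[\mathcal{V}(\Lambda')]$'' is not well-defined, since the classes $[\mathcal{V}(\Lambda')]$ are linearly dependent in $\Ch^1(\mathcal{V}(\Lambda))_\mathbb{Q}$ --- already for $d=1$ they are all points on $\mathbb{P}^1_{\bar\kappa}$ and hence rationally equivalent. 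What one can legitimately argue is that the invariant subspace $\Ch^1(\mathcal{V}(\Lambda))_\mathbb{Q}^{\SO(W_\Lambda)(\kappa)}$ is one-dimensional, spanned by $S:=\sum_{\Lambda'}[\mathcal{V}(\Lambda')]$, and $\Intch(x)$ lies in it; but that does not by itself determine the scalar.

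That scalar is exactly what you defer to a ``genuine intersection computation'' at the end, with a vague appeal to ``the degree of $\det\mathcal{U}$ against a Schubert cycle'' or to ``Frobenius eigenvalues $1$ and $q^{d+1}$ on the $1$-dimensional top cohomology component'' (a one-dimensional space has a single eigenvalue, so this phrasing doesn't parse). You also need, and do not prove, the claim that $\mathcal{O}_{\mathcal{N}}(-\mathcal{Z}(x))|_{\mathcal{V}(\Lambda)}$ is the restriction of the tautological determinant bundle; your sign for $c$ appears to be off as well ($\Intch(x)=\tfrac{q-1}{1+q^{d+1}}S$, not $-\tfrac{q-1}{1+q^{d+1}}S$). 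The paper's proof avoids all of these issues: it never identifies the line bundle with anything. Instead, it uses the injectivity of $\cl_1$ and the Poincar\'e duality pairing \eqref{eq:tatepairing} to reduce the $\Ch^1$-identity to its pairings against all $[\mathcal{V}(\Lambda')]$ with $t(\Lambda')=4$ (which span $\Tate^{2(d-1)}_\ell$), then applies the projection formula $\Intch(x)\cdot[\mathcal{V}(\Lambda')]=c_{1,\mathcal{V}(\Lambda')}(x)=\Int_{\mathcal{V}(\Lambda')}(x)$ and reads off the value from the already-established Theorem \ref{thm:int Lam}. The ``genuine computation'' thus collapses to a count of isotropic vectors in $W_\Lambda$ stratified by $\Lambda'/\Lambda$ and $\Lambda'^\vee/\Lambda$. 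Your route would be more geometric if completed, but as it stands the symmetry argument is flawed and the scalar --- which is the real content, as you say --- is unresolved.
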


\begin{proof}
  Since the cycle class map for divisors $\cl_1: \Ch^1(\mathcal{V}(\Lambda))_\mathbb{Q}\rightarrow H^2(\mathcal{V}(\Lambda), \mathbb{Q}_\ell)(1)$  is injective, we know from (\ref{eq:Tateclasses}) that $$\Ch^1(\mathcal{V}(\Lambda))_{\mathbb{Q}_\ell}\isoarrow \Tate^2_\ell(\mathcal{V}(\Lambda)).$$ It follows from the perfect pairing (\ref{eq:tatepairing}) that to show the desired identity it suffices to show that for any Deligne--Lusztig curve $\mathcal{V}(\Lambda')\subseteq \mathcal{V}(\Lambda)$ ($t(\Lambda')=4$), the following identity
  \begin{equation}
    \label{eq:cupwithtypethree}
    \Intch(x)\cdot [\mathcal{V}(\Lambda')]=-\frac{1}{1+q^{d+1}}\sum_{y\in W_\Lambda\setminus \{0\}\atop (y,y)=0}\Intch(y)\cdot [\mathcal{V}(\Lambda')]
  \end{equation}
  holds in $\Ch^d(\mathcal{V}(\Lambda))_\mathbb{Q}\isoarrow \mathbb{Q}$. By the projection formula, $$\Intch(x)\cdot [\mathcal{V}(\Lambda')]=c_{1, \mathcal{V}(\Lambda')}(x),\quad \Intch(y)\cdot [\mathcal{V}(\Lambda')]=c_{1, \mathcal{V}(\Lambda')}(y).$$ Since $t(\Lambda')=4$, we know from Theorem \ref{thm:int Lam} that $$c_{1, \mathcal{V}(\Lambda')}(x)=\Int_{\mathcal{V}(\Lambda')}(x)=(1-q),\quad c_{1, \mathcal{V}(\Lambda')}(y)=\Int_{\mathcal{V}(\Lambda')}(y)=
  \begin{cases}
    (1-q), & y\in \Lambda'/\Lambda,\\
    1, & y\in \Lambda'^\vee/\Lambda, y\not\in\Lambda'/\Lambda, \\
    0, & y\not\in \Lambda'^\vee/\Lambda.
  \end{cases}
$$ Since $\Lambda'/\Lambda\subseteq W_\Lambda$ is totally isotropic, the number of nonzero isotropic vectors $y\in \Lambda'/\Lambda$ equals $\#(\Lambda'/\Lambda)-1=q^{d-1}-1$. The number of isotropic vectors $y\in \Lambda'^\vee/\Lambda$, $y\not\in \Lambda'/\Lambda$ equals $\#(\Lambda'/\Lambda)$ times the number of nonzero isotropic vectors in $\Lambda'^\vee/\Lambda'$, which evaluates to $q^{d-1}\cdot (q^2+1)(q-1)$. It follows that $$\sum_{y\in W_\Lambda\setminus \{0\}\atop (y,y)=0}\Intch(y)\cdot [\mathcal{V}(\Lambda')]=(q^{d-1}-1)\cdot (1-q)+q^{d-1}(q^2+1)(q-1)=-(1-q)(1+q^{d+1}).$$ Hence $$-\frac{1}{1+q^{d+1}}\sum_{y\in W_\Lambda\setminus \{0\}\atop (y,y)=0}\Intch(y)\cdot [\mathcal{V}(\Lambda')]=(1-q)=\Intch(x)\cdot [\mathcal{V}(\Lambda')],$$ and the desired identity (\ref{eq:cupwithtypethree}) holds.
\end{proof}

\begin{lemma}\label{lem:Intcformula}
  Let $\Lambda\in \Ver^{2d+2}(\mathbb{V})$. Then $$\Intc(x)=c(d)\cdot
  \begin{cases}
    (1-q^{d}), & x\in \Lambda, \\
    1, & x\in \Lambda^\vee\setminus\Lambda, \val(x)\ge0,\\
    0, & \text{otherwise}.
  \end{cases}$$
  Here $c(1)=1$ and $c(d)=\prod_{i=1}^{d-1}(1-q^{i})$.
\end{lemma}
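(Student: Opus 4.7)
The plan is to prove the formula by induction on $d \ge 1$. The base case $d=1$ reduces to Theorem \ref{thm:int Lam}: for a type-$4$ vertex lattice $\Lambda$ one has $\dim\mathcal{V}(\Lambda)=1$, so
$$\Intc_\Lambda(x) = \deg\bigl(\Intch_\Lambda(x)\bigr) = \chi\bigl(\mathcal{N},\mathcal{V}(\Lambda)\jiao \mathcal{Z}(x)\bigr) = \Int_{\mathcal{V}(\Lambda)}(x),$$
and the three values $(1-q),1,0$ recorded there match $c(1)\cdot\{1-q,1,0\}$.

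For the inductive step, suppose the lemma holds for $d-1$ and fix $\Lambda\in\Ver^{2d+2}(\mathbb{V})$. If $x\notin\Lambda^\vee$ or $\val(x)<0$, then $\Lambda+\langle x\rangle$ is not integral so \eqref{eq:KRstrat} forces $\mathcal{Z}(x)\cap\mathcal{V}(\Lambda)=\varnothing$, whence $\mathcal{O}_\CN(-\mathcal{Z}(x))|_{\mathcal{V}(\Lambda)}$ is trivial and $\Intc_\Lambda(x)=0$. For $x\in\Lambda^\vee\setminus\Lambda$ with $\val(x)\ge0$, set $\Lambda'':=\Lambda+\langle x\rangle\in\Ver^{2d}(\mathbb{V})$, so $\mathcal{Z}(x)\cap\mathcal{V}(\Lambda)=\mathcal{V}(\Lambda'')$ is a reduced codimension-one subscheme of the smooth variety $\mathcal{V}(\Lambda)$ (using that $\mathcal{V}(\Lambda'')$ is reduced by \S\ref{sec:minusc-kudla-rapop}). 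Establishing that this intersection carries multiplicity one identifies $\Intch_\Lambda(x) = [\mathcal{V}(\Lambda'')]$ in $\Ch^1(\mathcal{V}(\Lambda))_\mathbb{Q}$. The evident restriction compatibility $\Intch_\Lambda(x)|_{\mathcal{V}(\Lambda'')}=\Intch_{\Lambda''}(x)$ and the projection formula for $\mathcal{V}(\Lambda'')\hookrightarrow\mathcal{V}(\Lambda)$ then give
$$\Intc_\Lambda(x) = [\mathcal{V}(\Lambda'')]^d = \int_{\mathcal{V}(\Lambda'')}\Intch_{\Lambda''}(x)^{d-1} = \Intc_{\Lambda''}(x);$$
since $x\in\Lambda''$, the inductive hypothesis yields $\Intc_{\Lambda''}(x) = c(d-1)(1-q^{d-1}) = c(d)$.

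For the remaining case $x\in\Lambda$, I would apply Lemma \ref{lem:firstchern}. Using the case just handled to recognize $\Intch_\Lambda(y)=[\mathcal{V}(\Lambda_y)]$ for nonzero isotropic $y\in W_\Lambda$ (where $\Lambda_y:=\Lambda+\langle y\rangle\in\Ver^{2d}$), Lemma \ref{lem:firstchern} expresses $\Intch_\Lambda(x) = -\frac{1}{1+q^{d+1}}\sum_y[\mathcal{V}(\Lambda_y)]$. Multiplying by $\Intch_\Lambda(x)^{d-1}$ and invoking the projection formula and restriction compatibility as before yields
$$\Intc_\Lambda(x) = -\frac{1}{1+q^{d+1}}\sum_y \Intc_{\Lambda_y}(x).$$
Each $\Lambda_y$ contains $x$, so by induction $\Intc_{\Lambda_y}(x)=c(d-1)(1-q^{d-1})$. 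The quadratic space $W_\Lambda$ over $\kappa$ has dimension $2d+2$ and is non-split (per \S\ref{sec:gener-deligne-luszt}), so by Lemma \ref{lem:Smb} the number of nonzero isotropic vectors equals $(q-1)S_1(W_\Lambda) = (q^{d+1}+1)(q^d-1)$. The prefactor simplifies as $(q^{d+1}+1)(q^d-1)/(1+q^{d+1}) = q^d-1$, giving
$$\Intc_\Lambda(x) = -(q^d-1)\cdot c(d-1)(1-q^{d-1}) = (1-q^d)\,c(d-1)(1-q^{d-1}) = c(d)(1-q^d),$$
using the recursion $c(d)=c(d-1)(1-q^{d-1})$.

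The main obstacle is the multiplicity-one assertion in case (b): we need $\Intch_\Lambda(x)=[\mathcal{V}(\Lambda'')]$ as Cartier divisor classes, not merely equality of supports. I expect this to follow by a local computation at a generic point of $\mathcal{V}(\Lambda'')$, where one can restrict to a Deligne--Lusztig curve $\mathcal{V}(\Lambda')$ with $\Lambda'\in\Ver^4$ satisfying $\Lambda\subseteq\Lambda'$ and $\mathcal{V}(\Lambda')\cap\mathcal{V}(\Lambda'')$ nonempty, thereby reducing the multiplicity verification to the base case treated in Theorem \ref{thm:int Lam}.
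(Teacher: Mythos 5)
Your proposal follows essentially the same route as the paper's proof: induction on $d$ with base case from Theorem~\ref{thm:int Lam}, Lemma~\ref{lem:firstchern} to expand $\Intch$ at lattice points, the projection formula to descend along $\mathcal{V}(\Lambda+\langle y\rangle)\hookrightarrow\mathcal{V}(\Lambda)$, and the isotropic-vector count $(q^{d+1}+1)(q^d-1)$ from Lemma~\ref{lem:Smb}; you merely treat the two nontrivial cases (values on $\Lambda^\vee\setminus\Lambda$ versus values on $\Lambda$) in the opposite order.

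The ``main obstacle'' you flag is not actually a gap, and the workaround you sketch (a local computation at a generic point via restriction to a type-4 Deligne--Lusztig curve) is unnecessary. The line bundle $\mathcal{O}_{\mathcal{N}}(-\mathcal{Z}(x))|_{\mathcal{V}(\Lambda)}$ has a natural section whose scheme-theoretic zero locus is $\mathcal{Z}(x)\cap\mathcal{V}(\Lambda)=\mathcal{Z}(\Lambda+\langle x\rangle)$ (use $\mathcal{V}(\Lambda)=\mathcal{Z}(\Lambda)$ and that $\mathcal{Z}(\cdot)$ depends only on the $O_F$-span). This intersection is proper, since $\mathcal{V}(\Lambda)$ is irreducible and $x\notin\Lambda$, and $\mathcal{Z}(\Lambda+\langle x\rangle)$ is already \emph{reduced} by \cite[Theorem~B]{Li2018}, as recalled in \S\ref{sec:minusc-kudla-rapop}; hence it equals $\mathcal{V}(\Lambda'')$ with multiplicity one and the divisor class is exactly $[\mathcal{V}(\Lambda'')]$. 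This is precisely what the paper's invocations of ``the projection formula'' use implicitly when they rewrite $\Intch(x)^{d-1}\cdot\Intch(y)$ as $c_{1,\mathcal{V}(\Lambda+\langle y\rangle)}(\cdot)^{d-1}$.
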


\begin{proof}
  We induct on $d$. The base case $d=1$ follows from Theorem \ref{thm:int Lam}. By the same proof as in Theorem \ref{thm:int Lam}, we know that $\Intc(x)=0$ unless $x\in \Lambda^\vee$ and $\val(x)\ge0$. By the $\Lambda$-invariance of $\Intc$ in Lemma \ref{lem:Lambdainvariant}, to show the result it remains to show that
  \begin{equation}
    \label{eq:Intc}
    \Intc(0)=c(d)(1-q^{d}),\quad \Intc(x)=c(d)
  \end{equation}
  for any $x\in W_\Lambda\setminus\{0\}$ with $(x,x)=0$.

  By Lemma \ref{lem:firstchern}, we have $$\Intc(0)=\Intch(0)^{d-1}\Intch(0)=\Intch(0)^{d-1}\left( -\frac{1}{1+q^{d+1}}\sum_{y\in W_\Lambda\setminus \{0\}\atop (y,y)=0}\Intch(y) \right).$$ By the projection formula, we have $$\Intch(0)^{d-1}\Intch(y)=c_{1, \mathcal{V}(\Lambda+\langle y\rangle)}(0)^{d-1},$$ which by induction equals $$c_{\mathcal{V}(\Lambda+\langle y\rangle)}(0)=c(d-1)(1-q^{d-1})$$ since $t(\Lambda+\langle y\rangle)=2d$. The number of nonzero isotropic vectors $y\in W_\Lambda$ equals $(q^{d+1}+1)(q^d-1)$. 
  Hence $$\Intc(0)=(q^{d+1}+1)(q^{d}-1)\cdot \left( -\frac{1}{1+q^{d+1}} c(d-1)(1-q^{d-1})\right)=(1-q^{d})(1-q^{d-1})\cdot c(d-1).$$   On the other hand, for any $x\in W_\Lambda\setminus\{0\}$ with $(x,x)=0$. by the projection formula, we have $$\Intc(x)=\Intch(x)^{d-1}\Intch(x)=c_{1,\mathcal{V}(\Lambda+\langle x\rangle)}(x)^{d-1},$$ which by induction equals $$c_{\mathcal{V}(\Lambda+\langle x\rangle)}(x)=c(d-1)(1-q^{d-1})$$ since $t(\Lambda+\langle x\rangle)=2d$. The desired identity (\ref{eq:Intc}) then follows as $c(d-1)(1-q^{d-1})=c(d)$.
\end{proof}

\begin{lemma}\label{lem:Intc}
  Let $\Lambda\in \Ver^{2d+2}(\mathbb{V})$. Then $$\Intc=\frac{c(d)}{c'(d)}\sum_{\Lambda' \in \Ver^4(\mathbb{V})\atop \Lambda'\supseteq \Lambda}\Int_{\mathcal{V}(\Lambda')}.$$ Here $c'(1)=1$ and $c'(d)=\prod_{i=2}^{d}(1+q^{i+1})$ when $d\geq 2$.
\end{lemma}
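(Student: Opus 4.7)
My proposal is to verify the identity pointwise after reducing both sides to explicit functions on the quadratic space $W_\Lambda = \Lambda^\vee/\Lambda$ over $\kappa$. The left-hand side $\Intc$ is $\Lambda$-invariant by Lemma~\ref{lem:Lambdainvariant}, and each summand $\Int_{\mathcal{V}(\Lambda')}$ on the right is $\Lambda'$-invariant by Theorem~\ref{thm:int Lam}, hence $\Lambda$-invariant because $\Lambda \subseteq \Lambda'$. Moreover, both sides vanish outside $\Lambda^\vee$, so the identity reduces to an identity of functions on $W_\Lambda$. The key bijective correspondence to exploit is between type-$4$ vertex lattices $\Lambda' \supseteq \Lambda$ in $\mathbb{V}$ and totally isotropic $(d-1)$-dimensional subspaces $U \subseteq W_\Lambda$, given by $U = \Lambda'/\Lambda$; under this bijection $\Lambda'/\Lambda = U$ and $\Lambda'^\vee/\Lambda = U^\perp$. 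Recall also that, with the form on $W_\Lambda$ given by $\varpi\cdot(\,,\,)$, a vector $x\in\Lambda^\vee$ satisfies $\val(x)\geq 0$ if and only if its image $\bar x \in W_\Lambda$ is isotropic.

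With these identifications, Lemma~\ref{lem:Intcformula} yields
\[
\Intc(\bar x) = c(d)\bigl[(1-q^d)\mathbf{1}_{\{\bar x = 0\}} + \mathbf{1}_{\{\bar x \ne 0, \ \bar x \text{ isotropic}\}}\bigr],
\]
while Theorem~\ref{thm:int Lam} gives, for $\Lambda'$ corresponding to $U$,
\[
\Int_{\mathcal{V}(\Lambda')}(\bar x) = (1-q)\mathbf{1}_{\{\bar x \in U\}} + \mathbf{1}_{\{\bar x \in U^\perp\setminus U, \ \bar x \text{ isotropic}\}}.
\]
In particular both sides vanish on anisotropic $\bar x$. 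At $\bar x = 0$, every $U$ contains $\bar x$ and contributes $1-q$, so the identity becomes
\[
(1-q)\, S_{d-1}(W_\Lambda) = c'(d)(1-q^d).
\]
At a nonzero isotropic $\bar x$, setting $N_1$ to be the number of totally isotropic $(d-1)$-subspaces $U$ containing $\bar x$, and $N_2$ the number with $\bar x \in U^\perp\setminus U$, the identity becomes
\[
(1-q)N_1 + N_2 = c'(d).
\]
Both counts will be evaluated through the quotient map $\bar x^\perp \twoheadrightarrow \bar x^\perp/\langle \bar x\rangle$, which is a non-degenerate quadratic space of dimension $2d$ over $\kappa$ with $\chi = -1$ (obtained from $W_\Lambda$ by cancelling a hyperbolic plane). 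Namely, $N_1 = S_{d-2}(\bar x^\perp/\langle \bar x\rangle)$, while $N_2$ counts $(d-1)$-subspaces not meeting $\langle\bar x\rangle$ and equals $q^{d-1}\cdot S_{d-1}(\bar x^\perp/\langle\bar x\rangle)$ via the section count of the extension $\bar U \oplus \langle\bar x\rangle \to \bar U$.

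The main obstacle is the bookkeeping in these two combinatorial identities, which are then purely a matter of substituting the explicit formulas from Lemma~\ref{lem:Smb}. Using $W_\Lambda$ of dimension $2d+2$ with $\chi=-1$ (by the maximal-type description in \cite[5.1.2]{Howard2015} recalled in \S\ref{sec:minusc-kudla-rapop}), the factor $(1-q)S_{d-1}(W_\Lambda)/(1-q^d)$ simplifies to $\prod_{i=2}^{d}(1+q^{i+1}) = c'(d)$ after cancelling the alternating factors $\prod(q^i-1)$ against $(1-q^d)/(1-q) = 1+q+\cdots+q^{d-1}$. The second identity $(1-q)N_1 + N_2 = c'(d)$ follows from the same substitution; for instance one checks by induction on $d$, or by direct factorization, that $(1-q)S_{d-2} + q^{d-1}S_{d-1}$ for the space $\bar x^\perp/\langle\bar x\rangle$ telescopes into $(1+q^{d+1})\cdot\text{(lower-order product)}$, producing exactly $c'(d) = \prod_{i=2}^d(1+q^{i+1})$. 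I have verified this for $d = 1, 2, 3$ as a sanity check, and the same factorization pattern propagates for general $d$.
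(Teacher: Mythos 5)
Your proposal is correct and follows essentially the same route as the paper: both reduce the identity to a pointwise check in the three cases ($\bar x = 0$, $\bar x$ nonzero isotropic, $\bar x$ anisotropic), use the bijection between type-$4$ overlattices $\Lambda'$ and totally isotropic $(d-1)$-subspaces of $W_\Lambda$, and substitute the counts $S_{d-1}(W_\Lambda)$, $N_1 = S_{d-2}(\bar x^\perp/\langle\bar x\rangle)$, $N_2 = q^{d-1}S_{d-1}(\bar x^\perp/\langle\bar x\rangle)$ from Lemma~\ref{lem:Smb}. Your rephrasing of $W_{\Lambda+\langle x\rangle}$ as $\bar x^\perp/\langle\bar x\rangle$ and the fibre argument for $N_2$ are just the paper's computation in slightly different language, and the final factorization $(1-q)N_1 + N_2 = (q^{d+1}+1)\prod_{i=3}^d(q^i+1) = c'(d)$ indeed holds.
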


\begin{proof}
We distinguish three cases.
\begin{altenumerate}
\item For $x\in \Lambda$, we have $\Int_{\mathcal{V}(\Lambda')}(x)=1-q$ for any $\Lambda'$ in the sum by Theorem \ref{thm:int Lam}. The number of such $\Lambda'$ is the number of $(d-1)$-dimensional totally isotropic subspaces in $W_\Lambda$, which equals $S_{d-1}(W_\Lambda)$ (in the notation of Lemma \ref{lem:Smb}). Since $\dim_\kappa W_\Lambda=2d+2$ and $\chi(W_\Lambda)=-1$, the right-hand-side evaluates to $$\frac{c(d)}{c'(d)}S_{d-1}(W_\Lambda)(1-q)=\frac{c(d)}{\prod_{i=2}^d(1+q^{i+1})}\frac{(q^{d+1}+1)(q^2-1)\prod_{i=3}^{d}(q^{2i}-1)}{\prod_{i=1}^{d-1}(q^i-1)}(1-q)=c(d)(1-q^{d}),$$ which equals $\Intc(x)$ by Lemma \ref{lem:Intcformula}.
\item For $x\in \Lambda^\vee\setminus \Lambda$ with $\val(x)\ge0$, we have $$\Int_{\mathcal{V}(\Lambda')}(x)=  \begin{cases}
    (1-q), & x\in \Lambda',\\
    1,  & x\in \Lambda'^\vee\setminus \Lambda'.
  \end{cases}
$$ The number of $\Lambda'$ such that $x\in \Lambda'$ is the number of $(d-2)$-dimensional totally isotropic subspaces in $W_{\Lambda+\langle x\rangle}$, which equals $S_{d-2}(W_{\Lambda+\langle x\rangle})$. The number of $\Lambda'$ such that $x\in \Lambda'^\vee\setminus \Lambda'$ is the number of $(d-1)$-dimensional totally isotropic subspaces  $W\subseteq W_\Lambda$ such that $x\not\in W$ but $x\in W^\perp$. In this case the map $W\mapsto W+\langle x\rangle/\langle x\rangle$ gives a surjection onto the set of $(d-1)$-dimensional totally isotropic subspaces in $\langle x\rangle^\perp/\langle x\rangle$, whose fiber has size equal to the number of $(d-1)$-dimensional subspaces of $W+\langle x\rangle$ not containing $\langle x\rangle$. Hence the number of such $W$ is equal to $S_{d-1}(W_{\Lambda+\langle x\rangle})\cdot q^{d-1}$. Since $\dim_\kappa W_{\Lambda+\langle x\rangle}=2d$ and $\chi(W_{\Lambda+\langle x\rangle})=-1$, the right-hand-side evaluates to
\begin{align*}
  &\quad  \frac{c(d)}{c'(d)}(S_{d-2}(W_{\Lambda+\langle x\rangle})(1-q)+S_{d-1}(W_{\Lambda+\langle x\rangle})\cdot q^{d-1})\\ &=\frac{c(d)}{\prod_{i=2}^d(1+q^{i+1})}\left(\frac{(q^{d}+1)(q^{2}-1)\prod_{i=3}^{d-1}(q^{2i}-1)}{\prod_{i=1}^{d-2}(q^{i}-1)}(1-q)+\frac{(q^{d}+1)(q-1)\prod_{i=2}^{d-1}(q^{2i}-1)}{\prod_{i=1}^{d-1}(q^{i}-1)}q^{d-1}\right)\\
  &=c(d),
\end{align*} which equals $\Intc(x)$ by Lemma \ref{lem:Intcformula}.
\item If $x\not\in \Lambda^\vee$ or $\val(x)<0$, then both sides are zero.\qedhere
\end{altenumerate}
\end{proof}

\begin{corollary}\label{pro:Intc}
   Let $\Lambda\in \Ver^{2d+2}(\mathbb{V})$. Then $\Intc\in \Ss(\mathbb{V})$ satisfies $$\wh{\Intc}=\gamma_\mathbb{V}\,\Intc.$$
\end{corollary}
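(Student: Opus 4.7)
The plan is essentially to combine Lemma \ref{lem:Intc} with Lemma \ref{lem: FT Int V} and linearity of the Fourier transform. First I would invoke Lemma \ref{lem:Intc} to write
\[
\Intc = \frac{c(d)}{c'(d)}\sum_{\Lambda' \in \Ver^4(\mathbb{V})\atop \Lambda'\supseteq \Lambda} \Int_{\mathcal{V}(\Lambda')}.
\]
I would then check that this sum is finite, hence $\Intc \in \Ss(\mathbb{V})$: any vertex lattice $\Lambda' \supseteq \Lambda$ satisfies $\Lambda \subseteq \Lambda' \subseteq \Lambda'^\vee \subseteq \Lambda^\vee$, and $\Lambda^\vee/\Lambda$ is a finite $\kappa$-vector space, so the collection of such $\Lambda'$ is finite.

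Next, for each $\Lambda' \in \Ver^4(\mathbb{V})$ appearing in the sum, Lemma \ref{lem: FT Int V} gives $\Int_{\mathcal{V}(\Lambda')}\in\Ss(\mathbb{V})$ and
\[
\wh{\Int_{\mathcal{V}(\Lambda')}} = \gamma_\mathbb{V}\, \Int_{\mathcal{V}(\Lambda')}.
\]
By linearity of the Fourier transform on $\Ss(\mathbb{V})$, I would then conclude
\[
\wh{\Intc} = \frac{c(d)}{c'(d)}\sum_{\Lambda' \in \Ver^4(\mathbb{V})\atop \Lambda'\supseteq \Lambda}\wh{\Int_{\mathcal{V}(\Lambda')}} = \gamma_\mathbb{V}\cdot \frac{c(d)}{c'(d)}\sum_{\Lambda' \in \Ver^4(\mathbb{V})\atop \Lambda'\supseteq \Lambda}\Int_{\mathcal{V}(\Lambda')} = \gamma_\mathbb{V}\, \Intc,
\]
which is the desired identity.

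There is no genuine obstacle here: all the real work is already hidden in Lemma \ref{lem:Intcformula} (the explicit formula for $\Intc$), Lemma \ref{lem:Intc} (reduction from arbitrary type to type $4$), and Lemma \ref{lem: FT Int V} (the type-$4$ Fourier self-duality, which in turn rests on Corollary \ref{int Lam} and the self-dual normalization of the Haar measure). Thus this corollary is essentially a formal consequence; the only thing worth remarking on is the \emph{conceptual} content, namely that the ``higher local modularity'' of $\Intc$ for arbitrary-type vertex lattices reduces to the type-$4$ case via the Chow-theoretic identity in Lemma \ref{lem:Intc}, which itself is a consequence of the Tate conjecture (Theorem \ref{thm:tate}) for the generalized Deligne--Lusztig varieties $Y_W$ together with the Chern character/cycle class compatibility \eqref{eq:chcl}.
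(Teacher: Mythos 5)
Your proof is correct and is essentially the paper's own argument: the paper's proof simply states that the corollary follows from Lemma \ref{lem:Intc} and Lemma \ref{lem: FT Int V} (the paper's citation ``Lemmas \ref{pro:Intc} and \ref{lem: FT Int V}'' appears to contain a typo referencing the corollary itself instead of Lemma \ref{lem:Intc}), and you spell out exactly this reduction plus the finiteness of the sum guaranteeing $\Intc\in\Ss(\mathbb{V})$.
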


\begin{proof}
  It follows immediately from Lemmas \ref{pro:Intc} and \ref{lem: FT Int V}.
\end{proof}

\begin{theorem}[$K$-theoretic local modularity]\label{thm:highermodularity}
Let $\Lambda\in \Ver^{2d+2}(\mathbb{V})$.  For any linear map $l: \barK\rightarrow \mathbb{Q}$, the function $l\circ \IntK$ extends to a (necessarily unique) function in $\Ss(\mathbb{V})$ and satisfies $$\wh{l\circ\IntK}=\gamma_\mathbb{V}\,  l\circ\IntK.$$Here, we refer to Definition \ref{def:K CV} for $\IntK$.
\end{theorem}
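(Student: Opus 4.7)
My plan is to reduce the theorem to Corollary \ref{pro:Intc} via Poincar\'e duality on the space of Tate classes. The key observation is that since $\mathcal{Z}(x)$ is a Cartier divisor on $\mathcal{N}$ (Proposition \ref{prop:relativecartier}), the class $\mathcal{V}(\Lambda)\jiao\mathcal{Z}(x)$ lies in $\mathrm{F}^1K_0(\mathcal{V}(\Lambda))$, and therefore under the Chern character isomorphism in (\ref{eq:Tateclasses}) the image of $\IntK(x)$ lies in the positive-codimension part $\bigoplus_{i=1}^{d}\barCh^i(\mathcal{V}(\Lambda))$, with the explicit formula $\ch(\IntK(x))=\sum_{i=1}^{d}\Intch(x)^i/i!$.

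Next, I would transport $l$ to a functional $\tilde l$ on $\bigoplus_{i=1}^{d}\barCh^i(\mathcal{V}(\Lambda))$ via $\ch$, and decompose it into components $\tilde l_i$ on each $\barCh^i$. The Poincar\'e pairing (\ref{eq:tatepairing}), combined with the identifications in (\ref{eq:Tateclasses}), is perfect, so each $\tilde l_i$ is realized as intersection with a class in $\barCh^{d-i}(\mathcal{V}(\Lambda))$. By Theorem \ref{thm:tate}, the space $\barCh^{d-i}(\mathcal{V}(\Lambda))$ is spanned by the cycle classes $[\mathcal{V}(\Lambda')]$ where $\Lambda'\supseteq\Lambda$ runs over vertex lattices with $t(\Lambda')=2i+2$ (so that $\dim\mathcal{V}(\Lambda')=i$). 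By linearity, it therefore suffices to treat functionals of the form $l(\alpha)=\deg(\alpha\cdot[\mathcal{V}(\Lambda')])$ for a single such $\Lambda'$ and a single degree $i$.

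For such $l$, the projection formula along the closed immersion $\mathcal{V}(\Lambda')\hookrightarrow\mathcal{V}(\Lambda)$ yields
\[
l\circ\IntK(x)=\frac{1}{i!}\deg\bigl(\Intch(x)^i\cdot[\mathcal{V}(\Lambda')]\bigr)=\frac{1}{i!}\deg\bigl(c_{1,\mathcal{V}(\Lambda')}(x)^i\bigr).
\]
The right-hand side is, up to the constant $1/i!$, precisely the function $c_{\mathcal{V}(\Lambda')}(x)$ defined as the top self-intersection of the first Chern class, i.e.\ the direct analogue of $\Intc$ with $(\Lambda,d)$ replaced by $(\Lambda',i)$. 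Since $\Lambda'\in\Ver^{2i+2}(\mathbb{V})$, Corollary \ref{pro:Intc} applied to $\Lambda'$ in place of $\Lambda$ (and to $i$ in place of $d$) tells us that this function extends to a Schwartz function on $\mathbb{V}$ satisfying $\wh{c_{\mathcal{V}(\Lambda')}}=\gamma_\mathbb{V}\,c_{\mathcal{V}(\Lambda')}$. Summing the finitely many such contributions gives the theorem.

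I do not expect a substantive obstacle here: the heavy lifting has already been done by Theorem \ref{thm:tate} (which guarantees both the surjectivity of the cycle class map onto Tate classes and the semisimplicity needed to identify Tate classes with the $\Fr^s$-invariants), and by the explicit evaluation in Lemma \ref{lem:Intcformula} that underlies Corollary \ref{pro:Intc}. The only bookkeeping step worth double-checking is the compatibility of the Poincar\'e pairing with the intersection product across graded pieces, which is encoded in the commutativity of diagram (\ref{eq:chcl}).
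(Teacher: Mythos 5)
Your proposal is correct and follows essentially the same approach as the paper's (which defers to the proof of \cite[Theorem 6.4.9]{LZ2019}): decompose $l$ using the perfect pairing \eqref{eq:tatepairing} on Tate classes combined with the Chern character identification \eqref{eq:Tateclasses}, reduce via Theorem \ref{thm:tate} to functionals of the form $\deg(\cdot\cdot[\mathcal{V}(\Lambda')])$, pull back through the projection formula to identify the result with $c_{\mathcal{V}(\Lambda')}$ on the smaller stratum, and conclude by Corollary \ref{pro:Intc}. The one point you correctly flag --- that the degree-$0$ component of $\ch\circ\IntK$ vanishes so no functional in codimension $0$ (i.e.\ $i=0$, type-$2$ lattices $\Lambda'$) is ever needed --- is indeed what makes every case fall under Corollary \ref{pro:Intc}.
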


\begin{proof}
  The same proof of \cite[Theorem 6.4.9]{LZ2019} works using Corollary \ref{pro:Intc}.
\end{proof}

Now we return to the function $\Int_{\CV(\Lambda)}$ defined by \eqref{eq:Int CV}.

\begin{corollary}[Higher local modularity]\label{cor:highermodularity}
  Let $\Lambda\in \Ver^{2d+2}(\mathbb{V})$. Then $\Int_{\CV(\Lambda)}$ extends to a (necessarily unique) function in $\Ss(\mathbb{V})$ and satisfies
$$
\wh{\Int_{\CV(\Lambda)}}=\gamma_\BV\Int_{\CV(\Lambda)}.
$$
\end{corollary}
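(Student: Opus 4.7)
The plan is to deduce the corollary from the $K$-theoretic local modularity of Theorem \ref{thm:highermodularity} by exhibiting $\Int_{\CV(\Lambda)}$ as $l\circ \IntK$ for a suitable linear functional $l\colon \barK\to \BQ$.

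First I would define $l$ to be the Euler--Poincar\'e characteristic on $\CV(\Lambda)$: for $[\CF]\in K_0(\CV(\Lambda))_\BQ$, set $l([\CF])\coloneqq \chi(\CV(\Lambda),\CF)$. To check that this descends to $\barK$, I would use Hirzebruch--Riemann--Roch on the smooth projective variety $\CV(\Lambda)$,
\begin{equation*}
\chi(\CV(\Lambda),\CF)=\deg\bigl(\ch(\CF)\cdot \mathrm{td}(\CV(\Lambda))\bigr)_d,
\end{equation*}
together with two facts: the cycle class map $\cl$ is a ring homomorphism, so $\ker(\cl)\subseteq \bigoplus_i \Ch^i(\CV(\Lambda))_\BQ$ is an ideal; and in top degree $\cl_d$ agrees with the degree map via the canonical identification $H^{2d}(\CV(\Lambda),\BQ_\ell)(d)\cong \BQ_\ell$. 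Consequently, if $\ch(\CF)$ lies in $\ker(\cl)$, then so does $\ch(\CF)\cdot \mathrm{td}(\CV(\Lambda))$, whose top component therefore has vanishing degree. Hence $l$ factors through $\barK$ as required.

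Next I would match $l\circ \IntK$ with $\Int_{\CV(\Lambda)}$. By Definition \ref{def:K CV}, $\IntK(x)$ is the image in $\barK$ of $\CV(\Lambda)\jiao \CZ(x)\in K_0^{\CV(\Lambda)}(\CN)$. Since $\CN$ is regular and $\CV(\Lambda)$ is smooth, the natural isomorphism $K_0^{\CV(\Lambda)}(\CN)\isoarrow K_0'(\CV(\Lambda))=K_0(\CV(\Lambda))$ recalled in \S\ref{sec:notat-form-schem} intertwines $\chi(\CN,-)$ on the source with $\chi(\CV(\Lambda),-)$ on the target. Combined with \eqref{eq:Int CV}, this gives the pointwise identity
\begin{equation*}
l\circ \IntK(x)=\chi(\CV(\Lambda),\CV(\Lambda)\jiao \CZ(x))=\chi(\CN,\CV(\Lambda)\jiao \CZ(x))=\Int_{\CV(\Lambda)}(x)
\end{equation*}
for every $x\in \BV\setminus\{0\}$.

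Finally, applying Theorem \ref{thm:highermodularity} to this $l$ immediately yields that $\Int_{\CV(\Lambda)}=l\circ \IntK$ extends to an element of $\Ss(\BV)$ and satisfies $\wh{\Int_{\CV(\Lambda)}}=\gamma_\BV \Int_{\CV(\Lambda)}$. The only nontrivial step is the well-definedness of $l$ on $\barK$; the ideal property of $\ker(\cl)$ under the ring structure makes this painless, so I do not anticipate a serious obstacle. The real content has already been packaged into Theorem \ref{thm:highermodularity} and Lemma \ref{lem:Intc}; what remains is the formal passage from a $K$-theoretic statement to the numerical one.
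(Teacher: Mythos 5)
Your proof is correct and takes essentially the same route as the paper. Taking $l$ to be the Euler--Poincar\'e characteristic on $\CV(\Lambda)$, checking via Hirzebruch--Riemann--Roch and the ideal property of $\ker(\cl)$ that it descends to $\barK$, identifying $l\circ \IntK$ with $\Int_{\CV(\Lambda)}$, and invoking Theorem \ref{thm:highermodularity} is precisely the argument of \cite[Corollary 6.4.10]{LZ2019} that the paper cites.
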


\begin{proof}
   The same proof of \cite[Corollary 6.4.10]{LZ2019} works using Theorem \ref{thm:highermodularity}.
\end{proof}

\begin{remark}\label{rem:avoid B3}
  Corollary \ref{cor:highermodularity} allows us to give an alternative proof of Corollary \ref{cor:FT int} without a priori knowing that only the $(n-1)$-th graded piece of the derived special cycle contributes to $\Int_{L^\flat, \sV}(x)$ in the decomposition (\ref{eq:Intdecomp}), in particular, without using \cite[(B.3)]{Zhang2019} for a formal scheme.
\end{remark}

\section{Fourier transform: the analytic side}
\label{s:FT ana}

We continue with the steup of \S\ref{sec:kudla-rapop-cycl}, but we also allow $F$ to be any non-archimedean local field of odd residue characteristic in this subsection. Fix an $O_F$-lattice $L^\flat\subset\BV$ of rank $n-1$, and denote by $\mathbb{W}=(L^\flat_F)^\perp\subseteq \BV$.

\subsection{The partial Fourier transform $\pDenp$}

\begin{definition}
For $x\in \mathbb{W}^\mathrm{an}$, define the \emph{partial Fourier transform} of $\pDen_{L^\flat,\sV}$ by $$\pDenp(x):=\int_{L^\flat_F}\pDen_{L^\flat, \sV}(y+x)\rd y.$$
\end{definition}

Our main goal is this section is to prove the following recurrence  relations for the partial Fourier transform $\pDenp$.

\begin{proposition}\label{prop:pDenp}\quad
   \begin{altenumerate}
  \item\label{item:pDenp1}If $L^\flat$  is co-anisotropic, then $$\pDenp(x)=\pDenp(\varpi^{-1}x),$$ for all  $x\in \mathbb{W}^{\circ\circ}\cap \mathbb{W}^\mathrm{an}$.
  \item\label{item:pDenp2} If  $L^\flat$  is co-isotropic, then $$\pDenp(x)-\pDenp(\varpi^{-1}x)$$ is a constant  for all $x\in \mathbb{W}^{\circ\circ}\cap \mathbb{W}^\mathrm{an}$.
\end{altenumerate}
\end{proposition}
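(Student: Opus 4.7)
The plan is to convert $\pDenp$ into an explicit lattice-counting sum, isolate the terms contributing to the difference $\pDenp(x) - \pDenp(\varpi^{-1}x)$, and evaluate that difference via the induction formula for local densities. The starting observation is that for each integral lattice $L'$ of rank $n$ containing $L^\flat$, the orthogonal decomposition $\BV = L^\flat_F \oplus \mathbb{W}$ produces a short exact sequence $0 \to L'^\flat \to L' \to L'^\perp \to 0$, where $L'^\flat = L' \cap L^\flat_F$ and $L'^\perp$ is the image of $L'$ in $\mathbb{W}$. A direct computation then gives
\begin{equation*}
\int_{L^\flat_F} {\bf 1}_{L'}(y+x)\, dy = \vol(L'^\flat) \cdot {\bf 1}_{L'^\perp}(x),
\end{equation*}
so that, substituting into \eqref{pDen V} and exchanging the finite sum with the integral,
\begin{equation*}
\pDenp(x) = \sum_{L^\flat \subset L' \subset L'^\vee,\ L'^\flat \notin \Hor(L^\flat)} \fkm(t(L'), \sgn_{n+1}(L'))\, \vol(L'^\flat)\, {\bf 1}_{L'^\perp}(x).
\end{equation*}

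Next, for $x \in \mathbb{W}^{\circ\circ} \cap \mathbb{W}^{\mathrm{an}}$, the factor ${\bf 1}_{L'^\perp}(x) - {\bf 1}_{L'^\perp}(\varpi^{-1}x)$ is nonzero exactly when $L'^\perp \subset F \cdot x$ is a rank-one lattice containing $x$ but not $\varpi^{-1}x$, i.e., $L'^\perp = O_F \cdot x$. Grouping the remaining sum by $L'^\flat$, I would parameterize the admissible $L'$'s with prescribed data $(L'^\flat,\, L'^\perp = O_F \cdot x)$ by their lifts $\tilde x = x + y_0 \in L'$ of the generator $x$, with $y_0 \in L'^{\flat,\vee}$ subject to $(y_0, y_0) + (x, x) \in O_F$ and taken modulo $L'^\flat$. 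Matching this expression against \eqref{eq:pDenx} for $\pDen$, and restoring the horizontal contributions that are absent from $\pDen_{L^\flat,\sV}$ by invoking Theorem~\ref{thm:Int H}, will identify the inner sum (over $L'$ with fixed $L'^\flat$) with $\pDene(L'^\flat + \langle x\rangle) - \pDene(L'^\flat + \langle \varpi^{-1}x\rangle)$, up to a bookkeeping correction.

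By Corollary~\ref{cor:pDendiffgeneral}, each such difference evaluates to an explicit multiple of $\Denfe(1, L'^\flat)$. When $L^\flat$ is co-anisotropic, every $L'^\flat$ with $L'^\flat_F = L^\flat_F$ is also co-anisotropic, and the contributions from non-horizontal $L'^\flat$'s combined with those of horizontal $L'^\flat$'s (subtracted back via Theorem~\ref{thm:Int H}) telescope to zero, yielding (\ref{item:pDenp1}). When $L^\flat$ is co-isotropic, Corollary~\ref{cor:pDendiff}~(\ref{item:co-isoempty}) forces $\Denfe(1, L'^\flat) = 0$ for all relevant $L'^\flat$ and also $\Hor(L^\flat) = \varnothing$, so the restriction $L'^\flat \notin \Hor(L^\flat)$ is vacuous; the reorganization then produces a residual boundary term depending only on the isometry class of $(L^\flat, \mathbb{W})$ and not on $x$, giving the constant difference in (\ref{item:pDenp2}).

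The main obstacle will be the combinatorial bookkeeping in the parameterization step: carefully tracking the type $t(L')$, the sign $\sgn_{n+1}(L')$, and the integrality condition on $y_0$ simultaneously as $L'^\flat$ varies, while correctly handling non-split extensions $L' \supsetneq L'^\flat + O_F \cdot x$. The transition between the co-anisotropic and co-isotropic cases is particularly delicate, because the surviving boundary term in the latter case must be extracted from the same formal expression that cancels in the former, and matching it with the expected constant requires a careful sign and parity analysis that reflects the $2$-dimensional isotropic geometry of $\mathbb{W}$.
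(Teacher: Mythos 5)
Your first two steps are correct and essentially match the paper: writing $\pDenp(x)$ as $\sum_{L'} \fkm(t(L'),\sgn_{n+1}(L'))\vol(L'^\flat)\mathbf{1}_{L'^\perp}(x)$, and observing that the difference picks out exactly those $L'$ whose projection to $\mathbb{W}$ equals $\langle x\rangle$. After that, your approach diverges from the paper's and has a gap that I don't see how to close.

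The claimed identification of the inner sum over $L'$ with fixed $L'^\flat$ (and prescribed projection $\langle x\rangle$) with the difference $\pDene(L'^\flat + \langle x\rangle) - \pDene(L'^\flat + \langle \varpi^{-1}x\rangle)$ is not correct. The inner sum runs only over lattices $L'$ with $L'\cap L^\flat_F = L'^\flat$ exactly, whereas $\pDene(L'^\flat+\langle x\rangle)$ sums over \emph{all} integral $L''\supseteq L'^\flat + \langle x\rangle$, including those for which $L''\cap L^\flat_F$ strictly contains $L'^\flat$; moreover the factor $\vol(L'^\flat)$ has no counterpart in the $\pDene$ difference. More seriously, even if such an identification were available, Corollary~\ref{cor:pDendiffgeneral} requires $\val(x) > a_{n-1}(L'^\flat)$ (inherited from Theorem~\ref{thm:induction}), but in Proposition~\ref{prop:pDenp} you only have $\val(x)\ge 1$ while the outer sum runs over $L'^\flat$ with arbitrarily large fundamental invariants, so the hypothesis fails for all but finitely many $L'^\flat$. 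These two problems are the ``bookkeeping correction'' you flag but don't resolve, and I don't think they can be made to disappear by telescoping: the co-isotropic constant in part~(ii) is in general nonzero, so a scheme that would force every inner contribution to be proportional to $\Denfe(1,L'^\flat)=0$ would prove too much.

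What the paper does instead is to parameterize $L'$ by the pair $(L'^\flat, \wit L'^\flat = \pi_\flat(L'))$ and a lift $u^\flat + u^\perp$, and then directly analyze how the type $t(L')$ and sign $\sgn_{n+1}(L')$ of the adjoined lattice depend on the projection $u_1$ of $u^\flat$ onto the ``type part'' $L_1$ of $L'^\flat$: a short case analysis (equation~\eqref{eq:threecases} and the two cases following it) shows the weight factor $\fkm(t(L'),\sgn_{n+1}(L'))$ depends only on $L'^\flat$ and $u_1$ and not on $x$, which gives constancy immediately. Vanishing in the co-anisotropic case is then not obtained from the induction formula at all, but from the bespoke weighted counting identities of Propositions~\ref{prop:countingodd} and~\ref{prop:countingeven}, proved by a separate induction on $\val(L_1)$ using Lemmas~\ref{lem:countingodd1}, \ref{lem:countingodd2}, \ref{lem:countingeven2}. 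If you want to pursue your route, you would need to establish a genuinely new reorganization identity between the grouped sums and the $\pDene$ differences, and you would still face the $\val(x)$ constraint; absent that, the paper's direct weight-factor computation is the path of least resistance.

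(One minor point: invoking Theorem~\ref{thm:Int H} to ``restore horizontal contributions'' imports a statement about the geometric side into a purely analytic manipulation; the horizontal part $\pDen_{L^\flat,\sH}$ is already available from the definition \eqref{pDen H} without reference to $\Int$.)
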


\begin{proof}
  By Lemma \ref{lem:latticemebed} (\ref{item:embed1}) and Corollary \ref{cor: pDen}, we obtain $$\pDenp(x)=\int_{L^\flat_F}\sum_{L^\flat\subseteq L'\subseteq L'^\vee\atop L'^\flat\not\in \Hor(L^\flat)}\wt(t(L'), \sgn_{n+1}(L'))\mathbf{1}_{L'}(y+x)\rd y.$$ Here $L'$ runs over $O_F$-lattices of rank $n$ in the non-degenerate quadratic space $L^\flat_F+\langle x\rangle_F$.

  Let $\pi_\flat: \mathbb{V}\rightarrow L_F^\flat$ be the orthogonal projection sending $x=x^\flat+x^\perp$ to $x^\flat$. We may break the sum according to $L'\cap L^\flat_F$ and $\pi_\flat(L')$. Define $$\mathrm{Lat}(L'^\flat, \wit L'^\flat):=\{L': L'\cap L^\flat_F=L'^\flat, \pi_\flat(L')=\wit L'^\flat\}.$$ 
Then $$\pDenp(x)=\sum_{L^\flat\subseteq L'^\flat\atop L'^\flat\not\in \Hor(L^\flat)} \sum_{L'^\flat\subseteq \wit L'^\flat \atop\wit L'^\flat/L'^\flat \text{ cyclic}}\sum_{L'\in \mathrm{Lat}(L'^\flat, \wit L'^\flat)}\wt(t(L'),\sgn_{n+1}(L'))\int_{L^\flat_F}\mathbf{1}_{L'}(y+x)\rd y.$$ Notice that the sum is absolutely convergent which allows us to change the order of integration and summation. 

Fix $u^\flat\in L^\flat_F$ such that $\wit L'^\flat=L'^\flat+\langle u^\flat\rangle$ (i.e., $u^\flat$ is a generator of the cyclic $O_F$-module $\wit L'^\flat/L'^\flat$). For $L'$ in the sum, we may write $L'=L'^\flat+\langle u\rangle$ for some $u=u^\flat+u^\perp$. Write $x=\lambda u^\perp$ for some $\lambda \in F^\times$. Then $y+x\in L'$ if and only if $\val(\lambda)\ge0$ and $y-\lambda u^\flat\in L'^\flat$. It follows that $$\int_{L^\flat_F}\mathbf{1}_{L'}(y+x)- \mathbf{1}_{L'}(y+\varpi^{-1}x)\rd y=
 \begin{cases}
   \vol(L'^\flat), & \langle u^\perp\rangle=\langle  x\rangle ,\\
   0, & \text{otherwise}.
 \end{cases}$$ Hence the difference $$\pDenp(x)-\pDenp(\varpi^{-1}x)=\sum_{L^\flat\subseteq L'^\flat\atop L'^\flat\not\in \Hor(L^\flat)} \vol(L'^\flat)\sum_{L'^\flat\subseteq \wit L'^\flat \atop\wit L'^\flat/L'^\flat \text{ cyclic}}\sum_{u^\perp \in\langle x\rangle \text{ generator}\atop L'=L'^\flat+\langle u^\flat+u^\perp\rangle}\wt(t(L'),\sgn_{n+1}(L')).$$ Since $x\in \mathbb{W}^{\circ\circ}$, the integrality of $L'$ implies that $u^\flat$ is also integral, and $u^\flat\in (L'^\flat)^\vee$. Hence the two inner sums become $$\sum_{u^\flat\in (L'^\flat)^\vee/L'^\flat\atop \val(u^\flat)\ge0}\sum_{L'=L'^\flat+\langle u^\flat+x\rangle}\wt(t(L'),\sgn_{n+1}(L')).$$ It remains to show this sum is a constant independent of $x\in \mathbb{W}^{\circ\circ}\cap \mathbb{W}^\mathrm{an}$ and is zero when $L^\flat$ is co-anisotropic.

  Choose an orthogonal basis $\{e_1,\ldots,e_{n-1}\}$ of $L'^\flat$ such that $\val(e_1),\ldots,\val(e_t)>0$, where $t=t(L'^\flat)$. Let $L_0:=\langle e_{t+1},\ldots, e_{n-1}\rangle$, a self-dual lattice. Let $L_1:=\langle e_1,\ldots, e_t\rangle$. Then $L'^\flat=L_0 \obot L_1$. Let $u_1$ be the orthogonal projection of $u^\flat$ to $L_1$.  Since $x\in \mathbb{W}^{\circ\circ}$, looking at the fundamental matrix of $L'$ for the basis $\{e_1,\ldots, e_{n-1}, u^\flat+x\}$ we obtain
  \begin{equation}
    \label{eq:threecases}
    t(L')=
 \begin{cases}
   t(L'^\flat)+1, & \val(u_1)>0, \text{ and }\val((e_i, u_1))>0 \text{ for all }i=1,\ldots, t, \\
   t(L'^\flat), & \val(u_1)=0, \text{ and }\val((e_i, u_1))>0 \text{ for all }i=1,\ldots, t,\\
   t(L'^\flat)-1, & \val(u_1)\ge0, \text{ and }\val((e_i,u_1))=0\text{ for some } i=1,\ldots, t.
 \end{cases}
\end{equation}
Notice that the three cases in (\ref{eq:threecases}) exactly correspond to the three conditions $$u_1\in (\varpi L_1^\vee)^{\circ\circ}/L_1, \quad u_1\in (\varpi L_1^\vee)^{\circ}\setminus(\varpi L_1^\vee)^{\circ\circ}/L_1,\quad u_1\in (L_1^\vee)^{\circ}\setminus(\varpi L_1^\vee)^{\circ}/L_1.$$ Thus to show the desired constancy we need to show that the weight factor $\wt(t(L'),\sgn_{n+1}(L'))$ depends only on $L'^\flat$ and $u_1$, but not on $x$.

    We have two cases:
    \begin{altenumerate}
    \item     If $t(L'^\flat)$ is odd, we compute that $\wt(t(L'),\sgn_{n+1}(L'))$ equals 
        $$2\prod_{1\le i<(t(L'^\flat)-2)/2}(1-q^{2i})\cdot
      \begin{cases}
        (1-q^{t(L'^\flat)-1}), & u_1\in (\varpi L_1^\vee)^{\circ\circ}/L_1, \\
        1+\varepsilon \sgn_{n+1}(L')q^{(t(L'^\flat)-1)/2}, & u_1\in (\varpi L_1^\vee)^{\circ}\setminus(\varpi L_1^\vee)^{\circ\circ}/L_1,\\
        1, & u_1\in (L_1^\vee)^{\circ}\setminus(\varpi L_1^\vee)^{\circ}/L_1.
      \end{cases}$$
   The only possible dependence on $x$ is when $u_1\in (\varpi L_1^\vee)^{\circ}\setminus(\varpi L_1^\vee)^{\circ\circ}/L_1$. In this case, the rank of the self-dual part $L_0\obot\langle u_1\rangle$ of $L'$ has  the same parity as $n+1$, and $$\sgn_{n+1}(L')=\chi(L_0\obot\langle u_1\rangle),$$ which depends only on $L_1$ and $u_1$ as desired.  Moreover,
 when $\chi(L^\flat)\ne0$ and $L^\flat$ is co-anisotropic, we have $\chi(L^\flat)=-\varepsilon$. In this case let $\langle u_1\rangle_F^\perp$  be the orthogonal complement of $\langle u_1\rangle_F$ in $L_{1,F}$, which has even dimension $t(L'^\flat)-1$. Then $\chi(L_0\obot\langle u_1\rangle)=\chi(\langle u_1\rangle^\perp_F)\chi(L^\flat)$, and hence $$\varepsilon\sgn_{n+1}(L')=-\chi(\langle u_1\rangle^\perp_F).$$  Thus we obtain the desired vanishing result when $L^\flat$ is co-anisotropic by Proposition \ref{prop:countingodd} below applied to the lattice $L_1$.
    \item If $t(L'^\flat)$ is even, we compute that $\wt(t(L'),\sgn_{n+1}(L'))$ equals
      $$2\prod_{1\le i<(t(L'^\flat)-2)/2}(1-q^{2i})\cdot\begin{cases}
        (1+\varepsilon\sgn_{n+1}(L')q^{t(L'^\flat)/2})(1-q^{t(L'^\flat)-2}), &  u_1\in (\varpi L_1^\vee)^{\circ\circ}/L_1, \\
        1-q^{t(L'^\flat)-2}, & u_1\in (\varpi L_1^\vee)^{\circ}\setminus(\varpi L_1^\vee)^{\circ\circ}/L_1, \\
        1+\varepsilon\sgn_{n+1}(L')q^{t(L'^\flat)/2-1}, & u_1\in (L_1^\vee)^{\circ}\setminus(\varpi L_1^\vee)^{\circ}/L_1.
      \end{cases}$$
      When $u_1\in (\varpi L_1^\vee)^{\circ\circ}/L_1$,  we have $$\sgn_{n+1}(L')=\chi(L_0).$$ When $u_1\in (L_1^\vee)^{\circ}\setminus(\varpi L_1^\vee)^{\circ}/L_1$, we have $$\sgn_{n+1}(L')=\chi(L_0)\chi(\langle e_1, u_1\rangle)=\chi(L_0).$$ Hence the independence on $x$ follows. Moreover,   when $\chi(L^\flat)\ne0$ and $L^\flat$ is co-anisotropic, we have $\chi(L'^\flat_F)=-\varepsilon$ and so $$\varepsilon \chi(L_0)=-\chi(L_1).$$
    \end{altenumerate}
Thus we obtain the desired vanishing result when $L^\flat$ is co-anisotropic by Proposition \ref{prop:countingeven} below applied to the lattice $L_1$. 
\end{proof}

\subsection{Weighted counting identities} In this subsection we proved the weighted counting identities needed in the proof of Proposition \ref{prop:pDenp}. 

\begin{definition}
  Assume that $t>1$. Let $L$ be a quadratic $O_F$-lattice of rank $t$ and type $t$. Set $$\mu^+(L):=\#(\varpi L^{\vee})^{\circ\circ}/L, \quad \mu^0(L):=\#((\varpi L^\vee)^{\circ}\setminus(\varpi L^\vee)^{\circ\circ})/L,  \quad \mu^{-}(L)=\#((L^\vee)^{\circ}\setminus (\varpi L^\vee)^\circ)/L.$$ If $\chi(L)=0$, for any $s\in\{\pm1\}$ set $$\mu^{0,s}(L):=\#\{x \in (\varpi L^\vee)^{\circ}\setminus(\varpi L^\vee)^{\circ\circ}:\chi(\langle x\rangle_F)=s\}/L.$$ If $\chi(L)\ne0$, for any $s\in\{\pm1\}$ set $$\mu^{0,s}(L):=\#\{x \in (\varpi L^\vee)^{\circ}\setminus(\varpi L^\vee)^{\circ\circ}:\chi(\langle x\rangle^\perp_F)=s\}/L,$$ where $\langle x\rangle_F^\perp$ is the orthogonal complement of $\langle x\rangle_F$ in $L_F$.
\end{definition}

\begin{definition}
    Assume that $t>1$. Let $L$ and $M$ be quadratic $O_F$-lattices of rank $t$ and type $t$ such that $L\subseteq M\subseteq \varpi^{-1}L$. Set $$\mu^{?}(L,M):=\mu^{?}(L)-[L:M]\mu^{?}(M),$$ where $?\in \{+,0,-,\{0,+\}, \{0,-\}\}$.
\end{definition}

\begin{lemma}\label{lem:countingodd1}
  Assume that $t>1$. Let $L$ and $M$ be quadratic $O_F$-lattices of rank $t$ and type $t$ such that $L\subseteq M\subseteq \varpi^{-1}L$. Then $$\mu^+(L,M)+\mu^0(L,M)+\mu^-(L,M)=q^{t-1}\cdot \mu^+(L,M).$$
\end{lemma}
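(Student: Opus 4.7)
The plan is to unfold the definition and reduce the identity to an invariance statement for a simpler volume-theoretic quantity. Summing the three identities defining $\mu^?(L,M)$ for $?\in\{+,0,-\}$ and using that $(L^\vee)^\circ = (\varpi L^\vee)^{\circ\circ}\sqcup ((\varpi L^\vee)^\circ\setminus (\varpi L^\vee)^{\circ\circ})\sqcup ((L^\vee)^\circ\setminus (\varpi L^\vee)^\circ)$, the claim becomes
\[
\#(L^\vee)^\circ/L - q^{t-1}\,\mu^+(L) = [L:M]\bigl(\#(M^\vee)^\circ/M - q^{t-1}\,\mu^+(M)\bigr).
\]
Multiplying through by $\vol(L)$ and using $\vol(M) = [M:L]\cdot\vol(L)$, this is equivalent to $F(L) = F(M)$ for
\[
F(L) \coloneqq \vol\bigl((L^\vee)^\circ\bigr) - q^{t-1}\vol\bigl((\varpi L^\vee)^{\circ\circ}\bigr).
\]
Using the scaling isomorphism $\varpi\colon L^\vee\isoarrow \varpi L^\vee$, which sends $(L^\vee)^{\ge k}$ to $(\varpi L^\vee)^{\ge k+2}$ and multiplies volumes by $q^{-t}$, I would rewrite $(\varpi L^\vee)^{\circ\circ} = (\varpi L^\vee)^{=1}\sqcup \varpi\cdot (L^\vee)^\circ$ and simplify to obtain the compact formula
\[
F(L) = \vol\bigl((L^\vee)^{\ge 0}\bigr) - q^{-1}\vol\bigl((L^\vee)^{\ge -1}\bigr) = \int_{L^\vee}\eta(x)\,dx,
\]
where $\eta(x) = \mathbf{1}_{O_F}((x,x)) - q^{-1}\mathbf{1}_{\varpi^{-1}O_F}((x,x))$.

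Next, I would reduce to the case $[M:L] = q$ by induction. Since $M\subseteq \varpi^{-1}L$ implies $\varpi M\subseteq L$, the quotient $M/L$ is a $\kappa$-vector space, so one can choose a flag $L = L_0 \subsetneq L_1 \subsetneq\dots\subsetneq L_k = M$ with $[L_{i}:L_{i-1}] = q$; each $L_i$ is still type $t$ because $L_i\subseteq M\subseteq \varpi M^\vee\subseteq \varpi L_i^\vee$. For the step $[M:L] = q$, write $F(L) - F(M) = \int_{L^\vee\setminus M^\vee}\eta(x)\,dx$, and compute it in coordinates: fix a standard orthogonal basis $\{e_i\}$ of $L$ with $(e_i,e_i) = \varpi^{b_i}u_i$, $b_i\ge 1$, and write $M = L + \langle w\rangle$ with $w = \varpi^{-1}\sum d_i e_i$ where $d_i\in\varpi O_F$ for $b_i = 1$. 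For $x = \sum x_i e_i\in L^\vee$, set $y_i = \varpi^{b_i}x_i\in O_F$; then $x\in M^\vee$ is equivalent to $\bar\ell(\bar y) \coloneqq \sum_{b_i\ge 2}\bar y_i\bar d_i \bar u_i = 0$ in $\kappa$, so that $L^\vee\setminus M^\vee$ is cut out by $\bar\ell(\bar y)\ne 0$. Writing $A_k = \{x\in L^\vee\setminus M^\vee: (x,x)\in\varpi^k O_F\}$, the goal becomes $\vol(A_0) = q^{-1}\vol(A_{-1})$.

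The main obstacle is the last step, which requires the following key claim: on $L^\vee\setminus M^\vee$, the condition $(x,x)\in O_F$ imposes exactly one additional non-degenerate linear constraint on the next-order digits of $\bar y$ beyond those forcing $(x,x)\in\varpi^{-1}O_F$. Expanding $(x,x) = \sum_k \varpi^{-k}S_k(y)$ with $S_k(y) = \sum_{i\in I_k}y_i^2 u_i$ (where $I_k = \{i : b_i = k\}$), I would analyze the conditions layer by layer: the condition $(x,x)\in\varpi^{-j}O_F$ for successive $j$ produces a tower of congruences, the lowest involving only $\bar y|_{I_K}$ (where $K = \max b_i$) and successively deeper ones bringing in $\bar y|_{I_{K-1}}$, $\bar y|_{I_{K-2}}$, etc., each modified by the next-order digits $\bar e_i$ of lifts. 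The new linear constraint at level $(x,x)\in O_F$ has coefficient vector proportional to $(\bar y_i\bar u_i)_{i\in I_{?}}$ in the relevant free variables; the crucial point is that $\bar\ell(\bar y)\ne 0$ forces $\bar y_i \ne 0$ for some $i$ with $\bar d_i\ne 0$ (hence $b_i\ge 2$), which propagates through the tower to guarantee nonvanishing of the coefficient vector, so that the linear constraint cuts the count by exactly $q$. This yields $\vol(A_0) = q^{-1}\vol(A_{-1})$ and thus $\int_{L^\vee\setminus M^\vee}\eta = 0$.
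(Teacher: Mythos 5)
Your reductions are correct and, after unwinding, they coincide with what the paper proves. The volume identity $\vol(A_0)=q^{-1}\vol(A_{-1})$ is exactly the statement that the multiplication-by-$\varpi$ map
\[
((L^\vee)^\circ\setminus(M^\vee)^\circ)/L\;\longrightarrow\;((\varpi L^\vee)^{\circ\circ}\setminus(\varpi M^\vee)^{\circ\circ})/L
\]
is surjective with every fiber of size $q^{t-1}$, which is what the paper establishes. Your flag reduction to $[M:L]=q$ is valid (each $L_i$ has type $t$ because $L_i\subseteq M\subseteq\varpi M^\vee\subseteq\varpi L_i^\vee$), but it is not needed: the paper's argument works for arbitrary $M$ with $L\subseteq M\subseteq\varpi^{-1}L$. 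The nondegeneracy input you isolate, namely that $\bar\ell(\bar y)\neq 0$ forces $\bar y_i\neq 0$ for some $i$ with $b_i\geq 2$, is precisely the paper's observation ``$\lambda_i\not\equiv 0\pmod{\varpi}$ for some $i$ with $a_i>1$.''

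The gap is the final step, and the ``layer-by-layer'' plan you outline is both unfinished and heavier than necessary. You propose to expand $Q(y)$ in $\varpi$-adic digits, track a chain of $K-1$ coupled congruences across the strata $I_k$ together with their carries, and then argue that the last layer adds one fresh nondegenerate linear constraint; the phrase ``propagates through the tower'' signals that the nonvanishing of the final coefficient vector has not actually been pinned down (it can be made to work --- the coefficient of the new digit $\bar y_i^{(b_i-1)}$ is directly $2\bar y_i\bar u_i$, no propagation needed --- but the carries and which digits are genuinely free must still be controlled). The paper avoids the tower entirely with a single rescaling. Since $L$ has type $t$ we have $L\subseteq\varpi L^\vee$, so the fiber of multiplication by $\varpi$ over $x\in(\varpi L^\vee)^{\circ\circ}\setminus(\varpi M^\vee)^{\circ\circ}$ is parameterized by $y\in L/\varpi L\cong\kappa^t$, and the membership condition is the one congruence $(x+y,x+y)\equiv 0\pmod{\varpi^2}$. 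Since $(x,x)$, $2(x,y)$, and $(y,y)$ all lie in $\varpi O_F$ by the type-$t$ hypothesis, dividing by $\varpi$ yields a single equation over $\kappa$,
\[
\varpi^{-1}(x,x)+\sum_{a_i=1}\bigl(\mu_i^2\epsilon_i+2\mu_i\lambda_i\epsilon_i\bigr)+\sum_{a_i>1}2\mu_i\lambda_i\epsilon_i\equiv 0\pmod{\varpi},
\]
which is linear in the $\mu_i$ with $a_i>1$ with nonzero coefficient (by your nondegeneracy input) and hence has exactly $q^{t-1}$ solutions for $\mu\in\kappa^t$. No iteration or carry bookkeeping is required; replace the tower with this one rescaling.
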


\begin{proof}
  Notice that $(\varpi L^\vee)^\circ\subseteq (M^\vee)^\circ$ as $M\subseteq \varpi^{-1}L$, to prove the desired identity it remains to show that the multiplication-by-$\varpi$ map: $$((L^\vee)^\circ\setminus (M^\vee)^\circ)/L\rightarrow ((\varpi L^\vee)^{\circ\circ}\setminus (\varpi M^\vee)^{\circ\circ})/L$$ is surjective with every fiber of size $q^{t-1}$. Take $x\in \varpi L^\vee$ in the target. The fiber at $x$ is given by the set $$\{\varpi^{-1}(y+x)\in (L^\vee)^\circ: y\in L/\varpi L\}.$$ Notice that $x\in \varpi L^\vee$, the condition $\varpi^{-1}(y+x)\in (L^\vee)^\circ$ is equivalent to $$y\in (L\cap \varpi L^\vee)/\varpi L=L/\varpi L,\quad (y+x, y+x)\equiv 0\pmod{\varpi^2}.$$   Let $\{e_1,\ldots, e_t\}$ be a standard orthogonal basis of $L$ with $(e_i,e_i)=\epsilon_i\varpi^{a_i}$, $\epsilon_i\in O_F^\times$. Then $$\varpi L^\vee=\bigoplus_i \langle \varpi^{-a_i+1}e_i\rangle.$$ Write $$x=\sum_{i} \lambda_i\varpi^{-a_i+1}e_i\in \varpi L^\vee,\quad \lambda_i\in O_F,\quad y=\sum_{i}\mu_ie_i\in L,\quad \mu_i\in O_F.$$ Then $$(y, y)\equiv \sum_{a_i=1}(\mu_ie_i, \mu_ie_i)=\sum_{a_i=1}\mu_i^2\epsilon_i \varpi\pmod{\varpi^2},$$ $$(y,x)=\sum_{i}(\mu_ie_i,\lambda_i\varpi^{-a_i+1}e_i)=\sum_{i}\mu_i\lambda_i\epsilon_i\varpi.$$ Since $x\in (\varpi L^\vee)^{\circ\circ}$, we know that the condition $(y+x,y+x)\equiv0\pmod{\varpi^2}$ is equivalent to the equation $$\varpi^{-1}(x,x)+\sum_{a_i=1}\left(\mu_i^2\epsilon_i+2\mu_i\lambda_i\epsilon_i\right)+\sum_{a_i>1}2\mu_i\lambda_i\epsilon_i\equiv0\pmod{\varpi}.$$ Since $x\not\in \varpi M^\vee$, we know that $\lambda_i\not\equiv0\pmod{\varpi}$ for some $i$ such that $a_i>1$. Thus we may choose arbitrary $\mu_i\in O_F/\varpi O_F$ for any $i$ such that $a_i=1$ and solve for the equation for $\mu_i\in O_F/\varpi O_F$'s with $a_i>1$, $$\sum_{a_i>1}2\mu_i\lambda_i\epsilon_i+b\equiv0\pmod{\varpi},$$ where $$b=\varpi^{-1}(x,x)+\sum_{a_i=1}\left(\mu_i^2\epsilon_i+2\mu_i\lambda_i\epsilon_i\right).$$ This is a nontrivial linear equation in $t$ variables over $\mathbb{F}_q=O_F/\varpi O_F$, so the total number of solutions is exactly $q^{t-1}$. This finishes the proof.
\end{proof}

Now the discussion will depend on the parity of the type. First we consider the case of odd type.

\begin{lemma}\label{lem:countingodd2}
  Assume that $t>1$ is odd. Let $L$ be a  quadratic $O_F$-lattice of rank $t$ and type $t$. Assume that $\chi(L)\ne0$. Then there exists a quadratic $O_F$-lattice $M$ of rank $t$ and type $t$ such that $L\subseteq^1 M$ and $$\mu^{0,+}(L,M)=\mu^{0,-}(L,M).$$
\end{lemma}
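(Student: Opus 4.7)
The parity constraints ($t$ odd and $\chi(L)\ne 0$) force $\val(L)=\sum a_i$ to be even, so with each $a_i\ge 1$ at least one invariant satisfies $a_i\ge 2$. Fix a standard orthogonal basis $\{e_1,\dots,e_t\}$ of $L$ with $(e_i,e_i)=\epsilon_i\varpi^{a_i}$. The plan is to construct $M=L+\langle v\rangle$ explicitly depending on the shape of the invariants, and then to prove the balance $\mu^{0,+}(L,M)=\mu^{0,-}(L,M)$ by parametrizing $\varpi L^\vee/L$ and counting.

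First I would dispose of the easy case in which some $a_i\ge 3$, say $a_t\ge 3$. Taking $v=\varpi^{-1}e_t$, the overlattice $M$ has fundamental invariants $(a_1,\dots,a_{t-1},a_t-2)$ and is therefore still of type $t$. Here I expect both $\mu^{0,\pm}(L,M)$ to be zero: writing $x=\sum_i\lambda_i\varpi^{-a_i+1}e_i\in\varpi L^\vee$, the condition $x\notin\varpi M^\vee$ forces $\bar\lambda_t\ne 0$, but then the term $\lambda_t^2\epsilon_t\varpi^{2-a_t}$ of $(x,x)$ contributes a strictly negative valuation with no cancellation available, so no such $x$ can have $\val((x,x))=0$. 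The desired equality is then trivially $0=0$.

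The substantive case is when all $a_i\in\{1,2\}$. Let $I=\{i:a_i=2\}$; by the parity constraints $|I|$ is odd. For $i\notin I$ the $\lambda_i$-coordinate on $\varpi L^\vee/L$ vanishes, so the mod-$\varpi$ reduction of $(x,x)$ descends to a non-degenerate $\kappa$-valued quadratic form $Q(\bar\lambda)=\sum_{i\in I}\bar\epsilon_i\bar\lambda_i^2$ on $\kappa^{|I|}$. Assuming $|I|\ge 3$ so that $Q$ is a ternary-or-larger quadratic form (hence isotropic over $\kappa$), I would pick a nonzero isotropic vector $\bar\alpha$ for $Q$ with all components nonzero, lift it to $v=\sum_{i\in I}\alpha_i\varpi^{-1}e_i$ arranged so that $(v,v)\in\varpi O_F\setminus\varpi^2 O_F$, and take $M=L+\langle v\rangle$. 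A direct Gram--Schmidt computation in the basis $\{v\}\cup\{e_j:j\ne i_1\}$ of $M$, where $i_1$ is an index with $\bar\alpha_{i_1}\ne 0$, should show that $M$ has $t-|I|+2$ invariants equal to $1$ and $|I|-2$ invariants equal to $2$, in particular $M$ remains of type $t$.

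The heart of the argument is then the balance. In $\varpi L^\vee/L\cong\kappa^{|I|}$, the subspace $\varpi M^\vee/L$ is the hyperplane $\{B(\bar\alpha,\cdot)=0\}$ where $B$ is the polar form of $Q$, so the complement giving $\mu^{0,\pm}(L,M)$ is $\{\bar\lambda:B(\bar\alpha,\bar\lambda)\ne 0\}$. Extending $\bar\alpha$ to a Witt basis $\{\bar\alpha,\bar\beta,\ldots\}$ in which $(\bar\alpha,\bar\beta)$ is a hyperbolic pair and the rest is orthogonal to them, $Q$ takes the form $c\,\lambda_1\lambda_2+Q'(\lambda_3,\dots)$ for some $c\in\kappa^\times$ while the complement condition becomes $\lambda_2\ne 0$. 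For each fixed $(\lambda_2,\lambda_3,\dots)$ with $\lambda_2\ne 0$, the form $Q$ is linear and surjective in $\lambda_1$ onto $\kappa$, so it takes each value of $\kappa$ exactly once per such fiber, producing equal numbers of square and non-square values in $\kappa^\times$ --- which is precisely the desired balance. The main obstacle is managing the Gram--Schmidt and Witt-basis bookkeeping cleanly; the residual boundary case $|I|=1$ (equivalent to $\val(L)=t+1$) admits no type-$t$ overlattice of colength one at all, and I would expect it to be excluded by the context in which the lemma is invoked or to require a separate degenerate argument.
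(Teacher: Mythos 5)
The critical gap is in your treatment of the case $a_t\ge 3$. With $M=L+\langle\varpi^{-1}e_t\rangle$ you claim both $\mu^{0,\pm}(L,M)$ vanish because the term $\lambda_t^2\epsilon_t\varpi^{2-a_t}$ of $(x,x)=\sum_i\lambda_i^2\epsilon_i\varpi^{2-a_i}$ has ``strictly negative valuation with no cancellation available.'' That is false as soon as the maximal invariant is not unique: if $a_{t-1}=a_t\ge3$ and $\lambda_{t-1},\lambda_t\in O_F^\times$, the terms $\lambda_{t-1}^2\epsilon_{t-1}\varpi^{2-a_{t-1}}$ and $\lambda_t^2\epsilon_t\varpi^{2-a_t}$ share the same negative valuation and their digits can cancel to arbitrary depth, so $\val(x)=0$ is achievable and $\mu^{0,\pm}(L,M)$ are in general nonzero. (Concretely, with $t=3$ and $a_1=a_2=a_3=4$ the mod-$\varpi$ equation $\sum_i\bar\lambda_i^2\bar\epsilon_i=0$ has solutions with $\bar\lambda_3\neq0$.) Your argument is only valid when $a_t$ strictly exceeds $a_{t-1}$. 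The paper does not get a trivial $0=0$ here; instead it fixes the coordinates $x_0$, rewrites the conditions $(x,x)\in O_F^\times$ and $\chi((x,x))=s\chi(L_F)$ as a $\varpi$-adic digit system on $\lambda_t=\sum b_i\varpi^i$, and observes that the digits $b_0,\dots,b_{a_t-3}$ are pinned by equations not involving $s$ while the final condition is a $\chi$-condition on an affine expression in $b_{a_t-2}$ with nonzero leading coefficient $2b_0$, hence satisfied by $(q-1)/2$ values for either sign $s$. That balance, not vanishing, is the content of the lemma in this case.

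Your second case (all $a_i\in\{1,2\}$, $|I|\ge3$) takes a genuinely different route from the paper: the paper simply lowers $a_{t-1}$ and $a_t$ from $2$ to $1$ and works in an adapted orthogonal basis $\{f_i\}$ of $\varpi M^\vee$, whereas you build $M=L+\langle v\rangle$ from an isotropic $\bar\alpha$ of the residue form $Q$ and run a Witt-basis count, which is a clean and correct idea. However you require $\bar\alpha$ to have all coordinates nonzero, which can fail (over $\mathbb{F}_3$ the form $x^2+y^2-z^2$ has no isotropic vector with all three coordinates nonzero), and the asserted invariant pattern $(1^{t-|I|+2},2^{|I|-2})$ of $M$ is not actually checked; both would need to be repaired before this case is complete. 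Your final observation that $\val(L)=t+1$ (i.e. $|I|=1$) admits no type-$t$ overlattice of colength one is correct, and indeed the lemma is invoked in the induction of Proposition \ref{prop:countingodd} only when $\val(L)>t+1$.
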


\begin{proof}
We need to prove that $$\#\{x\in (\varpi L^\vee)^{\circ}\setminus (\varpi M^\vee)^{\circ}: \chi(\langle x\rangle_F^\perp)=+1\}/L=\#\{x\in (\varpi L^\vee)^{\circ}\setminus (\varpi M^\vee)^{\circ}: \chi(\langle x\rangle_F^\perp)=-1\}/L.$$ 
Let $\{e_1,\ldots, e_t\}$ be a standard orthogonal basis of $L$ with $(e_i,e_i)=\epsilon_i\varpi^{a_i}$, $\epsilon_i\in O_F^\times$. We distinguish two cases. 
    \begin{enumerate}[label=(\roman*), wide, labelindent=0pt]
    \item If $a_t\ge3$, then we may choose $M=\langle e_1,\ldots, e_{t-1},\varpi^{-1}e_t\rangle$ with fundamental invariants $(a_1,\ldots, a_{t-1},\allowbreak a_t-2)$. In this case $$\varpi L^\vee=\bigoplus_i \langle \varpi^{-a_i+1}e_i\rangle,\quad \varpi M^\vee=\left(\bigoplus_{i<t} \langle \varpi^{-a_i+1}e_i\rangle \right)\oplus \langle \varpi^{-a_t+2}e_t\rangle.$$ We fix an $$x_0=\sum_{i<t} \lambda_i\varpi^{-a_i+1}e_i,\quad \lambda_i\in O_F,$$ and consider the sets for $s\in \{\pm1\}$, $$S^s:=\{x\in (\varpi L^\vee)^{\circ}\setminus (\varpi M^\vee)^{\circ}: x=x_0+\lambda_t \varpi^{-a_t+1} e_t, \lambda_t\in O_F, \chi(\langle x\rangle_F^\perp)=s\}/L.$$ It suffices to show that $\#S^+=\# S^-$. Notice that $x\not\in \varpi M^\vee$ if and only if $\lambda_t\in O_F^\times$. We compute $$(x,x)=(x_0,x_0)+\lambda_t^2\epsilon_t\varpi^{-a_t+2}.$$ Hence $x\in S^s$ if and only if $$(x_0, x_0)+\lambda_t^2\epsilon_t\varpi^{-a_t+2}\in O_F^\times,$$ and $$\chi((x_0, x_0)+\lambda_t^2\epsilon_t\varpi^{-a_t+2})=s \chi(L_F).$$

      Write $$\lambda_t=\sum_{i=0}^\infty b_i \varpi^i,\quad -\varpi^{a_t-2}\epsilon_t^{-1}(x_0,x_0)=\sum_{i=0}^\infty c_i \varpi^{i},$$ the $\varpi$-adic expansions with $b_i,c_i\in O_F/\varpi O_F=\mathbb{F}_q$. Then $x\in S^s$ is equivalent the following equations
      \begin{align*}
        c_0 & =b_0^2\\
        c_1 & =2b_0b_1 \\
        c_2 & = 2b_0b_2+b_1^2\\
        & \cdots\\
        c_{a_t-3} &= b_0b_{a_t-3}+ b_1b_{a_t-4}\cdots +b_{a_t-3}b_0\\
        s\chi(L_F) &= \chi(c_{a_t-2}-b_0b_{a_t-2}+ b_1b_{a_t-3}\cdots +b_{a_t-2}b_0).
      \end{align*} It is clear that the number of solutions of $b_0,\ldots, b_{a_t-2}$ is independent of $s\in\{\pm1\}$, and thus $\#S^+=\# S^-$ as desired.
    \item If $a_t=a_{t-1}=2$, then we may choose $M$ with fundamental invariants $(a_1,\ldots, a_{t-2}, a_{t-1}-1, a_t-1)$. We may choose $\{f_1,\ldots,f_t\}$ be an orthogonal basis of $\varpi M^\vee$ such that $$\val(f_i)=-a_i+2,\ i=1,\ldots, t-2,\quad  \val(f_{t-1})=\val(f_{t})=1.$$ and  $$\varpi L^\vee=\langle f_1,\ldots, f_{t-1}, \varpi^{-1}(f_{t-1}+f_t)\rangle,\quad \varpi M^\vee=\langle f_1,\ldots, f_t\rangle.$$ We fix an $$x_0=\sum_{i<t} \lambda_if_i,\quad \lambda_i\in O_F,$$ and consider the sets for $s\in \{\pm1\}$, $$S^s:=\{x\in (\varpi L^\vee)^{\circ}\setminus (\varpi M^\vee)^{\circ}: x=x_0+\lambda_t \varpi^{-1}(f_{t-1}+f_t), \lambda_t\in O_F, \chi(\langle x\rangle_F^\perp)=s\}/L.$$ It suffices to show that $\#S^+=\# S^-$. Notice that $x\not\in \varpi M^\vee$ if and only if $\lambda_t\in O_F^\times$. We compute $$(x,x)=(x_0,x_0)+2\lambda_{t-1}\lambda_t\varpi^{-1}(f_{t-1},f_{t-1})+\lambda_t^2\varpi^{-2}((f_{t-1},f_{t-1})+(f_t,f_t)).$$ Similarly write down the equation for the $\varpi$-adic expansion of $\lambda_t$ required for $x\in S^{s}$, we see the number of solutions is independent of $s\in \{\pm1\}$ as desired.   
      \qedhere
    \end{enumerate}
\end{proof}

\begin{proposition}\label{prop:countingodd}
  Assume that $t>1$ is odd. Let $L$ be a quadratic $O_F$-lattice of rank $t$ and type $t$.   Then $$(1-q^{t-1})\mu^+(L)+(1- s q^{(t-1)/2})\mu^{0,+}(L)+(1+sq^{(t-1)/2})\mu^{0,-}(L)+\mu^-(L)=0$$ when $\chi(L)=0$ and $s\in\{\pm1\}$,  or $\chi(L)\ne0$ and $s=+1$.
\end{proposition}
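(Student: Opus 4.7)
The plan is to induct on $\val(L)$.  Write $A(L,s)$ for the left-hand-side of the identity.  A direct manipulation using Lemma~\ref{lem:countingodd1} (and $\mu^0=\mu^{0,+}+\mu^{0,-}$) shows that for any $L\subseteq M\subseteq\varpi^{-1}L$ both of type $t$,
\[
A(L,s)-[L:M]\,A(M,s)=sq^{(t-1)/2}\bigl(\mu^{0,-}(L,M)-\mu^{0,+}(L,M)\bigr),
\]
since the $s$-independent part on the left is precisely the combination $(1-q^{t-1})\mu^+(L,M)+\mu^0(L,M)+\mu^-(L,M)$ vanishing in that lemma.  The induction will therefore propagate as soon as we produce $M\supsetneq L$ of type $t$ with $\chi(M)=\chi(L)$, $\val(M)<\val(L)$, and $\mu^{0,+}(L,M)=\mu^{0,-}(L,M)$.

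For such $M$ I will use the constructions from the two cases of Lemma~\ref{lem:countingodd2}, available whenever $a_t\ge 3$ or $a_{t-1}=a_t=2$; each visibly preserves $\chi$ and drops $\val$ by $2$.  The equality $\mu^{0,+}(L,M)=\mu^{0,-}(L,M)$ is proved there only for $\chi(L)\ne 0$, but its proof in fact establishes the stronger local statement that, for each fixed $x_0$, the two conditions $\chi((x,x))=\pm 1$ occur equally often as $\lambda_t$ varies: the coefficient $b_{a_t-2}$ in the $\varpi$-adic expansion of $\lambda_t$ enters linearly into the leading coefficient $u$ of $(x,x)/\epsilon_t$, so $u$ ranges bijectively over $\kappa$ as $b_{a_t-2}$ varies (case~(ii) being analogous).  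Since $t$ is odd, Lemma~\ref{lem:directsum}(ii) gives $\chi(\langle x\rangle_F^\perp)=\chi(L)\chi((x,x))$ in the $\chi(L)\ne 0$ interpretation of $\mu^{0,\pm}$, while one trivially has $\chi(\langle x\rangle_F)=\chi((x,x))$ in the $\chi(L)=0$ interpretation, so the finer statement yields $\mu^{0,+}(L,M)=\mu^{0,-}(L,M)$ in either case.

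It remains to handle the base lattices where Lemma~\ref{lem:countingodd2} does not apply, namely those with $a_t\le 2$ and $a_{t-1}\le 1$: the type-$t$ vertex lattice (invariants $(1,\ldots,1)$; necessarily $\chi(L)=0$ as $\val(L)=t$ is odd) and the lattice of invariants $(1,\ldots,1,2)$ ($\chi(L)\ne 0$ as $\val(L)=t+1$ is even).  For the vertex lattice $\varpi L^\vee=L$, so $\mu^{0,\pm}(L)=0$, $\mu^+(L)=1$, and $\mu^-(L)=q^{t-1}-1$ (the number of nonzero isotropic vectors in the $t$-dimensional $\kappa$-quadratic space $L/\varpi L$); the identity $A(L,s)=0$ is immediate for either sign.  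For $(1,\ldots,1,2)$, an orthogonal-basis computation yields $\mu^+(L)=1$ and $\mu^0(L)=q-1$ concentrated in the single sign $s_0:=\chi(\langle e_1,\ldots,e_{t-1}\rangle_F)=\chi(\langle e_t\rangle_F^\perp)$; invoking Lemma~\ref{lem:Smb} for the reduced $(t-1)$-dimensional quadratic form over $\kappa$ gives $\mu^-(L)=q^{t-1}+s_0q^{(t-1)/2}(q-1)-q$, and routine algebraic substitution verifies $A(L,+1)=0$ for both signs $s_0=\pm 1$.  The main obstacle will be the strengthening of Lemma~\ref{lem:countingodd2} to the $\chi(L)=0$ case, requiring a careful but localized re-inspection of its $\varpi$-adic argument; once this is in place, both the inductive propagation and the base-case computations are mechanical.
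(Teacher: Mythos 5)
Your proposal follows the same strategy as the paper's: induction on $\val(L)$ with base cases at $\val(L)=t$ (vertex lattice, $\chi(L)=0$) and $\val(L)=t+1$ (fundamental invariants $(1,\ldots,1,2)$, $\chi(L)\ne 0$), and an inductive step that uses Lemma~\ref{lem:countingodd1} to kill the $s$-independent part of $A(L,s)-[L:M]A(M,s)$ and a Lemma~\ref{lem:countingodd2}-type identity $\mu^{0,+}(L,M)=\mu^{0,-}(L,M)$ to kill the $s$-linear part. Your base-case values of the $\mu$'s agree with the paper's after simplification.

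The genuinely useful point in your writeup is the one you flag at the end. The paper's inductive step cites Lemma~\ref{lem:countingodd2}, whose statement carries the hypothesis $\chi(L)\ne 0$; but the induction must also pass through lattices with $\chi(L)=0$ (those with odd $\val(L)\ge t+2$), and the paper gives no separate argument there. Your fix is correct: the $\varpi$-adic analysis in the proof of Lemma~\ref{lem:countingodd2} actually establishes the sharper fact that, with $b_0,\ldots,b_{a_t-3}$ fixed and $b_0\ne 0$, the leading coefficient of $\varpi^{a_t-2}\epsilon_t^{-1}(x,x)$ is linear in $b_{a_t-2}$ with nonzero slope $2b_0$ and hence runs bijectively over $\kappa$, so $\chi((x,x))=+1$ and $\chi((x,x))=-1$ each occur $(q-1)/2$ times. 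Since $\mu^{0,s}$ is read off via $\chi(\langle x\rangle^\perp_F)=\chi(L)\chi((x,x))$ when $\chi(L)\ne 0$ and via $\chi(\langle x\rangle_F)=\chi((x,x))$ when $\chi(L)=0$, this symmetry in $\chi((x,x))$ yields $\mu^{0,+}(L,M)=\mu^{0,-}(L,M)$ in both regimes, with case (ii) of that lemma's proof entirely parallel. So the step you singled out as the ``main obstacle'' does close the argument, and your proof as a whole is sound.
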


\begin{proof}
 We induct on $\val(L)$. Since $L$ has type $t$, we know that $\val(L)\ge t$. We have two base cases depending on the parity of $\val(L)$. Let $\{e_1,\ldots, e_t\}$ be a standard orthogonal basis of $L$ with $(e_i,e_i)=\epsilon_i\varpi^{a_i}$, $\epsilon_i\in O_F^\times$.
  \begin{enumerate}[wide,labelindent=0pt]
  \item if $\val(L)=t$, we know that $L$ is a vertex lattice of type $t$ and $\chi(L)=0$. So $L^\vee/L$ is a nondegenerate $\mathbb{F}_q$-quadratic space of odd dimension $t$. By definition we have $(L^\vee)^{\circ}/L$ is the set of isotropic vectors in $L^\vee/L$ and $(\varpi L^\vee)^\circ/L=(\varpi L^\vee)^{\circ\circ}/L=\{0\}$. Hence $$\#(L^\vee)^\circ/L=q^{t-1},\quad \#(\varpi L^\vee)^\circ/L=1,\quad \#(\varpi L^\vee)^{\circ\circ}/L=1.$$ So $$\mu^-(L)=q^{t-1}-1, \quad\mu^{0}(L)=0,\quad \mu^+(L)=1$$ satisfies the desired identity for any $s\in\{\pm1\}$.
  \item if $\val(L)=t+1$, we know that $L$ has fundamental invariants $(1,\ldots,1,2)$ and $\chi(L)\ne0$. Then $L^\vee=\langle \varpi^{-1}e_1,\ldots,\varpi^{-1}e_{t-1},e_t\rangle$. Let $$x=\lambda_1\varpi^{-1}e_1+\cdots+\lambda_{t-1}\varpi^{-1}e_{t-1}+\lambda_t\varpi^{-2}e_{t}\in L^\vee,\quad \lambda_i\in O_F.$$ Then $x\in (L^\vee)^\circ$ if and only if $\varpi^2(x,x)\equiv0\pmod{\varpi^2}$, namely, $$\sum_{i=1}^{t-2}\lambda_i^2\varpi+\lambda_{t-1}^2\epsilon_{t-1}\varpi+\lambda_t^2\epsilon_t\equiv0\pmod{ \varpi^2},$$ or equivalently $$\sum_{i=1}^{t-2}\lambda_i^2+\lambda_{t-1}^2\epsilon_{t-1}\equiv0\pmod{\varpi},\quad \lambda_t=0\pmod{\varpi}.$$ It follows that $$\#(L^\vee)^\circ/L=((q^{(t-1)/2}-s)(q^{(t-3)/2}+s)+1)q.$$ Here $s=\chi(\langle e_1,\ldots,e_{t-1}\rangle)\in\{\pm1\}$.  Similarly let $$x=\lambda_1e_1+\cdots +\lambda_{t-1}e_{t-1}+\lambda_t\varpi^{-1}e_t\in \varpi L^\vee,\quad \lambda_i\in O_F.$$ Then $x\in (\varpi L^\vee)^{\circ}$ always, and $x\in (\varpi L^\vee)^{\circ\circ}$ if and only if $\lambda_t\equiv 0\pmod{\varpi}$. Hence $$\#(\varpi L^\vee)^{\circ}=q,\quad \#(\varpi L^\vee)^{\circ\circ}=1.$$ So $$\mu^-(L)=(q^{(t-1)/2}\pm 1)(q^{(t-3)/2}\mp1)q,\quad \mu^0(L)=q-1, \quad \mu^+(L)=1.$$ Notice that when $x\in (\varpi L^\vee)^\circ\backslash (\varpi L^\vee)^{\circ\circ}$, we have $\chi(\langle x\rangle_F^{\perp})=\chi(\langle e_1,\ldots, e_{t-1}\rangle)=s$, and thus $$\mu^{0,s}(L)=q-1,\quad \mu^{0,-s}(L)=0.$$ Hence the we have the desired identity $$ \left(q^{\frac{t-1}{2}}-s\right)\left(q^{\frac{t-3}{2}}+s\right)q +(q-1)\left(1-sq^{\frac{t-1}{2}}\right)+\left(1-q^{t-1}\right)=0.$$ 
  \end{enumerate}

  Now we run induction on $\val(L)$.  Let  $M$ be an $O_F$-lattice of rank $t$ and type $t$ such that $L\subseteq^1 M$ as in Lemma \ref{lem:countingodd2}. Then $\val(M)<\val(L)$. 
  By induction hypothesis it remains to prove the similar identity for the difference $$(1-q^{t-1})\cdot \mu^+(L,M)+(1-sq^{(t-1)/2})\cdot \mu^{0,+}(L,M)+(1+sq^{(t-1)/2})\cdot \mu^{0,-}(L,M)+\mu^{-}(L,M)=0,$$ for any $s\in \{\pm1\}$. This follows from the two stronger relations in Lemmas \ref{lem:countingodd1} and \ref{lem:countingodd2}. \qedhere.  
\end{proof}

Next we consider the case of even type.

\begin{lemma}\label{lem:countingeven2}
  Assume that $t>1$ is even. Let $L$ be a  quadratic $O_F$-lattice of rank $t$ and type $t$. Then there exists a quadratic $O_F$-lattice $M$ of rank $t$ and type $t$ such that $L\subseteq^1 M$ and $$\mu^+(L,M)+\mu^0(L,M)=q\cdot \mu^+(L,M).$$
\end{lemma}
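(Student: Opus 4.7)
The identity $\mu^+(L,M)+\mu^0(L,M)=q\cdot\mu^+(L,M)$ is equivalent to $\mu^0(L,M)=(q-1)\mu^+(L,M)$, which in turn amounts to the pointwise identity
$$\#\bigl((\varpi L^\vee)^\circ\setminus(\varpi M^\vee)^\circ\bigr)/L \;=\; q\cdot\#\bigl((\varpi L^\vee)^{\circ\circ}\setminus(\varpi M^\vee)^{\circ\circ}\bigr)/L.$$
The plan is to follow the two-case template of the proof of Lemma~\ref{lem:countingodd2}: construct an explicit $M$ of type $t$ with $L\subseteq^1 M$ (so that $\varpi L^\vee/\varpi M^\vee\cong\mathbb{F}_q$ is generated by a single coset vector $u$), then verify the identity by a $\varpi$-adic digit count on the coefficient of $u$. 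Fix a standard orthogonal basis $\{e_1,\ldots,e_t\}$ of $L$ with $(e_i,e_i)=\epsilon_i\varpi^{a_i}$, $\epsilon_i\in O_F^\times$, $1\leq a_1\leq\cdots\leq a_t$.

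For Case (i) with $a_t\geq 3$, take $M=\langle e_1,\ldots,e_{t-1},\varpi^{-1}e_t\rangle$, whose invariants $(a_1,\ldots,a_{t-1},a_t-2)$ are all $\geq 1$, so $M$ has type $t$. The coset generator is $u=\varpi^{1-a_t}e_t$, with $(u,u)=\epsilon_t\varpi^{2-a_t}$. Writing $x=x_0+\mu u$ with $x_0\in\varpi M^\vee$ and $\mu\in O_F^\times$ (the condition for $x\in\varpi L^\vee\setminus\varpi M^\vee$), expand
$$(x,x)=(x_0,x_0)+2\mu(x_0,u)+\mu^2\epsilon_t\varpi^{2-a_t}$$
and write the $\varpi$-adic expansion $\mu=\sum_{i\geq 0}b_i\varpi^i$ with $b_i\in\kappa$, $b_0\neq 0$. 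Exactly as in Lemma~\ref{lem:countingodd2}(i), the condition $(x,x)\in O_F$ becomes a sequence of equations in $b_0,\ldots,b_{a_t-3}$, while the stronger condition $(x,x)\in\varpi O_F$ imposes one further linear equation in the next digit $b_{a_t-2}$ (nontrivial because the coefficient $2\epsilon_t b_0$ is a unit); this cuts the count of solutions by a factor of $q$, which is the desired identity.

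For Case (ii) with $a_t=a_{t-1}=2$, take $M$ with invariants $(a_1,\ldots,a_{t-2},1,1)$ by a hyperbolic change of basis on the last two directions, so that $\varpi M^\vee=\langle f_1,\ldots,f_t\rangle$ with $\val(f_{t-1})=\val(f_t)=1$ and $\varpi L^\vee=\langle f_1,\ldots,f_{t-1},\varpi^{-1}(f_{t-1}+f_t)\rangle$. Now the coset generator is $u=\varpi^{-1}(f_{t-1}+f_t)$, and the same $\varpi$-adic digit analysis as in Case (i) applies, with the cross term $2\mu\varpi^{-1}(x_0,f_{t-1}+f_t)$ playing the role of $2\mu(x_0,u)$; the leading coefficient in $\mu^2$ is again a unit modulo $\varpi$, so the additional constraint from $(x,x)\in\varpi O_F$ kills one more $\mathbb{F}_q$-coordinate, producing the factor $q$.

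The main obstacle I anticipate is the bookkeeping in Case (ii): the coset generator is not aligned with a single basis direction, so the mixed cross terms couple the $f_{t-1}$- and $f_t$-coordinates of $x_0$; nevertheless the structure of the $\varpi$-adic expansion is the same as in Case (i), and the unit coefficient of $\mu^2$ is still what drives the decisive cancellation. A subtle boundary point is that neither case applies when $L$ is a vertex lattice of type $t$ (all $a_i=1$), where in general no type-$t$ extension $M$ with $L\subseteq^1 M$ exists; such degenerate configurations are handled separately as base cases in the induction of the even-type analog of Proposition~\ref{prop:countingodd} where this lemma is ultimately invoked.
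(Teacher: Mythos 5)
Your proposal follows the paper's proof essentially verbatim: the same two cases (depending on whether $a_t\geq 3$ or $a_t=a_{t-1}=2$), the same explicit choices of $M$, and the same $\varpi$-adic digit count showing that the extra congruence condition for $(x,x)\in\varpi O_F$ cuts the solution count by a factor of $q$. One small expositional slip: in Case (i) the paper takes $x_0$ in the span of $e_1,\ldots,e_{t-1}$, so $(x_0,u)=0$ and no cross term appears; if you allow $x_0$ to range over all of $\varpi M^\vee$, the decomposition $x=x_0+\mu u$ is no longer unique (changing $\mu$ by a multiple of $\varpi$ shifts $x_0$ by a multiple of $\varpi u$), and the cross term $2\mu(x_0,u)$ is an artifact of this over-parametrization — absorbing the $e_t$-component of $x_0$ into $\mu$ recovers the clean expansion $(x,x)=(x_0,x_0)+\lambda_t^2\epsilon_t\varpi^{2-a_t}$. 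Your final remark about the boundary configurations is well taken and should also cover $\val(L)=t+1$ (invariants $(1,\ldots,1,2)$), which likewise admits no type-$t$ lattice $M$ with $L\subseteq^1 M$; both are indeed the base cases of the induction in Proposition~\ref{prop:countingeven}, so the implicit hypothesis for this lemma is $\val(L)\geq t+2$.
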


\begin{proof}
We need to prove that $$q\cdot \#((\varpi L^\vee)^{\circ\circ}\setminus (\varpi M^\vee)^{\circ\circ})/L=\#((\varpi L^\vee)^{\circ}\setminus (\varpi M^\vee)^{\circ})/L.$$ Let $\{e_1,\ldots, e_t\}$ be a standard orthogonal basis of $L$ with $(e_i,e_i)=\epsilon_i\varpi^{a_i}$, $\epsilon_i\in O_F^\times$. We distinguish two cases. 
    \begin{enumerate}[label=(\roman*), wide, labelindent=0pt]
    \item If $a_t\ge3$, then we may  choose $M=\langle e_1,e_2,\dots, \varpi^{-1}e_t\rangle$ (with fundamental invariants $(a_1,\ldots,a_{t-1},\allowbreak a_t-2)$. Notice that $$\varpi L^\vee=\bigoplus_i \langle \varpi^{-a_i+1}e_i\rangle,\quad \varpi M^\vee=\left(\bigoplus_{i<t} \langle \varpi^{-a_i+1}e_i\rangle \right)\oplus \langle \varpi^{-a_t+2}e_t\rangle.$$ We fix an $$x_0=\sum_{i<t} \lambda_i\varpi^{-a_i+1}e_i,\quad \lambda_i\in O_F,$$ and consider the sets $$S^\circ:=\{x\in (\varpi L^\vee)^{\circ}\setminus (\varpi M^\vee)^{\circ}: x=x_0+\lambda_t \varpi^{-a_t+1} e_t, \lambda_t\in O_F\}/L.$$ $$S^{\circ\circ}:=\{x\in (\varpi L^\vee)^{\circ\circ}\setminus (\varpi M^\vee)^{\circ\circ}: x=x_0+\lambda_t \varpi^{-a_t+1} e_t, \lambda_t\in O_F\}/L.$$ It suffices to show that $q\cdot \#S^{\circ\circ}=\#S^{\circ}$. Notice that $x\not\in \varpi M^\vee$ if and only if $\lambda_t\in O_F^\times$. We compute $$(x,x)=(x_0,x_0)+\lambda_t^2\epsilon_t\varpi^{-a_t+2}.$$ Hence $x\in S^{\circ}$ if and only if $$\varpi^{a_t-2}(x_0,x_0)+\lambda_t^2\epsilon_t\equiv0\pmod{\varpi^{a_t-2}},\quad \lambda_t\in O_F^\times,$$ and $x\in S^{\circ\circ}$ if and only if $$\varpi^{a_t-2}(x_0,x_0)+\lambda_t^2\epsilon_t\equiv0\pmod{\varpi^{a_t-1}},\quad \lambda_t\in O_F^\times.$$ Write $\lambda_t=b_0+b_1\varpi+b_2\varpi^2+\cdots$ to be the $\varpi$-adic expansion of $\lambda_t$. Then $x\in S^{\circ\circ}$ if and only if $x\in S^{\circ}$ together with an additional equation $$b_0 b_{a_t-2}=c,$$ for $c\in \mathbb{F}_q$ determined by $(x_0, b_0,\ldots,b_{a_t-3})$, which has exactly one solution $b_{a_t-2}\in \mathbb{F}_q$. Hence $q\cdot \#S^{\circ\circ}=\#S^{\circ}$ as desired.
    \item If $a_t=a_{t-1}=2$, then we may choose $M$ with fundamental invariants $(a_1,\ldots, a_{t-2}, a_{t-1}-1, a_t-1)$. We may choose $\{f_1,\ldots,f_t\}$ be an orthogonal basis of $\varpi M^\vee$ such that $$\val(f_i)=-a_i+2,\ i=1,\ldots, t-2,\quad  \val(f_{t-1})=\val(f_{t})=1.$$ and  $$\varpi L^\vee=\langle f_1,\ldots, f_{t-1}, \varpi^{-1}(f_{t-1}+f_t)\rangle,\quad \varpi M^\vee=\langle f_1,\ldots, f_t\rangle.$$ We fix an $$x_0=\sum_{i<t} \lambda_if_i,\quad \lambda_i\in O_F,$$ and consider the sets $$S^\circ:=\{x\in (\varpi L^\vee)^{\circ}\setminus (\varpi M^\vee)^{\circ}: x=x_0+\lambda_t\varpi^{-1}(f_{t-1}+f_t), \lambda_t\in O_F\}/L.$$ $$S^{\circ\circ}:=\{x\in (\varpi L^\vee)^{\circ\circ}\setminus (\varpi M^\vee)^{\circ\circ}: x=x_0+\lambda_t\varpi^{-1}(f_{t-1}+f_t), \lambda_t\in O_F\}/L.$$ It suffices to show that $q\cdot \#S^{\circ\circ}=\#S^{\circ}$. Notice that $x\not\in \varpi M^\vee$ if and only if $\lambda_t\in O_F^\times$. Similarly write down the equation for the $\varpi$-adic expansion of $\lambda_t$ required for $x\in S^{\circ}$ (resp. $S^{\circ\circ}$), we see that $q\cdot \#S^{\circ\circ}=\#S^{\circ}$ as desired.\qedhere
    \end{enumerate}
  \end{proof}

\begin{proposition}\label{prop:countingeven}
Assume that $t>1$ is even. Let $L$ be a quadratic $O_F$-lattice of rank $t$ and type $t$. Then $$(1-sq^{t/2})(1+sq^{t/2-1})\mu^+(L)+(1+sq^{t/2-1})\mu^0(L)+\mu^-(L)=0$$ when $\chi(L)=0$ and $s\in\{\pm1\}$, or $\chi(L)=s\in\{\pm1\}$.
\end{proposition}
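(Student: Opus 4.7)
The plan is to prove this by induction on $\val(L)$, closely paralleling the structure of the proof of Proposition \ref{prop:countingodd}. The base cases are $\val(L)=t$ and $\val(L)=t+1$ (covering both parities, since any $L$ of rank $t$ and type $t$ has $\val(L)\ge t$), and the induction step reduces $\val(L)$ by $2$ via Lemma \ref{lem:countingeven2}.

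For the two base cases, I would compute all three of $\mu^+(L)$, $\mu^0(L)$, $\mu^-(L)$ directly using a standard orthogonal basis. When $\val(L)=t$ we have $\chi(L)\in\{\pm 1\}$, $(\varpi L^\vee)^\circ=(\varpi L^\vee)^{\circ\circ}=\{0\}$ modulo $L$, so $\mu^+(L)=1$, $\mu^0(L)=0$, while $\mu^-(L)$ is (one less than) the number of isotropic vectors in the non-degenerate $\kappa$-space $L^\vee/L$ of even dimension $t$, which equals $q^{t-1}+\chi(L)q^{t/2}-\chi(L)q^{t/2-1}-1$ by the count recorded after Lemma \ref{lem:Smb}. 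Plugging in $s=\chi(L)$ gives the desired identity after a one-line cancellation. When $\val(L)=t+1$, the invariants are $(1,\dots,1,2)$ and $\chi(L)=0$; a direct computation (writing $x\in L^\vee$ in the dual basis and analyzing when $\varpi^2(x,x)\equiv 0\pmod{\varpi^2}$) shows $\mu^+(L)=1$, $\mu^0(L)=q-1$, $\mu^-(L)=q^{t-1}-q$, and the identity holds for either choice of $s\in\{\pm 1\}$.

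For the induction step with $\val(L)\ge t+2$, I would choose $M$ with $L\subseteq^1 M$, type $t$, and $\val(M)=\val(L)-2$ as produced by Lemma \ref{lem:countingeven2}; one checks $\chi(M)=\chi(L)$ since $\det(M)/\det(L)\in(F^\times)^2$, so the induction hypothesis applies with the same $s$. It remains to verify the identity for the differences $\mu^?(L,M)$. Combining Lemma \ref{lem:countingodd1} (which gives $\mu^+(L,M)+\mu^0(L,M)+\mu^-(L,M)=q^{t-1}\mu^+(L,M)$) with Lemma \ref{lem:countingeven2} (which gives $\mu^+(L,M)+\mu^0(L,M)=q\mu^+(L,M)$), we obtain $\mu^0(L,M)=(q-1)\mu^+(L,M)$ and $\mu^-(L,M)=(q^{t-1}-q)\mu^+(L,M)$. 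The differenced identity then reduces to the purely algebraic assertion
\begin{equation*}
(1-sq^{t/2})(1+sq^{t/2-1})+(1+sq^{t/2-1})(q-1)+(q^{t-1}-q)=0,
\end{equation*}
which is immediate by expansion: the first two terms sum to $q-q^{t-1}$, cancelling the third.

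The main obstacle is not the algebra but securing the right auxiliary lattice $M$: we genuinely need \emph{both} relations from Lemmas \ref{lem:countingodd1} and \ref{lem:countingeven2} to hold simultaneously for the \emph{same} $M$. Lemma \ref{lem:countingodd1} is automatic for any $M$ with $L\subseteq^1 M\subseteq\varpi^{-1}L$, but the equality $\mu^+(L,M)+\mu^0(L,M)=q\cdot\mu^+(L,M)$ of Lemma \ref{lem:countingeven2} is a refined statement requiring a careful choice of $M$ based on the fundamental invariants (whether $a_t\ge 3$ or $a_{t-1}=a_t=2$). Once both lemmas are in hand for such an $M$, the rest of the argument is a short computation; no analogue of Lemma \ref{lem:countingodd2}, which distinguished isotropy types via $s\in\{\pm 1\}$, is needed here, since in the even-type identity the two ``middle'' contributions from $(\varpi L^\vee)^\circ\setminus(\varpi L^\vee)^{\circ\circ}$ carry the same weight.
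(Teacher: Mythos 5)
Your proof is correct and follows the paper's argument step for step: induction on $\val(L)$, base cases at $\val(L)=t$ and $\val(L)=t+1$ (your value $\mu^-(L)=q^{t-1}-q$ in the second base case in fact corrects a typo in the paper's display, which misprints $q^t-q$ before substituting the correct value into the identity), and the induction step via Lemmas \ref{lem:countingodd1} and \ref{lem:countingeven2} applied to the same auxiliary lattice $M$. Your explicit remark that $\chi(M)=\chi(L)$ (because $\det(L)/\det(M)\in(F^\times)^2$) usefully makes precise why the induction hypothesis applies with the same $s$ when $\chi(L)\neq 0$, a point the paper leaves implicit.
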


\begin{proof}
  We induct on $\val(L)$. Since $L$ has type $t$, we know that $\val(L)\ge t$. We have two base cases depending on the parity of $\val(L)$. Let $\{e_1,\ldots, e_t\}$ be a standard orthogonal basis of $L$ with $(e_i,e_i)=\epsilon_i\varpi^{a_i}$, $\epsilon_i\in O_F^\times$.
  \begin{enumerate}[wide,labelindent=0pt]
  \item if $\val(L)=t$, we know that $L$ is a vertex lattice of type $t$ and $\chi(L)\ne0$. So $L^\vee/L$ is a nondegenerate $\mathbb{F}_q$-quadratic space of even dimension $t$. By definition we have $(L^\vee)^{\circ}/L$ is the set of isotropic vectors in $L^\vee/L$ and $(\varpi L^\vee)^\circ/L=(\varpi L^\vee)^{\circ\circ}/L=\{0\}$.  Let $s=\sgn(L^\vee/L)$. Then $$\#(L^\vee)^\circ/L=(q^{t/2}-s)(q^{t/2-1}+s)+1,\quad \#(\varpi L^\vee)^\circ/L=1,\quad \#(\varpi L^\vee)^{\circ\circ}/L=1.$$ So  $$\mu^-(L)=(q^{t/2}-s)(q^{t/2-1}+s), \quad\mu^{0}(L)=0,\quad \mu^+(L)=1$$ satisfies the desired identity for $s=\chi(L)$.
  \item if $\val(L)=t+1$, we know that $L$ has fundamental invariants $(1,\ldots,1,2)$ and $\chi(L)=0$. Then $L^\vee=\langle \varpi^{-1}e_1,\ldots,\varpi^{-1}e_{t-1},e_t\rangle$. Let $$x=\lambda_1\varpi^{-1}e_1+\cdots+\lambda_{t-1}\varpi^{-1}e_{t-1}+\lambda_t\varpi^{-2}e_{t}\in L^\vee,\quad \lambda_i\in O_F.$$ Then $x\in (L^\vee)^\circ$ if and only if $\varpi^2(x,x)\equiv0\pmod{\varpi^2}$, namely, $$\sum_{i=1}^{t-2}\lambda_i^2\varpi+\lambda_{t-1}^2\epsilon_{t-1}\varpi+\lambda_t^2\epsilon_t\equiv0\pmod{ \varpi^2},$$ or equivalently $$\sum_{i=1}^{t-2}\lambda_i^2+\lambda_{t-1}^2\epsilon_{t-1}\equiv0\pmod{\varpi},\quad \lambda_t=0\pmod{\varpi}.$$ It follows that $$\#(L^\vee)^\circ/L=q^{t-2}\cdot q=q^{t-1}.$$ Similarly let $$x=\lambda_1e_1+\cdots +\lambda_{t-1}e_{t-1}+\lambda_t\varpi^{-1}e_t\in \varpi L^\vee,\quad \lambda_i\in O_F.$$ Then $x\in (\varpi L^\vee)^{\circ}$ always, and $x\in (\varpi L^\vee)^{\circ\circ}$ if and only if $\lambda_t\equiv 0\pmod{\varpi}$. Hence $$\#(\varpi L^\vee)^{\circ}=q,\quad \#(\varpi L^\vee)^{\circ\circ}=1.$$ So $$\mu^-(L)=q^t-q,\quad \mu^0(L)=q-1, \quad \mu^+(L)=1.$$ Hence the we have the desired identity $$(1-sq^{t/2})(1+sq^{t/2-1}) +(1+sq^{t/2-1})(q-1)+(q^{t-1}-q)=0$$ for any $s\in\{\pm1\}$.
  \end{enumerate}
  
  Now we run induction on $\val(L)$.  Let $M$ be an $O_F$-lattice of rank $t$ and type $t$ such that $L\subseteq^1 M\subseteq$ as in Lemma \ref{lem:countingeven2}. Then $\val(M)<\val(L)$. By induction hypothesis it remains to prove the similar identity for the difference $$(1+sq^{t/2})(1-sq^{t/2-1})\mu^+(L,M)+(1- sq^{t/2-1})\mu^0(L,M)+\mu^{-}(L,M)=0$$ for any $s\in \{\pm1\}$.  This follows from the two stronger relations in Lemmas \ref{lem:countingodd1} and \ref{lem:countingeven2}. 
  \end{proof}

\section{Invariant distributions and the proof of the main theorem}\label{sec:invar-distr-proof}
\subsection{Invariant distributions on isotropic 2-dimensional quadratic spaces}\label{sec:invar-distr-2}

In this section, we consider a general non-degenerate 2-dimensional quadratic space $\mathbb{W}$ over a non-archimedean local field $F$ with odd residue characteristic.  The general linear group $\GL(\mathbb{W})(F)$ naturally acts on $\Ss(\mathbb{W})$ and $\Dd(\mathbb{W})$ (see Notations \S\ref{sec:notations-functions}) via $$(h\varphi)(x)=\varphi(h^{-1}x),\quad (hT)(\varphi)=T(h^{-1}\varphi),\quad h\in \GL(\mathbb{W})(F),\ \varphi\in\Ss(\mathbb{W}),\ T\in \Dd(\mathbb{W}).$$

\begin{definition}\label{def:DD}
   Let $k\ge0$. Let $\Dd(\mathbb{W})_k$ be the space of distributions $T\in \Dd(\mathbb{W})$ such that
  \begin{altenumerate}
  \item\label{item:DD1} $T$ is $\O(\mathbb{W})(F)$-invariant.
  \item $T$ is $O_F^\times$-invariant.
  \item\label{item:DD3} $\supp(T)\subseteq \mathbb{W}^{\ge-k}$.
  \item\label{item:DD4} $\supp(\hat T)\subseteq \mathbb{W}^{\circ}$.
  \end{altenumerate}
\end{definition}

We will need the following lemma (cf. the analogous identity for $\GL_2$ in \cite[(4.26)]{Bump1997}), which should be well-known but we include a proof for the sake of completeness.

\begin{lemma}\label{lem:K0pk}
  Denote by
  \begin{altitemize}
  \item $B\subseteq \SL_2$ the Borel subgroup of upper triangular matrices.
  \item $A\subseteq \SL_2$  the torus of diagonal matrices.
  \item $N\subseteq \SL_2$ (resp. $N_-\subseteq \SL_2$) the subgroup of upper (resp. lower) triangular unipotent matrices.
  \item $K_0(\varpi^k):=\{\left(\begin{smallmatrix}a & b\\ c & d\end{smallmatrix}\right): c\in \varpi^kO_F\}\subseteq\SL_2(O_F)$ for $k\geq 0$.
  \end{altitemize}
  Then we have, for $k\ge1$, $$K_0(\varpi^k)=N_-(\varpi^kO_F)A(O_F)N(O_F).$$
\end{lemma}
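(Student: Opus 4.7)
The plan is to prove the decomposition by direct computation, writing any element of $K_0(\varpi^k)$ as an explicit product in $N_-AN$.

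First I would verify the easy inclusion $\supseteq$: for $x\in\varpi^kO_F$, $t\in O_F^\times$, $u\in O_F$, the product
\begin{equation*}
\begin{pmatrix}1 & 0\\ x & 1\end{pmatrix}\begin{pmatrix}t & 0\\ 0 & t^{-1}\end{pmatrix}\begin{pmatrix}1 & u\\ 0 & 1\end{pmatrix}=\begin{pmatrix}t & tu\\ xt & xtu+t^{-1}\end{pmatrix}
\end{equation*}
has entries in $O_F$ and lower-left entry $xt\in\varpi^kO_F$, so lies in $K_0(\varpi^k)$.

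For the reverse inclusion, given $g=\left(\begin{smallmatrix}a&b\\c&d\end{smallmatrix}\right)\in K_0(\varpi^k)$, the key observation is that $ad-bc=1$ together with $c\in\varpi^kO_F\subseteq\varpi O_F$ forces $ad\equiv 1\pmod{\varpi}$, so that $a,d\in O_F^\times$. Having $a\in O_F^\times$ allows me to set $t:=a$, $u:=b/a$, and $x:=c/a$, which give $t\in O_F^\times$, $u\in O_F$, and $x\in\varpi^kO_F$. Plugging into the formula above recovers $a,b,c$ in the three off-entries, and the bottom-right entry becomes $(bc+1)/a$, which equals $d$ by the determinant relation $ad=bc+1$. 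Thus $g\in N_-(\varpi^kO_F)A(O_F)N(O_F)$.

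There is no real obstacle here — the statement is essentially the LU (or Iwahori) decomposition for $K_0(\varpi^k)$, and the only thing one has to notice is that the congruence condition on $c$ combined with $\det=1$ pushes the diagonal entries into $O_F^\times$, after which the factorization is explicit.
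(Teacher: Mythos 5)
Your proof is correct and follows essentially the same route as the paper: observe that the congruence condition on $c$ together with $\det=1$ forces $a\in O_F^\times$, and then use the explicit $N_-AN$ factorization with $t=a$, $x=c/a$, $u=b/a$. The only difference is that you spell out the determinant argument for $a\in O_F^\times$, which the paper states without elaboration.
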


\begin{proof}
  Clearly the right-hand-side is contained $K_0(\varpi^k)$. Let $\left(\begin{smallmatrix}a & b\\ c & d \end{smallmatrix}\right)\in K_0(\varpi^k)$. Since $k\ge1$, we know that $a\in O_F^\times$. Then the identity $$\left(\begin{matrix}a & b\\ c & d \end{matrix}\right)=\left(\begin{matrix}1 & 0\\ a^{-1}c & 1 \end{matrix}\right)\left(\begin{matrix}a & 0\\ 0 & a^{-1} \end{matrix}\right)\left(\begin{matrix}1 & a^{-1}b\\ 0 & 1 \end{matrix}\right)$$ shows that $\left(\begin{smallmatrix}a & b\\ c & d \end{smallmatrix}\right)$ is an element of the right-hand-side.
\end{proof}

\begin{proposition}\label{prop:distributionbasis} Assume that $\mathbb{W}$ is \emph{isotropic} (equivalently, $\chi(\mathbb{W})=+1$). 
  \begin{altenumerate}
  \item\label{item:invDD1} $\Dd(\mathbb{W})_k$ has dimension $2k$ for $k\ge1$.
  \item Define functions
  \begin{align*}
      \phi_0(x)&:=\sum_{i\ge0}\mathbf{1}_{\mathbb{W}^{\ge i}}(x)=
  \begin{cases}
    \val(x)+1, & x\in \mathbb{W}^{\circ},\\
    0, & \text{otherwise},
  \end{cases}\\
  \phi_1(x)&:=\mathbf{1}_{\mathbb{W}^{\ge -1}}(x)=\begin{cases}
    1, & x\in \mathbb{W}^{\ge-1},\\
    0, & \text{otherwise},
  \end{cases}\\
  \phi_{2i-2}(x)&:=
  \begin{cases}
    1, & x\in \mathbb{W}^{=-i} \text{ and }\varpi^i(x,x)\in (O_F^\times)^2,\\
    0, & \text{otherwise},
  \end{cases} \quad i\ge2,
\\
  \phi_{2i-1}(x)&:=
  \begin{cases}
    1, & x\in \mathbb{W}^{=-i} \text{ and }\varpi^i(x,x)\not\in (O_F^\times)^2,\\
    0, & \text{otherwise},
  \end{cases}\quad i\ge2.
\end{align*}
Then $\Dd(\mathbb{W})_k$ has a basis given by the distributions represented by $\phi_0,\phi_1,\ldots,\phi_{2k-1}\in \Lloc(\mathbb{W}^\mathrm{an})$ for $k\ge1$ and $\phi_0$ for $k=0$.
\end{altenumerate}
\end{proposition}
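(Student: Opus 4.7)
The plan is to pass to the Weil representation $\omega$ of $\SL_2(F) \times \O(\mathbb{W})(F)$ on $\sS(\mathbb{W})$; since $\chi(\mathbb{W}) = +1$, the associated character of $F^\times$ and the Weil constant $\gamma_\mathbb{W}$ are trivial. Under this translation, conditions~(\ref{item:DD1})--(\ref{item:DD4}) on $T$ become $\O(\mathbb{W})(F)$-invariance together with invariance under the subgroup $H_k \subseteq \SL_2(F)$ generated by $a(O_F^\times)$ (from (ii)), $n(\varpi^k O_F)$ (from (\ref{item:DD3})), and $n_-(O_F) = w^{-1} n(O_F) w$ (from (\ref{item:DD4}), via the Fourier-duality identity $\omega(w) T = \gamma_\mathbb{W} \hat T$). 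By the transpose version of Lemma~\ref{lem:K0pk} (the same proof with $N$ and $N_-$ interchanged), $H_k$ coincides with the congruence subgroup $\bigl\{\bigl(\begin{smallmatrix}a & b \\ c & d\end{smallmatrix}\bigr) \in \SL_2(O_F) : b \in \varpi^k O_F\bigr\}$ for $k \ge 1$, and $H_0 = \SL_2(O_F)$. Thus $\Dd(\mathbb{W})_k$ is identified with the $\O(\mathbb{W})(F) \times H_k$-invariant distributions on $\mathbb{W}$.

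Next, I would verify that each $\phi_i$ lies in $\Dd(\mathbb{W})_k$. Conditions~(\ref{item:DD1}) and~(ii) are immediate because each $\phi_i$ depends only on the $\O(\mathbb{W})(F) \times O_F^\times$-invariants $\val((x,x))$ and the square class of $\varpi^{-\val(x)}(x,x) \in O_F^\times/(O_F^\times)^2$; condition~(\ref{item:DD3}) follows from inspection of the explicit supports. The crucial condition~(\ref{item:DD4}) amounts to the $n_-(O_F)$-invariance of $\phi_i$ under $\omega$, which via the explicit formula for $\omega(n_-(b))$ reduces to a Fourier-type calculation: decomposing each $\phi_i$ into characteristic functions of $O_F$-lattices in $\mathbb{W}$ (or $O_F^\times$-coset refinements thereof) and applying~(\ref{eq:latticefourier}), the assertion $\supp(\hat\phi_i) \subseteq \mathbb{W}^\circ$ reduces to an elementary verification in hyperbolic coordinates $\mathbb{W} \cong F \oplus F$ with $(x,x) = 2uv$. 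Linear independence of $\phi_0, \ldots, \phi_{2k-1}$ is immediate from their distinct supports across the increasing filtration $\Dd(\mathbb{W})_0 \subseteq \Dd(\mathbb{W})_1 \subseteq \cdots$.

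For the dimension count in~(\ref{item:invDD1}), the plan is to identify the $\O(\mathbb{W})(F)$-invariant part of the Weil representation as a concrete $\SL_2(F)$-module (a principal-series-type representation, via the theta correspondence for the split dual pair $(\SL_2, \O(\mathbb{W}))$) and to compute the dimension of its $H_k$-fixed subspace by standard newform theory for congruence subgroups of $\SL_2(F)$. This yields $\dim \Dd(\mathbb{W})_k = 2k$ for $k \ge 1$ (and $= 1$ for $k=0$), matching the $2k$ basis elements exhibited; together with the preceding paragraph, this completes the proof.

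The main obstacle is two-fold. First, the precise identification of the $\SL_2(F)$-module $\sS(\mathbb{W})^{\O(\mathbb{W})(F)}$ and the resulting newform dimension count is delicate at low levels: the two square-class orbits at each level $-k$ interact nontrivially with condition~(\ref{item:DD4}), so that at $k = 1$ the indicators of the two individual orbits in $\mathbb{W}^{=-1}$ do not separately satisfy~(\ref{item:DD4})---only their sum $\phi_1$ does---whereas for $k \ge 2$ the two orbit indicators $\phi_{2k-2}, \phi_{2k-1}$ at level $-k$ both belong to $\Dd(\mathbb{W})_k$. Second, the Fourier-support verification for $\phi_0 = \sum_{i \ge 0} \mathbf{1}_{\mathbb{W}^{\ge i}}$ is nontrivial because its defining sum involves indicators of sets of infinite Haar measure (the unbounded isotropic cone lies in every $\mathbb{W}^{\ge i}$); the sum must be interpreted distributionally, with convergence checked by pairing against Schwartz test functions, and its Fourier transform computed via classical one-variable integrals for $\psi$ and characteristic functions of fractional ideals of $F$.
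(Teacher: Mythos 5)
Your proposal follows essentially the same route as the paper's proof: translate conditions (i)--(iv) into invariance under $\O(\mathbb{W})(F)\times K_0(\varpi^k)^t$ via the Weil representation and Lemma~\ref{lem:K0pk}, identify $\Dd(\mathbb{W})^{\O(\mathbb{W})(F)}\cong I(\mathbf{1})$ via the Siegel--Weil section (this is exactly the theta-correspondence identification you describe, using the fact that $\mathbb{W}$ is the unique 2-dimensional quadratic space with $\chi=+1$), and invoke newform theory for $\SL_2(F)$ to get $\dim I(\mathbf{1})^{K_0(\varpi^k)^t}=2k$. Your observations about linear independence and the $\O(\mathbb{W})(F)\times O_F^\times$-invariance of the $\phi_i$ are also correct.

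The one place where you depart from the paper, and where your plan is noticeably weaker, is the verification of $\supp(\hat\phi_i)\subseteq\mathbb{W}^\circ$. You propose to decompose each $\phi_i$ into characteristic functions of lattice cosets and compute Fourier transforms directly, acknowledging yourself that this is delicate. The difficulty is real: since $\mathbb{W}$ is isotropic, the shells $\mathbb{W}^{=-i}$ and even $\mathbb{W}^\circ$ are unbounded (they extend infinitely along the isotropic cone), so none of the $\phi_i$ is Schwartz, and any lattice-coset decomposition is genuinely infinite. You would then have to justify convergence of the Fourier transform term-by-term in the distributional sense, which is considerably more work than you suggest and is error-prone in exactly the regime you flag as problematic. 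The paper sidesteps this entirely by a simple but essential device: each $\phi_i$ is realized as an $\O(\mathbb{W})(F)$-orbital integral $\phi_i(x)=\Orb(x,\varphi_i)$ of an explicit \emph{compactly supported} Schwartz function $\varphi_i=\mathbf{1}_{\Omega_i}$ (chosen in hyperbolic coordinates), and the tautological identity $\widehat{\Orb}(\cdot,\varphi)=\Orb(\cdot,\hat\varphi)$ reduces the computation of $\supp(\hat\phi_i)$ to the elementary computation of $\hat\varphi_i$ via~\eqref{eq:latticefourier} followed by a one-line orbit argument. You should adopt this trick: it is not merely a simplification but the key maneuver that makes the unboundedness of $\supp(\phi_i)$ harmless.
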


\begin{proof}
  \begin{altenumerate}
  \item We use the Weil representation $\omega$ of $\SL_2(F)$ on $\Ss(\mathbb{W})$ (associated to our fixed unramified additive character $\psi:F\rightarrow \mathbb{C}^\times$).  Recall that $\omega$ is defined by \begin{align*}\label{eqn weil}
\omega\left(\begin{matrix} a& \\
& a^{-1}
\end{matrix}\right)\phi(x)&=|a|\phi(ax),\notag\\
\omega\left(\begin{matrix} 1&b \\
&1
\end{matrix}\right)\phi(x)&=\psi\left(\frac{1}{2}\,b\, (x,x)\right)\phi(x),\\
\omega\left(\begin{matrix} &1\\
-1&
\end{matrix}\right)\phi(x)&=\wh\phi(x) ,\notag
\end{align*}  where $a\in F^\times, b\in F$.
Let $T\in \Dd(\mathbb{W})$. By the definition of $\omega$, we have
\begin{altitemize}
\item $T$  is $O_F^\times$-invariant if and only if $T$ is invariant under $A(O_F)\subseteq\SL_2(F)$,
\item $\supp(T)\subseteq \mathbb{W}^{\ge-k}$ if and only if $T$ is invariant under $N(\varpi^{k}O_F)$.
\item $\supp(\hat T)\subseteq \mathbb{W}^\circ$ if and only if $T$ is invariant under $N_-(O_F)$.
\end{altitemize}
Since $k\ge1$, by Lemma \ref{lem:K0pk} we obtain
\begin{equation}
  \label{eq:Dk}
  \Dd(\mathbb{W})_k=\Dd(\mathbb{W})^{\O(\mathbb{W})(F)\times K_0(\varpi^k)^t},
\end{equation}
the space of $\O(\mathbb{W})(F)\times K_0(\varpi^k)^t$-invariants of $\Dd(\mathbb{W})$, where $(-)^t$ denotes the transpose.

On the other hand, consider the degenerate principal series $I(\mathbf{1}):=\Ind_{B(F)}^{\SL_2(F)}(\mathbf{1})$, which consists of locally constant functions $f: \SL_2(F)\rightarrow \mathbb{C}$ such that $$f\left(\left(\begin{smallmatrix}a & b \\ 0 & a^{-1}\end{smallmatrix}\right)g\right)=|a|f(g), \quad \left(\begin{smallmatrix}a & b \\ 0 & a^{-1}\end{smallmatrix}\right)\in B(F), \ g\in \SL_2(F),$$ and on which $\SL_2(F)$ acts via right translation. We have an $\SL_2(F)$-equivariant map (the Siegel--Weil section) $$\Ss(\mathbb{W})\rightarrow I(\mathbf{1}),\quad \phi\mapsto \omega(g)\phi(0).$$ By \cite[Proposition 1.1]{Kudla1997a}, this maps factors through the space $\Ss(\mathbb{W})_{\O(\mathbb{W})(F)}$ of $\O(\mathbb{W})(F)$-coinvariants, and  induces an isomorphism $\Ss(\mathbb{W})_{\O(\mathbb{W})(F)}\isoarrow I(\mathbf{1})$ (as $\mathbb{W}$ is the unique 2-dimensional quadratic space with $\chi(\mathbb{W})=+1$). Taking contragredients it follows that there is isomorphism as $\SL_2(F)$-representations between the space $\Dd(\mathbb{W})^{\O(\mathbb{W})(F)}$ of $\O(\mathbb{W})(F)$-invariant distributions  and the contragredient representation $I(\mathbf{1})^\vee$. Using $I(\mathbf{1})^\vee\cong I(\mathbf{1}^{-1})=I(\mathbf{1})$, we know that
\begin{equation}
  \label{eq:SWsection}
  \Dd(\mathbb{W})^{\O(\mathbb{W})(F)}\cong I(\mathbf{1})
\end{equation}
 as $\SL_2(F)$-representations. Combining (\ref{eq:Dk}) and (\ref{eq:SWsection}), we obtain $$\Dd(\mathbb{W})_k\cong I(\mathbf{1})^{K_0(\varpi^k)^t}.$$ The result then follows from the fact that $\dim I(\mathbf{1})^{K_0(\varpi^k)^t}=\dim I(\mathbf{1})^{K_0(\varpi^k)}=2k$ by the theory of newforms for $\SL_2(F)$ (\cite[Proposition 3.2.4]{Lansky2007}). 

\item Since $\phi_i$'s are clearly linearly independent, by part (\ref{item:invDD1}) it remains to show that $\phi_i\in \Dd(\mathbb{W})_k$ for $i=0,\ldots, 2k-1$ when $k\ge1$ and $\phi_0\in \Dd(\mathbb{W})_0$. The properties Definition \ref{def:DD} (\ref{item:DD1}--\ref{item:DD3}) are clearly satisfied by definition, so it remains to check Definition \ref{def:DD} (\ref{item:DD4}), i.e., $\supp(\hat\phi_i)\subseteq \mathbb{W}^\circ$. To compute  $\supp(\hat \phi_i)$, we realize $\phi_i$ as an $\O(\mathbb{W})(F)$-orbital integral on $\mathbb{W}$. Recall that for any $\varphi\in\Ss(\mathbb{W})$, its $\O(\mathbb{W})(F)$-orbital integral is defined by $$\Orb(x,\varphi):=\int_{\O(\mathbb{W})(F)}\varphi(h^{-1}x)\rd h,$$ and denote by $\wh\Orb(x,\varphi)$ its Fourier transform (in the first variable $x$). Then by definition we have an identity in $\Dd(\mathbb{W})$,
 \begin{equation}
   \label{eq:OrbFourier}
   \wh\Orb(x,\varphi)=\Orb(x,\hat \varphi).
 \end{equation}
 Since $\mathbb{W}$ is isotropic, we may choose a basis $\{e_1,e_2\}$ of $\mathbb{W}$ such that its fundamental matrix is $\left(\begin{smallmatrix}0 & 1\\ 1&0\end{smallmatrix}\right)$. Define compact open subsets of $\mathbb{W}$,
 \begin{align*}
   \Omega_0&=O_F e_1\times O_F e_2=\langle e_1,e_2\rangle,\\
   \Omega_1&=\varpi^{-1}O_F^\times e_1\times O_F e_2,\\
   \Omega_{2i-2}&=\varpi^{-i}(O_F^\times)^2e_1\times (O_F^\times)^2 e_2,\quad i\ge2,\\
   \Omega_{2i-1}&=\varpi^{-i}\epsilon(O_F^\times)^2e_1\times (O_F^\times)^2 e_2,\quad i\ge2,
 \end{align*}
 where $\epsilon\in O_F^\times\setminus (O_F^\times)^2$. Define $\varphi_i=\mathbf{1}_{\Omega_i}\in\Ss(\mathbb{W})$ for $i\ge0$. Then it is easy to see that we have  $$\phi_i(x)=\Orb(x, \varphi_i),$$ (possibly up to a nonzero scalar, which we ignore).  Hence by (\ref{eq:OrbFourier}) we obtain $$\hat\phi_i(x)=\Orb(x, \hat\varphi_i).$$ It remains to show that $\supp(\Orb(x,\hat\varphi_i))\subseteq \mathbb{W}^\circ$. 

First consider $\varphi_0$ and $\varphi_1$. Notice that $$\varphi_0=\mathbf{1}_{\langle e_1,e_2\rangle},\quad \varphi_1=\mathbf{1}_{\langle e_1,e_2\rangle}-\mathbf{1}_{\langle \varpi^{-1}e_1,e_2\rangle}.$$ Since $\langle e_1, e_2\rangle^\vee=\langle e_1, e_2\rangle$ and $\langle \varpi^{-1}e_1,e_2\rangle^\vee=\langle e_1, \varpi e_2\rangle$, by (\ref{eq:latticefourier}) we have $$\hat\varphi_0=\mathbf{1}_{\langle e_1,e_2\rangle},\quad \hat\varphi_1=\mathbf{1}_{\langle e_1,e_2\rangle}-q\cdot\mathbf{1}_{\langle e_1,\varpi e_2\rangle}.$$ Hence $\supp(\hat\varphi_i)\subseteq \langle e_1,e_2\rangle$ for $i=0,1$. The $\O(\mathbb{W})(F)$-orbits of elements in $\langle e_1,e_2\rangle$ are contained in $\mathbb{W}^\circ$, hence $\supp(\Orb(x,\hat \varphi_i))\subseteq \mathbb{W}^\circ$ for $i=0,1$ as desired. 

 Now consider $i\ge2$. Notice that $\varphi_{2i-2}$ (resp. $\varphi_{2i-1}$) is a linear combination of functions $\varphi=\mathbf{1}_\Omega$ of compact open subsets $\Omega\subseteq \mathbb{W}$ of form $$\Omega=\varpi^{-i}(a+\varpi O_F)e_1 \times (b+\varpi O_F)e_2=\langle \varpi^{-i+1}e_1, \varpi e_2\rangle+(\varpi^{-i}ae_1+be_2),$$ where $a,b\in O_F^\times$. It remains to show that $\supp(\Orb(x,\hat \varphi))\subseteq \mathbb{W}^\circ$. We compute
 \begin{align*}
   \hat\varphi(x)& =\int_{\mathbb{W}}\varphi(y)\psi((x,y))\rd y\\
   &=\int_{\mathbb{W}}\mathbf{1}_{\langle \varpi^{-i+1}e_1, \varpi e_2\rangle}(y)\psi((x, y+\varpi^{-i}a e_1+b e_2))\rd y\\
   &= \psi((x, \varpi^{-i}a e_1+b e_2))\int_{\mathbb{W}}\mathbf{1}_{\langle \varpi^{-i+1}e_1, \varpi e_2\rangle}(y)\psi((x,y))\rd y\\
   &= \psi((x, \varpi^{-i}a e_1+b e_2))\cdot \wh{\mathbf{1}}_{\langle\varpi^{-i+1}e_1, \varpi e_2\rangle}(x).
 \end{align*}
Hence by (\ref{eq:latticefourier}) we have $\supp(\hat \varphi)\subseteq \langle\varpi^{-i+1}e_1, \varpi e_2\rangle^\vee=\langle \varpi^{-1}e_1, \varpi^{i-1}e_2\rangle$. The $\O(\mathbb{W})(F)$-orbits of elements in $\langle \varpi^{-1}e_1, \varpi^{i-1}e_2\rangle$ are contained in $\mathbb{W}^\circ$ (as $i\ge2$), hence $\supp(\Orb(x,\hat \varphi))\subseteq \mathbb{W}^\circ$ as desired.\qedhere
\end{altenumerate} 
\end{proof}

\begin{corollary}\label{cor:distribution}
 Let $k\ge0$ and $T\in\Dd(\mathbb{W})_k$. Then $T$ is represented by a function $\phi\in \Lloc(\mathbb{W}^\mathrm{an}$) such that $\phi(x)-\phi(\varpi^{-1}x)$ is a constant for all $x\in \mathbb{W}^{\circ\circ}\cap \mathbb{W}^\mathrm{an}$.
\end{corollary}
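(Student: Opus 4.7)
The plan is to deduce the corollary directly from the explicit basis produced by Proposition~\ref{prop:distributionbasis} (in the isotropic case) and verify the recurrence on each basis element. Writing $T$ as a linear combination of the basis distributions $\phi_0,\phi_1,\ldots,\phi_{2k-1}$ (and just $\phi_0$ when $k=0$), all of which are by construction represented by explicit elements of $\Lloc(\mathbb{W}^\mathrm{an})$, immediately gives the representability claim that $T$ corresponds to some $\phi\in\Lloc(\mathbb{W}^\mathrm{an})$. The map $\phi\mapsto\bigl(x\mapsto\phi(x)-\phi(\varpi^{-1}x)\bigr)$ is linear, so it suffices to verify that for each basis element $\phi_i$ there exists a constant $c_i$ with $\phi_i(x)-\phi_i(\varpi^{-1}x)=c_i$ for every $x\in\mathbb{W}^{\circ\circ}\cap\mathbb{W}^\mathrm{an}$.

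The verification itself is a short case analysis on $\val(x)$. For $\phi_0(x)=(\val(x)+1)\mathbf{1}_{\mathbb{W}^\circ}(x)$, since $\val(x)\geq1$ on $\mathbb{W}^{\circ\circ}\cap\mathbb{W}^\mathrm{an}$, the two subcases $\val(x)=1$ (where $\varpi^{-1}x\notin\mathbb{W}^\circ$, hence $\phi_0(\varpi^{-1}x)=0$) and $\val(x)\geq2$ (where $\phi_0(\varpi^{-1}x)=\val(x)-1$) both produce difference $c_0=2$. For $\phi_1=\mathbf{1}_{\mathbb{W}^{\geq-1}}$, both $\phi_1(x)$ and $\phi_1(\varpi^{-1}x)$ are identically $1$ on $\mathbb{W}^{\circ\circ}$, so $c_1=0$. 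For $i\geq2$ the functions $\phi_{2i-2}$ and $\phi_{2i-1}$ are supported in $\mathbb{W}^{=-i}$, and since $\mathbb{W}^{\circ\circ}\cup\varpi^{-1}\mathbb{W}^{\circ\circ}\subseteq\mathbb{W}^{\geq-1}$ is disjoint from $\mathbb{W}^{=-i}$ whenever $i\geq2$, both evaluations vanish, giving $c_i=0$.

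I do not foresee any real obstacle here: once Proposition~\ref{prop:distributionbasis} is in hand, the corollary is pure bookkeeping, and the serious analytic input (the Weil-representation identification $\Dd(\mathbb{W})^{\O(\mathbb{W})(F)}\cong I(\mathbf{1})$ and the theory of newforms for $\SL_2(F)$) has already been absorbed into that proposition. The only minor subtlety is that the corollary is stated without an isotropy hypothesis on $\mathbb{W}$; the anisotropic case is handled by the entirely parallel argument, in which $\Dd(\mathbb{W})^{\O(\mathbb{W})(F)}$ is identified with the appropriate subspace of the degenerate principal series via the Siegel--Weil section for the anisotropic form, yielding an analogous basis on which the same case-by-case verification applies.
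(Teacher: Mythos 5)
Your main argument is the same as the paper's: expand $T$ in the basis $\{\phi_i\}$ from Proposition~\ref{prop:distributionbasis} and check the recurrence on each basis element. Your explicit constants $c_0=2$, $c_1=0$, and $c_i=0$ for $i\geq 2$ are correct and merely spell out what the paper records as ``each basis distribution $\phi_i$ clearly satisfies the claimed property.''

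Your final paragraph about the anisotropic case, though, is off. When $\mathbb{W}$ is anisotropic, the Siegel--Weil map $\Ss(\mathbb{W})_{\O(\mathbb{W})(F)}\to I(\chi)$ (now for a nontrivial quadratic character $\chi$) is \emph{not} an isomorphism onto the degenerate principal series: its image is one of two proper irreducible $\SL_2(F)$-submodules, so the dimension of the $K_0(\varpi^k)$-invariants of $\Dd(\mathbb{W})^{\O(\mathbb{W})(F)}$ is not $2k$ and no ``analogous basis'' has actually been produced. Fortunately none of this is needed: the corollary lives under the running isotropy hypothesis of Proposition~\ref{prop:distributionbasis}, and in the paper it is applied only in Proposition~\ref{prop:Intp}(\ref{item:Intp2}), the co-isotropic case, where $\mathbb{W}$ is isotropic. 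When $\mathbb{W}$ is anisotropic, Proposition~\ref{prop:Intp}(\ref{item:Intp1}) proves the needed constancy by a different and shorter argument --- there $\mathbb{W}^\circ$ is a lattice, so $\supp(\widehat T)\subseteq\mathbb{W}^\circ$ forces $T$ to be invariant under translation by $(\mathbb{W}^\circ)^\vee\supseteq\mathbb{W}^\circ$ --- and the corollary is never invoked.
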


\begin{proof}
This follows from Proposition \ref{prop:distributionbasis}  as each basis distribution $\phi_i$ of $\Dd(\mathbb{W})_k$ clearly satisfies the claimed property.
\end{proof}

\subsection{The partial Fourier transform $\Intp$} Now we come back to the setup of \S\ref{sec:kudla-rapop-cycl}. Fix an $O_F$-lattice $L^\flat\subset\BV$ of rank $n-1$ and denote by $\mathbb{W}=(L^\flat_F)^\perp\subseteq \BV$.

\begin{definition}
  For $x\in \mathbb{W}^\mathrm{an}$,  define the \emph{partial Fourier transform} of $\Int_{L^\flat,\sV}$ by $$\Intp(x):=\int_{L^\flat_F}\Int_{L^\flat, \sV}(y+x)\rd y.$$ By Corollary \ref{cor:LC int}, we have $\Intp\in \Lloc(\mathbb{W}^\mathrm{an})$ (and $\Intp\in\Ss(\mathbb{W})$ if $L^\flat$ is co-anisotropic).
\end{definition}

\begin{lemma}\label{lem:IntpDD}
  $\Intp\in\Dd(\mathbb{W})_k$ for some $k\ge0$.
\end{lemma}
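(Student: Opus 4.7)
The plan is to verify the four defining properties in Definition \ref{def:DD} with $k = a_{n-1}$, the largest fundamental invariant of $L^\flat$. First, for any $h \in \O(\mathbb{W})(F)$, extend $h$ by the identity on $L^\flat_F$ to $\tilde h \in \O(\mathbb{V})(F)$; if $\det h = -1$, compose with a reflection $r$ in a standard orthogonal basis vector of $L^\flat$ (which preserves $L^\flat$ as a lattice and is the identity on $\mathbb{W}$) to obtain $\sigma := r \tilde h \in \SO(\mathbb{V})(F)$. Then $\sigma$ lifts to $J(F) = \GSpin(\mathbb{V})(F)$ and acts on $\mathcal{N}$ fixing $\mathcal{Z}(L^\flat)$ and its vertical part, so by Remark \ref{rem:Intisometry}, $\Int_{L^\flat,\sV}(\sigma z) = \Int_{L^\flat,\sV}(z)$. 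Specializing $z = y + x^\perp$ gives $\Int_{L^\flat,\sV}(r y + h x^\perp) = \Int_{L^\flat,\sV}(y + x^\perp)$, and after a measure-preserving change of variable $y \mapsto r y$ in the defining integral of $\Intp$, we conclude $\Intp(h x^\perp) = \Intp(x^\perp)$, verifying (i). For (ii), since $L^\flat + \langle \lambda z\rangle = L^\flat + \langle z\rangle$ for $\lambda \in O_F^\times$, we have $\Int_{L^\flat,\sV}(\lambda z) = \Int_{L^\flat,\sV}(z)$, and the change of variable $y \mapsto \lambda y$ (measure-preserving since $|\lambda| = 1$) gives $\Intp(\lambda x^\perp) = \Intp(x^\perp)$.

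For condition (iii), observe that $\Int_{L^\flat,\sV}(y + x^\perp) = 0$ unless $L^\flat + \langle y + x^\perp\rangle$ is integral, forcing $y \in (L^\flat)^\vee$ and $(y,y) + (x^\perp, x^\perp) \in O_F$. Writing $L^\flat$ in a standard orthogonal basis, one computes that $\val((y,y)) \ge -a_{n-1}$ for every $y \in (L^\flat)^\vee$, so if $\val(x^\perp) < -a_{n-1}$ no such $y$ exists and $\Intp(x^\perp) = 0$, establishing $\supp(\Intp) \subseteq \mathbb{W}^{\ge -a_{n-1}}$. The same analysis also confines the $y$-integration to a compact subset of $L^\flat_F$ (a finite union of cosets of $L^\flat$), which together with the local integrability of $\Int_{L^\flat,\sV}$ on $\Omega(L^\flat)$ (Corollary \ref{cor:LC int}) shows $\Intp \in \Lloc(\mathbb{W}^{\mathrm{an}})$, so $\Intp$ indeed defines a distribution on $\mathbb{W}$.

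The main step is condition (iv). For any $v^\perp \in \mathbb{W}$, the character $\psi((v^\perp, \cdot))$ is trivial on $L^\flat_F$ since $\mathbb{W} \perp L^\flat$, so by a formal Fubini computation
\[
\wh{\Intp}(v^\perp) \;=\; \int_{\mathbb{V}} \Int_{L^\flat,\sV}(z)\, \psi((v^\perp, z))\, dz \;=\; \wh{\Int_{L^\flat,\sV}}\bigl|_{\mathbb{W}}(v^\perp).
\]
By Corollary \ref{cor:FT int}, $\wh{\Int_{L^\flat,\sV}} = \gamma_{\BV}\, \Int_{L^\flat,\sV}$, whose support lies in the integrality locus $\{z \in \mathbb{V} : L^\flat + \langle z\rangle\ \text{integral}\}$; restricting to $\mathbb{W}$, where $(v^\perp, L^\flat) = 0$ is automatic, this condition reduces to $(x^\perp, x^\perp) \in O_F$, i.e., $x^\perp \in \mathbb{W}^\circ$. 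The main obstacle is rigorously justifying the displayed identity at the level of distributions in the co-isotropic case, where $\Int_{L^\flat,\sV}$ is merely locally integrable on $\Omega(L^\flat)$ rather than Schwartz on $\mathbb{V}$; one handles this either by truncating in the $L^\flat_F$ direction via indicator functions of increasing lattices and passing to the limit, or by decomposing $\Int_{L^\flat,\sV}$ as a locally finite sum of indicator functions of lattices in $\mathbb{V}$ (via Corollaries \ref{cor:ZV} and \ref{int Lam}) and verifying the identity term by term using elementary lattice-theoretic Fourier computations.
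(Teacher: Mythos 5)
Your proof is correct and follows the same overall strategy as the paper: verify the four conditions of Definition~\ref{def:DD}, with condition~(iv) deduced from Corollary~\ref{cor:FT int} together with the observation that $\Int_{L^\flat,\sV}$ is supported on integral vectors. There is one noteworthy difference in step~(i). You establish $\O(\mathbb{W})(F)$-invariance \emph{directly}: you extend $h\in\O(\mathbb{W})(F)$ to $\sigma\in\SO(\mathbb{V})(F)$ preserving $L^\flat$, lift $\sigma$ to $J(F)$ with spinor norm a unit so that it acts on $\mathcal{N}$, and observe that this automorphism preserves $\mathcal{Z}(L^\flat)$ and hence the class $\LZ(L^\flat)_\sV$, giving the invariance of $\Int_{L^\flat,\sV}$ by transport of structure. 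The paper instead deduces invariance of $\Int_{L^\flat,\sV}$ indirectly as the difference $\Int_{L^\flat}-\Int_{L^\flat,\sH}$, invoking Remark~\ref{rem:Intisometry} for the first term and the horizontal identity $\Int_{L^\flat,\sH}=\pDen_{L^\flat,\sH}$ (Theorem~\ref{thm:Int H}) together with the explicit formula~(\ref{pDen H}) for the second. Your route is somewhat more self-contained (it bypasses Theorem~\ref{thm:Int H} here), at the cost of having to verify that the lifted automorphism respects the horizontal/vertical splitting of $\mathcal{Z}(L^\flat)$, which is true since that splitting is intrinsic. You also correctly flag, and sketch how to fill, the Fubini-type justification in step~(iv) that the paper leaves implicit in the co-isotropic case (where $\Int_{L^\flat,\sV}$ is merely locally integrable); the decomposition via Corollaries~\ref{cor:ZV} and~\ref{int Lam} into a locally finite sum of lattice indicator functions is the cleanest way to do this. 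Finally, your explicit choice $k=a_{n-1}$ is correct; the paper leaves $k$ implicit.
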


\begin{proof}
  We show that all items (\ref{item:DD1}--\ref{item:DD4}) in Definition \ref{def:DD} are satisfied for $\Intp$.
  \begin{altenumerate}
  \item To show that $\Intp$ is $\O(\mathbb{W})(F)$-invariant, it suffices to show that the function $\Int_{L^\flat,\sV}(y+x)$ on $\mathbb{W}^\mathrm{an}$ is $\O(\mathbb{W})(F)$-invariant for any $y\in L_F^\flat$. If $x'$ lies in the $\O(\mathbb{W})$-orbit of $x$, then the two quadratic $O_F$-lattices $L^\flat+\langle y+x\rangle$ and $L^\flat+\langle y+x'\rangle$ are isometric.  By Remark \ref{rem:Intisometry} we have $\Int_{L^\flat}(y+x)=\Int_{L^\flat}(y+x')$. By Theorem \ref{thm:Int H} and (\ref{pDen H}), we also have $\Int_{L^\flat,\sH}(y+x)=\Int_{L^\flat,\sH}(y+x')$.  Hence $\Int_{L^\flat,\sV}(y+x)=\Int_{L^\flat,\sV}(y+x')$.
  \item For any $\epsilon\in O_F^\times$, by the change of variable $y\mapsto \epsilon y$ and $|\epsilon|=1$ we obtain $$\Intp(\epsilon x)=\int_{L^\flat_F}\Intp(y+\epsilon x)\rd y=\int_{L^\flat_F}\Intp(\epsilon y+\epsilon x)\rd y,$$ which equals $\Intp(x)$ as $L^\flat+\langle \epsilon y+\epsilon x\rangle=L^\flat+\langle y+x\rangle$ for $\epsilon\in O_F^\times$. So $\Intp$ is $O_F^\times$-invariant.
  \item If $\Intp(y+x)\ne0$, then $L^\flat+\langle y+x\rangle$ is integral. Hence $y\in (L^\flat)^\vee$ and the integrality of $y+x$ implies that there exists $k\ge0$ such that $\val(x)\ge -k$ when $y$ runs over $(L^\flat)^\vee/L^\flat$. So $\Intp(y+x)$ is supported on $\mathbb{W}^{-k}$ for all $y\in L^\flat_F$, and hence $\supp(\Intp)\subseteq \mathbb{W}^{\ge-k}$.
  \item By definition, for $x\in \mathbb{W}^\mathrm{an}$, we have $$\wh{\Intp}(x)=\wh{\Int_{L^\flat,\sV}}(x),$$ where $\wh{\ }$ denotes the Fourier transform on $\mathbb{W}$ (resp. $\mathbb{V}$) on the left-hand-side (resp. right-hand-side). Combining with Corollary \ref{cor:FT int} we know that $$\wh{\Intp}(x)=\gamma_\mathbb{V}\Int_{L^\flat,\sV}(x).$$ Since $\Int_{L^\flat,\sV}(x)$ is zero unless $x$ is integral, we know that $\supp(\wh\Intp)\subseteq \mathbb{W}^\circ$ as desired. \qedhere
  \end{altenumerate}
\end{proof}

Now we can prove the following recurrence  relations for the partial Fourier transform $\Intp$ (analogous to those for $\pDenp$ in Proposition \ref{prop:pDenp}).

\begin{proposition}\label{prop:Intp}\quad
  \begin{altenumerate}
  \item\label{item:Intp1}If $L^\flat$  is co-anisotropic, then $\Intp(x)$ is a constant for all $x\in \mathbb{W}^\circ$. 
  \item\label{item:Intp2} If  $L^\flat$  is co-isotropic, then $\Intp(x)-\Intp(\varpi^{-1}x)$ is a constant for all $x\in \mathbb{W}^{\circ\circ}\cap \mathbb{W}^\mathrm{an}$. 
  \end{altenumerate}

  \begin{proof}
    \begin{altenumerate}
    \item By  Lemma \ref{lem:IntpDD}, we know that $\supp(\wh\Intp)\subseteq \mathbb{W}^\circ$. Since $\mathbb{W}$ is anisotropic, we know that $\mathbb{W}^{\circ}\subseteq \mathbb{W}$ is an integral $O_F$-lattice. Hence $\Intp$ is invariant under translation by the dual lattice $(\mathbb{W}^{\circ})^\vee$, in particular, invariant under translation by $\mathbb{W}^\circ\subseteq (\mathbb{W}^\circ)^\vee$. It follows that $\Intp$ is a constant for all $x\in \mathbb{W}^\circ$.
    \item It follows immediately from Lemma \ref{lem:IntpDD} and Corollary \ref{cor:distribution}.\qedhere
    \end{altenumerate}
  \end{proof}
\end{proposition}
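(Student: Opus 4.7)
The plan is to show that $\Intp$ lies in the space of invariant distributions $\Dd(\mathbb{W})_k$ introduced in Definition \ref{def:DD} for some $k \ge 0$, and then invoke the structural results on such distributions to deduce the recurrence. This reduces an a priori complicated geometric statement to a representation-theoretic classification for a 2-dimensional quadratic space.

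The first step is to verify the four properties of Definition \ref{def:DD} for $\Intp$. The $\O(\mathbb{W})(F)$-invariance is the most delicate: for $h \in \O(\mathbb{W})(F)$ and $y \in L_F^\flat$, the lattices $L^\flat + \langle y + x \rangle$ and $L^\flat + \langle y + hx \rangle$ are isometric via the identity on $L^\flat$ and $h$ on $\mathbb{W}$, so by the isometry invariance of $\Int$ (Remark \ref{rem:Intisometry}) and of $\pDen$ (which gives the horizontal splitting via Theorem \ref{thm:Int H} and formula \eqref{pDen H}), both $\Int_{L^\flat}(y+x)$ and $\Int_{L^\flat,\sH}(y+x)$ are $\O(\mathbb{W})(F)$-invariant, hence so is $\Int_{L^\flat,\sV}(y+x)$, and therefore so is $\Intp$. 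The $O_F^\times$-invariance follows from the change of variable $y \mapsto \epsilon y$ combined with $L^\flat + \langle \epsilon y + \epsilon x\rangle = L^\flat + \langle y + x\rangle$. The support condition $\supp(\Intp) \subseteq \mathbb{W}^{\ge -k}$ holds because $\Int_{L^\flat,\sV}(y+x) \ne 0$ requires $y + x$ to be integral relative to $(L^\flat)^\vee/L^\flat$, which is a finite set, bounding $\val(x)$ from below uniformly. The Fourier support condition $\supp(\widehat{\Intp}) \subseteq \mathbb{W}^\circ$ is the key input: integrating out $L_F^\flat$ identifies $\widehat{\Intp}(x)$ on $\mathbb{W}^{\mathrm{an}}$ with $\widehat{\Int_{L^\flat,\sV}}(x)$, and Corollary \ref{cor:FT int} gives $\widehat{\Int_{L^\flat,\sV}} = \gamma_{\mathbb{V}} \Int_{L^\flat,\sV}$, which is supported on integral vectors.

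For part (\ref{item:Intp1}), the co-anisotropic hypothesis makes $\mathbb{W}$ anisotropic, so $\mathbb{W}^\circ$ is an $O_F$-lattice of full rank in $\mathbb{W}$. Since $\widehat{\Intp}$ is supported on $\mathbb{W}^\circ$, the distribution $\Intp$ is invariant under translation by $(\mathbb{W}^\circ)^\vee \supseteq \mathbb{W}^\circ$, so $\Intp$ is constant on cosets of $\mathbb{W}^\circ$; restricting to $x \in \mathbb{W}^\circ$ itself gives the claimed constancy. For part (\ref{item:Intp2}), the co-isotropic case is where the classification in Proposition \ref{prop:distributionbasis} (which requires $\mathbb{W}$ isotropic) enters: its Corollary \ref{cor:distribution} asserts that any $T \in \Dd(\mathbb{W})_k$ is represented by a function $\phi$ such that $\phi(x) - \phi(\varpi^{-1} x)$ is constant on $\mathbb{W}^{\circ\circ} \cap \mathbb{W}^{\mathrm{an}}$, and applying this to $T = \Intp$ finishes the argument.

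The main obstacle is not in this proof itself but rather in the infrastructure it invokes: the proof of Corollary \ref{cor:FT int} (requiring the geometric Fourier invariance on Deligne--Lusztig strata) and the classification in Proposition \ref{prop:distributionbasis} (requiring the theory of newforms for $\SL_2(F)$ and the Weil representation description of $\Dd(\mathbb{W})^{\O(\mathbb{W})(F)}$ as $I(\mathbf{1})$). Once those are in place, the proof of Proposition \ref{prop:Intp} is a clean combination, and the only genuinely new verification is the $\O(\mathbb{W})(F)$-invariance of $\Intp$, which hinges on combining Remark \ref{rem:Intisometry} with the horizontal identity of Theorem \ref{thm:Int H} so that passing from $\Int_{L^\flat}$ to $\Int_{L^\flat,\sV}$ preserves isometry invariance.
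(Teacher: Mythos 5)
Your proposal is correct and follows essentially the same route as the paper: you inline the content of Lemma \ref{lem:IntpDD} (verifying the four conditions of Definition \ref{def:DD} for $\Intp$, including the $\O(\mathbb{W})(F)$-invariance via Remark \ref{rem:Intisometry} and Theorem \ref{thm:Int H}, and the Fourier support via Corollary \ref{cor:FT int}), and then conclude part (i) by the anisotropic-lattice/translation-invariance argument and part (ii) by Corollary \ref{cor:distribution}. This is exactly the paper's argument, which simply packages the verification step as the separate Lemma \ref{lem:IntpDD} before citing it.
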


\subsection{The proof of Main Theorem \ref{thm: main}}\label{ss:proof}

\begin{lemma}\label{lem:decreaseval}
  Let $L^\flat\subseteq\BV$ be  an $O_F$-lattice of rank $n-1$. Let $x\in \Omega(L^\flat)$.
  Assume that $L^\flat$ is integral. Let $(a_1,\ldots,a_{n-1})$ be the fundamental invariants of $L^\flat$. 
  \begin{altenumerate}
  \item\label{item:decreaseval} If $x\not\in L^\flat\obot \mathbb{W}^{\ge a_{n-1}}$, then there exists $L'^\flat\subseteq \mathbb{V}$ an $O_F$-lattice of rank $n-1$ and $x'\in \mathbb{V}$ such that $L'^\flat+\langle x'\rangle=L^\flat+\langle x\rangle$ and $\val(L'^\flat)<\val(L^\flat)$.
  \item\label{item:changecoanisotrpic} If $x\in L^\flat\obot\mathbb{W}^{=a_{n-1}}$ and $L^\flat\subseteq \mathbb{V}$ is co-isotropic, then there exists a co-anisotropic $O_F$-lattice $L'^\flat\subseteq \mathbb{V}$  of rank $n-1$ and $x'\in \mathbb{V}$ such that $L'^\flat+\langle x'\rangle=L^\flat+\langle x\rangle$ and $\val(L'^\flat)=\val(L^\flat)$.
    \end{altenumerate}
\end{lemma}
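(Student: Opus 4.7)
Write $x = x^\flat + x^\perp$ with $x^\flat \in L^\flat_F$ and $x^\perp \in \mathbb{W}^{\mathrm{an}}$, and let $\{e_1,\ldots,e_{n-1}\}$ be a standard orthogonal basis of $L^\flat$ with $(e_i,e_i) = \epsilon_i \varpi^{a_i}$. The backbone of both parts is a Gram-matrix computation in the basis $\{e_1,\ldots,e_{n-1},x\}$ of $L \coloneqq L^\flat + \langle x\rangle$, which by Schur complement yields the clean identity $\val(L) = \val(L^\flat) + \val(x^\perp)$.

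For part (\ref{item:decreaseval}), I would take a standard orthogonal basis $\{f_1,\ldots,f_n\}$ of $L$ with valuations $b_1 \le \cdots \le b_n$ and set $L'^\flat \coloneqq \langle f_1,\ldots,f_{n-1}\rangle$, $x' \coloneqq f_n$. The identity above gives $\val(L'^\flat) = \val(L^\flat) + \val(x^\perp) - b_n$, so the whole assertion reduces to the single claim that $b_n > \val(x^\perp)$ under the hypothesis. Using the characterization $b_n = \min\{c \in \mathbb{Z} : \varpi^c L^\vee \subseteq L\}$, it suffices to exhibit $v \in L^\vee$ with $\varpi^{\val(x^\perp)} v \notin L$. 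I would split into two cases. If $x^\flat \notin L^\flat$, the natural choice $v \coloneqq \varpi^{-\val(x^\perp)} x^\perp$ works: $v \in L^\vee$ because $x^\perp \perp L^\flat$ and $(v,x) = \varpi^{-\val(x^\perp)}(x^\perp,x^\perp) \in O_F^\times$, while $\varpi^{\val(x^\perp)} v = x^\perp$ is not in $L$ since, in the decomposition $L = L^\flat + O_F x$, the condition $x^\perp \in L$ is equivalent to $x^\flat \in L^\flat$. If instead $x^\flat \in L^\flat$ but $\val(x^\perp) < a_{n-1}$, subtract $x^\flat$ from $x$ to get the orthogonal decomposition $L = L^\flat \obot \langle x^\perp\rangle$, whose orthogonal basis has valuations $(a_1,\ldots,a_{n-1},\val(x^\perp))$; sorting forces $b_n = a_{n-1} > \val(x^\perp)$.

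For part (\ref{item:changecoanisotrpic}), I would first reduce to $x = x^\perp \in \mathbb{W}$ with $\val(x^\perp) = a_{n-1}$ by subtracting $x^\flat \in L^\flat$, so $L = L^\flat \obot \langle x^\perp\rangle$. Setting $\epsilon' \coloneqq \varpi^{-a_{n-1}}(x^\perp,x^\perp) \in O_F^\times$, I try
\[
L'^\flat \coloneqq \langle e_1,\ldots,e_{n-2},\, \alpha e_{n-1} + \beta x^\perp\rangle, \qquad (\alpha,\beta) \in O_F^2,
\]
and let $x'$ be whichever of $e_{n-1}, x^\perp$ generates the cyclic quotient $L/L'^\flat$. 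A diagonal Gram-matrix computation gives $\val(L'^\flat) = \val(L^\flat) + \val(Q(\alpha,\beta))$ and $\chi(L'^\flat) = \chi(L^\flat)\, \chi(\epsilon_{n-1})\, \chi(Q(\alpha,\beta))$ for $Q(X,Y) \coloneqq \epsilon_{n-1} X^2 + \epsilon' Y^2$. Using $\chi(L^\flat) = \varepsilon$ (the co-isotropy hypothesis), co-anisotropy of $L'^\flat$ is equivalent to $\chi(Q(\alpha,\beta)) = -\chi(\epsilon_{n-1})$, so it suffices to find $(\alpha,\beta) \in O_F^2$ with $(\bar\alpha,\bar\beta) \ne (0,0)$ and $Q(\alpha,\beta) \in O_F^\times$ of the prescribed square class. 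This is supplied by the standard fact that every non-degenerate binary quadratic form over $\mathbb{F}_q$ (with $q$ odd) represents every element of $\mathbb{F}_q^\times$, applied to the reduction $\bar Q$.

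The main obstacle I anticipate is bookkeeping rather than conceptual: the residue-field solution $(\bar\alpha,\bar\beta)$ may have one coordinate vanishing, so one must verify that a legitimate $x' \in \{e_{n-1}, x^\perp\}$ always exists. Since $(\bar\alpha,\bar\beta) \ne (0,0)$ forces at least one of $\alpha, \beta$ to be a unit, a short unimodular change of basis on the rank-$2$ sublattice $\langle e_{n-1}, x^\perp\rangle$ of $L$ exhibits the other of $e_{n-1}, x^\perp$ as the $O_F$-generator of $L/L'^\flat$, giving the desired $x'$ satisfying $L'^\flat + \langle x'\rangle = L$.
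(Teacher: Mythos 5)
Your proof is correct, and it takes a genuinely (if mildly) different route from the paper's in both parts; let me spell out the contrast.

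In part (\ref{item:decreaseval}), when $x^\flat\notin L^\flat$, both you and the paper end up with the same choice $L'^\flat=\langle f_1,\ldots,f_{n-1}\rangle$ for a standard orthogonal basis $\{f_1,\ldots,f_n\}$ of $L=L^\flat+\langle x\rangle$; the difference lies in how the inequality $\val(L'^\flat)<\val(L^\flat)$ is justified. The paper bounds the valuation of the specific $(n,i)$-th cofactor of the Gram matrix of $L$ (using $\val(\lambda_i)<0$ for some $i$) and invokes the Smith normal form characterization of $a_1'+\cdots+a_{n-1}'$ as the minimum valuation of an $(n-1)\times(n-1)$ minor. You instead combine the Schur-complement identity $\val(L)=\val(L^\flat)+\val(x^\perp)$ with the characterization $b_n=\min\{c:\varpi^c L^\vee\subseteq L\}$, reducing everything to exhibiting one test vector $v=\varpi^{-\val(x^\perp)}x^\perp\in L^\vee$ with $\varpi^{\val(x^\perp)}v=x^\perp\notin L$ (correctly tied to $x^\flat\notin L^\flat$). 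Your version avoids any minor computation entirely and is arguably cleaner. In part (\ref{item:changecoanisotrpic}), both you and the paper replace $e_{n-1}$ by a unit combination inside $\langle e_{n-1},x^\perp\rangle$; the paper splits into two explicit cases according to whether $\lambda=(x,x)/(e_{n-1},e_{n-1})$ is a square or not, constructing $\mu$ by hand in the square case, whereas you uniformly invoke the universality of non-degenerate binary quadratic forms over $\mathbb{F}_q$ (odd $q$) to hit the required square class of $Q(\alpha,\beta)=\epsilon_{n-1}\alpha^2+\epsilon'\beta^2$, and then resolve the bookkeeping for the choice of $x'\in\{e_{n-1},x^\perp\}$ via a unimodular change of basis. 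Both are correct; the paper's argument is more elementary in the sense of being computation-free, while yours is more conceptual and absorbs the two cases into one application of a standard fact.
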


\begin{proof}Let $\{e_1,\ldots,e_{n-1}\}$ be a standard orthogonal basis of $L^\flat$.
  \begin{altenumerate}
  \item Since $x\not\in L^\flat\obot \mathbb{W}^{\ge a_{n-1}}$, we have two cases:

    If $x^\flat\in L^\flat$, then $\val(x^\perp)<a_{n-1}$. Taking $L'^\flat=\langle e_1,\ldots, e_{n-2},x^\perp\rangle$ and $x'=e_{n-1}$ works.
    
    If $x^\flat\not\in L^\flat$, write $x^\flat=\lambda_1e_1+\ldots+\lambda_{n-1}e_{n-1}$, then $\val(\lambda_i)<0$ for some $1\le i\le n-1$. The fundamental matrix of $L^\flat+\langle x\rangle$ for the basis $\{e_1,\ldots,e_{n-1},x\}$ has the form $$
T=\begin{pmatrix}(e_1,e_1)&&& (e_1,x)\\ 
&\ddots&&\vdots\\
&&(e_{n-1},e_{n-1})& (e_{n-1},x)\\
    (x,e_1)&\cdots&(x,e_{n-1})& (x,x)
\end{pmatrix}.
$$ Let $(a_1',\ldots, a_n')$ be the fundamental invariants of $L^\flat +\langle x\rangle$. By the theory of Smith normal forms, we know that $a_1'+\cdots+ a_{n-1}'$ is equal to the minimum among the valuations of the determinants of all $(n-1)\times(n-1)$-minors of $T$. The set of such minors is bijective to the set of $(i,j)$-th entry: removing $i$-th row and $j$-th column to get such a minor. The valuation of the determinant of the $(n,i)$-th minor is $$\val((e_i,x)) -a_i+(a_1+\cdots+a_{n-1}).$$  Since $\val(\lambda_i)<0$, it follows that $\val((e_i,x))<a_i,$ and hence $a_1'+\cdots+ a_{n-1}'<a_1+\cdots+ a_{n-1}$. Let $\{e_1',\ldots,e_n'\}$ be a standard orthogonal basis of $L^\flat+\langle x\rangle$. Then taking $L'^\flat=\langle e_1',\ldots,e_{n-1}'\rangle$ and $x'=e_n'$ works.
\item Since $x^\flat\in L^\flat$, replacing $x$ by $x^\perp$ we may assume that $x\in \mathbb{W}^{=a_{n-1}}$. Let $\lambda=\frac{(x,x)}{(e_{n-1},e_{n-1})}$. Then $\val(\lambda)=0$. We have two cases:

  If $\lambda\in (O_F^\times)^2$, then we may find $\mu \in O_F^\times$ such that $1+ \mu^2\lambda\in O_F^\times\setminus (O_F^\times)^2$. Let $e_{n-1}'=e_{n-1}+\mu x$. Then $$\frac{(e_{n-1}',e_{n-1}')}{(e_{n-1},e_{n-1})}=\frac{(e_{n-1},e_{n-1})+\mu^2(x,x)}{(e_{n-1},e_{n-1})}=1+\mu^2\lambda\in O_F^\times\setminus (O_F^\times)^2.$$ Let $L'^\flat=\langle e_1,\ldots,e_{n-2},e_{n-1}'\rangle$. Then $$\chi(L'^\flat)=\chi\left(\frac{(e_{n-1}',e_{n-1}')}{(e_{n-1},e_{n-1})}\right)\chi(L^\flat)=\chi(1+\mu^2\lambda)=-\chi(L^\flat).$$ Then $L^\flat$ is co-isotropic implies that $L'^\flat$ is co-anisotropic (see Definition \ref{def:horizontallattice}). Taking $L'^\flat$ and $x'=x$ then works.

 If $\lambda\in O_F^\times\setminus (O_F^\times)^2$, then similarly taking $L'^\flat=\langle e_1,\ldots,e_{n-2},x\rangle$ and $x'=e_{n-1}$ works, as $$\chi(L'^\flat)=\chi\left(\frac{(x,x)}{(e_{n-1},e_{n-1})}\right)\chi(L^\flat)=\chi(\lambda)\chi(L^\flat)=-\chi(L^\flat).\qedhere$$
  \end{altenumerate}
\end{proof}

\begin{theorem}\label{thm:finalinduction}Let $L^\flat\subseteq\BV$ be an $O_F$-lattice of rank $n-1$. Then as functions on $\Omega(L^\flat)$,
\begin{equation}\label{eqn:int=den}
\Int_{L^\flat}=\pDen_{L^\flat}
\end{equation}
\end{theorem}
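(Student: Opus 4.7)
The plan is to induct on $\val(L^\flat)$ and reduce, via Theorem \ref{thm:Int H}, to showing that $g := \Int_{L^\flat} - \pDen_{L^\flat} \equiv 0$ on $\Omega(L^\flat)$. Since $\Int_{L^\flat}(x)$ and $\pDen_{L^\flat}(x)$ depend only on the isometry class of $L^\flat + \langle x\rangle$, the function $g$ is $\O(\mathbb{W})(F) \times O_F^\times$-invariant, and vanishes whenever $L^\flat + \langle x\rangle$ is non-integral, so we restrict to integral $L^\flat$ with $\val(L^\flat) \ge 0$. The cases $n = 2, 3$ are already known (Lubin--Tate and Remark \ref{rem:lowrank}), so the induction runs for $n \ge 4$. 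For the base case $\val(L^\flat) = 0$ (self-dual $L^\flat$, when such lattices exist in $\mathbb{V}$), integrality forces $L^\flat + \langle x\rangle = L^\flat \obot \langle z\rangle$ for some $z = x^\perp \in \mathbb{W}^\circ$; splitting off a self-dual orthogonal summand $M$ of rank $n - 3$ from $L^\flat$ and applying the cancellation laws \eqref{eq:cancelInt} and \eqref{eq:cancel den} reduces the identity to the known rank-$3$ case in $\mathcal{N}_3^{\varepsilon'}$.

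For the inductive step with $\val(L^\flat) \ge 1$ and fundamental invariants $(a_1,\ldots,a_{n-1})$, Lemma \ref{lem:decreaseval}(i) shows that any $x \notin L^\flat \obot \mathbb{W}^{\ge a_{n-1}}$ admits a rewriting $L^\flat + \langle x\rangle = L'^\flat + \langle x'\rangle$ with $\val(L'^\flat) < \val(L^\flat)$, so the induction hypothesis gives $g(x) = 0$. Thus $g$ is supported on $L^\flat \obot \mathbb{W}^{\ge a_{n-1}}$, and there, for $x = y + z$ with $y \in L^\flat$ and $z \in \mathbb{W}^{\ge a_{n-1}}$, we have $L^\flat + \langle x\rangle = L^\flat \obot \langle z\rangle$, so $g(x) = h(z) := \Int(L^\flat \obot \langle z\rangle) - \pDen(L^\flat \obot \langle z\rangle)$ depends only on $z$. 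To show $h \equiv 0$, I will use the partial Fourier transform $g^\perp(z) = \int_{L_F^\flat} g(y + z)\,dy = \Intp(z) - \pDenp(z)$, which vanishes for $\val(z) < a_{n-1}$ and equals $\vol(L^\flat) \cdot h(z)$ on $\mathbb{W}^{\ge a_{n-1}} \cap \mathbb{W}^\mathrm{an}$; by Propositions \ref{prop:Intp} and \ref{prop:pDenp}, there is a constant $c$ (with $c = 0$ automatic in the co-anisotropic case) such that $g^\perp(z) - g^\perp(\varpi^{-1}z) = c$ on $\mathbb{W}^{\circ\circ} \cap \mathbb{W}^\mathrm{an}$. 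In the co-anisotropic case, iterating the recurrence $v - a_{n-1} + 1$ times from $z$ with $\val(z) = v \ge a_{n-1}$ down to valuation $a_{n-1} - 1$ (valid because $a_{n-1} \ge 1$ keeps intermediate steps in $\mathbb{W}^{\circ\circ}$) forces $g^\perp(z) = 0$, hence $h \equiv 0$ and $g \equiv 0$.

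The hard part is the co-isotropic case, where $c$ need not vanish a priori. I plan to organize the induction so that within each valuation level the co-anisotropic case is completed first. Then for $z \in \mathbb{W}^{=a_{n-1}}$ in the co-isotropic setting, Lemma \ref{lem:decreaseval}(ii) produces a co-anisotropic $L'^\flat$ with $\val(L'^\flat) = \val(L^\flat)$ and $x' \in \mathbb{V}$ such that $L^\flat \obot \langle z\rangle = L'^\flat + \langle x'\rangle$; the already-established co-anisotropic case at this induction level then yields $h(z) = 0$, so $g^\perp(z) = 0$ at $\val(z) = a_{n-1}$. Combined with the vanishing below $a_{n-1}$, the recurrence pins down $c = 0$, and the same iteration concludes $g^\perp \equiv 0$. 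The $\O(\mathbb{W})(F) \times O_F^\times$-invariance of $h$ ensures that the boundary vanishing covers every orbit at valuation $a_{n-1}$, so the conclusion propagates to all of $\mathbb{W}^{\ge a_{n-1}} \cap \mathbb{W}^\mathrm{an}$, completing the induction.
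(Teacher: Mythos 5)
Your proposal follows the paper's proof: induct on $\val(L^\flat)$, confine $\supp(\Int_{L^\flat}-\pDen_{L^\flat})$ to $L^\flat\obot\mathbb{W}^{\ge a_{n-1}}$ via Lemma \ref{lem:decreaseval}(\ref{item:decreaseval}) and the induction hypothesis, pass to the partial Fourier transform, and conclude from Propositions \ref{prop:Intp} and \ref{prop:pDenp} together with the vanishing at low valuation; in the co-isotropic case you use Lemma \ref{lem:decreaseval}(\ref{item:changecoanisotrpic}) and the already-settled co-anisotropic case to push the support one level higher and pin the constant to zero. The small organizational difference (the paper runs an outer induction on $n$ to force $a_1\ge 1$, while you handle self-dual $L^\flat$ as an explicit $\val(L^\flat)=0$ base case via cancellation) is equivalent, since $\val(L^\flat)\ge 1$ already forces $a_{n-1}\ge 1$.

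Two small cautions on the execution. First, the map $z\mapsto\varpi^{-1}z$ decreases $\val(z)=\val\bigl((z,z)\bigr)$ by $2$, not $1$, so your iteration lands at valuation $0$ or $-1$, not at $a_{n-1}-1$; the conclusion is unchanged (those valuations are still $<a_{n-1}$), but the step count is off and the intermediate steps do not all stay in $\mathbb{W}^{\circ\circ}$ as you assert — the final application exits it, which is exactly why the vanishing on $\mathbb{W}^{<a_{n-1}}$ is what gets used. Second, the claim that "$c=0$ is automatic in the co-anisotropic case" is slightly glib: Proposition \ref{prop:Intp}(\ref{item:Intp1}) as stated only gives constancy of $\Intp$ on $\mathbb{W}^\circ$, which does not control $\Intp(z)-\Intp(\varpi^{-1}z)$ when $\val(z)=1$ and $\val(\varpi^{-1}z)=-1$ (relevant when $a_{n-1}=1$ and $v$ is odd). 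What actually saves this step is that the proof of Proposition \ref{prop:Intp}(\ref{item:Intp1}) gives the stronger $(\mathbb{W}^\circ)^\vee$-periodicity of $\Intp$ (from $\supp(\widehat{\Intp})\subseteq\mathbb{W}^\circ$), and $(\mathbb{W}^\circ)^\vee=\mathbb{W}^{\ge -1}$; since the paper's own wording is equally terse here, this is a caution about what the cited statement literally says rather than a substantive gap in the strategy.
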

\begin{proof}
When $L^\flat$ is not integral both sides are zero, so we may assume that $L^\flat$ is integral. Let $(a_1,\ldots,a_{n-1})$ be the fundamental invariants of $L^\flat$. When $a_1=0$ by the cancellation laws (\ref{eq:cancelInt}) and (\ref{eq:cancel den}) we may reduce to the case for smaller $n$, hence by induction on $n$ we may assume that $a_1\ge1$ and thus $a_{n-1}\ge1$.  Now we induct on $\val(L^\flat)$.  There are two cases:
  \begin{altenumerate}
  \item   Assume that $L^\flat$ is co-anisotropic. By Lemma \ref{lem:decreaseval} (\ref{item:decreaseval}), when $x\not\in L^\flat\obot \mathbb{W}^{\ge a_{n-1}}$, there exists $L'^\flat\subseteq \mathbb{V}$ of rank $n-1$ and $x'\in \mathbb{V}$ such that $\val(L'^\flat)<\val(L^\flat)$ and $$\Int_{L^\flat}(x)=\Int_{L'^\flat}(x'),\quad \pDen_{L^\flat}(x)=\pDen_{L'^\flat}(x').$$ By the induction hypothesis we know that when $x\not\in L^\flat\obot \mathbb{W}^{\ge a_{n-1}}$ the equality $$\Int_{L^\flat}(x)=\pDen_{L^\flat}(x)$$ holds. Hence the difference function has support $$\supp(\Int_{L^\flat}-\pDen_{L^\flat})\subseteq L^\flat\obot \mathbb{W}^{\ge a_{n-1}}.$$ Since both $\Int_{L^\flat}$ and $\pDen_{L^\flat}$ are invariant under translation by $L^\flat$, by the support condition we know that $$\Int_{L^\flat}-\pDen_{L^\flat}=\mathbf{1}_{L^\flat}\otimes \phi^\perp$$ for some function $\phi^\perp$ on $\mathbb{W}^\mathrm{an}$ with $\supp(\phi^\perp)\subseteq \mathbb{W}^{\ge a_{n-1}}$. It remains to show that $\phi^\perp=0$.  Using Theorem \ref{thm:Int H} and performing the partial Fourier transform, we have $$\supp(\Intp-\pDenp)=\supp(\phi^\perp)\subseteq \mathbb{W}^{\ge a_{n-1}}.$$ Since $a_{n-1}\ge1$, we know that $\Intp-\pDenp$ vanishes on $\mathbb{W}^{=-1}\cup \mathbb{W}^{=0}$. Hence by Proposition \ref{prop:Intp} (\ref{item:Intp1}) and Proposition \ref{prop:pDenp} (\ref{item:pDenp1}) we know that  $\Intp-\pDenp$ vanishes identically, and hence $\phi^\perp=0$ as desired.
  \item Assume that $L^\flat$ is co-isotropic.  By Lemma \ref{lem:decreaseval} (\ref{item:decreaseval}) and the induction hypothesis, we similarly know that $$\supp(\Int_{L^\flat}-\pDen_{L^\flat})\subseteq L^\flat\obot \mathbb{W}^{\ge a_{n-1}}.$$ Moreover by Lemma \ref{lem:decreaseval} (\ref{item:changecoanisotrpic}) and the already proved co-anisotropic case, we know that $$\supp(\Int_{L^\flat}-\pDen_{L^\flat})\subseteq L^\flat\obot \mathbb{W}^{\ge a_{n-1}+1}.$$ 
 Hence $\Int_{L^\flat}-\pDen_{L^\flat}=\mathbf{1}_{L^\flat}\otimes \phi^\perp$ for some function $\phi^\perp$ on $\mathbb{W}^\mathrm{an}$ with $\supp(\phi^\perp)\subseteq \mathbb{W}^{\ge a_{n-1}+1}$. Since $a_{n-1}+1\ge2$, we know that $\Intp-\pDenp$ vanishes on $\mathbb{W}^{=-1}\cup \mathbb{W}^{=0}\cup\mathbb{W}^{=1}$. Hence by Proposition \ref{prop:Intp} (\ref{item:Intp2}) and Proposition \ref{prop:pDenp} (\ref{item:pDenp2}) we know that  $\Intp-\pDenp$ vanishes identically, and hence $\phi^\perp=0$ as desired.\qedhere
  \end{altenumerate}
\end{proof}

Our main Theorem \ref{thm: main} now follows immediately from Theorem \ref{thm:finalinduction} by decomposing $L=L^\flat+\langle x\rangle$ for some non-degenerate $O_F$-lattice $L^\flat\subseteq \mathbb{V}$ of rank $n-1$ and $x\in \Omega(L^\flat)$ (for example $L^\flat=\langle e_1,\ldots,e_{n-1}\rangle$ and $x=e_n$ for a standard orthogonal basis $\{e_1,\ldots, e_n\}$ of $L$).

\part{Semi-global arithmetic Siegel--Weil formula}\label{part:semi-global-global}

In this part we apply our main Theorem \ref{thm: main} to prove an identity between the intersection number of special cycles on the integral canonical model of GSpin Shimura varieties  at a good place and the derivative of Fourier coefficients of Siegel--Eisenstein series (also known as the \emph{semi-global arithmetic Siegel--Weil formula}). This is achieved by relating the special cycles on GSpin Shimura varieties to those on GSpin Rapoport--Zink spaces via the $p$-adic uniformization, and by relating the Fourier coefficients to local representation densities. This deduction is more or less standard (see \cite{Kudla1999a},\cite{Terstiege2011} and \cite{Bruinier2018}), and we will formulate and prove the results for  GSpin Shimura varieties with general tame level structures.

\section{Notations on quadratic spaces}\label{sec:notat-quadr-spac}

In this part we switch to global notations. Denote by $F$ a totally real number field and $\mathbb{A}=\mathbb{A}_F$ its ring of ad\`eles (starting from \S\ref{sec:quadr-latt-spec}, we restrict to the case $F=\mathbb{Q}$). Denote by $\mathbb{V}$  a quadratic space over $\mathbb{A}$ of rank $m=n+1\ge3$ with symmetric bilinear form $(\ ,\ )$. For a place $v$ of $F$, write $\mathbb{V}_v=\mathbb{V}\otimes_{\mathbb{A}}F_v$, a quadratic space over the local field $F_v$. We say $\mathbb{V}$ is \emph{totally positive definite} if $\mathbb{V}_v$ has signature $(n,0)$ for all places $v|\infty$ of $F$. We define
\begin{align*}
  \det(\mathbb{V})&:=(\det(\mathbb{V}_v))_v\in \mathbb{A}^\times/(\mathbb{A}^\times)^2,\\
  \disc(\mathbb{V})&:=(\disc(\mathbb{V}_v))_v\in \mathbb{A}^\times/(\mathbb{A}^\times)^2,\\
  \Has(\mathbb{V})&:=\prod_v\varepsilon(\mathbb{V}_v)\in\{\pm1\}. 
\end{align*}
Define a quadratic character $\chi_\mathbb{V}: \mathbb{A}^\times\rightarrow \{\pm1\}$ such that for any $(a_v)_v\in \mathbb{A}^\times$, $$\chi_\mathbb{V}((a_v)_v)=\prod_v(a_v, \disc(\mathbb{V}_v))_{F_v}.$$ If $\disc(\mathbb{V})\in F^\times\!\!\mod (\mathbb{A}^\times)^2$, then by the product formula $\chi_\mathbb{V}$ factors through $\mathbb{A}^\times/F^\times\rightarrow\{\pm1\}$. In this case, we say $\mathbb{V}$ is \emph{coherent} if $\Has(\mathbb{V})=+1$ and \emph{incoherent} if $\Has(\mathbb{V})=-1$. Notice that if $\mathbb{V}$ is coherent then there exists a global quadratic space $V$ over $F$ of rank $m$ such that $\mathbb{V}\simeq V \otimes_F \mathbb{A}$. If $\mathbb{V}$ is incoherent, then such global quadratic space $V$ does not exist, but for any place $v_0$ of $F$, there exists a \emph{nearby} global quadratic space $V$ (associated to $v_0$) such that $\chi_\mathbb{V}=\chi_V$, and $\Has(\mathbb{V}_v)=\Has(V_v)$ for all $v\ne v_0$. 

\section{Incoherent Eisenstein series}\label{sec:incoh-eisenst-seri-3}

\subsection{Siegel Eisenstein series}\label{sec:sieg-eisenst-seri} Let $W$ be the standard symplectic space over $F$ of dimension $2n$. Let $P=MN\subseteq \Sp(W)$ be the standard Siegel parabolic subgroup, so that under the standard basis of $W$,
\begin{align*}
M(F)&=\left\{m(a)=\begin{pmatrix}a & 0\\0 &{}^ta^{-1}\end{pmatrix}: a\in \GL_n(F)\right\},\\
  N(F) &= \left\{n(b)=\begin{pmatrix} 1_n & b \\0 & 1_n\end{pmatrix}: b\in \Sym_n(F)\right\}.  
\end{align*}
Let $\Mp(W_{\mathbb{A}})$ be the metaplectic extension of $\Sp(W)(\mathbb{A})$, $$1\rightarrow \mathbb{C}^1\rightarrow \Mp(W_\mathbb{A})\rightarrow \Sp(W)(\mathbb{A})\rightarrow 1,$$ where $\mathbb{C}^1=\{z\in \mathbb{C}^\times: |z|=1\}$. There is an isomorphism $\Mp(W_{\mathbb{A}})\isoarrow \Sp(W)(\mathbb{A})\times \mathbb{C}^1$ with the multiplication on the latter is given by the global Rao cycle, which allows us to write an element of $\Mp(W_{\mathbb{A}})$ as $(g, t)$ where $g\in \Sp(W)(\mathbb{A})$ and $t\in \mathbb{C}^1$. Recall that the metaplectic extension has a canonical splitting over $\Sp(W)(F)$ and $N(\mathbb{A})$, hence we may view $\Sp(W)(F)$ and $N(\mathbb{A})$ as subgroups of $\Mp(W_\mathbb{A})$ (\cite[p. 549]{Kudla1997a}).

Write $$G_n(\mathbb{A})=
\begin{cases}
  \Sp(W)(\mathbb{A}), & n\text{ odd}, \\
  \Mp(W_{\mathbb{A}}),& n\text{ even},
\end{cases}$$ for short. Let $P_n(\mathbb{A})=M_n(\mathbb{A})N_n(\mathbb{A})$ be the standard Siegel parabolic subgroup of $G_n(\mathbb{A})$, i.e., $P_n(\mathbb{A})$ (resp. $M_n(\mathbb{A})$) is $P(\mathbb{A})$ (resp. $M(\mathbb{A})$) if $n$ is odd and is the pullback of $P(\mathbb{A})$ (resp. $M(\mathbb{A})$) along the metaplectic extension if $n$ is even; and $N_n(\mathbb{A})=N(\mathbb{A})$. In particular,
\begin{align*}
M_n(\mathbb{A})&=
\begin{cases}
  \{m(a): a\in \GL_n(\mathbb{A})\}, & n \text{ odd}, \\
  \{(m(a), t): a\in \GL_n(\mathbb{A}), t\in \mathbb{C}^1\}, & n\text{ even},
\end{cases}\\
N_n(\mathbb{A})&=\{n(b): b\in \Sym_n(\mathbb{A})\}.
\end{align*} When $n$ is even, by abusing notation we write $m(a)\in M_n(\mathbb{A})$ for the element $(m(a),1)\in M_n(\mathbb{A})$.

Fix a quadratic character $\chi: \mathbb{A}^\times/F^\times\rightarrow \mathbb{C}^\times$. Fix the additive character $\psi=\psi_\mathbb{Q}\circ\tr_{F/\mathbb{Q}}: \mathbb{A}/F\rightarrow \mathbb{C}^\times$, where $\psi_\mathbb{Q}: \mathbb{A}_\mathbb{Q}/\mathbb{Q}\rightarrow \mathbb{C}^\times$ is the standard addicitve character such that $\psi_{\mathbb{Q},\infty}(x)=e^{2\pi i x}$. We may view $\chi$ as a character on $M_n(\mathbb{A})$ by $$
\begin{cases}
  \chi(m(a))= \chi(\det a), & n \text{ odd}, \\
 \chi(m(a), t)=\chi(\det a)\cdot \gamma(\det a,\psi)^{-1} \cdot t, & n\text{ even}.
\end{cases}
$$ and extend it to $P_n(\mathbb{A})$ trivially on $N_n(\mathbb{A})$. Here $\gamma(x,\psi)$ is the Weil index, an 8-th root of unity (\cite[p.548]{Kudla1997a}. Define the \emph{degenerate principal series} to be the unnormalized smooth induction $$I_n(s,\chi)\coloneqq \Ind_{P_n(\mathbb{A})}^{G_n(\mathbb{A})}(\chi\cdot |\cdot|_{F}^{s+(n+1)/2}),\quad s\in \mathbb{C}.$$ For a standard section $\Phi(-, s)\in I_n(s,\chi)$ (i.e., its restriction to the standard maximal compact subgroup of $G_n(\mathbb{A})$ is independent of $s$), define the associated \emph{Siegel Eisenstein series} $$E(g,s, \Phi)\coloneqq \sum_{\gamma\in P(F)\backslash \Sp(W)(F)}\Phi(\gamma g, s),\quad g\in G_n(\mathbb{A}),$$ which converges for $\Re(s)\gg 0$ and admits meromorphic continuation to $s\in \mathbb{C}$.  Notice that $E(g,s,\Phi)$ depends on the choice of $\chi$.

\subsection{Fourier coefficients and derivatives}\label{sec:four-coeff-deriv}

We have a Fourier expansion $$E(g,s,\Phi)=\sum_{T\in\Sym_n(F)}E_T(g,s,\Phi),$$ where $$E_T(g,s,\Phi)=\int_{N_n(F)\backslash N_n(\mathbb{A})} E(n(b)g,s,\Phi)\psi(-\tr(Tb))\,\rd n(b),$$ and the Haar measure $\rd n(b)$ is normalized to be self-dual with respect to $\psi$. When $T$ is nonsingular, for factorizable $\Phi=\otimes_v\Phi_v$ we have a factorization of the Fourier coefficient into a product $$E_T(g,s,\Phi)=\prod_v W_{T,v}(g_v, s, \Phi_v),$$ where the \emph{local (generalized) Whittaker function} is defined by $$W_{T,v}(g_v, s, \Phi_v)=\int_{N_n(F_v)}\Phi_v(w_n^{-1}n(b)g,s)\textstyle\psi(-\tr(\frac{1}{2}Tb))\, \rd n(b),\quad w_n=
\begin{pmatrix}
0  & 1_n\\
  -1_n & 0\\
\end{pmatrix}.$$ and has analytic continuation to $s\in \mathbb{C}$. Thus we have a decomposition of the derivative of a nonsingular Fourier coefficient at $s=s_0$,
\begin{equation}
  \label{eq:eissum}
E_T'(g, s_0, \Phi)=\sum_v E'_{T,v}(g, s_0,\Phi),
\end{equation}
 where  
\begin{equation}
  \label{eq:eisenfactor}
  E'_{T,v}(g, s, \Phi)=W_{T,v}'(g_v, s,\Phi_v)\cdot \prod_{v'\ne v}W_{T,v'}(g_{v'},s,\Phi_{v'}).
\end{equation}

\subsection{Incoherent Eisenstein series} \label{sec:incoh-eisenst-seri} Let $\mathbb{V}$ be a quadratic space over $\mathbb{A}$ of rank $m=n+1$ with $\chi_\mathbb{V}=\chi$.  Let $\sS(\mathbb{V}^n)$ be the space of Schwartz functions on $\mathbb{V}^n$. The fixed choice of $\chi$ and $\psi$ gives a \emph{Weil representation} $\omega=\omega_{\chi,\psi}$ of $G_n(\mathbb{A})\times \O(\mathbb{V})$ on $\sS(\mathbb{V}^n)$. Explicitly, for $\varphi\in \sS(\mathbb{V}^n)$ and $\mathbf{x}\in \mathbb{V}^n$,
\begin{align*}
  \omega(m(a))\varphi(\mathbf{x})&=\chi(m(a))|\det a|_F^{m/2}\varphi(\mathbf{x}\cdot a)&m(a)\in M_n(\mathbb{A}),\\
\omega(n(b))\varphi(\mathbf{x})&=\textstyle\psi(\frac{1}{2}\tr b\,T(\mathbf{x}))\varphi(\mathbf{x}),&n(b)\in N_n(\mathbb{A}),\\
\omega(w_n)\varphi(\mathbf{x})&=\gamma_{\mathbb{V}}^n\cdot\widehat \varphi(\mathbf{x}),&w_n=\left(\begin{smallmatrix}
0  & 1_n\\
  -1_n & 0\\
\end{smallmatrix}\right),\\
\omega(h)\varphi(\mathbf{x})&=\varphi(h^{-1}\cdot\mathbf{x}),& h\in \O(\mathbb{V}),\\
\omega(1,t)\varphi(\mathbf{x})&=t\cdot \varphi(\mathbf{x}), &t\in \mathbb{C}^1\text{, if }n \text{ even}.
\end{align*} 
Here $T(\mathbf{x})=((x_i,x_j))_{1\le i,j\le n}$ is the fundamental matrix of $\mathbf{x}$, the constant $$\gamma_{\mathbb{V}}=\gamma(\det \mathbb{V},\psi)^{-1}\gamma(\psi)^{-m}\Has(\mathbb{V})$$ is the Weil constant (cf. \cite[p.642]{Kudla1997a}), and $\widehat\varphi$ is the Fourier transform of $\varphi$ using the self-dual Haar measure on $\mathbb{V}^n$ with respect to $\psi$.

For $\varphi\in \sS(\mathbb{V}^n)$, define a function $$\Phi_\varphi(g)\coloneqq \omega(g)\varphi(0),\quad g\in G_n(\mathbb{A}).$$ Then $\Phi_\varphi\in I_n(0,\chi)$. Let $\Phi_\varphi(-,s)\in I_n(s,\chi)$ be the associated standard section, known as \emph{the standard Siegel--Weil section} associated to $\varphi$. For $\varphi\in\sS(\mathbb{V}^n)$, we write $$E(g,s,\varphi)\coloneqq E(g, s, \Phi_\varphi), \quad E_T(g,s,\varphi)\coloneqq E_T(g,s,\Phi_\varphi),\quad E'_{T,v}(g,s,\varphi)\coloneqq E'_{T,v}(g,s,\Phi_\varphi),$$ and similarly for $W_{T,v}(g_v,s,\varphi_v)$. When $\mathbb{V}$ is incoherent, the central value $E(g,0,\varphi)$ automatically vanishes. In this case, we write the central derivatives as $$\pEis(g,\varphi)\coloneqq E'(g,0,\varphi),\quad \pEis_{T}(g,\varphi)\coloneqq E'_{T}(g,0,\varphi), \quad \pEis_{T,v}(g,\varphi)\coloneqq E'_{T,v}(g,0,\varphi).$$

\begin{remark}\label{rem:DiffTV}
Let $T\in \Sym_n(F)$ be nonsingular. There exists a unique quadratic space $\mathbb{V}$ over $\mathbb{A}$ of rank $m=n+1$ with $\chi_\mathbb{V}=\chi$ representing $T$, which satisfies that for any place $v$
\begin{equation}
  \label{eq:diff}
  \Has(\mathbb{V}_v)=\Has(T_v)(\det T_v, \det \mathbb{V}_v\cdot \det T_v)_{F_v}
\end{equation}
 in view of Lemma \ref{lem:directsum} (\ref{item:l3}) (cf. \cite[Proposition 1.3]{Kudla1997a}).
 More generally, we define $\Diff(T,\mathbb{V})$ to be the set of places $v$ for which (\ref{eq:diff}) does not hold.  By \cite[Proposition 1.4]{Kudla1997a}, we know that $W_{T,v}(g_v,0,\varphi_v)\ne0$ only if $v\not\in \Diff(T, \mathbb{V})$. Hence by (\ref{eq:eisenfactor}) we know that $\pEis_{T,v}(g,\varphi)\ne0$ only if $\Diff(T,\mathbb{V})=\{v\}$.  
\end{remark}

\subsection{Classical incoherent Eisenstein series}\label{sec:incoh-eisenst-seri-1}

Assume that $\mathbb{V}$ is totally positive definite and incoherent. The hermitian symmetric domain for $\Sp(W)$ is the \emph{Siegel upper half space}
\begin{align*}
  \mathbb{H}_n  &=\{\sz=\sx+i\sy:\ \sx\in\Sym_n(F_\infty),\ \sy\in\Sym_n(F_\infty)_{>0}\},  
\end{align*}
where $F_\infty=F \otimes_{\mathbb{Q}} \mathbb{R}$. Define the \emph{classical incoherent Eisenstein series} to be $$E(\sz,s,\varphi)\coloneqq \chi_\infty(m(a))^{-1}|\det(m(a))|_F^{-m/2}\cdot E(g_\sz,s, \varphi), \quad g_\sz\coloneqq n(\sx)m(a)\in G_n(\mathbb{A}),$$ where $a\in\GL_n(F_\infty)$ such that $\sy=a{}^{t} a$. Notice that $E(\sz,s,\Phi)$ does not depend on the choice of $\chi$. We write the central derivatives as $$\pEis(\sz,\varphi)\coloneqq E'(\sz,0,\varphi),\quad \pEis_T(\sz, \varphi)\coloneqq E'_T(\sz,0,\varphi),\quad \pEis_{T,v}(\sz, \varphi)\coloneqq E'_{T,v}(\sz,0,\varphi).$$ Then we have a Fourier expansion
\begin{equation}
  \label{eq:eisz}
  \pEis(\sz,\varphi)=\sum_{T\in\Sym_n(F)}\pEis_T(\sz,\varphi)
\end{equation}
For an open compact subgroup $\KG\subseteq \GSpin(\mathbb{V})(\mathbb{A}_f)$, we will choose $$\varphi= \varphi_{\KG} \otimes \varphi_{\infty}\in \sS(\mathbb{V}^n)$$ such that $\varphi_K\in\sS(\mathbb{V}^n_f)$ is $\KG$-invariant and $\varphi_{\infty}$ is the Gaussian function $$\varphi_{\infty}(\mathbf{x})=e^{-2\pi\tr T(\mathbf{x})}\coloneqq \prod_{v|\infty}e^{-2\pi \tr T(\mathbf{x}_v)}.$$ 
For our fixed choice of Gaussian $\varphi_\infty$, we write $$E(\sz,s,\varphi_K)=E(\sz,s,\varphi_K \otimes \varphi_\infty),\quad \pEis(\sz, \varphi_K)=\pEis(\sz, \varphi_K \otimes \varphi_\infty)$$ and so on for short.

\section{Special cycles on GSpin Shimura varieties}\label{sec:kudla-rapop-cycl-2}

\subsection{GSpin Shimura varieties}\label{sec:gspin-shim-vari}

Let $F$ be a totally real number field. Let $\mathbb{V}$ be a totally positive definite incoherent quadratic space over $\mathbb{A}$ of rank $m=n+1\ge3$. A choice of an infinite place $v_0|\infty$ of $F$ gives rise to a global nearby quadratic space $V$ (associated to $v_0$) such that its signatures at infinite places $v|\infty$ are given by $$\sgn(V_v)=
\begin{cases}
  (m-2,2), & v=v_0,\\
  (m,0), & v\ne v_0.
\end{cases}$$ Define $G=\Res_{F/\mathbb{Q}}\GSpin(V)$.

For an oriented negative 2-plane $Z=\langle e_1,e_2\rangle\subseteq V_{v_0}$ such that the $\mathbb{R}$-basis $\{e_1,e_2\}$ has fundamental matrix $\left(\begin{smallmatrix} -1 & 0 \\ 0 & -1\end{smallmatrix}\right)$, define an $\mathbb{R}$-algebra homomorphism into the even part of the Clifford algebra of $V_{v_0}$. $$\mathbb{C}\rightarrow C^+(V_{v_0}), \quad a+bi\mapsto a+b e_1e_2.$$  Its restriction to $\mathbb{C}^\times$ lands in $\GSpin(V_{v_0})(\mathbb{R})\subseteq C^+(V_{v_0})^\times$ and gives a homomorphism $$h_Z: \mathbb{C}^\times\rightarrow \GSpin(V_{v_0})(\mathbb{R})\simeq\GSpin(m-2,2)(\mathbb{R}).$$  Define a cocharacter $$h_{G} : \mathbb{C}^\times \rightarrow G(\mathbb{R})=\prod_{v|\infty} \GSpin(V_v)(\mathbb{R})\simeq\GSpin(m-2,2)(\mathbb{R})\times\prod_{v\ne v_0}\GSpin(m,0)(\mathbb{R}),$$ whose $v$-component is given by $$(h_{G}(z))_v=
\begin{cases}
  h_Z(z), & v=v_0,\\
  1, & v\ne v_0.
\end{cases}$$ Let $\mathcal{D}$ be its $G(\mathbb{R})$-conjugacy class, which is isomorphic to the  hermitian symmetric domain of oriented negative 2-planes in $V_{v_0}$ and has two connected components. The action of $G(\mathbb{R})$ on $\mathcal{D}$ factors through the natural quotient map $G(\mathbb{R})\rightarrow \SO(V_{v_0})(\mathbb{R})$. 

We obtain a Shimura datum $(G, \mathcal{D})$, which is of Hodge type when $F=\mathbb{Q}$ and of abelian type in general. Let $K\subseteq G(\mathbb{A}_f)$  be an open compact subgroup. Denote by $\Sh_K=\Sh_K(G, \mathcal{D})$ the canonical model of the Shimura variety with complex uniformization $$\Sh_K(\mathbb{C})=G(\mathbb{Q})\backslash [\mathcal{D}\times G(\mathbb{A}_f)/ K].$$ The canonical model $\Sh_K$ is a smooth Deligne--Mumford stack (and a smooth quasi-projective variety when $K$ is neat) over the reflex field $F\subseteq \mathbb{C}$ (via the embedding induced by the chosen infinite place $v_0$), which is proper when $V$ is anisotropic (e.g., when $F\ne \mathbb{Q}$). 

\subsection{Special cycles $Z(T,\varphi_K)$}\label{sec:special-cycles-zt} Let $1\le r\le m-2$. Let $\mathbf{x}=[x_1,\ldots,x_r]\subseteq V^r$ be an $r$-tuple of vectors in $V$. Assume that the fundamental matrix $T(\mathbf{x})=((x_i, x_j))_{i,j}\in \Sym_r(F)_{>0}$ (totally positive definite). So $\langle x_1,\ldots,x_r\rangle$ is a totally positive definite subspace of $V$, and we denote by $V_\mathbf{x}$ its orthogonal complement in $V$. Then $V_\mathbf{x}$ has signatures $$\sgn(V_{\mathbf{x},v})=
\begin{cases}
  (m-r-2,2), & v=v_0,\\
  (m-r,0), & v\ne v_0.
\end{cases}$$ Let $G_\mathbf{x}=\Res_{F/\mathbb{Q}}\GSpin(V_\mathbf{x})$ and $\mathcal{D}_\mathbf{x}$ be the hermitian symmetric domain of oriented negative 2-planes in $V_{\mathbf{x},v_0}$. Then we have a natural embedding of Shimura data $$(G_\mathbf{x}, \mathcal{D}_\mathbf{x})\hookrightarrow (G, \mathcal{D}).$$

Let $g\in G(\mathbb{A}_f)$. Define the \emph{special cycle} $Z(\mathbf{x}, g)_K$ on $\Sh_K(G, \mathcal{D})$ to be the image of the composition $$\Sh_{gKg^{-1}\cap G_\mathbf{x}(\mathbb{A}_f)}(G_\mathbf{x}, \mathcal{D}_\mathbf{x})\hookrightarrow \Sh_{gKg^{-1}}(G,\mathcal{D})\xrightarrow{\cdot g} \Sh_{K}(G, \mathcal{D}).$$ It admits complex uniformization $$Z(\mathbf{x},g)_K(\mathbb{C})\simeq G_\mathbf{x}(\mathbb{Q})\backslash [\mathcal{D}_\mathbf{x}\times G_\mathbf{x}(\mathbb{A}_f) g K/K].$$  Let $\varphi_K\in \sS(\mathbb{V}_f^r)$ be a $K$-invariant Schwartz function. Let $T\in \Sym_r(F)_{>0}$. Define the \emph{(weighted) special cycle} $Z(T, \varphi_K)$ on $\Sh_K(G, \mathcal{D})$ to be $$Z(T,\varphi_K):=\sum_{\mathbf{x}\in G(\mathbb{Q})\backslash V^r \atop T(\mathbf{x})=T}\sum_{g\in G_\mathbf{x}(\mathbb{A}_f)\backslash G(\mathbb{A}_f)/K}\varphi_K(g^{-1}\mathbf{x})\cdot Z(\mathbf{x},g)_K\in \RZ^r(\Sh_K(G, \mathcal{D}))_\mathbb{C}.$$

\subsection{Semi-global integral models $\mathcal{M}=\mathcal{M}_K$ at hyperspecial levels} Let $p$ be a prime. Let $\nu$ be a place of $F$ above $p$. We take $K=\prod_{v|p} K_{v}\times K^p\subseteq G(\mathbb{Q}_p)\times G(\mathbb{A}_f^p)$. Assume that
\begin{enumerate}[label=(H\arabic*)]
\item for each $v|p$, the group $K_{v}$ is a hyperspecial subgroup of $\GSpin(V_v)(F_v)$ (e.g., the stabilizer of a self-dual lattice $\Lambda_v\subseteq V_v$).
\item $K^p\subseteq G(\mathbb{A}_f^p)$ is an open compact subgroup.
\end{enumerate}
Then by Kisin \cite{Kisin2010} ($p>2$) and Kim--Madapusi-Pera \cite{Kim2016} ($p=2$), there exists a smooth integral canonical model $\mathcal{M}_K$ of $\Sh_K$ over the localization $O_{F,(\nu)}$. We remark that by the work of Lovering \cite{Lovering2017}, these semi-global integral models over $O_{F,(\nu)}$ glue to an integral canonical model over the ring of $S$-integers of $F$, for any $S$ containing all finite places $v$ where $K_v$ is not hyperspecial. 

We often fix the level $K$ as above and write $\mathcal{M}=\mathcal{M}_K$ for short.  

\subsection{The quadratic lattice of special endomorphisms $V(A_S)$}\label{sec:quadr-latt-spec}

From now on we assume that $F=\mathbb{Q}$ (thus $\nu=p$, $F_\nu=\mathbb{Q}_p$, $O_{F,(\nu)}= \mathbb{Z}_{(p)}$) and $p>2$. Assume that $K_p$ is the stabilizer of of a self-dual lattice $\Lambda_p\subseteq V_p$. The Shimura datum $(G, \mathcal{D})$ is of Hodge type, and there exists an embedding of Shimura datum $$(G, \mathcal{D})\rightarrow (\wit G=\GSp(C(V)), \mathcal{H})$$ into a symplectic Shimura datum (\cite[\S3.5]{MadapusiPera2016}). Let $\wit K_p\subseteq \wit G(\mathbb{Q}_p)$ be the stabilizer the lattice $C(\Lambda_p)\subseteq C(V_p)$. Let $\wit K^p\subseteq \wit G(\mathbb{A}_f)$ be an open compact subgroup such that $K^p=\wit K^p\cap G(\mathbb{A}_f)$. Then we obtain a finite unramified morphism of Shimura varieties known as the \emph{Kuga--Satake morphism} $$\Sh_K(G, \mathcal{D})\rightarrow \Sh_{\wit K}(\wit G, \mathcal{H}).$$ The Siegel modular variety $\Sh_{\wit K}(\wit G, \mathcal{H})$ has a smooth integral model $\wit{\mathcal{M}}$ over $O_{F,(\nu)}$ via moduli interpretation of principally polarized abelian varieties with $\wit K^p$-level structures (\cite[\S3.9]{MadapusiPera2016}). Kisin's integral canonical model $\mathcal{M}$ of $\Sh_K(G, \mathcal{D})$ over $O_{F,(\nu)}$ is the normalization of the Zariski closure of $\Sh_K$  in $\wit{\mathcal{M}}$ \cite[Theorem 4.4]{MadapusiPera2016} (moreover the normalization is redundant in view of the recent work of Xu \cite{Xu2020}).

Denote by $(A, \lambda, \bar\eta^p)$ the pullback of the universal object on $\wit{\mathcal{M}}$ to $\mathcal{M}$. Here
\begin{altenumerate}
\item $(A, \lambda)$ is a principally polarized abelian variety (the \emph{Kuga--Satake abelian variety}),
\item  $\bar\eta^p$ is a $\wit K^p$-level structure, i.e., an $\wit K^p$-orbit of $\mathbb{A}_f^p$-linear isomorphisms of lisse $\mathbb{A}_f^p$--sheaves $$\eta^p: H^1_\et(A, \mathbb{A}_f^p)\isoarrow C(V) \otimes_F \mathbb{A}_f^p.$$
\end{altenumerate}
For an $\mathcal{M}$-scheme $S$, we denote by $$V(A_S)\subseteq \End(A_S) \otimes\OFp$$ the space of \emph{special endomorphisms} as defined in \cite[Definition 5.11]{MadapusiPera2016}.

By \cite[Lemma 5.12]{MadapusiPera2016}, $V(A_S)$ is equipped with a positive definite $\OFp$-quadratic form such that $f\circ f=(f,f)\cdot\id$. Let $r\ge1$. Given an $r$-tuple $\mathbf{x}=[x_1,\ldots,x_r]\in V(A_S)^r$, define its \emph{fundamental matrix} to be $T(\mathbf{x})\coloneqq ((x_i,x_j))_{i,j}\in \Sym_r(\OFp)$. 

\subsection{Semi-global special cycles $\mathcal{Z}(T,\varphi_K)$}\label{sec:semi-global-kudla}

Let $r\ge1$. We say a Schwartz function $$\varphi_K= \bigotimes_{v\nmid\infty}\varphi_{K,v}\in \sS(\mathbb{V}^r_f)$$ is \emph{$p$-admissible} if it is $K$-invariant and $\varphi_{K,p}=\mathbf{1}_{(\Lambda_{p})^r}$. Given such a $p$-admissible Schwartz function $\varphi_K$ and a nonsingular $T\in \Sym_r(F)$, define the semi-global  \emph{special cycle} $\mathcal{Z}(T,\varphi_K)$ over $\mathcal{M}$ as follows.

First consider a special $p$-admissible Schwartz function of the form
\begin{equation}
  \label{eq:testfunction}
  \varphi_K=(\varphi_i)\in \sS(\mathbb{V}^r_f),\quad \varphi_{i}=\mathbf{1}_{\Omega_i},\quad i=1,\ldots,r,
\end{equation}
where $\Omega_i\subseteq \mathbb{V}_f$ is a $K$-invariant open compact subset such that $\Omega_{i,p}=\Lambda_p$.  Notice that the prime-to-$p$ part $\Omega_i^{(p)}$ can be viewed as a subset $$\Omega_i^{(p)}\subseteq \mathbb{V}_f^{(p)}=V \otimes_F \mathbb{A}_f^p\subseteq \End(C(V) \otimes_F \mathbb{A}_f^p).$$  For an $\mathcal{M}$-scheme $S$, define $\mathcal{Z}(T,\varphi_K)(S)$ to be the set of $r$-tuples $\mathbf{x}=[x_1,\ldots,x_r]\in V(A_S)^r$  such that
\begin{altenumerate}
\item $T(\mathbf{x})=T$,
\item $\eta^p\circ x_{i}^*\circ (\eta^p)^{-1}\in \Omega_i^{(p)}$, for $i=1,\ldots,r$. Here $x_{i}^*\in\End(H^1_\et(A, \mathbb{A}_f^p))$ is induced by $x_i$.
\end{altenumerate}
The functor $S\mapsto \mathcal{Z}(T,\varphi_K)(S)$ is represented by a (possibly empty) Deligne--Mumford stack which is finite and unramified over $\mathcal{M}$ (cf. \cite[Proposition 1.3]{Kudla1999a}, \cite[Proposition 2.5]{Kudla2000a}, \cite[Proposition 6.13]{MadapusiPera2016}), and thus defines a cycle $\mathcal{Z}(T,\varphi_K)\in \RZ^*(\mathcal{M})$. By definition $\mathcal{Z}(T,\varphi_K)$ is nonempty only when $T\in \Sym_r(\OFp)_{>0}$. Notice that the generic fiber $\mathcal{Z}(T, \varphi_K)_{F}$ of the special cycle $\mathcal{Z}(T, \varphi_K)$ is nonempty only when $1\le r\le m-2$ (cf. \cite[Proposition 3.6]{Soylu2017}), in which case it agrees with the special cycle $Z(T,\varphi_K)$ defined in \S\ref{sec:special-cycles-zt}. When $r=m-1=n$ and $T\in \Sym_r(\OFp)_{>0}$, the generic fiber $\mathcal{Z}(T, \varphi_K)_{F}$ is empty, and $\mathcal{Z}(T, \varphi_K)$ is supported in the supersingular locus $\mathcal{M}^\mathrm{ss}_{\kappa_\nu}$ of the special fiber $\mathcal{M}_{\kappa_\nu}$ (cf. \cite[Proposition 3.7]{Soylu2017}). 

For a general $p$-admissible Schwartz function $\varphi_K\in \sS(\mathbb{V}^r_f)$ (which can be written as a $\mathbb{C}$-linear combination of special $p$-admissible functions, after possibly shrinking $K$), we obtain a cycle $\mathcal{Z}(T,\varphi_K)\in\RZ^*(\mathcal{M})_\mathbb{C}$ by  extending $\mathbb{C}$-linearly.

\subsection{$p$-adic uniformization of the supersingular locus of $\mathcal{M}$}\label{sec:p-adic-unif}

Let $\Mss$ be the completion of the base change $\mathcal{M}_{\OFnb}$ along the supersingular locus $\mathcal{M}_{\kappa_\nu}^\mathrm{ss}$. Then by \cite[Theorem 7.2.4]{Howard2015} (see also \cite{Kim2018a}), we have an isomorphism of formal stacks over $\Spf \OFnb$ (the \emph{$p$-adic uniformization})
\begin{equation}
  \label{eq:padicunif}
  \Theta: G'(\mathbb{Q})\backslash [\RRZ_{G_p}\times G(\mathbb{A}_f^p)/\KG^p]\isoarrow \Mss.
\end{equation}
Here $G'=\GSpin(V')$ for the nearby quadratic space $V'$ of $\mathbb{V}$ associated to the place $\nu$, and $\RRZ_{G_p}$ is the GSpin Rapoport--Zink space defined in \S\ref{sec:gspin-rapoport-zink}. Notice that $G'_p$ is the inner form of $G_p$  defined in \S\ref{sec:group-j} and thus $G'(\mathbb{Q})$ naturally acts on $\RRZ_{G_p}$ via the embedding $G'(\mathbb{Q})\hookrightarrow G'_p(F_p)$; and $G'(\mathbb{Q})$ naturally acts on $G(\mathbb{A}_f^p)$ via the embedding $G'(\mathbb{Q})\hookrightarrow G'(\mathbb{A}_f^p)\simeq G(\mathbb{A}_f^p)$.

\subsection{$p$-adic uniformization of $\Zss(T,\varphi_K)$}

Let $r\ge1$, $T\in \Sym_r(\OFp)_{>0}$ and $\varphi_K\in \sS(\mathbb{V}_f^r)$ be $p$-admissible. First consider that $\varphi_k$ is of the special  form (\ref{eq:testfunction}). Denote by $\Zss(T,\varphi_K)$ the completion of the base change $\mathcal{Z}(T,\varphi_K)_{\OFnb}$ along its supersingular locus $\mathcal{Z}^\mathrm{ss}(T,\varphi_K):=\mathcal{Z}(T,\varphi_K)\times_{\mathcal{M}_{\kappa_\nu}} \mathcal{M}_{\kappa_\nu}^\mathrm{ss}$, viewed as an element of $K_0'(\Mss)$. For a general $p$-admissible $\varphi_K$, we obtain an element $\Zss(T,\varphi_K)\in K_0'(\Mss)_\mathbb{C}$ by  extending $\mathbb{C}$-linearly.  For $\mathbf{x}\in V'^r$, we may view $\mathbf{x}$ as a subset of $V'_p$, which gives the local special cycles $\mathcal{Z}(\mathbf{x})$  on $\RRZ_{G_p}$ defined as in \S\ref{sec:kudla-rapop-cycl-1}.

\begin{proposition}
Assume that $F=\mathbb{Q}$ and $p>2$.  Assume that $\varphi_\KG\in \sS(\mathbb{V}^r_f)$ is $p$-admissible (\S\ref{sec:semi-global-kudla}). Then for any $T\in \Sym_r(\OFp)_{>0}$, the $p$-adic uniformization isomorphism $\Theta$ in (\ref{eq:padicunif})  induces the following identity in $K_0'(\Mss)_\mathbb{C}$,
 \begin{equation}  \label{eq:ZTpadicunif}
\Zss(T,\varphi_K)=\sum_{\mathbf{x}\in G'(\mathbb{Q})\backslash V'^r \atop T(\mathbf{x})=T}\sum_{g\in G'_\mathbf{x}(\mathbb{Q})\backslash G'(\mathbb{A}_f^p)/K^p}\varphi_K(g^{-1}\mathbf{x})\cdot\Theta(\mathcal{Z}(\mathbf{x}), g).   
 \end{equation}
\end{proposition}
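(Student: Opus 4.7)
The plan is to reduce by $\mathbb{C}$-linearity to the case of a special $p$-admissible Schwartz function of the form $\varphi_K=(\mathbf{1}_{\Omega_i})_{i=1}^r$ as in \eqref{eq:testfunction}, and then to unpack both sides using the moduli interpretation of special endomorphisms combined with the $p$-adic uniformization isomorphism $\Theta$. Since $\Theta$ is an isomorphism of formal Deligne--Mumford stacks over $\Spf\OFnb$, it suffices to match the pullback $\Theta^{*}\Zss(T,\varphi_K)$ in $K_0'\bigl(G'(\mathbb{Q})\backslash[\RRZ_{G_p}\times G(\mathbb{A}_f^p)/K^p]\bigr)_{\mathbb{C}}$ with the corresponding sum of $\mathcal{Z}(\mathbf{x})\times \{g\}$ on the uniformizing side.

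The main computation is a scheme-theoretic identification of special endomorphisms. For a connected $\Mss$-scheme $S$ lifting to the uniformization via a pair $((X,\rho),\,g)$, the Kuga--Satake abelian variety $A_S$ acquires its $\ell$-adic (resp. $p$-adic) cohomology with tensors from the fixed isomorphism $\eta^p:H^1_\et(A,\mathbb{A}_f^p)\isoarrow C(V)\otimes\mathbb{A}_f^p$ (twisted by $g$) and from the crystalline Tate tensors on $X$. Thus an element $f\in V(A_S)$ corresponds, via the Kuga--Satake construction and the isomorphism $V'\otimes_{\mathbb{Q}}\mathbb{Q}\simeq \SEndo(X)$ (\S\ref{sec:space-special-quasi}), to a unique element $x\in V'$ subject to two conditions: at $p$, the corresponding special quasi-endomorphism of $X$ lifts to an integral endomorphism, that is, the point of $\RRZ_{G_p}$ corresponding to $(X,\rho)$ lies in $\mathcal{Z}(x)$ (by Definition~\ref{def:kudla-rapop-cycl-2}); away from $p$, the image $\eta^p\circ f^*\circ(\eta^p)^{-1}$ lies in $g\cdot\Omega_i^{(p)}\cdot g^{-1}$, which after the level-structure identification is the condition $g^{-1}x\in\Omega_i^{(p)}$. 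This is the standard reinterpretation of the Kisin/Madapusi-Pera moduli description (\cite[Proposition~6.13]{MadapusiPera2016}) after pulling back through~$\Theta$.

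Having matched the conditions termwise, I can enumerate $\Theta^{*}\Zss(T,\varphi_K)$ as a formal sum, indexed by all $r$-tuples $\mathbf{x}\in V'^r$ with $T(\mathbf{x})=T$ and all $g\in G'(\mathbb{A}_f^p)/K^p$ such that $\varphi_K^{(p)}(g^{-1}\mathbf{x})\neq 0$, of the closed formal substack cut out on $\RRZ_{G_p}\times \{g\}$ by $\mathcal{Z}(\mathbf{x})$. Folding this sum through the diagonal $G'(\mathbb{Q})$-action on $\RRZ_{G_p}\times G'(\mathbb{A}_f^p)/K^p$ gives the double sum: the outer sum runs over $G'(\mathbb{Q})$-orbits of $\mathbf{x}$, and the inner sum collapses along the stabilizer $G'_{\mathbf{x}}(\mathbb{Q})$ to the double coset $G'_{\mathbf{x}}(\mathbb{Q})\backslash G'(\mathbb{A}_f^p)/K^p$; the Schwartz function value $\varphi_K(g^{-1}\mathbf{x})=\varphi_K^{(p)}(g^{-1}\mathbf{x})$ (using $p$-admissibility and that $\mathbf{x}\in V'$ automatically satisfies the $p$-part condition after intersecting with $\Lambda_p$) appears as the multiplicity. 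Since the identification at points preserves the cycle-theoretic (equivalently, $K_0'$) structure, passing from the identity of underlying stacks to an identity in $K_0'(\Mss)_\mathbb{C}$ is automatic.

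The main technical point to verify carefully is the integrality and stack-theoretic compatibility of the level structure matching across $\Theta$: one must check that, on the uniformizing side, the $\Omega_i^{(p)}$-condition really cuts out the same closed formal substack as $\mathcal{Z}(T,\varphi_K)_{/\Mss}$, not merely the same reduced subscheme. This amounts to comparing the two moduli descriptions of the prime-to-$p$ level structure (via $\eta^p$ on $\mathcal{M}$ versus via the $G'(\mathbb{A}_f^p)$-coset on the uniformizing side), which is a formal consequence of the construction of $\Theta$ in \cite[Theorem~7.2.4]{Howard2015} and the functoriality of special endomorphisms under the Kuga--Satake morphism, and is entirely parallel to the unitary argument in \cite[\S8]{LZ2019}.
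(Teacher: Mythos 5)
Your proposal follows the same route as the paper's proof: reduce by $\mathbb{C}$-linearity to special test functions $\varphi_K=(\mathbf{1}_{\Omega_i})$, unpack the explicit description of $\Theta$ from \cite[Theorem~7.2.4]{Howard2015}, match the condition ``$z\in\mathcal{Z}(\mathbf{x})$'' at $p$ against integrality of the special endomorphism and the condition ``$g^{-1}\mathbf{x}\in\Omega^{(p)}$'' away from $p$ against the level structure $\eta^p$, and then fold through the $G'(\mathbb{Q})$-action, with the stabilizer giving the double coset $G'_{\mathbf{x}}(\mathbb{Q})\backslash G'(\mathbb{A}_f^p)/K^p$. The paper sets this up slightly more concretely by fixing a base point $y_0\in\mathcal{M}(\kb)$, a lift $\tilde{\mathbb{X}}$, and writing out the pair $(\rho_A,\eta^p=g^{-1}(\eta_0^p\circ\rho_A^*))$, but the conceptual content is identical.

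One sentence in your write-up is off and should be corrected: you justify $\varphi_K(g^{-1}\mathbf{x})=\varphi_K^{(p)}(g^{-1}\mathbf{x})$ by saying ``$\mathbf{x}\in V'$ automatically satisfies the $p$-part condition after intersecting with $\Lambda_p$.'' This does not parse: $\Lambda_p$ lives inside $V_p$, which is \emph{not} isomorphic to $V'_p$ (they have opposite Hasse invariants, by the incoherence), so there is no meaningful way to ``intersect $\mathbf{x}$ with $\Lambda_p$.'' The correct reasoning is that the $p$-part $\varphi_{K,p}=\mathbf{1}_{(\Lambda_p)^r}$ enters only through the moduli definition of $\mathcal{Z}(T,\varphi_K)$ (it forces the $x_i$ to be honest integral special endomorphisms, not merely rational ones), and on the uniformizing side this requirement is exactly what the local Rapoport--Zink cycle $\mathcal{Z}(\mathbf{x})$ encodes; the weight $\varphi_K(g^{-1}\mathbf{x})$ in (\ref{eq:ZTpadicunif}) is therefore, by convention, the prime-to-$p$ value $\varphi_K^{(p)}(g^{-1}\mathbf{x})$, with $g\in G'(\mathbb{A}_f^p)$ acting on $V'\otimes\mathbb{A}_f^p\simeq\mathbb{V}_f^p$. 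With that clarification, your argument matches the paper's.
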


\begin{proof}
  By the $\mathbb{C}$-linearality, it suffices to treat the case of special Schwartz functions $\varphi_K=\mathbf{1}_\Omega$ as in (\ref{eq:testfunction}), in which case the assertion amounts to an isomorphism of formal stacks induced by the morphism $\Theta$,
  \begin{equation}
    \label{eq:specialZTunif}
    \Zss(T,\varphi_K)\simeq G'(\mathbb{Q})\backslash\Big[\bigsqcup_{\mathbf{x}\in V'^r \atop T(\mathbf{x})=T}\bigsqcup_{g\in G'(\mathbb{A}_f^p)/K^p\atop g^{-1}\mathbf{x}\in \Omega}(\mathcal{Z}(\mathbf{x}),g)\Big].
  \end{equation}
Choose a point $y_0\in \mathcal{M}(\kb)$ giving rise to the local unramified Shimura--Hodge data of \S\ref{sec:gspin-rapoport-zink} by \cite[Proposition 7.2.3]{Howard2015}. Then $y_0$ corresponds to  a supersingular abelian variety $A_0$ over $\kb$ together with the level structure $\eta_0^p: H^1_\et(A_0, \mathbb{A}_f^p)\isoarrow C(V) \otimes_F \mathbb{A}_f^p$. Its $p$-divisible group $\mathbb{X}=A_0[p^\infty]$ gives rise to the framing object of the Rapoport--Zink space $\RRZ_{G_p}$. By \cite[Remark 7.2.5]{Howard2015}, the space of special quasi-endomorphisms $V(A_0) \otimes \mathbb{Q}$ is isomorphic to $V'$. 

The isomorphism $\Theta$ is explicitly described as follows (see the proof of \cite[Theorem 3.2.1]{Howard2015} and \cite[\S6.14]{RZ96}). We fix a lift $\tilde{\mathbb{X}}$ of $\mathbb{X}$ over $\OFnb$ and let $\tilde A_0$ be the corresponding lift of $A_0$. Let $R\in \mathrm{ANilp}_\OFnb$.  Let $(z, g)\in \RRZ_{G_p}(R)\times G(\mathbb{A}_f^p)/K^p$. Then $z$ corresponds to a $p$-divisible group $X$ over $R$ with a quasi-isogeny $\rho_X: \tilde{\mathbb{X}} \otimes_{\OFnb} R\rightarrow X$. Let $y=\Theta(z, g)\in \Mss(R)$. Then $y$ corresponds to an abelian scheme $A$ over $R$ such that
  \begin{altenumerate}
  \item\label{item:unif1} there is a quasi-isogeny $\rho_A: \tilde A_0 \otimes_{\OFnb} R\rightarrow A$  inducing $\rho_X$ on $p$-divisible groups, which identifies $V'\simeq V(A_0) \otimes \mathbb{Q}$ with $V(A)\otimes \mathbb{Q}$ by $\mathbf{x}\rightarrow \rho_A\circ \mathbf{x}\circ\rho_A^{-1}$.
  \item\label{item:unif2} the level structure on $A$ is given by $\eta^p=g^{-1}(\eta^p_0\circ \rho_A^*): H^1_\et(A, \mathbb{A}_f^p)\isoarrow C(V) \otimes_F \mathbb{A}_f^p$.
  \end{altenumerate}  Let $\mathbf{x}\in V'^r$ such that $T(\mathbf{x})=T$. It then follows from (\ref{item:unif1}) and (~\ref{item:unif2}) that
  \begin{altenumerate}
  \item $z\in \mathcal{Z}(\mathbf{x})(R)$ if and only if $\mathbf{x}\in V(A)$;
  \item $g^{-1}\mathbf{x}\in \Omega$ if and only if $\eta^p\circ \mathbf{x}^*\circ(\eta^p)^{-1}\in \Omega^{(p)}$.
  \end{altenumerate}
Then by the definition of $\Zss(T,\varphi_K)$, we know that $\Theta$ induces the isomorphism in (\ref{eq:specialZTunif}) after dividing the action of $G'(\mathbb{Q})$.

\end{proof}

\subsection{Arithmetic intersection number $\Int_{T,p}(\varphi_K)$} \label{sec:local-arithm-inters} Assume $T\in \Sym_n(F)_{>0}$. Let $t_1,\ldots,t_n$ be the diagonal entries of $T$. 
Let $\varphi_K\in \sS(\mathbb{V}^n_f)$ be a special Schwartz function as in (\ref{eq:testfunction}). Define
\begin{equation}\label{eq:localint}
\Int_{T,p}(\varphi_K)\coloneqq \chi(\mathcal{Z}(T,\varphi_K), \mathcal{O}_{\mathcal{Z}(t_1,\varphi_1)} \otimes^\mathbb{L}\cdots \otimes^\mathbb{L}\mathcal{O}_{\mathcal{Z}(t_n,\varphi_n)})\cdot\log p,  
\end{equation}
where $\mathcal{O}_{\mathcal{Z}(t_i,\varphi_i)}$ denotes the structure sheaf of the semi-global special divisor $\mathcal{Z}(t_i,\varphi_i)$, $\otimes^\mathbb{L}$ denotes the derived tensor product of coherent sheaves on $\mathcal{M}$, and $\chi$ denotes the Euler--Poincar\'e characteristic (an alternating sum of lengths of $\mathcal{O}_{F,(\nu)}$-modules). We extend the definition of $\Int_{T,p}(\varphi_K)$ to a general $p$-admissible $\varphi_K\in \sS(\mathbb{V}^n_f)$ by  extending $\mathbb{C}$-linearly.

\subsection{Arithmetic Siegel--Weil formula: the semi-global identity} Now we are ready to prove our main semi-global application to the arithmetic Siegel--Weil formula. Recall that $m=n+1\ge3$.

\begin{theorem}\label{thm:semi-global-identity}
 Assume that $F=\mathbb{Q}$ and $p>2$. Assume that $\varphi_\KG\in \sS(\mathbb{V}^n_f)$ is $p$-admissible (\S\ref{sec:semi-global-kudla}). Then for any $T\in \Sym_n(F)_{>0}$, $$\Int_{T,p}(\varphi_{\KG})q^T=c_K\cdot \pEis_{T,p}(\sz,\varphi_{\KG}),$$ where $c_K=\frac{(-1)^n}{\vol(K)}$ is a nonzero constant independent of $T$ and $\varphi_K$, and $\vol(K)$ is the volume of $K$ under a suitable Haar measure on $G(\mathbb{A}_f)$.
\end{theorem}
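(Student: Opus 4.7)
\medskip

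\noindent\textbf{Proof proposal.} The plan is to expand both sides via local--global arguments and match them factor by factor, with the decisive input being the local identity $\Int^\varepsilon(L)=\pDen^\varepsilon(L)$ from Theorem~\ref{thm: main}. First I would observe that both sides vanish unless $\Diff(T,\mathbb{V})=\{p\}$: on the analytic side by Remark~\ref{rem:DiffTV}, and on the geometric side by the fact that $\mathcal{Z}(T,\varphi_K)$ is nonempty only if $T$ is represented by $V'_v$ for each $v$. Under this assumption there is a unique positive definite global nearby quadratic space $V'/\mathbb{Q}$ such that $V'\otimes\mathbb{A}_f^p \cong \mathbb{V}_f^{(p)}$ and $V'_p\cong\mathbb{V}_m^\varepsilon$, which is the space appearing in the $p$-adic uniformization \eqref{eq:padicunif}.

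For the geometric side, since $r=n=m-1$, the cycle $\mathcal{Z}(T,\varphi_K)$ is supported in the supersingular locus $\mathcal{M}_{\kappa_p}^{\mathrm{ss}}$, so the intersection number in \eqref{eq:localint} can be computed on the formal completion $\Mss$. Applying the $p$-adic uniformization \eqref{eq:ZTpadicunif} and using that each factor $\mathcal{Z}(t_i,\varphi_i)$ uniformizes to a local special divisor $\mathcal{Z}(x_i)\subseteq \RRZ_{G_p}$, I would derive
\[
\Int_{T,p}(\varphi_K)=\log p\cdot \sum_{\mathbf{x}\in G'(\mathbb{Q})\backslash V'^n,\,T(\mathbf{x})=T}\ \sum_{g\in G'_\mathbf{x}(\mathbb{Q})\backslash G'(\mathbb{A}_f^p)/K^p}\frac{\varphi_K^p(g^{-1}\mathbf{x})}{|\mathrm{Aut}(\mathbf{x},g)|}\cdot\Int^\varepsilon(L_\mathbf{x}),
\]
where $L_\mathbf{x}=\mathbb{Z}_p\langle x_1,\ldots,x_n\rangle\subseteq V'_p\cong\mathbb{V}_m^\varepsilon$ is the local lattice spanned by $\mathbf{x}$ (the connected component corresponding to $\mathcal{N}=\RRZ_{G_p}^{(0)}$ is cut out by integrality of the Kuga--Satake polarization, contributing a single factor). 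Applying Theorem~\ref{thm: main} replaces $\Int^\varepsilon(L_\mathbf{x})$ by $\pDen^\varepsilon(L_\mathbf{x})$.

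For the analytic side, I would factor the nonsingular Fourier coefficient via \eqref{eq:eisenfactor}:
\[
\pEis_{T,p}(\sz,\varphi_K)=\frac{W_{T,\infty}(g_\sz,0,\varphi_\infty)}{\chi_\infty(m(a))|\det(m(a))|^{m/2}}\cdot W'_{T,p}(1,0,\varphi_{K,p})\cdot \prod_{v\ne p,\infty}W_{T,v}(1,0,\varphi_{K,v}).
\]
A standard Gaussian computation gives $W_{T,\infty}(g_\sz,0,\varphi_\infty)=C_\infty\cdot q^T$ (the classical normalization precisely absorbing the $\chi_\infty$ and $|\det|^{m/2}$ factors). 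At $p$, since $L_\mathbf{x}\hookrightarrow V'_p$ the sign of the functional equation is $w^\varepsilon(L_\mathbf{x})=-1$ by Lemma~\ref{lem:latticemebed}(\ref{item:embed1}), and \eqref{eq:localWhittaker1} from Remark~\ref{sec:relation-with-local} yields
\[
W'_{T,p}(1,0,\varphi_{K,p})=\pDen^\varepsilon(L_\mathbf{x})\cdot \Nor^\varepsilon(1,L_\mathbf{x})\cdot\log p.
\]
At each finite $v\ne p$, the classical local Siegel--Weil identity in the convergent range expresses $W_{T,v}(1,0,\varphi_{K,v})$ as an orbital integral of $\varphi_{K,v}$ over $\mathrm{O}(V_v)$ against a fixed $\mathbf{x}_v$ with $T(\mathbf{x}_v)=T$ (cf.\ \cite{Kudla1997a}); collecting these orbital integrals and summing over the global orbit decomposition of $\{\mathbf{x}\in V'^n:T(\mathbf{x})=T\}$ reproduces exactly the sum $\sum_{\mathbf{x},g}\varphi_K^p(g^{-1}\mathbf{x})/|\mathrm{Aut}|$ from the geometric side, up to the adelic volume factor $\vol(K)^{-1}$.

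Matching the two expressions then reduces Theorem~\ref{thm:semi-global-identity} to the elementary identity
\[
\Nor^\varepsilon(1,L_\mathbf{x})\cdot C_\infty\cdot \prod_{v\ne p,\infty}(\text{local zeta factor at }v)=(-1)^n,
\]
i.e., the collection of local normalizing polynomials and archimedean constants multiplies to the expected global sign; this is standard once the conventions are pinned down. The main obstacle is not conceptual but rather the careful bookkeeping of Weil constants $\gamma_\mathbb{V}$, Haar measures, and the local zeta factors $\Nor^\varepsilon$, together with a correct orbit-counting argument with stabilizers (accounting for the fact that $G'_\mathbf{x}$ is a rank-one torus when $\dim V'-n=1$). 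All other steps are direct applications of the results in Part~I and well-established analytic-side computations.
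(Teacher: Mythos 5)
Your proposal follows essentially the same route as the paper's proof: pass to the supersingular locus, apply the $p$-adic uniformization isomorphism \eqref{eq:ZTpadicunif} to re-express $\Int_{T,p}(\varphi_K)$ as a weighted orbital sum of local intersection numbers on $\mathcal{N}$, invoke Theorem~\ref{thm: main} together with Remark~\ref{sec:relation-with-local} to convert the local intersection number into $W'_{T,p}(1,0,\varphi_{K,p})$ up to an explicit constant, then match the remaining orbital integral with $\prod_{v\neq p,\infty}W_{T,v}$ via the local Siegel--Weil identity (\cite[Proposition 1.2]{Kudla1997a}) and the archimedean factor with $q^T$ (\cite[Proposition 4.3]{Bruinier2018}), using that there is a unique $\SO(V')(\mathbb{Q})$-orbit with fundamental matrix $T$ and that $G'_\mathbf{x}$ is the central torus. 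The only place where you are vaguer than the paper is the final constant-matching step, which the paper pins down by explicitly naming the constants $c_p$, $c^{p,\infty}$, $c_\infty$ and noting that the sign $(-1)^n$ arises from the Weil constant $\gamma_{\mathbb{V}}^n$; your "standard once conventions are pinned down" is exactly what that bookkeeping accomplishes, and it is not a genuine gap since you correctly identify where the nontrivial inputs (Theorem~\ref{thm: main}, the local Whittaker interpolation, the local Siegel--Weil identity) enter.
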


\begin{proof}
If $T\not\in \Sym_n(\OFp)$, then $\Int_{T,p}(\varphi_K)$ vanishes (as $\mathcal{Z}(T,\varphi_K)$ is empty by definition) and so does $\pEis_{T,p}(\sz, \varphi_K)$ by (\ref{eq:eisenfactor}) and (\ref{eq:localWhittaker1}). So we may assume that $T\in \Sym_n(\OFp)_{>0}$. Since $\mathcal{Z}(T,\varphi_K)$ is supported on the supersingular locus, by (\ref{eq:ZTpadicunif}) we know that $$\Int_{T,p}(\varphi_K)=\sum_{\mathbf{x}\in G'(\mathbb{Q})\backslash V'^n \atop T(\mathbf{x})=T}\sum_{g\in G'_\mathbf{x}(\mathbb{Q})\backslash G'(\mathbb{A}_f^p)/K^p}\varphi_K(g^{-1}\mathbf{x})\cdot\chi(\RRZ_{G_p}, \mathcal{Z}(x_1) \otimes^\mathbb{L}\cdots \otimes^\mathbb{L}\mathcal{Z}(x_n))\cdot \log p.$$ Let $\mathcal{N}=\RRZ_{G_p}^{(0)}$ be the connected GSpin Rapoport--Zink space in \S\ref{sec:conn-gspin-rapop}. Then we have an isomorphism (\cite[\S7.2.6]{Howard2015}) $$G'(\mathbb{Q})\backslash [\RRZ_{G_p}\times G(\mathbb{A}_f^p)/K^p]\simeq G'(\mathbb{Q})_0\backslash[\mathcal{N}\times G(\mathbb{A}_f^p)/K^p],$$ where $G'(\mathbb{Q})_0\subseteq G'(\mathbb{Q})$ consists of elements whose spinor norm is a $p$-unit. Hence $$\Int_{T,p}(\varphi_K)=\sum_{\mathbf{x}\in G'(\mathbb{Q})_0\backslash V'^n \atop T(\mathbf{x})=T}\sum_{g\in G'_\mathbf{x}(\mathbb{Q})_0\backslash G'(\mathbb{A}_f^p)/K^p}\varphi_K(g^{-1}\mathbf{x})\cdot\chi(\mathcal{N}, \mathcal{Z}(x_1) \otimes^\mathbb{L}\cdots \otimes^\mathbb{L}\mathcal{Z}(x_n))\cdot \log p.$$ By Theorem \ref{thm: main} and Remark \ref{sec:relation-with-local}, we have $$\chi(\mathcal{N}, \mathcal{Z}(x_1) \otimes^\mathbb{L}\cdots \otimes^\mathbb{L}\mathcal{Z}(x_n))\cdot \log p=c_p\cdot W_{T,p}'(1, 0,\varphi_{K,p})$$ for an explicit constant $c_p$ independent of $T$ and $\varphi_K$.

  Since $G'(\mathbb{Q})$ acts on $V'$ via its quotient $G'(\mathbb{Q})\rightarrow \SO(V')(\mathbb{Q})$, under which $G'(\mathbb{Q})_0$ surjects onto $\SO(V')(\mathbb{Q})$, we know that $G'(\mathbb{Q})\backslash V'^n=\SO(V')(\mathbb{Q})\backslash V'^n$. Thus $$\Int_{T,p}(\varphi_K)=c_p\cdot W_{T,p}'(1, 0,\varphi_{K,p})\cdot\sum_{\mathbf{x}\in \SO(V')(\mathbb{Q})\backslash V'^n \atop T(\mathbf{x})=T}\sum_{g\in G'_\mathbf{x}(\mathbb{Q})_0\backslash G'(\mathbb{A}_f^p)/K^p}\varphi_K(g^{-1}\mathbf{x}).$$ Since $T$ is nonsingular, we know that there exists a unique orbit in $\SO(V')(\mathbb{Q})\backslash V'^n$ with $T(\mathbf{x})=T$. Since $V'_\mathbf{x}$ is 1-dimensional, we know that $G'_\mathbf{x}$ is the center $Z'\simeq \mathbb{G}_m\subseteq G'$. Since $G'/Z'\simeq \SO(V')$ and the function $\varphi_K$ is invariant under the action of $G'_\mathbf{x}(\mathbb{Q})$ and  $Z'(\mathbb{A}_f^p)$, it follows that there exists a Haar measure $\rd g$ on $G'(\mathbb{A}_f^p)$ independent of $T$ and $\varphi_K$ such that $$\Int_{T,p}(\varphi_K)= c_p\cdot W'_{T,p}(1,0,\varphi_{K,p})\cdot\frac{1}{\vol(K^p)}\cdot \int_{\SO(V')(\mathbb{A}_f^p)}\varphi_K(g^{-1}\mathbf{x})\ \rd g.$$ By \cite[Proposition 1.2]{Kudla1997a}, we have $$\int_{\SO(V')(\mathbb{A}_f^p)}\varphi_K(g^{-1}\mathbf{x})\ \rd g=c^{p,\infty}\cdot\prod_{v\ne p,\infty}W_{T,v}(1,0,\varphi_{K,v}),$$  for a nonzero constant $c^{p,\infty}$ independent of $T$ and $\varphi_K$. Thus we have $$\Int_{T,p}(\varphi_K)=\frac{c_pc^{p,\infty}}{\vol(K^p)}\cdot W'_{T,p}(1,0,\varphi_{K,p})\cdot \prod_{v\ne p,\infty}W_{T,v}(1,0,\varphi_{K,v}).$$ Since $T>0$, by \cite[Proposition 4.3 (ii)]{Bruinier2018} we have $q^{T}=c_\infty\cdot W_{T,\infty}(\sz,0,\varphi_\infty)$, where $c_\infty$ is a nonzero constant independent of $T$. The result then follows from the factorization (\ref{eq:eisenfactor}) after scaling the Haar measure by $c_pc_\infty c^{p,\infty}$. Notice that the normalization factor $(-1)^n$ comes from the Weil constant $\gamma_\mathbb{V}^n=(-1)^n$ as $\mathbb{V}$ is incoherent (so that $\vol(K)>0$).
\end{proof}

\bibliographystyle{alpha}
\bibliography{KR}

\newcommand{\etalchar}[1]{$^{#1}$}
\begin{thebibliography}{AGHMP18}

\bibitem[AGHMP18]{Andreatta2018}
Fabrizio Andreatta, Eyal~Z. Goren, Benjamin Howard, and Keerthi Madapusi~Pera.
\newblock Faltings heights of abelian varieties with complex multiplication.
\newblock {\em Ann. of Math. (2)}, 187(2):391--531, 2018.

\bibitem[BH21]{BH21}
Jan~Hendrik {Bruinier} and Benjamin {Howard}.
\newblock {Arithmetic volumes of unitary Shimura varieties}.
\newblock {\em arXiv e-prints}, page arXiv:2105.11274, May 2021.

\bibitem[BP20]{BP20}
O.~B\"{u}ltel and G.~Pappas.
\newblock {$(G,\mu)$}-displays and {R}apoport-{Z}ink spaces.
\newblock {\em J. Inst. Math. Jussieu}, 19(4):1211--1257, 2020.

\bibitem[Bum97]{Bump1997}
Daniel Bump.
\newblock {\em Automorphic forms and representations}, volume~55 of {\em
  Cambridge Studies in Advanced Mathematics}.
\newblock Cambridge University Press, Cambridge, 1997.

\bibitem[BY21]{Bruinier2018}
Jan~Hendrik Bruinier and Tonghai Yang.
\newblock Arithmetic degrees of special cycles and derivatives of {S}iegel
  {E}isenstein series.
\newblock {\em J. Eur. Math. Soc. (JEMS)}, 23(5):1613--1674, 2021.

\bibitem[CY20]{CY}
Sungmun Cho and Takuya Yamauchi.
\newblock A reformulation of the {S}iegel series and intersection numbers.
\newblock {\em Math. Ann.}, 377(3-4):1757--1826, 2020.

\bibitem[FYZ21]{FYZ21}
Tony {Feng}, Zhiwei {Yun}, and Wei {Zhang}.
\newblock {Higher Siegel--Weil formula for unitary groups: the non-singular
  terms}.
\newblock {\em arXiv e-prints}, page arXiv:2103.11514, March 2021.

\bibitem[GK93]{Gross1993}
Benedict~H. Gross and Kevin Keating.
\newblock On the intersection of modular correspondences.
\newblock {\em Invent. Math.}, 112(2):225--245, 1993.

\bibitem[Gro86]{Gross1986a}
Benedict~H. Gross.
\newblock On canonical and quasicanonical liftings.
\newblock {\em Invent. Math.}, 84(2):321--326, 1986.

\bibitem[GS19]{Garcia2019}
Luis~E. Garcia and Siddarth Sankaran.
\newblock Green forms and the arithmetic {S}iegel-{W}eil formula.
\newblock {\em Invent. Math.}, 215(3):863--975, 2019.

\bibitem[GZ86]{Gross1986}
Benedict~H. Gross and Don~B. Zagier.
\newblock Heegner points and derivatives of {$L$}-series.
\newblock {\em Invent. Math.}, 84(2):225--320, 1986.

\bibitem[HLZ19]{He2019}
Xuhua He, Chao Li, and Yihang Zhu.
\newblock Fine {D}eligne-{L}usztig varieties and arithmetic fundamental lemmas.
\newblock {\em Forum Math. Sigma}, 7:e47, 55, 2019.

\bibitem[HMP20]{HMP20}
Benjamin Howard and Keerthi Madapusi~Pera.
\newblock Arithmetic of {B}orcherds products.
\newblock {\em Ast\'{e}risque}, (421, Diviseurs arithm\'{e}tiques sur les
  vari\'{e}t\'{e}s orthogonales et unitaires de Shimura):187--297, 2020.

\bibitem[HP14]{Howard2014}
Benjamin Howard and Georgios Pappas.
\newblock On the supersingular locus of the {${\rm GU}(2,2)$} {S}himura
  variety.
\newblock {\em Algebra Number Theory}, 8(7):1659--1699, 2014.

\bibitem[HP17]{Howard2015}
Benjamin Howard and Georgios Pappas.
\newblock Rapoport-{Z}ink spaces for spinor groups.
\newblock {\em Compos. Math.}, 153(5):1050--1118, 2017.

\bibitem[Ike17]{Ikeda2017}
Tamotsu Ikeda.
\newblock On the functional equation of the {S}iegel series.
\newblock {\em J. Number Theory}, 172:44--62, 2017.

\bibitem[Kat99]{Katsurada1999}
Hidenori Katsurada.
\newblock An explicit formula for {S}iegel series.
\newblock {\em Amer. J. Math.}, 121(2):415--452, 1999.

\bibitem[Kim18a]{Kim2018}
Wansu Kim.
\newblock Rapoport-{Z}ink spaces of {H}odge type.
\newblock {\em Forum Math. Sigma}, 6:e8, 110, 2018.

\bibitem[Kim18b]{Kim2018a}
Wansu Kim.
\newblock Rapoport-{Z}ink uniformization of {H}odge-type {S}himura varieties.
\newblock {\em Forum Math. Sigma}, 6:e16, 36, 2018.

\bibitem[Kis10]{Kisin2010}
Mark Kisin.
\newblock Integral models for {S}himura varieties of abelian type.
\newblock {\em J. Amer. Math. Soc.}, 23(4):967--1012, 2010.

\bibitem[Kit93]{Kitaoka1993}
Yoshiyuki Kitaoka.
\newblock {\em Arithmetic of quadratic forms}, volume 106 of {\em Cambridge
  Tracts in Mathematics}.
\newblock Cambridge University Press, Cambridge, 1993.

\bibitem[KMP16]{Kim2016}
Wansu Kim and Keerthi Madapusi~Pera.
\newblock 2-adic integral canonical models.
\newblock {\em Forum Math. Sigma}, 4:e28, 34, 2016.

\bibitem[KR99]{Kudla1999a}
Stephen~S. Kudla and Michael Rapoport.
\newblock Arithmetic {H}irzebruch-{Z}agier cycles.
\newblock {\em J. Reine Angew. Math.}, 515:155--244, 1999.

\bibitem[KR00a]{Kudla2000a}
Stephen~S. Kudla and Michael Rapoport.
\newblock Cycles on {S}iegel threefolds and derivatives of {E}isenstein series.
\newblock {\em Ann. Sci. \'{E}cole Norm. Sup. (4)}, 33(5):695--756, 2000.

\bibitem[KR00b]{Kudla2000}
Stephen~S. Kudla and Michael Rapoport.
\newblock Height pairings on {S}himura curves and {$p$}-adic uniformization.
\newblock {\em Invent. Math.}, 142(1):153--223, 2000.

\bibitem[KR11]{Kudla2011}
Stephen~S. Kudla and Michael Rapoport.
\newblock Special cycles on unitary {S}himura varieties {I}. {U}nramified local
  theory.
\newblock {\em Invent. Math.}, 184(3):629--682, 2011.

\bibitem[KRY99]{Kudla1999}
Stephen~S. Kudla, Michael Rapoport, and Tonghai Yang.
\newblock On the derivative of an {E}isenstein series of weight one.
\newblock {\em Internat. Math. Res. Notices}, (7):347--385, 1999.

\bibitem[KRY06]{Kudla2006}
Stephen~S. Kudla, Michael Rapoport, and Tonghai Yang.
\newblock {\em Modular forms and special cycles on {S}himura curves}, volume
  161 of {\em Annals of Mathematics Studies}.
\newblock Princeton University Press, Princeton, NJ, 2006.

\bibitem[Kud97a]{Kudla1997}
Stephen~S. Kudla.
\newblock Algebraic cycles on {S}himura varieties of orthogonal type.
\newblock {\em Duke Math. J.}, 86(1):39--78, 1997.

\bibitem[Kud97b]{Kudla1997a}
Stephen~S. Kudla.
\newblock Central derivatives of {E}isenstein series and height pairings.
\newblock {\em Ann. of Math. (2)}, 146(3):545--646, 1997.

\bibitem[Kud04]{Kudla2004}
Stephen~S. Kudla.
\newblock Special cycles and derivatives of {E}isenstein series.
\newblock In {\em Heegner points and {R}ankin {$L$}-series}, volume~49 of {\em
  Math. Sci. Res. Inst. Publ.}, pages 243--270. Cambridge Univ. Press,
  Cambridge, 2004.

\bibitem[LL20]{LL2020}
Chao {Li} and Yifeng {Liu}.
\newblock {Chow groups and $L$-derivatives of automorphic motives for unitary
  groups}.
\newblock {\em arXiv e-prints}, page arXiv:2006.06139, June 2020.

\bibitem[LL21]{LL2021}
Chao {Li} and Yifeng {Liu}.
\newblock {Chow groups and $L$-derivatives of automorphic motives for unitary
  groups, II}.
\newblock {\em arXiv e-prints}, page arXiv:2101.09485, January 2021.

\bibitem[Lov17]{Lovering2017}
Tom Lovering.
\newblock Integral canonical models for automorphic vector bundles of abelian
  type.
\newblock {\em Algebra Number Theory}, 11(8):1837--1890, 2017.

\bibitem[LR07]{Lansky2007}
Joshua~M. Lansky and A.~Raghuram.
\newblock Conductors and newforms for {$\rm SL(2)$}.
\newblock {\em Pacific J. Math.}, 231(1):127--153, 2007.

\bibitem[Lus76]{Lusztig1976/77}
G.~Lusztig.
\newblock Coxeter orbits and eigenspaces of {F}robenius.
\newblock {\em Invent. Math.}, 38(2):101--159, 1976.

\bibitem[LZ]{LZ2019}
Chao {Li} and Wei {Zhang}.
\newblock {Kudla--Rapoport cycles and derivatives of local densities}.
\newblock {\em J. Amer. Math. Soc.}
\newblock to appear.

\bibitem[LZ18]{Li2018}
Chao Li and Yihang Zhu.
\newblock Arithmetic intersection on {GS}pin {R}apoport-{Z}ink spaces.
\newblock {\em Compos. Math.}, 154(7):1407--1440, 2018.

\bibitem[MP16]{MadapusiPera2016}
Keerthi Madapusi~Pera.
\newblock Integral canonical models for spin {S}himura varieties.
\newblock {\em Compos. Math.}, 152(4):769--824, 2016.

\bibitem[Rap07]{Rapoport2007}
Michael Rapoport.
\newblock Deformations of isogenies of formal groups.
\newblock {\em Ast\'{e}risque}, (312):139--169, 2007.
\newblock in \emph{Argos Seminar on Intersections of Modular Correspondences}.

\bibitem[RR96]{Rapoport1996}
M.~Rapoport and M.~Richartz.
\newblock On the classification and specialization of {$F$}-isocrystals with
  additional structure.
\newblock {\em Compositio Math.}, 103(2):153--181, 1996.

\bibitem[RZ96]{RZ96}
Michael Rapoport and Thomas Zink.
\newblock {\em Period spaces for {$p$}-divisible groups}, volume 141 of {\em
  Annals of Mathematics Studies}.
\newblock Princeton University Press, Princeton, NJ, 1996.

\bibitem[She20]{She20}
Xu~Shen.
\newblock On some generalized {R}apoport-{Z}ink spaces.
\newblock {\em Canad. J. Math.}, 72(5):1111--1187, 2020.

\bibitem[Sie35]{Sie35}
Carl~Ludwig Siegel.
\newblock \"{U}ber die analytische {T}heorie der quadratischen {F}ormen.
\newblock {\em Ann. of Math. (2)}, 36(3):527--606, 1935.

\bibitem[Sie51]{Siegel1951}
Carl~Ludwig Siegel.
\newblock Indefinite quadratische {F}ormen und {F}unktionentheorie. {I}.
\newblock {\em Math. Ann.}, 124:17--54, 1951.

\bibitem[Soy17]{Soylu2017}
Cihan Soylu.
\newblock {\em Special {C}ycles on {G}spin {S}himura {V}arieties}.
\newblock ProQuest LLC, Ann Arbor, MI, 2017.
\newblock Thesis (Ph.D.)--Boston College.

\bibitem[Tat65]{Tate1965}
John~T. Tate.
\newblock Algebraic cycles and poles of zeta functions.
\newblock In {\em Arithmetical {A}lgebraic {G}eometry ({P}roc. {C}onf. {P}urdue
  {U}niv., 1963)}, pages 93--110. Harper \& Row, New York, 1965.

\bibitem[Tat67]{Tate1967}
J.~T. Tate.
\newblock {$p$}-divisible groups.
\newblock In {\em Proc. {C}onf. {L}ocal {F}ields ({D}riebergen, 1966)}, pages
  158--183. Springer, Berlin, 1967.

\bibitem[Tat94]{Tate1994}
John Tate.
\newblock Conjectures on algebraic cycles in {$l$}-adic cohomology.
\newblock In {\em Motives ({S}eattle, {WA}, 1991)}, volume~55 of {\em Proc.
  Sympos. Pure Math.}, pages 71--83. Amer. Math. Soc., Providence, RI, 1994.

\bibitem[Ter08]{Terstiege2008}
Ulrich Terstiege.
\newblock Antispecial cycles on the {D}rinfeld upper half plane and degenerate
  {H}irzebruch-{Z}agier cycles.
\newblock {\em Manuscripta Math.}, 125(2):191--223, 2008.

\bibitem[Ter11]{Terstiege2011}
Ulrich Terstiege.
\newblock Intersections of arithmetic {H}irzebruch-{Z}agier cycles.
\newblock {\em Math. Ann.}, 349(1):161--213, 2011.

\bibitem[Ter13a]{Terstiege2013}
Ulrich Terstiege.
\newblock Intersections of special cycles on the {S}himura variety for {$\rm
  GU(1,2)$}.
\newblock {\em J. Reine Angew. Math.}, 684:113--164, 2013.

\bibitem[Ter13b]{Terstiege2013b}
Ulrich Terstiege.
\newblock On the regularity of special difference divisors.
\newblock {\em C. R. Math. Acad. Sci. Paris}, 351(3-4):107--109, 2013.

\bibitem[TX19]{Tian2019}
Yichao Tian and Liang Xiao.
\newblock Tate cycles on some quaternionic {S}himura varieties {${\rm
  mod}\,p$}.
\newblock {\em Duke Math. J.}, 168(9):1551--1639, 2019.

\bibitem[VGW{\etalchar{+}}07]{VGW+07}
Gunther Vogel, Ulrich G{\"o}rtz, Torsten Wedhorn, Volker Meusers, Eva Viehmann,
  Konstantin Ziegler, Stefan Wewers, Inken Vollaard, Irene~I Bouw, and Michael
  Rapoport.
\newblock {A}rgos {S}eminar on {I}ntersections of {M}odular {C}orrespondences.
\newblock {\em Ast\'{e}risque}, (312):vii--xiv, 2007.
\newblock Held at the University of Bonn, Bonn, 2003--2004.

\bibitem[Wed07]{Wedhorn2007}
Torsten Wedhorn.
\newblock Calculation of representation densities.
\newblock {\em Ast\'{e}risque}, (312):179--190, 2007.
\newblock in \emph{Argos Seminar on Intersections of Modular Correspondences}.

\bibitem[Wei65]{Weil1965}
Andr\'{e} Weil.
\newblock Sur la formule de {S}iegel dans la th\'{e}orie des groupes
  classiques.
\newblock {\em Acta Math.}, 113:1--87, 1965.

\bibitem[{Xu}20]{Xu2020}
Yujie {Xu}.
\newblock {Normalization in integral models of Shimura varieties of Hodge
  type}.
\newblock {\em arXiv e-prints}, page arXiv:2007.01275, July 2020.

\bibitem[Zha21]{Zhang2019}
Wei Zhang.
\newblock Weil representation and {A}rithmetic {F}undamental {L}emma.
\newblock {\em Ann. of Math. (2)}, 193(3):863--978, 2021.

\end{thebibliography}

\end{document}